\newtheorem*{corollary*}{Corollary}
\theoremstyle{plain}
\newtheorem{theorem}{Theorem}[section]
\newtheorem{conjecture}[theorem]{Conjecture}
\newtheorem{definition}[theorem]{Definition}
\newtheorem{proposition}[theorem]{Proposition}
\newtheorem{corollary}[theorem]{Corollary}
\newtheorem{lemma}[theorem]{Lemma}
\theoremstyle{remark}
\newtheorem{remark}[theorem]{Remark}
\numberwithin{equation}{section}
\numberwithin{figure}{section}
\DeclareMathOperator{\var}{Var}
\DeclareMathOperator{\id}{Id}
\newcommand{\eps}{\varepsilon}
\newcommand{\tr}{\mathrm{tr}}
\newcommand{\Z}{\mathbb{Z}}
\newcommand{\Q}{\mathbb{Q}}
\newcommand{\R}{\mathbb{R}}
\newcommand{\C}{\mathbb{C}}
\newcommand{\E}{\mathbb{E}}
\newcommand{\Var}{\mathrm{Var}}
\newcommand{\RWEntropy}{h_{\mu}}
\newcommand{\Haarof}[1]{m_{#1}}
\newcommand{\supp}[1]{\mathrm{supp}(#1)}
\begin{document}
	\title[Absolutely continuous self-similar measures]{On absolute continuity of inhomogeneous and contracting on average self-similar measures}
	\author[sk]{Samuel Kittle}
    \author[ck]{Constantin Kogler}

    \begin{abstract}
        We give a condition for absolute continuity of self-similar measures in arbitrary dimensions. This allows us to construct the first explicit absolutely continuous examples of inhomogeneous self-similar measures in dimension one and two. In fact, for $d\geq 1$ and any given rotations in $O(d)$ acting irreducibly on $\R^d$ as well as any distinct translations, all having algebraic coefficients, we construct absolutely continuous self-similar measures with the given rotations and translations. We furthermore strengthen Varjú's result for Bernoulli convolutions, treat complex Bernoulli convolutions and in dimension $\geq 3$ improve the condition on absolute continuity by Lindenstrauss-Varjú. Moreover, self-similar measures of contracting on average measures are studied, which may include expanding similarities in their support.
    \end{abstract}

    \email{s.kittle@ucl.ac.uk, kogler@ias.edu}

    \address{Samuel Kittle, Department of Mathematics, University College London, 25 Gordon Street, London WC1H 0AY, United Kingdom}

    \address{Constantin Kogler, Institute for Advanced Study, 1 Einstein Dr, Princeton, NJ 08540, United States of America}

    \maketitle

    \tableofcontents
	
    \section{Introduction}

    In the study of self-similar measures it is fundamental to determine their dimension and to find conditions for absolute continuity. For the former problem progress was made by Hochman (\cite{Hochman2014}, \cite{Hochman2017}), relating the dimension of a self-similar measure to the entropy and Lyapunov exponent provided the generating measure satisfies a mild separation condition. While it was shown by Saglietti-Shmerkin-Solomyak \cite{SagliettiShmerkinSolomyak2018} that, under suitable assumptions, generic one-dimensional self-similar measures are absolutely continuous, finding explicit examples remains challenging. It was shown by Varjú \cite{Varju2019} that Bernoulli convolutions are absolutely continuous if their defining parameter is sufficiently close to $1$ in terms of the Mahler measure. In dimension $d\geq 3$, assuming that the rotation part of the self-similar measure is fixed and has an $L^2$ spectral gap on $O(d)$, Lindenstrauss-Varjú \cite{LindenstraussVarju206} showed absolute continuity if all of the contraction rates are sufficiently close to 1. In this paper we strengthen and vastly generalise these two results. Moreover, we give the first explicit examples of absolutely continuous self-similar measures in dimension one and two with non-uniform contraction rates. For instance, consider for $x\in \R$ the similarities 
    \begin{equation}\label{FirstInhomExample}
        g_1(x) = \frac{n}{n + 1}x  \quad \text{ and }   \quad g_2(x) = \frac{n}{n + 2}x + 1.
    \end{equation}
    We then show that the self-similar measure of $\frac{1}{2}\delta_{g_1} + \frac{1}{2}\delta_{g_2}$ is absolutely continuous on $\R$ for any sufficiently large integer $n \geq 1$. Furthermore, our methods allow us to construct several classes of explicit absolutely continuous examples for $g_i(x) = \rho_i U_i x + b_i$ for $x\in \R^d$ in any dimension $d\geq 1$ as well as for every collection of orthogonal matrices $U_i$ acting irreducibly on $\R^d$ and distinct vectors $b_i \in \R^d$, provided they all have algebraic entries. 

    Let $G = \mathrm{Sim}(\R^d)$ be the group of similarities on $\R^d$ and let $O(d)$ be the group of orthogonal $d\times d$ matrices. For each $g \in G$ there exists  a scalar $\rho(g) >  0$, an orthogonal matrix $U(g) \in O(d)$ and a vector $b(g) \in \R^d$ such that $g(x) = \rho(g)U(g)x + b(g)$ for all $x \in \R^d$. A similarity is called contracting if $\rho(g) < 1$ and expanding when $\rho(g) > 1$. 

    The Lyapunov exponent of a probability measure $\mu$ on $G$ is defined, whenever it exists, as $$\chi_{\mu} = \E_{g\sim\mu}[\log \rho(g)].$$ Throughout this paper we use the following terminology.  

    \begin{definition}
        If $\chi_{\mu} < 0$, we call $\mu$ \textbf{contracting on average}. Moreover, if every $g\in \mathrm{supp}(\mu)$ is contracting, we say that $\mu$ is \textbf{contracting}. When $\chi_{\mu} < 0$ and there is $g \in \mathrm{supp}(\mu)$ such that $\rho(g) > 1$, then we call $\mu$ \textbf{contracting only on average}.
    \end{definition}
    
    It is well-known (\cite{Hutchinson1981}, \cite{BarnsleyElton1988}, \cite{BougerolPicard1992}) that when $\mu$ is a finitely supported contracting on average probability measure on $G$, then there exists a unique probability measure $\nu$ on $\R^d$ that is $\mu$-stationary (i.e. $\nu$ satisfies $\mu * \nu = \nu$) and referred to as the self-similar measure of $\mu$. Under these assumptions, it follows from the moment estimates of \cite{GuivarchLePage2016}*{Proposition 5.1} that $\nu$ has a polynomial tail decay in the sense that there exists some $\alpha = \alpha(\mu) > 0$ such that as $R \to \infty$,
    \begin{equation}\label{PolynomialTailDecay}
        \nu(x \in \R^d \,:\, |x| \geq R) \ll_{\mu} R^{-\alpha}
    \end{equation}
    for an implied constant depending only on $\mu$. The authors have given in \cite{KittleKoglerTail} an independent proof of \eqref{PolynomialTailDecay} for contracting on average measures on arbitrary metric spaces. 
    
    Throughout this paper, we denote by $\nu$ the self-similar measure associated to $\mu$. If $\mu$ is (only) contracting on average, we say that $\nu$ is a (only) contracting on average self-similar measure. Moreover, $\mu$ or respectively $\nu$ is called homogeneous if there are $r \in \R_{>0}$ and $U\in O(d)$ such that $r = \rho(g)$ and $U = U(g)$ for all $g \in \mathrm{supp}(\mu)$. When this is not the case, we say that $\mu$ and $\nu$ are inhomogeneous. A particular goal of this paper is to give explicit examples of inhomogeneous as well as contracting only on average self-similar measures which are absolutely continuous.

    To state our main result, we first discuss the Hausdorff dimension of $\nu$, which is defined as $$\dim \nu = \inf\{ \dim E \,:\, E \subset \R^d \text{ measurable and } \nu(E) > 0 \}$$ where $\dim E$ is the Hausdorff dimension of $E$. In order to state the landmark results by Hochman \cite{Hochman2014}, \cite{Hochman2017}, recall that the random walk entropy of a finitely supported measure $\mu$ is defined as  
    \begin{equation}\label{Def:RWEntropy}
        h_{\mu} = \lim_{n \to \infty} \frac{1}{n}H(\mu^{*n}) = \inf_{n \geq 1} \frac{1}{n}H(\mu^{*n}),
    \end{equation}
    where $H(\cdot)$ is the Shannon entropy. Observe that if $\mathrm{supp}(\mu)$ has no exact overlaps, which means that $\mathrm{supp}(\mu)$ generates a free semigroup, then $h_{\mu} = H(\mu) = -\sum_i p_i\log p_i.$ 
    
    Moreover, as in \cite{Hochman2017}, denote by $d(\cdot,\cdot)$ the metric on $G$ defined for $g = \rho_1U_1 + b_1$ and $h = \rho_2U_2 + b_2$ as $$d(g,h) = |\log \rho_1 - \log \rho_2| + ||U_1 - U_2|| + |b_1 - b_2|$$ for $||\cdot||$ the operator norm and $|\cdot|$ the euclidean norm.
    
    To distinguish between the results for dimension and absolute continuity, denote $$\Delta_n = \min\{ d(g, h) \text{ for } g, h \in \mathrm{supp}(\mu^{*n}) \text{ with } g \neq h  \}  $$ and $$M_n = \min\left\{ d(g, h) \text{ for } g, h \in \bigcup_{i = 0}^n \mathrm{supp}(\mu^{*i}) \text{ with } g \neq h  \right\}.$$ Furthermore we set 
    \begin{equation}\label{Def:SepRate}
        S_n = - \frac{1}{n} \log M_n \quad\quad \text{ and } \quad\quad S_{\mu} = \limsup_{n \to \infty} S_n,
    \end{equation}
    where $S_{\mu}$ is referred to as the splitting rate.
    
    We call a subgroup $H$ of $O(d)$ irreducible if $H$ acts irreducibly on $\R^d$, i.e. the only $H$-invariant subspaces of $\R^d$ are $\{ 0 \}$ and $\R^d$. Moreover, we say that a measure $\mu = \sum_{i = 1}^n p_i \delta_{g_i}$ on $G$ or $O(d) \subset G$ is irreducible if the group generated by $\{ U(g_1), \ldots , U(g_n) \}$ is irreducible. When the elements in the support of $\mu$ have a common fixed point $x \in \R^d$, then $\delta_x$ is the self-similar measure of $\mu$. To avoid the latter case, we say that $\mu$ has no common fixed point if the similarities in $\mathrm{supp}(\mu)$ do not. 
    
    It follows by Hochman \cite{Hochman2017}, generalising \cite{Hochman2014}, that if $\mu$ is a finitely supported, contracting and irreducible probability measure on $G$ without a common fixed point such that $\Delta_n \geq e^{-cn}$ for some $c > 0$ and infinitely many $n \geq 1$, then $\dim \nu = \min\{ d, \frac{h_{\mu}}{|\chi_{\mu}|} \}.$ 
    
    In the accompanying paper \cite{KittleKoglerDimension} we use the techniques of this paper to generalise Hochman's result to contracting on average measures. Moreover, we show that a weaker requirement than exponential separation at all scales is sufficient (see \cite{KittleKoglerDimension} for a discussion). We work with $M_n$ instead of $\Delta_n$ for convenience only and in order to apply the general entropy gap results from \cite{KittleKoglerEntropy}.

    \begin{theorem}(\cite{KittleKoglerDimension}*{Theorem 1.2 and Theorem 1.3})\label{GeneralHochman}
        Let $\mu$ be a finitely supported, contracting on average and irreducible probability measure on $G$ without a common fixed point. Assume that either of the following two properties holds:
        \begin{enumerate}[label = (\roman*)]
            \item For some $c > 0$, $M_n \geq e^{-cn}$ for infinitely many $n\geq 1$,
            \item For some $\eps > 0$, $\log M_n \geq -n \exp((\log n)^{1/3 - \eps})$ for all sufficiently large $n\geq 1$.
        \end{enumerate} Then $$\dim \nu = \min\left\{ d, \frac{h_{\mu}}{|\chi_{\mu}|} \right\}.$$
    \end{theorem}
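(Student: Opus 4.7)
The upper bound $\dim \nu \le \min\{d, h_\mu/|\chi_\mu|\}$ is standard: using the self-similar relation $\nu = \mu^{*n} * \nu$, typical words of length $n$ contract at rate close to $e^{n\chi_\mu}$ by the law of large numbers, their number is at most $e^{nh_\mu + o(n)}$, and the polynomial tail \eqref{PolynomialTailDecay} absorbs the contribution of atypical words whose cocycle deviates from $n\chi_\mu$. The substantive task is the lower bound.

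My plan is to adapt Hochman's entropy-increase method to the contracting-on-average setting. Writing $H(\nu, \mathcal{D}_k)$ for the Shannon entropy of $\nu$ along the dyadic partition $\mathcal{D}_k$ of $\R^d$ at scale $2^{-k}$, the goal would be to show $$\liminf_{n\to\infty} \frac{H(\nu, \mathcal{D}_{k_n})}{k_n} \ge \min\Bigl\{d, \frac{h_\mu}{|\chi_\mu|}\Bigr\}$$ along a carefully chosen sequence $k_n\to\infty$. Hochman's dichotomy says that if the entropy deficit is positive at scale $k_n$, then invariance under $\mu^{*n}$ forces exponential coincidences in $\mathrm{supp}(\mu^{*n})$, contradicting $M_n \ge e^{-cn}$ in case (i); case (ii) is handled by choosing $k_n$ to grow slightly super-linearly in $n$ so that the sub-exponential error permitted in $\log M_n$ is still absorbed by the gap.

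The new ingredient needed, compared to \cite{Hochman2014}, \cite{Hochman2017}, is a reduction from the contracting-on-average regime to a contracting one. For this I would introduce a stopping time $\tau_n$ defined as the first $k \ge c_0 n$ at which the partial random product is $e^{-\delta k}$-contracting for suitable $\delta, c_0 > 0$. Standard large-deviation estimates for the sub-additive cocycle $\log \rho$ give $\tau_n \le K n$ outside of an exponentially small event, and the polynomial tail \eqref{PolynomialTailDecay} bounds the contribution of that event to $\nu$. The stopped walk $\mu^{*\tau_n}$ is effectively contracting, still satisfies the separation coming from $M_n$ (since $\tau_n$ is linearly bounded), and its random-walk entropy is within $o(n)$ of $n h_\mu$ because $\tau_n / n$ is concentrated. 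One can then feed this stopped walk into the general entropy-gap framework of \cite{KittleKoglerEntropy} and execute Hochman's dichotomy for $\nu$ verbatim.

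The main obstacle is the non-monotone contraction profile of partial products when some $g\in\mathrm{supp}(\mu)$ have $\rho(g)>1$: words can transiently expand before eventually contracting, so the entropy-increase step, which requires that $\mu^{*n}*\nu$ spreads mass at a precisely specified scale, must be calibrated to the stopping time rather than to $n$. Matching the stopping scale, the dyadic scale $k_n$ at which entropy is measured, and the separation scale coming from $M_n$ — simultaneously, and with the super-exponential slack needed in case (ii) — is the main technical content that \cite{KittleKoglerDimension} works out using the entropy-gap tools of \cite{KittleKoglerEntropy}.
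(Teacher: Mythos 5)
This statement is not proved in the present paper at all: it is imported verbatim from the companion paper \cite{KittleKoglerDimension} (Theorems 1.2 and 1.3) and used here only as input, so there is no in-paper proof to compare your argument against. The honest comparison is with what the authors say that companion paper does, namely run a Hochman-type dimension argument through the stopped-random-walk and entropy-gap machinery of this paper and \cite{KittleKoglerEntropy}; your overall plan (stopping time at which $\log\rho$ first drops below a prescribed level, large deviations plus the tail bound \eqref{PolynomialTailDecay} to control the exceptional event, then an entropy-increase dichotomy) is consistent in spirit with that route.

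However, as written your proposal has genuine gaps rather than being a proof. First, the entropy-increase step ``calibrated to the stopping time'' is precisely the nontrivial content: once the contraction scale is random, one must show that the stopped walk still carries entropy $\approx \tau h_{\mu}$ at the separation scale governed by $M_n$, and that the spreading of $\mu^{*\tau}*\nu$ happens at a scale matched to $\rho(q_\tau)$ rather than to the word length; you assert this and explicitly defer it to \cite{KittleKoglerDimension}. Second, and more seriously, your treatment of case (ii) does not work as described. Even granting the adapted dichotomy, Hochman's argument only yields that if $\dim \nu < \min\{d, h_{\mu}/|\chi_{\mu}|\}$ then $\frac{1}{n}\log M_n \to -\infty$, i.e.\ super-exponential decay with no rate; choosing the dyadic scales $k_n$ ``slightly super-linearly in $n$'' cannot upgrade this qualitative statement to tolerate separation as weak as $\log M_n \geq -n\exp((\log n)^{1/3-\eps})$. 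Obtaining (ii) requires a quantitative rate in the entropy-growth step — an explicit inverse-theorem-type bound of the kind the detail, variance-summation and entropy-gap results of this paper and \cite{KittleKoglerEntropy} are designed to provide — and producing that rate is the substance of the cited proof, not a routine rescaling of the sequence of scales.
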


    It is well-established that $\dim \nu \leq \{ d, \frac{h_{\mu}}{|\chi_{\mu}|} \}$. Therefore, $\nu$ can only be absolutely continuous if $ h_{\mu} \geq d\,|\chi_{\mu}|$. The following folklore conjecture is expected to hold, which to the authors knowledge has not been explicitly stated before. 
    
    \begin{conjecture}\label{MainConjecture}
        Let $\mu$ be a finitely supported, contracting on average and irreducible probability measure on $G$ without a common fixed point. Then $\nu$ is absolutely continuous if $$\frac{h_{\mu}}{|\chi_{\mu}|} > d.$$
    \end{conjecture}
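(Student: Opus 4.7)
The plan would be to adapt the template of Varjú \cite{Varju2019} and Lindenstrauss--Varjú \cite{LindenstraussVarju206}: control the entropy of $\nu$ at small scales using the stationarity equation $\nu = \mu * \nu$, and then upgrade a sufficiently strong smoothing estimate to absolute continuity. Assuming a separation hypothesis strong enough to invoke Theorem \ref{GeneralHochman}, the hypothesis $h_{\mu}/|\chi_{\mu}| > d$ already gives $\dim \nu = d$; the remaining task is a measure-theoretic strengthening, which unlike Hochman's dimension theorem is genuinely global and cannot be read off from a single scale.

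Let $U_r$ denote the normalized Lebesgue measure on the Euclidean $r$-ball in $\R^d$, and write $H_r(\nu) = H(\nu * U_r)$ together with the detail $s_r(\nu) = H_r(\nu) - H_{er}(\nu)$. Iterating $\nu = \mu^{*n} * \nu$, one would analyse how convolution by $\mu^{*n}$ affects $H_r$: heuristically each factor of $\mu$ injects roughly $h_{\mu}$ units of entropy while the effective scale contracts by $e^{\chi_{\mu}}$ on average, so the detail should approach $h_{\mu}/|\chi_{\mu}|$, and since detail is bounded above by $d$ this forces $s_r(\nu) = d - o(1)$ along a sufficiently dense sequence of scales. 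Making this rigorous in the contracting-on-average setting with an irreducible orthogonal component requires an entropy-gain statement for random products of similarities, together with the polynomial tail decay \eqref{PolynomialTailDecay} to control the rare expanding elements in $\mathrm{supp}(\mu)$ that prevent a direct reduction to the purely contracting case.

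The main obstacle is the passage from \emph{maximal detail at all small scales} to \emph{absolute continuity}: a Cantor-type measure can have full Hausdorff dimension while remaining singular, so entropy at scale $r$ alone cannot close the argument. The standard device is an $L^p$-flattening inequality that establishes $\|\nu * \mu^{*n}\|_p = O(1)$ for some $p > 1$, whence $\nu \in L^p(\R^d)$. In the absence of any quantitative control on $M_n$, algebraic coincidences could in principle produce long-range overlaps that are invisible to a purely entropic analysis, and it is exactly this possibility that places the conjecture in its full generality beyond current techniques; this is why it is stated as a folklore conjecture rather than a theorem. Realistic partial results in the spirit of the present paper would either assume sub-exponential separation as in Theorem \ref{GeneralHochman}(ii), or impose algebraic non-degeneracy conditions such as Mahler measure bounds on the coefficients, as in the explicit examples \eqref{FirstInhomExample}, under which the flattening step should be accessible.
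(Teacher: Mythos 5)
The statement you were asked about is a conjecture, not a theorem: the paper explicitly records that Conjecture~\ref{MainConjecture} is ``completely open and not known for any class of self-similar measures,'' and it contains no proof of it. Your text, read honestly, is not a proof either — it is a survey of a plausible strategy that you yourself concede cannot be closed. The decisive gap is exactly the one you name: the passage from near-maximal entropy (or full dimension, via Theorem~\ref{GeneralHochman}) to absolute continuity. Nothing in your sketch supplies the required smoothing mechanism; the ``$L^p$-flattening inequality'' is invoked as a black box, and no argument is offered for why $\|\nu * \mu^{*n}\|_p = O(1)$ should hold under the bare hypothesis $h_{\mu}/|\chi_{\mu}| > d$. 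Since that flattening step \emph{is} the conjecture, the proposal has no mathematical content beyond the heuristic level, and had you submitted it as a proof it would be wrong to accept it.

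Two further points of comparison with what the paper actually does. First, the paper's partial results (Theorem~\ref{MainResultFixU}, Theorem~\ref{MainResult}) do not proceed via $L^p$-flattening at all: they fix the $O(d)$-part, assume finite splitting rate $S_{\mu}$ together with well-mixing and non-degeneracy, and require the much stronger inequality $h_{\mu}/|\chi_{\mu}| > C(\max\{1,\log(S_{\mu}/h_{\mu})\})^2$; the smoothing is then obtained through order-$k$ detail, a Berry--Esseen comparison, and the variance summation method for stopped random walks, not through an $L^p$ bootstrap. Second, your working definition of detail ($H_r(\nu)-H_{er}(\nu)$ with uniform smoothing) is the entropy-between-scales quantity of Varjú, which the paper deliberately replaces by the Gaussian-derivative detail $s_r(\nu)$ precisely because the latter admits the strong product bound of Corollary~\ref{StrongProduct}; conflating the two hides where the real work lies. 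So: the conjecture remains open, your proposal correctly diagnoses why, but it neither proves the statement nor matches the route by which the paper obtains its weaker substitutes.
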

    
    The latter conjecture is completely open and not known for any class of self-similar measures. Our main result establishes a weakening of the conjecture. Indeed, we show Conjecture~\ref{MainConjecture} with the $d$ being replaced by a large constant depending on the given rotations multiplied by a term involving the logarithmic separation rate $\log S_{\mu}$.
    

    More precisely, our result is uniform in perturbations of the orthogonal matrices. To capture this, denote for every $k \geq 2$ by $$\mathcal{I}_{k} = \{ (U_1, \ldots , U_{k}) \in O(d)^{k} \,:\, \text{the group generated by } U_1,\ldots, U_{k} \text{ is irreducible}  \}.$$ As shown in Lemma~\ref{I_kOpen}, the set $\mathcal{I}_k$ is an open subset of $O(d)^{k}$. We now state our main result, for which we are allowed to choose the rotations of $\mu$ from a given compact subset of $\mathcal{I}_k$.

    \begin{theorem}\label{MainResultFixU}
        Let $d \geq 1$, $k \geq 2$ and $\eps \in (0,1)$. Given a compact subset $\mathcal{I} \subset \mathcal{I}_{k}$ there exists a constant $C\geq 1$ depending on $d,\eps$ and $\mathcal{I}$ such that the following holds. Let $\mu = \sum_{i = 1}^k p_i\delta_{g_i}$ be a contracting probability measure on $G$ without a common fixed point satisfying $(U(g_1), \ldots, U(g_k)) \in \mathcal{I}$ and $p_i \geq \eps$ for all $1 \leq i \leq k$ as well as $S_{\mu}<\infty$. Then the self-similar measure $\nu$ is absolutely continuous if 
        \begin{equation}\label{MainCondition}
            \frac{h_{\mu}}{|\chi_{\mu}|} > C \left( \max\left\{ 1, \log \frac{S_{\mu}}{h_{\mu}} \right\} \right)^2.
        \end{equation}
    \end{theorem}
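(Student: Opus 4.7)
The overall plan is to follow and extend the entropy-gap framework of Varj\'u and Lindenstrauss--Varj\'u. For a probability measure $\eta$ on $\R^d$ and a scale $r>0$, one defines a \emph{detail} functional $s_r(\eta) = H(\eta * \xi_r) - H(\eta * \xi_{r/2})$ where $\xi_r$ is a smoothing kernel of width $r$; this is bounded above by $d\log 2$, and absolute continuity of $\nu$ would follow from showing that $s_r(\nu)$ is close to $d\log 2$ for all sufficiently small $r$. So the goal reduces to boosting the detail of $\nu$ at every sufficiently small scale up to near-maximum.

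The engine is an entropy-gap theorem of the type established in \cite{KittleKoglerEntropy}, which guarantees that convolving a measure on $\R^d$ with a measure $\lambda$ on $G$ of positive random-walk entropy strictly increases the detail at a matching scale by an amount proportional to $h_\lambda$, unless the detail is already nearly maximal. I would apply this with $\lambda = \mu^{*n}$ and exploit the stationarity identity $\nu = \mu^{*n} * \nu$: the detail of $\nu$ at scale $r$ is then controlled by its detail at scale $r\cdot e^{|\chi_\mu| n}$ plus a gain on the order of $n h_\mu$ (modulo a logarithmic overhead). Iterating this boost, the number of iterations needed to saturate $d\log 2$ is of order $d |\chi_\mu| / h_\mu$ times the overhead, and the condition \eqref{MainCondition} provides exactly this room.

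The key role of the hypothesis $U(\mu)=\mu_U$ with $\mu_U$ irreducible is to quantify the entropy-gap constant uniformly in $\mu$ in terms of $\mu_U$ alone. Irreducibility of $\mu_U$, via either a spectral-gap-type statement on $O(d)$ or a quantitative equidistribution on the sphere, ensures the detail gain is non-degenerate in every direction with a constant depending only on $d$, $\eps$, and $\mu_U$, independent of the translation and contraction data of $\mu$. The hypothesis $p_i \geq \eps$ keeps $h_{\mu^{*n}}$ comparable to $n h_\mu$ on a large-mass subset of $n$-step words, while $S_\mu < \infty$ ensures distinct $n$-step words are quantitatively distinguishable at scale $e^{-S_\mu n}$, which is precisely what is needed so that the entropy-gap theorem applies with only a logarithmic overhead in $\log(S_\mu / h_\mu)$.

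The main obstacle is the inhomogeneity of the contractions: distinct $n$-step compositions $g_{i_1}\cdots g_{i_n}$ contract by different amounts and so act at many scales simultaneously, whereas the entropy-gap theorem wants a well-defined characteristic scale. The plan is to run a stopping-time construction, truncating each word as soon as its accumulated contraction crosses a prescribed threshold $r$, grouping words by their realised contraction, and then applying the entropy-gap statement scale by scale. Controlling cross-terms between different contraction classes, and verifying that the stopped measure still carries random-walk entropy comparable to $h_\mu$ per unit of log-contraction, is the principal technical difficulty; it is here that the quadratic loss $\bigl(\log(S_\mu/h_\mu)\bigr)^2$ in \eqref{MainCondition} is expected to arise, with one factor of $\log$ coming from the stopping-time overhead and a second from the inherent overhead of the entropy-gap lemma.
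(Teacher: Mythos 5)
Your proposal follows the Varj\'u/Lindenstrauss--Varj\'u entropy-increase route, and there is a genuine gap at its core: the lemma you lean on --- that convolving a measure on $\R^d$ with a measure $\lambda$ on $G$ of positive random-walk entropy increases the detail at a matching scale by an amount proportional to $h_\lambda$ unless the detail is already nearly maximal --- does not exist in that form, and it is not what \cite{KittleKoglerEntropy} provides (that paper gives entropy and trace estimates for random walks on the group $G$ itself, not a smoothing-increase statement on $\R^d$). Producing such a statement in the inhomogeneous, $d$-dimensional setting would amount to a quantitative inverse/flattening theorem, and the losses inherent in that machinery are exactly what caps Varj\'u's method at a condition of the type $\lambda > 1 - C^{-1}(\log M_\lambda)^{-1-\eps}$. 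The stationarity $\nu=\mu^{*n}*\nu$ is a convolution on $G$, not on $\R^d$, and your outline never supplies the mechanism that replaces it: the paper stops the walk at $\tau_\kappa$, linearizes $q_\tau x \approx g_1\cdots g_n x + \sum_i\zeta_i(U_i)$ via a Taylor bound with no second-order terms, converts the entropy gap of the stopped walk on $G$ into conditional variance (``trace'') of the increments at many scales, uses well-mixing and non-degeneracy to make that variance non-degenerate in every direction, and then applies a Berry--Esseen comparison blockwise together with strong product bounds for order-$k$ detail, with $k\asymp\log\log r^{-1}$, to force $s_r(\nu)\leq(\log r^{-1})^{-2}$. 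None of these ingredients (linearization, variance summation across $\asymp\log\log r^{-1}$ disjoint scale windows, order-$k$ detail) appears in your plan, and they are precisely what produces the doubly-logarithmic condition $\bigl(\log(S_\mu/h_\mu)\bigr)^2$; an iteration of an entropy-gap lemma of the kind you describe would, even if the cross-scale bookkeeping worked, be expected to yield a condition polynomial in $S_\mu/h_\mu$ rather than in its logarithm, i.e.\ a strictly weaker theorem than the one claimed.

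Two further inaccuracies: absolute continuity does not follow from the detail (or entropy between scales) merely being close to maximal at all small scales --- one needs a quantitative rate, here $s_r(\nu)<(\log r^{-1})^{-\beta}$ with $\beta>1$; and uniform non-degeneracy is not an automatic consequence of irreducibility of $\mu_U$ ``independent of the translation and contraction data''. In the paper it is obtained only after conjugating $\mu$ by a suitable $h\in G$ and proving a normal-approximation statement for the conjugated self-similar measure (Proposition~\ref{UnifNonDeg} via Proposition~\ref{prop:homo_close_to_normal}), and the reduction also uses that under \eqref{MainCondition} the contraction ratios are automatically close to $1$ (Remark~\ref{rhoRemark}); this step needs an argument, not just equidistribution on the sphere.
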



    \begin{remark}\label{rhoRemark}
    We observe that by the assumptions on $\mu$ in Theorem~\ref{MainResultFixU} it follows that $h_{\mu}\leq \log k \leq \log \eps^{-1}$ and therefore $|\chi_{\mu}| < C^{-1}\log \eps^{-1}$. Thus, since $\mu$ is contracting and $p_i \geq \eps$ for all $1\leq i \leq k$, we can deduce that $\rho(g_i) \in (\tilde{\rho},1)$ for all $1 \leq i \leq k$ and $\tilde{\rho} \in (0,1)$ a constant sufficiently close to $1$ depending on $C$ and $\eps$.
    \end{remark}

    Theorem~\ref{MainResultFixU} is a special case of the more general Theorem~\ref{MainResult}, which requires a few new definitions we state in Section~\ref{subsection:MainResult}. When $d = 1$ we note that every probability measure on $O(1) = \{\pm 1 \}$ is irreducible and therefore we can set $\mathcal{I} = O(1)^k$.
    
    While Theorem~\ref{MainResultFixU} applies in the case when the $L^2(O(d))$-spectral gap of the measure $\sum_{i = 1}^k p_i\delta_{U(g_i)}$ is zero, the dependence of $C$ can be made more uniform in the presence of a spectral gap. To introduce notation, given a closed subgroup $H \subset G$ and assuming that $\mu_U$ is a probability measure on $O(d)$ with $\mathrm{supp}(\mu_U) \subset H$, we denote by $\mathrm{gap}_H(\mu_U)$ the $L^2$-spectral gap of $\mu_U$ in $H$ as defined in \eqref{SpectralGapDef}. Given a measure $\mu$ on $G$ we denote by $U(\mu)$ the pushforward of $\mu$ under the map $g \mapsto U(g)$.

    \begin{theorem}\label{SpectralGapTheorem}
        Let $d\geq 1$ and $\eps \in (0,1)$. Then there exists a constant $C \geq 1$ depending only on $d$ and $\eps$ such that the following holds. Let $\mu = \sum_{i = 1}^k p_i\delta_{g_i}$ be a contracting probability measure on $G$ without a common fixed point satisfying $p_i \geq \eps$ for all $1 \leq i \leq k$ as well as $S_{\mu}<\infty$. Assume further that $\mathrm{gap}_H(U(\mu)) \geq \eps > 0$ for $H$ the closure of the subgroup generated by the support of $U(\mu)$. Then the conclusion of Theorem~\ref{MainResultFixU} holds. 
    \end{theorem}
    
    We point out that in Theorem~\ref{SpectralGapTheorem} the constant $C$ is independent of the subgroup $H$ and the statement applies when $H$ is a finite irreducible subgroup of $O(d)$ as well as when $H$ is a positive dimensional irreducible Lie subgroup of $O(d)$. As is shown in section~\ref{section:mixnondeg}, this observation relies on uniform convergence of $\mu_U^{*n}$ towards the Haar probability measure $\Haarof{H}$ and on Schur's lemma implying that $\E_{h \sim \Haarof{H}}[|x \cdot hy|^2] = d^{-1}$ for any unit vectors $x,y \in \R^d$ and any irreducible subgroup $H\subset O(d)$.

    To construct explicit examples of absolutely continuous self-similar measures on $\R^d$, Theorem~\ref{MainResultFixU} requires to estimate $h_{\mu}, |\chi_{\mu}|$ and $S_{\mu}$. It is straightforward to deal with $|\chi_{\mu}|$ as it can be explicitly computed. Lower bounds on the random walk entropy follow in many cases (see Section~\ref{subsection:Entropy}) by the ping-pong lemma or by Breuillard's strong Tits alternative \cite{Breuillard2008}. It also holds that $h_{U(\mu)} \leq h_{\mu}$, so when $h_{U(\mu)} > 0$, we only need to control $|\chi_{\mu}|$ and $S_{\mu}$. With current methods we can usually only bound $S_{\mu}$ if all of the coefficients of the elements in the support of $\mu$ are algebraic. In the latter case, as shown in Section~\ref{HeightSeparationSection}, when all of the coefficients of elements in the support of $\mu$ lie in a number field $K$ and have logarithmic height at most $L$ (see \eqref{def:logheight}), then $S_{\mu} \ll_d L\cdot[K:\Q]$. We observe that $\log S_{\mu}$ is usually very small as it is double logarithmic in the arithmetic complexity of the coefficients. All this information makes it straightforward to find explicit examples of absolutely continuous self-similar measures. The constants $C$ in Theorem~\ref{MainResultFixU} can be computed from the involved terms, yet we do not make the dependence explicit in this work.

    The proof of Theorem~\ref{MainResultFixU} and Theorem~\ref{MainResult} builds on new techniques initiated by the first-named author in \cite{Kittle2023} and further developed in this paper, while being inspired by ideas from \cite{Hochman2014}, \cite{Hochman2017}, \cite{Varju2019} and \cite{Kittle2021}. We give an outline of our proof in Section~\ref{section:Outline} and note that the main novelties exploited are strong product bounds for detail at scale $r$ (a notion introduced in \cite{Kittle2021}) and a decomposition theory for stopped random walks to capture the amount of variance we can gain at a given scale, a technique we call the variance summation method. \cite{Kittle2023} is concerned with constructing absolutely continuous Furstenberg measures of $\mathrm{SL}_2(\R)$ on 1-dimensional projective space $\mathbb{P}^1(\R) = \R^2/\sim$ and an analogue of Theorem~\ref{MainResult} is shown. However, we currently can't deduce a result similar to Theorem~\ref{MainResultFixU} for Furstenberg measures of $\mathrm{SL}_2(\R)$ as the dynamics of the $\mathrm{SL}_2(\R)$ action on $\mathbb{P}^1(\R)$ are more difficult to control than the one of the $\mathrm{Sim}(\R^d)$ action on $\R^d$. Indeed, we exploit that one can rescale and translate self-similar measures without changing the Lyapunov exponent, the separation rate, the random walk entropy or the spectral gap of the generating measure. Moreover, an analogue of Theorem~\ref{MainResult} as well as Theorem~\ref{GeneralHochman} for Furstenberg measures of arbitrary dimensions is presently out of reach since the current methods cannot deal with non-conformal measures.

    To also treat contracting on average measures, we state the following version of Theorem~\ref{MainResultFixU}. We require some control on the scaling rate of the expanding similarities.

    \begin{theorem}\label{MainResultContAvFixU}
        Let $d\geq 2$, $k\geq 2$, $\eps \in (0,1)$ and $\mathcal{I} \subset \mathcal{I}_k$ be a compact subset and let $R > 1$. Let $\mu = \sum_{i = 1}^k p_i\delta_{g_i}$ be a contracting on average probability measure on $G$ without a common fixed point satisfying $(U(g_1), \ldots , U(g_k)) \in \mathcal{I}$ and $p_i \geq \eps$ as well as $\rho(g_i) \in [R^{-1},R]$ for all $1\leq i \leq k$. Then there is some $\tilde{\rho} \in (0,1)$ and $C > 1$ depending on $d,R, \eps$ and $\mathcal{I}$ such that the conclusion of Theorem~\ref{MainResultFixU} holds provided that for some $\hat{\rho} \in (\tilde{\rho},1)$ we have $$\frac{\E_{\gamma\sim \mu}[|\hat{\rho} - \rho(\gamma)|]}{1 - \E_{\gamma\sim\mu}[\rho(\gamma)]} < 1 - \varepsilon.$$ 
    \end{theorem}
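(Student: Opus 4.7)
The plan is to reduce Theorem~\ref{MainResultContAvFixU} to the contracting case handled by Theorem~\ref{MainResultFixU} (applied via its generalisation Theorem~\ref{MainResult}) by passing to an induced random walk. Let $(\gamma_i)_{i\geq 1}$ be i.i.d.\ with law $\mu$, and define the stopping time
\[
\tau = \inf\left\{ n \geq 1 \,:\, \rho(\gamma_1 \gamma_2 \cdots \gamma_n) \leq \hat{\rho} \right\},
\]
letting $\mu'$ be the law of $\gamma_1 \gamma_2 \cdots \gamma_\tau$. By construction every $g \in \supp{\mu'}$ satisfies $\rho(g) \in [R^{-1}\hat{\rho}, \hat{\rho}] \subset (0,1)$, so $\mu'$ is contracting, and iterating the stationarity $\mu * \nu = \nu$ together with Doob's optional stopping yields $\mu' * \nu = \nu$, identifying $\nu$ as the self-similar measure of $\mu'$ as well.

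The key role of the hypothesis is to give exponential tails for $\tau$. Positivity of the denominator $1 - \E_{\gamma\sim\mu}[\rho(\gamma)]$ forces $\E[\rho] < 1$, so $\E[\prod_{i=1}^n \rho(\gamma_i)] = \E[\rho]^n$ and Markov's inequality already gives $\Prob(\tau > n) \leq \hat{\rho}^{-1} \E[\rho]^n$. The sharper quantitative bound $\E[|\hat{\rho} - \rho|] < (1-\eps)\E[1 - \rho]$ provides the additional control needed so that, after truncating $\tau$ at a large deterministic level, the atoms of the derived measure have probabilities bounded below in terms of $\eps, R$ and $\mu_U$ only. The condition $\hat{\rho} \in (\tilde{\rho}, 1)$ with $\tilde{\rho}$ close to $1$ plays for $\mu'$ the role that Remark~\ref{rhoRemark} plays for $\mu$, ensuring $\rho(g) \in (\tilde{\rho}', 1)$ for $g \in \supp{\mu'}$ after lumping.

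It then remains to transfer the hypotheses of Theorem~\ref{MainResultFixU} to $\mu'$, with $\mu_U$ replaced by the induced $U(\mu')$. The rotation marginal $U(\mu')$ is supported on the closed subgroup generated by $\supp{\mu_U}$, hence irreducible, and inherits any $L^2$ spectral gap in the setting of Theorem~\ref{SpectralGapTheorem}. Wald's identity applied to $\log\rho$ and its entropic analogue for stopped random walks give
\[
\chi_{\mu'} = \E[\tau]\, \chi_\mu, \qquad h_{\mu'} = \E[\tau]\, h_\mu,
\]
so that the dimension-theoretic ratio $h_{\mu'}/|\chi_{\mu'}| = h_\mu / |\chi_\mu|$ is preserved exactly. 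A splitting-rate estimate $S_{\mu'} \leq C_2 S_\mu$ follows from the inclusion $\supp{(\mu')^{*n}} \subset \bigcup_{m \leq C_2 n} \supp{\mu^{*m}}$, valid up to exponentially small probability thanks to the tail bound on $\tau$. Thus condition \eqref{MainCondition} transfers from $\mu$ to $\mu'$, and Theorem~\ref{MainResultFixU} applied to $\mu'$ yields absolute continuity of $\nu$.

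The hardest technical point is that $\mu'$ is generically infinitely supported, while Theorem~\ref{MainResultFixU} requires finite support with atoms of probability at least $\eps$. I would handle this by truncating $\tau$ at a large level $N = N(\eps, R, \hat{\rho})$, replacing the tail event $\{\tau > N\}$ by a carefully chosen contracting similarity so that $\mu'_N * \nu = \nu$ remains valid, and then invoking either the more general Theorem~\ref{MainResult} directly or a limit argument as $N \to \infty$. Ensuring that the atom probabilities of $\mu'_N$ can be bounded below uniformly in $N$, so that the resulting constant depends only on $d, R, \eps$ and $\mu_U$, is precisely where the full strength of the hypothesis together with $\hat{\rho} \in (\tilde{\rho}, 1)$ is needed.
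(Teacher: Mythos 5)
There is a genuine gap. Your reduction hinges on applying Theorem~\ref{MainResultFixU} to the induced measure $\mu'$, the law of $\gamma_1\cdots\gamma_\tau$, but $\mu'$ cannot satisfy that theorem's hypotheses: it is infinitely supported, its atoms are words $g_{i_1}\cdots g_{i_n}$ whose probabilities decay exponentially in the word length, so there is no uniform lower bound $p_i\geq \eps'$ (your claim that the hypothesis on $\hat{\rho}$ yields such a bound ``in terms of $\eps, R$ and $\mu_U$ only'' is false), and its rotation marginal $U(\mu')$ is no longer the fixed measure $\mu_U$, so the constant produced by Theorem~\ref{MainResultFixU} would depend on the full data of $\mu$ rather than on $(d,R,\eps,\mu_U)$ as the statement requires. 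The truncation you sketch does not repair this: replacing the tail event $\{\tau>N\}$ by ``a carefully chosen contracting similarity'' while keeping $\mu'_N*\nu=\nu$ is not justified, the small atoms near length $N$ still have exponentially small mass, and since $\mathrm{supp}((\mu')^{*n})$ contains products of unboundedly long $\mu$-words, even the claimed bound $S_{\mu'}\leq C_2 S_{\mu}$ needs an argument (separation is a property of the support, not of high-probability events). In effect the step you flag as the ``hardest technical point'' is the entire content of the theorem, and it is left unresolved. The entropic Wald identity $h_{\mu'}=\E[\tau]h_{\mu}$ is also asserted without proof, though that is a secondary issue.

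The paper's route is different and avoids all of this: Theorem~\ref{MainResult} applies \emph{directly} to contracting-on-average measures, so no induction to a contracting walk is needed. The hypothesis $\E_{\gamma\sim\mu}[|\hat{\rho}-\rho(\gamma)|]/(1-\E_{\gamma\sim\mu}[\rho(\gamma)])<1-\eps$ is not there to produce exponential tails for a stopping time; it is exactly the input to Proposition~\ref{UnifNonDegContAv} (via Proposition~\ref{prop:non_degen}): one compares $\nu$ with the self-similar measure of the homogeneous system with uniform contraction $\hat{\rho}$ and the same rotations and translations, the displayed ratio controlling the projected Wasserstein distance $\mathcal{PW}_1$ between the two (Lemma~\ref{lemm:inhom_close_to_normal}); since the homogeneous measure is, after conjugation, close to a spherical Gaussian (Proposition~\ref{prop:homo_close_to_normal}), one gets uniform $(\alpha_0,\theta,A)$-non-degeneracy, while $(c,T)$-well-mixing comes from the fixed marginal $U(\mu)=\mu_U$ (Lemma~\ref{Ufixeduniformmixing}). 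Lemma~\ref{ConjugationLemma} transfers $h_{\mu}$, $\chi_{\mu}$, $S_{\mu}$ and absolute continuity between $\mu$ and the conjugated measure $\mu_h'$, and Theorem~\ref{MainResult} then concludes. If you want to salvage your approach you would have to prove a version of the main theorem for infinitely supported induced measures with no atom lower bound, which is not available in the paper.
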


    In the presence of a spectral gap, the analogue of Theorem~\ref{SpectralGapTheorem} also holds for Theorem~\ref{MainResultContAvFixU}. Using Theorem~\ref{MainResultFixU}, Theorem~\ref{MainResultContAvFixU} and Theorem~\ref{MainResult} one can construct a versatile collection of  explicit absolutely continuous self-similar measures. We give a few cases below and encourage the reader to find further examples.  Indeed, as shown in Corollary~\ref{QuiteGeneralExample} and Corollary~\ref{QuiteGeneralContAvExample}, for any given irreducible probability measure $\mu_U$ on $O(d)$ supported on matrices with algebraic entries and algebraic vectors $b_1, \ldots , b_k$ with $b_1 \neq b_2$, we can find explicit contracting as well as contracting only on average measures $\mu = \sum_{i = 1}^k p_i\delta_{g_i}$ on $G$ with $U(\mu) = \mu_U$ and $b(g_i) = b_i$ for $1 \leq i \leq k$ and having absolutely continuous self-similar measure. 

    \subsubsection*{\normalfont\textbf{Inhomogeneous Self-Similar Measures in Dimension $1$}}

    As a first example, we present results for self-similar measures supported on two similarities in dimension one. Upon conjugating, we can assume without loss of generality that our generating measure is supported on $x \mapsto \lambda_1x$ and $x \mapsto \lambda_2 x + 1$ for $\lambda_1, \lambda_2 \in (0,1)$.

    We recall the definition of the height of algebraic numbers, which measures the arithmetic complexity. For a number field $K$ and an algebraic number $\alpha \in K$ one defines the absolute height as  \begin{equation}\label{def:height}
        \mathcal{H}(\alpha) = \left(\prod_{v \in M_K} \max(1, |\alpha|_v)^{n_v} \right)^{1/[K:\Q]}
    \end{equation}
    where $M_K$ is the set of places of $K$, $n_v = [K_v : \mathbb{Q}_v]$ is the local degree at $v$ and $|\cdot|_v$ is the absolute value associated with the place $v$. We refer to \cite{MasserPolynomialsBook} for basic properties of heights and note that the height of $\alpha$ is independent of the number field $K$. We will also work with the logarithmic height 
    \begin{equation}\label{def:logheight}
        h(\alpha) = \log \mathcal{H}(\alpha).
    \end{equation}

    \begin{corollary}\label{Dim1Imhomogen}
        For every $\eps > 0$ there exists a small constant $c = c(\eps) > 0$ such that the following holds. Let $K$ be a number field and $\lambda_1,\lambda_2 \in K \cap (0,1)$ and write $h(\lambda_1,\lambda_2) = \max\{ h(\lambda_1), h(\lambda_2) \}$. Consider the similarities given for $x \in \R$ as 
        \begin{equation}\label{EasySimilarities}
            g_1(x) = \lambda_1 x \quad\quad \text{ and } \quad\quad g_2(x) = \lambda_2 x + 1.
        \end{equation}
        Then the self-similar measure of $\mu = \frac{1}{2}\delta_{g_1} + \frac{1}{2}\delta_{g_2}$ is absolutely continuous if $$ h(\lambda_1,\lambda_2) \geq \eps \quad\quad \text{ and } \quad\quad  |\chi_{\mu}| \max\{ 1, \log ([K:\Q] h(\lambda_1,\lambda_2) ) \} ^2 < c.$$
    \end{corollary}

     Corollary~\ref{Dim1Imhomogen} can be viewed as an inhomogeneous version of our strengthening of Varjú's result for Bernoulli convolutions (Corolarry~\ref{BernoulliCorollary}), yet with an additional dependence on the number field $K$ and on the lower bound of $\max\{ h(\lambda_1), h(\lambda_2) \}$. We further note that Lehmer's conjecture states the existence of an absolute $\eps_0 > 0$ such that $h(\lambda) \geq \eps_0/[K:\Q]$ for all $\lambda \in K$ and any number field $K$. Concretely, generalising the example discussed in \eqref{FirstInhomExample}, we can make the following conclusion.
    
    \begin{corollary}\label{Dim1ImhomogenEasy}
        There exists an absolute constant $c > 0$ such that the following holds. Let $\lambda_i = 1 - p_i/q_i \in (0,1)$ be rational for $i \in \{1,2 \}$ with coprime integers $p_i, q_i \geq 1$ and let $g_1$ and $g_2$ be the similarities from \eqref{EasySimilarities}. Then the self-similar measure of $\frac{1}{2}\delta_{g_1} + \frac{1}{2}\delta_{g_2}$ is absolutely continuous if for $i \in \{ 1,2 \}$, $$\frac{p_i}{q_i}\leq \frac{c}{(\log \log \max\{q_1, q_2\})^2}.$$ 
    \end{corollary}

    It is straightforward to adapt Corollary~\ref{Dim1Imhomogen} and Corollary~\ref{Dim1ImhomogenEasy} to multiple maps and also to include contracting on average measures. In the next section we discuss such examples in arbitrary dimensions and just mention the following simple example here. 

    \begin{corollary}\label{Dim1ContAverageEasy}
        For $n\geq 1$ consider the similarities given for $x\in \R$ as $$g_1(x) = \frac{n + 1}{n} x \quad\quad \text{ and } \quad\quad g_2(x) = \frac{n}{n+ 1} x + 1.$$ Then the self-similar measure of $\frac{1}{5}\delta_{g_1} + \frac{4}{5}\delta_{g_2}$ is absolutely continuous for sufficiently large $n$. 
    \end{corollary}
    
    \subsubsection*{\normalfont\textbf{Self-similar measures on $\R^d$}}

    With Theorem~\ref{MainResultFixU} and Theorem~\ref{MainResultContAvFixU} numerous explicit classes of absolutely continuous self-similar measures in $\R^d$ can be constructed. In order to apply these results we need to estimate $h_{\mu}$. In the following examples we have used the ping-pong lemma (see section~\ref{section:examples}) in two ways in order to establish lower bounds on $h_{\mu}$. For the first class of examples we have applied $p$-adic ping-pong as in Lemma~\ref{lemm:p_adic_ping_pong}.
            
    \begin{corollary}\label{QuiteGeneralExample}
    Let $d \geq 1$, $\eps \in (0,1)$ and $\mu_U = \sum_{i = 1}^k p_i\delta_{U_i}$ be an irreducible probability measure on $O(d)$ with $p_i \geq \eps$ and let $b_1, \ldots , b_k \in \R^d$ with $b_1 \neq b_2$. Assume that $U_1, \ldots , U_k$ and $b_1, \ldots , b_k$ have algebraic coefficients. Let $q$ be a prime number and for $1 \leq i \leq k$ consider $$g_i(x) = \frac{q}{q+a_{i,q}}U_ix + b_i \quad \quad \text{ for any integer } \quad \quad a_{i,q} \in [1, q^{1-\eps}].$$ Assume that $g_1, \ldots , g_k$ do not have a common fixed point and consider $\mu = \sum_{i = 1}^k p_i\delta_{g_i}$. Then the self-similar measure of $\mu$ is absolutely continuous for $q$ a sufficiently large prime depending on $d,\eps,U_1, \ldots , U_k$ and $b_1, \ldots , b_k$. 
    \end{corollary}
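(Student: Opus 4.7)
The plan is to apply Theorem~\ref{MainResultFixU} directly to $\mu = \sum_{i=1}^k p_i \delta_{g_i}$, so the task reduces to verifying its hypotheses and showing that the inequality \eqref{MainCondition} holds once $q$ is taken large enough. The assumptions $U(\mu) = \mu_U$ irreducible, $p_i \geq \eps$, and no common fixed point are immediate from the set-up, and each $g_i$ is contracting since $\rho(g_i) = q/(q + a_{i,q}) < 1$. Since all coefficients of $U_i$, $b_i$ and $\rho(g_i)$ are algebraic, they lie in a common number field $K$ of degree $D$ depending only on $U_1,\ldots,U_k,b_1,\ldots,b_k$, so $S_\mu<\infty$ is automatic. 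What remains is to estimate the three quantities appearing in \eqref{MainCondition}.

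For the Lyapunov exponent, since $a_{i,q} \leq q^{1-\eps}$ we have
\[
|\chi_\mu| \;=\; \sum_{i=1}^k p_i \log\!\left(1 + \frac{a_{i,q}}{q}\right) \;\leq\; \log\!\left(1 + q^{-\eps}\right) \;\ll\; q^{-\eps}.
\]
For the separation rate, I will use the discussion in Section~\ref{HeightSeparationSection} that $S_\mu \ll_d L\cdot [K:\Q]$ where $L$ bounds the logarithmic heights of the coefficients of the $g_i$. The entries of $U_i$ and $b_i$ contribute a constant depending on the data, while $h(q/(q+a_{i,q})) \ll \log q$, so we obtain $S_\mu \ll_{d,U_\bullet,b_\bullet} \log q$ and hence $\log S_\mu \ll \log \log q$ for all $q$ large.

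The main obstacle is to produce a lower bound $h_\mu \geq c(\eps) > 0$ that is uniform in $q$. I will invoke the $q$-adic ping-pong lemma (Lemma~\ref{lemm:p_adic_ping_pong}) applied to $g_1,\ldots,g_k$ acting on (an embedding of) $K^d$ into $\mathbb{Q}_q^d$. For all but finitely many primes $q$, each $b_i$ lies in the $q$-adic unit ball and the reductions $\bar b_i \bmod q$ are pairwise distinct whenever $b_i\neq b_j$ (this uses $b_1\neq b_2$; one splits into indices with $b_i = b_1$ and with $b_i\neq b_1$, and then iterates on the first class, or more directly uses that the $U_i$ are $q$-adic units so one can separate all indices). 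Since $a_{i,q}$ is coprime to $q$, the scaling factor $q/(q+a_{i,q})$ has $q$-adic absolute value $q^{-1}$, so after applying any word of length $n$ in the $g_i$ the translation part is determined modulo $q$ by its leftmost letter. This produces a standard ping-pong partition of $\mathbb{Z}_q^d$ (after a harmless affine normalisation) showing that $g_1,\ldots,g_k$ generate a free semigroup for every sufficiently large prime $q$. Consequently $\mathrm{supp}(\mu)$ has no exact overlaps, and
\[
h_\mu \;=\; H(\mu) \;=\; -\sum_{i=1}^k p_i \log p_i \;\geq\; -\eps\log\eps - (1-\eps)\log(1-\eps) \;=:\; c(\eps) \;>\; 0.
\]

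Combining the three estimates, the left-hand side of \eqref{MainCondition} satisfies $h_\mu/|\chi_\mu| \gg c(\eps)\, q^{\eps}$, while the right-hand side is bounded by $C(\log\log q + O(1))^2$, with $C$ depending only on $d,\eps$ and $\mu_U$ as supplied by Theorem~\ref{MainResultFixU}. Since $q^\eps$ grows faster than any power of $\log\log q$, condition \eqref{MainCondition} is satisfied for all sufficiently large primes $q$, and Theorem~\ref{MainResultFixU} yields absolute continuity of the self-similar measure of $\mu$. The only delicate step is the uniform-in-$q$ freeness provided by the $q$-adic ping-pong, everything else is a routine height calculation.
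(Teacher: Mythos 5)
Your overall strategy is the paper's: verify the hypotheses of Theorem~\ref{MainResultFixU}, get $|\chi_\mu|\ll q^{-\eps}$ from $a_{i,q}\leq q^{1-\eps}$, get $S_\mu\ll_{d,K}\log q$ from the height bound of Section~\ref{HeightSeparationSection}, and supply the entropy lower bound via $q$-adic ping-pong. The Lyapunov and separation estimates are fine. The gap is in the entropy step. You claim that $g_1,\ldots,g_k$ generate a free semigroup for all large $q$, hence no exact overlaps and $h_\mu=H(\mu)\geq c(\eps)$. This is not implied by the hypotheses and is false in general: only $b_1\neq b_2$ is assumed, while the remaining $b_i$, the $U_i$, and even the maps $g_i$ themselves are allowed to coincide (the paper explicitly exploits repeated $U_i$ right after this corollary). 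If, say, $g_3=g_1$ (same $U$, same $b$, same $a_{\cdot,q}$), the $k$ generators certainly do not generate a free semigroup and there are exact overlaps, so $h_\mu<H(\mu)$ can occur. Moreover your ping-pong partition itself breaks down whenever $b_i=b_j$ with $i\neq j$: since every $\rho(g_i)\in q\Z_q$, the image of any point under $g_i$ is congruent to $b_i$ modulo $q$, so the candidate sets $(q\Z_q)^d+b_i$ and $(q\Z_q)^d+b_j$ coincide and cannot separate words beginning with $g_i$ from words beginning with $g_j$. Your parenthetical suggestion that one can "separate all indices" using that the $U_i$ are $q$-adic units does not work, because the rotation part is invisible modulo $q$ after multiplication by $\rho(g_i)$.

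The repair is exactly what the paper does and costs you only a constant: apply Lemma~\ref{lemm:p_adic_ping_pong} to the two maps $g_1$ and $g_2$ alone, using that for all but finitely many primes $q$ the entries of $U_1,U_2,b_1,b_2$ are $q$-adic integers, $\rho(g_1),\rho(g_2)\in q\Z_q$ (here $1\leq a_{i,q}<q$ guarantees $q\nmid(q+a_{i,q})$), and $b_1-b_2\notin(q\Z_q)^d$ since $b_1\neq b_2$. Thus $g_1,g_2$ generate a free semigroup, and Lemma~\ref{lemm:FreeEntBound} gives $h_\mu\gg\min\{\mu(g_1),\mu(g_2)\}\geq\eps$, which is a lower bound on the random walk entropy $h_\mu$ directly, with no claim about exact overlaps or about $H(\mu)$. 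With $h_\mu\gg\eps$, $S_\mu\ll\log q$ and $|\chi_\mu|\ll q^{-\eps}$, the comparison in \eqref{MainCondition} goes through for all sufficiently large primes $q$ exactly as in your final paragraph. (Also note that in the non-rational case one should pass to the number field $K$ generated by all coefficients and choose a prime ideal $\mathfrak{p}$ above $q$, as in Lemma~\ref{lemm:p_adic_ping_pong}; your sketch of an embedding into $\Q_q^d$ needs this adjustment.)
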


    We point out that any choice of integers $a_{i,q}$ works and that the necessary size of $q$ to derive absolute continuity does not depend on this choice, leading to a vast number of examples. Moreover, we can adapt Corollary~\ref{QuiteGeneralExample} to give contracting only on average examples. In order to satisfy the assumption from Theorem~\ref{MainResultContAvFixU}, we require that $\mu = \sum_{i = 1}^k p_i \delta_{g_i}$ satisfies that $p_k \leq \frac{1}{3}$. This nonetheless leads to absolutely continuous examples with $U(\mu) = \mu_U$ for any given irreducible probability measure $\mu_U = \sum_{i = 1}^k p_i\delta_{U_i}$ on $O(d)$ as we do not require that the $U_i$ are distinct.

    \begin{corollary}\label{QuiteGeneralContAvExample}
    Let $d, \eps$ and $\mu_U = \sum_{i = 1}^k p_i \delta_{U_i}$  as well as $b_1, \ldots , b_k$ be as in Proposition~\ref{QuiteGeneralExample}. Let $q$ be a prime number and  consider for $1 \leq i \leq k-1$ $$g_i(x) = \frac{q}{q+3}U_ix + b_i \quad \quad \text{ and } \quad \quad g_k(x) = \frac{q}{q-1}U_kx + b_k.$$ Assume that $g_1, \ldots , g_k$ do not have a common fixed point and further that $$p_k \leq \frac{1}{3}.$$ Then the self-similar measure of $\mu = \sum_{i = 1}^k p_i\delta_{g_i}$ is absolutely continuous for $q$ a sufficiently large prime depending on $d,\eps,U_1, \ldots , U_k$ and $b_1, \ldots , b_k$.
    \end{corollary}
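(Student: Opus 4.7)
The plan is to reduce the corollary to Theorem~\ref{MainResultContAvFixU} with $R = 2$, in parallel with the proof of Corollary~\ref{QuiteGeneralExample}. The hypotheses $U(\mu) = \mu_U$ and $p_i \geq \eps$ are inherited directly from the statement, and $\rho(g_i) \in \{q/(q+3),\, q/(q-1)\} \subset (1/2, 2)$ for all $q$ sufficiently large. An elementary expansion yields
$$1 - \E_{\gamma \sim \mu}[\rho(\gamma)] \;=\; (1-p_k)\frac{3}{q+3} - p_k\frac{1}{q-1} \;=\; \frac{3 - 4p_k}{q} + O(q^{-2}),$$
which is strictly positive (so that $\chi_\mu < 0$) precisely because $p_k \leq 1/3$.

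To verify the variance-type hypothesis of Theorem~\ref{MainResultContAvFixU} I would take $\hat\rho = q/(q+3)$, which lies in $(\tilde\rho, 1)$ for all $q$ sufficiently large. Since $|\hat\rho - \rho(g_i)| = 0$ for $i < k$ and $|\hat\rho - \rho(g_k)| = 4q/((q+3)(q-1))$, the ratio in that hypothesis satisfies
$$\frac{p_k \cdot 4q/((q+3)(q-1))}{(3-4p_k)/q + O(q^{-2})} \;\longrightarrow\; \frac{4p_k}{3-4p_k} \;\leq\; \frac{4}{5}$$
as $q \to \infty$. Applying Theorem~\ref{MainResultContAvFixU} with $\eps$ replaced by $\eps' := \min(\eps, 1/10)$ (legitimate since $p_i \geq \eps \geq \eps'$) then furnishes the required strict inequality with $1 - \eps'$ on the right for all sufficiently large primes $q$.

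It remains to verify~\eqref{MainCondition}. The Lyapunov exponent equals
$$|\chi_\mu| = (1-p_k)\log\frac{q+3}{q} - p_k\log\frac{q}{q-1} = \frac{3 - 4p_k}{q} + O(q^{-2}) \;\asymp\; q^{-1}.$$
For a lower bound on $h_\mu$ I would invoke the $q$-adic ping-pong Lemma~\ref{lemm:p_adic_ping_pong} exactly as in Corollary~\ref{QuiteGeneralExample}: each $g_i$ acts on $\Q_q^d$ as a contraction by factor $q^{-1}$, since $q - 1$ and $q + 3$ are units in $\Z_q$ for $q > 3$, and the algebraicity of the coefficients together with the separation $b_1 \neq b_2$ forces $g_1$ and $g_2$ to generate a free semigroup on a suitable $q$-adic ball for all large $q$, yielding $h_\mu \geq c(\eps) > 0$ uniformly in $q$. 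Finally, the height bound of Section~\ref{HeightSeparationSection} gives $S_\mu \ll_{d, U_i, b_i} \log q$ since the coefficients of $g_i$ lie in the fixed number field $K = \Q(U_1, \ldots, U_k, b_1, \ldots, b_k)$ and have logarithmic height $O(\log q)$. Then~\eqref{MainCondition} reduces to $q \gtrsim_{\eps, U_i, b_i} (\log\log q)^2$, which holds for all sufficiently large primes $q$.

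The step I anticipate as the most delicate is the uniform-in-$q$ lower bound on $h_\mu$: the rotations $U_i$ are not assumed distinct, the Euclidean contraction rates differ, and $g_k$ is expanding in the Euclidean metric. What saves the ping-pong argument is that the two distinct ratios $q/(q+3)$ and $q/(q-1)$ share the same $q$-adic valuation $q^{-1}$, so the $q$-adic dynamics are uniformly contracting and the separation $b_1 \neq b_2$ already suffices to distinguish $g_1$ from $g_2$ on a $q$-adic ball.
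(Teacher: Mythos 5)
Your proposal is correct and follows essentially the same route as the paper: it reduces to Theorem~\ref{MainResultContAvFixU} with $\hat\rho = q/(q+3)$, verifies the contraction-on-average ratio condition using $p_k \leq 1/3$ (the paper computes the same limit $4p_k/(3-4p_k) < 1$), obtains $h_\mu \gg \eps$ from the $q$-adic ping-pong of Lemma~\ref{lemm:p_adic_ping_pong} exactly as in Corollary~\ref{QuiteGeneralExample}, bounds $S_\mu \ll \log q$ via heights, and uses $|\chi_\mu| \asymp q^{-1}$ to satisfy \eqref{MainCondition} for large $q$. Your explicit handling of the parameter (replacing $\eps$ by $\eps' = \min(\eps,1/10)$ so the ratio bound $4/5 < 1-\eps'$ applies) is a slightly more careful rendering of the paper's remark that the limit is uniformly away from $1$.
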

    
    We give a second class of examples that rely on Galois ping-pong in as Lemma~\ref{lemm:p_adic_ping_pong}.

    \begin{corollary}\label{MostGeneralExample}
        Let $d \geq 1$, $\eps \in (0,1)$ and $\mu_U = \sum_{i = 1}^k p_i\delta_{U_i}$ an irreducible probability measure on $O(d)$ with $p_i \geq \eps$ for all $1\leq i \leq k$. Assume furthermore that $U_1, \ldots , U_k$ have algebraic entries. Let $C > 1$ be sufficiently large in terms of $d,\eps$ and $\mu_U$.  
        
        Suppose that $g_i(x) = \frac{a_i + b_i\sqrt{q}}{c_i}U_ix + d_i$ with $a_i, b_i, c_i \in \Z$ and $d_i \in \Z^d$ for $1 \leq i \leq k$ and a prime number $q$ are contracting and do not have a common fixed point. Then the self-similar measure associated to $\mu = \sum_{i = 1}^k p_i\delta_{g_i}$ is absolutely continuous if the following properties are satisfied: 
        \begin{enumerate}[label = (\roman*)]
            \item For $j = 1$ and for $j = 2$ we have $$\bigg|\frac{a_j - b_j\sqrt{q}}{c_j}\bigg| < \frac{1}{3},$$
            \item For $L = \max(\sqrt{q}, |a_i|, |b_i|, |c_i|, |d_i|_{\infty})$ we have $$C|\chi_{\mu}| \leq \frac{1}{(\log(\log L))^2}.$$
        \end{enumerate}
    \end{corollary}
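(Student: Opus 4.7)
The plan is to apply Theorem~\ref{MainResultFixU} to the measure $\mu$. The hypotheses on $\mu_U$, on $p_i \geq \eps$, on being contracting, and on having no common fixed point are all assumed, so what remains is to verify the quantitative condition \eqref{MainCondition}. For this I need a uniform lower bound on the random walk entropy $h_{\mu}$, an upper bound on the splitting rate $S_{\mu}$, and to then match these against hypothesis (ii) bounding $|\chi_{\mu}|$.

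For the entropy, let $\sigma$ denote the nontrivial Galois automorphism of $K := \Q(\sqrt{q})$, sending $\sqrt{q} \mapsto -\sqrt{q}$, and consider the Galois-conjugate similarities $g_j^{\sigma}(x) = \frac{a_j - b_j\sqrt{q}}{c_j} U_j x + d_j$. By hypothesis (i), $g_1^{\sigma}$ and $g_2^{\sigma}$ are strong contractions with ratio less than $1/3$. Applying the ping-pong lemma (Lemma~\ref{lemm:p_adic_ping_pong}) in the real embedding determined by $\sigma$ yields that $\{g_1^{\sigma}, g_2^{\sigma}\}$ generate a free sub-semigroup, since two strong contractions with distinct fixed points admit disjoint invariant balls. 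Because $\sigma$ extends to a group isomorphism of similarities with coefficients in $K$, it follows that $\{g_1, g_2\}$ generates a free sub-semigroup in $G$ as well. The entropy arguments indicated in Section~\ref{section:examples} then give $h_{\mu} \geq c(\eps) > 0$, a constant depending only on $\eps$.

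For the splitting rate, all coefficients of $g_1, \ldots, g_k$ lie in a number field $K'$ of degree at most $2\cdot [K_U:\Q]$ over $\Q$, where $K_U$ is the number field generated by the entries of $U_1, \ldots, U_k$. Since $K_U$ depends only on $\mu_U$, we have $[K':\Q] \ll_{\mu_U} 1$. Standard height inequalities imply $h(\rho(g_i)), h(U(g_i)), h(b(g_i)) \ll_{\mu_U} \log L$. By the discussion in Section~\ref{HeightSeparationSection}, this gives $S_{\mu} \ll_{d,\mu_U} \log L$, and combined with the entropy lower bound we obtain $\log(S_{\mu}/h_{\mu}) \leq \log\log L + O_{\eps,\mu_U}(1)$.

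Substituting these bounds into \eqref{MainCondition} reduces its requirement to $|\chi_{\mu}| < c'/(\log\log L)^2$ for a constant $c'$ depending only on $d, \eps, \mu_U$. This is exactly hypothesis (ii) once the constant $C$ is taken sufficiently large, so Theorem~\ref{MainResultFixU} applies and concludes absolute continuity of $\nu$. The main obstacle is the uniform Galois ping-pong step: one must confirm that the fixed points of $g_1^{\sigma}$ and $g_2^{\sigma}$ are separated on a scale large enough relative to their (small) contraction ratios, uniformly over all admissible data $(a_i, b_i, c_i, d_i, q)$; once this separation is established, the remainder is a routine height computation and a direct application of Theorem~\ref{MainResultFixU}.
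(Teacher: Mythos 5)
Your argument follows the paper's proof essentially verbatim: the Galois conjugation $\sqrt{q} \mapsto -\sqrt{q}$ turns $g_1,g_2$ into contractions of ratio $<\tfrac13$, ping-pong (the paper's Galois ping-pong Lemma~\ref{lemm:GaloisPingPong}, not the $p$-adic Lemma~\ref{lemm:p_adic_ping_pong} you cite) gives a free semigroup, Lemma~\ref{lemm:FreeEntBound} gives $h_{\mu} \gg \eps$, the height bounds of Section~\ref{HeightSeparationSection} give $S_{\mu} \ll_{d,\mu_U} \log L$, and Theorem~\ref{MainResultFixU} then concludes exactly as you say. The ``main obstacle'' you flag is not one: freeness requires no quantitative separation of the conjugate fixed points (distinctness suffices, and it follows from the no-common-fixed-point hypothesis via the field automorphism), and the entropy bound from Lemma~\ref{lemm:FreeEntBound} depends only on the weights $p_1,p_2 \geq \eps$, not on any metric separation.
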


    As a particular case of Corollary~\ref{MostGeneralExample}, we can consider as shown in Lemma~\ref{MostGeneralParticualr} the maps $$g_i(x) = \frac{\lceil \sqrt{q} \rceil - m_{i,q} + 2\sqrt{q}}{3\lceil \sqrt{q} \rceil}U_ix + d_i$$ for any $m_{i,q} \in \Z$ and $d_i \in \Z^d$ satisfying for some $\eps > 0$ that $$m_{i,q} \in [0,q^{1/2-\eps}] \quad\quad \text{ and } \quad\quad |d_i|_{\infty} \leq \exp(\exp(q^{\eps/3})).$$ Then the self-similar measure of $\mu = \sum_{i = 1}^n p_i\delta_{g_i}$ is absolutely continuous for sufficiently large primes $q$ depending on $d, \mu_U$ and $\eps$, provided that $g_1, \ldots , g_k$ do not have a common fixed point. We note that since we have a double exponential range for $d_i$, we get abundantly many examples. 

    \subsubsection*{\normalfont\textbf{Dimension $d\geq 3$}}

    The case $d\geq 3$ is discussed next. We recall that Lindenstrauss-Varjú \cite{LindenstraussVarju206} proved the following. Let $d\geq 3$, $\eps \in (0,1)$ and $\mu_U$ a finitely supported probability measure on $SO(d)$, whose support generates a dense subgroup of $SO(d)$ and with $\mathrm{gap}_{SO(d)}(\mu_U) \geq \eps$. Then there exists a constant $\widetilde{\rho}  \in (0,1)$ depending on $d$ and $\eps$ such that every finitely supported contracting probability measure $\mu = \sum_{i = 1}^k p_i \delta_{g_i}$ on $G$ with $U(\mu) = \mu_U$ and
    \begin{equation}\label{LindenstraussVarju}
    p_i \geq \eps \quad \text{ as well as } \quad \rho(g_i) \in (\tilde{\rho},1) \quad \quad  \text{ for all } \quad  1 \leq i \leq k
    \end{equation} has an absolutely continuous self-similar measure $\nu$. Moreover, \cite{LindenstraussVarju206} show that $\nu$ has a $C^k$-density if the constant $\widetilde{\rho}$ is in addition sufficiently close to $1$ in terms of $k$. By current methods (\cite{BourgainGamburd2008Invent}, \cite{BenoistDeSaxce2016}) spectral gap of $U(\mu)$ is only known when $\mathrm{supp}(U(\mu))$ generates a dense subgroup and all of the entries of elements in $\mathrm{supp}(U(\mu))$ are algebraic. 

    Instead of using the entropy and separation rate on $G$, we can use the same quantities on $O(d)$ and apply similar methods to Theorem~\ref{MainResultFixU} to establish absolute continuity of self-similar measures. Denote by $h_{U(\mu)}$ the random walk entropy of $U(\mu)$ as defined in \eqref{Def:RWEntropy} and by $S_{U(\mu)}$ the separation rate on $O(d)$ as in \eqref{Def:SepRate} yet with the metric $||A-B||$ for $A,B \in O(d)$ and $||\cdot||$ the operator norm. 
    
    We note that $h_{U(\mu)} \leq h_{\mu}$ yet we do not have in general that $S_{U(\mu)} \geq S_{\mu}$. In the case when $S_{U(\mu)} \geq S_{\mu}$, which for example holds when the support of $U(\mu)$ generates a free group, \eqref{LindenstraussVarju} follows from Theorem~\ref{MainResultFixU}. Moreover, our method can be adapted to work with $S_{U(\mu)}$ and $h_{U(\mu)}$ instead of $S_{\mu}$ and $h_{\mu}$ and we establish a generalisation of \eqref{LindenstraussVarju} (in the case when $\mathrm{supp}(\mu_U)$ consists of matrices with algebraic coefficients) that we state next. In the case that $U(g_1), \ldots , U(g_k)$ does not generate a virtually solvable subgroup, uniform bounds on $h_{U(\mu)}$ follow from the strong Tits alternative \cite{Breuillard2008}, which make our results particularly strong as stated in \eqref{MainTheoremHighDimUnifTits}.

    \begin{theorem}\label{MainTheoremHighDim}
        Let $d \geq 3$ and let $k, \eps, \mathcal{I}$ and $\mu$ be as in Theorem~\ref{MainResultFixU}. Then there exists a constant $C \geq 1$ depending on $d,\eps$ and $\mathcal{I}$ such that $\nu$ is absolutely continuous if 
        \begin{equation}\label{Eq:MainTheoremHighDim}
            \frac{h_{U(\mu)}}{|\chi_{\mu}|} \geq C\left( \max\left\{  1,\log \frac{S_{U(\mu)}}{h_{U(\mu)}} \right\} \right)^2.
        \end{equation}
    
        If in addition the group generated by $U(g_1), \ldots , U(g_k)$ is not virtually solvable, there exists a constant $c\leq 1$ depending on $d,\eps$ and $\mathcal{I}$ such that $\nu$ is absolutely continuous if
        \begin{equation}\label{MainTheoremHighDimUnifTits}
            |\chi_{\mu}| \leq \frac{c}{\max\{  1, \log S_{U(\mu)}  \}^2}.
        \end{equation}
    \end{theorem}

    The result \eqref{MainTheoremHighDimUnifTits} implies and generalises \eqref{LindenstraussVarju} (in the case when $\mathrm{supp}(\mu_U)$ consists of matrices with algebraic coefficients) as we do not require $\mathrm{supp}(\mu_U)$ to generate a dense subgroup of $O(d)$ or $SO(d)$. If, moreover, all of the coefficients of the matrices in $\mathrm{supp}(U(\mu))$ lie in the number field $K$ and have logarithmic height at most $L \geq 1$, then as shown in Section~\ref{HeightSeparationSection} it holds that $S_{U(\mu)} \ll_d L\cdot[K:\Q]$, which makes \eqref{MainTheoremHighDimUnifTits} particularly effective. We finally mention that it is straightforward to adapt Theorem~\ref{MainTheoremHighDim} to contracting on average measures, analogously to Theorem~\ref{MainResultContAvFixU}.

    As in \eqref{LindenstraussVarju}, to apply Theorem~\ref{MainTheoremHighDim}, we do not require that all the entries of elements in $\mathrm{supp}(\mu)$ are algebraic and only require the latter for $U(\mu)$, leading to parametric families of examples. The advantage of Theorem~\ref{MainTheoremHighDim} over \eqref{LindenstraussVarju} is that not only is our result somewhat uniform when the rotations are perturbed, yet it is also particularly effective when $U(\mu)$ has high entropy (for example when $\mathrm{supp}(U(\mu))$ generates a free semigroup).

    \subsubsection*{\normalfont\textbf{Real and Complex Bernoulli Convolutions}} We finally discuss Bernoulli convolutions. Although Theorem~\ref{MainResultFixU} applies to arbitrary self-similar measures, it gives new results for Bernoulli convolutions. Let $\lambda \in (1/2,1)$ and denote by $\nu_{\lambda}$ the unbiased Bernoulli convolution of parameter $\lambda$, i.e. the law of the random variable $\sum_{n = 0}^{\infty} \xi_n\lambda^n$ with $\xi_0,\xi_1, \ldots$ independent Bernoulli random variables with $\mathbb{P}[\xi_i = 1] = \mathbb{P}[\xi_i = -1] =  1/2$. It was shown by Solomyak \cite{Solomyak1995} that for almost all $\lambda \in (1/2,1)$ the Bernoulli convolution $\nu_{\lambda}$ has a density in $L^2(\R)$, while Erd\H{o}s \cite{Erdos1939} proved that $\nu_{\lambda}$ is singular whenever $\lambda^{-1}$ is a Pisot number.

    The Mahler measure of an algebraic number $\lambda$ is defined as $$M_{\lambda} = |a|\prod_{|z_j| > 1} |z_j|$$ with $a(x-z_1)\cdots(x-z_{\ell})$ the minimal polynomial of $\lambda$ over $\Z$. We note that as in Corollary 5.9 of \cite{Kittle2023} it holds that 
    \begin{equation}\label{SeparationMahlerBound}
        S_{\nu_{\lambda}} \leq \log M_{\lambda}.
    \end{equation}
    Garsia \cite[Theorem 1.8]{Garsia1962} showed that $\nu_{\lambda}$ is absolutely continuous for algebraic $\lambda$ with $M_{\lambda} = 2$, while the first-named author \cite{Kittle2021} established that $\nu_{\lambda}$ is absolutely continuous if $M_{\lambda} \approx 2$. In landmark work, Varjú \cite{Varju2019} proved for every $\eps > 0$ there is a constant $C>1$ such that that $\nu_{\lambda}$ is absolutely continuous if 
    \begin{equation}\label{Varju}
        \lambda > 1 - C^{-1}\min\{\log M_{\lambda},(\log M_{\lambda})^{-1 + \eps} \}.
    \end{equation}
    When applying Theorem~\ref{MainResultFixU} to Bernoulli convolutions we deduce the following strengthening of \eqref{Varju}, exploiting the comparison between the entropy and the Mahler measure for Bernoulli convolution due to \cite{BreuillardVarju2020}.

    \begin{corollary}\label{BernoulliCorollary}
        There is an absolute constant $C > 1$ such that the following holds. Let $\lambda \in (1/2,1)$ be a real algebraic number. Then the Bernoulli convolution $\nu_{\lambda}$ is absolutely continuous on $\R$ if 
        \begin{equation}\label{BernoulliCond}
            \lambda > 1 - C^{-1}\min\{\log M_{\lambda},(\log \log M_{\lambda})^{-2} \} .
        \end{equation}
    \end{corollary}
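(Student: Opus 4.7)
\medskip
\noindent\textbf{Proof plan for Corollary~\ref{BernoulliCorollary}.}

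The plan is to reduce to Theorem~\ref{MainResultFixU} applied with $d = 1$, $\eps = 1/2$ and $\mu_U = \delta_1$. The generating measure of $\nu_\lambda$ is $\mu = \tfrac{1}{2}\delta_{g_1} + \tfrac{1}{2}\delta_{g_2}$ with $g_1(x) = \lambda x - 1$ and $g_2(x) = \lambda x + 1$. The hypotheses of Theorem~\ref{MainResultFixU} are easy to check: $\mu$ is contracting since $\lambda \in (1/2,1)$, the two maps have distinct fixed points $\pm(1-\lambda)^{-1}$ so there is no common fixed point, $U(\mu) = \delta_1$, the $p_i$ equal $1/2 \geq \eps$, and $S_\mu \leq \log M_\lambda < \infty$ by \eqref{SeparationMahlerBound}. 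Note also that $|\chi_\mu| = |\log\lambda|$, which for $\lambda \in (1/2,1)$ satisfies $1-\lambda \leq |\chi_\mu| \leq 2(1-\lambda)$.

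The external input I will use is the entropy--Mahler comparison of Breuillard--Varjú \cite{BreuillardVarju2020}, which for real algebraic $\lambda \in (1/2,1)$ provides a lower bound of the shape $h_\mu \geq c_0 \min\{\log 2,\, \log M_\lambda\}$ for an absolute $c_0 > 0$. Combining this with $S_\mu \leq \log M_\lambda$ from \eqref{SeparationMahlerBound} yields
\[
\max\bigl\{1,\, \log(S_\mu/h_\mu)\bigr\} \;\lesssim\; \max\bigl\{1,\, \log\log M_\lambda\bigr\},
\]
where the right-hand side is interpreted as $1$ when $M_\lambda \leq e$. Hence to invoke Theorem~\ref{MainResultFixU} it suffices to establish
\[
\frac{h_\mu}{|\chi_\mu|} \;>\; C\,\bigl(\max\{1,\, \log\log M_\lambda\}\bigr)^2
\]
for $C$ sufficiently large, under the assumption \eqref{BernoulliCond}.

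This reduces to a short case analysis. If $\log M_\lambda \leq 1$, then the right-hand side is just $C$, $h_\mu \geq c_0 \log M_\lambda$, and \eqref{BernoulliCond} gives $|\chi_\mu| \leq 2(1-\lambda) \leq 2C^{-1}\log M_\lambda$, so $h_\mu/|\chi_\mu| \geq c_0 C/2$, which beats $C$ once $C$ is large. If $\log M_\lambda > 1$, then $h_\mu \geq c_0 \log 2$ is bounded below by an absolute constant, while \eqref{BernoulliCond} forces $|\chi_\mu| \leq 2C^{-1}(\log\log M_\lambda)^{-2}$, so $h_\mu/|\chi_\mu| \gtrsim C(\log\log M_\lambda)^2$, again beating the required bound for $C$ sufficiently large. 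There is no real obstacle in this argument: all the analytic heavy lifting is absorbed into Theorem~\ref{MainResultFixU} and the Breuillard--Varjú entropy bound. The only mild subtlety is matching the two branches of the minimum in \eqref{BernoulliCond} with the two regimes of $\log M_\lambda$ above, which is precisely what the choice of $(\log\log M_\lambda)^{-2}$ in \eqref{BernoulliCond} is designed to accommodate, reflecting the quadratic dependence on $\log(S_\mu/h_\mu)$ in \eqref{MainCondition}.
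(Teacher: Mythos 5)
Your route is the same as the paper's: the paper checks that Bernoulli convolutions are uniformly non-degenerate and (trivially, since $d=1$) well-mixing and then feeds the Breuillard--Varjú bound $h_\mu \geq c_0\min\{\log 2,\log M_\lambda\}$ and $S_\mu \leq \log M_\lambda$ from \eqref{SeparationMahlerBound} into Theorem~\ref{MainResult}, followed by a case analysis in $\eta=\log M_\lambda$; invoking Theorem~\ref{MainResultFixU} with $d=1$, $\eps=1/2$, $\mu_U=\delta_1$ is just the packaged form of that, so there is no methodological difference, and your verification of its hypotheses and of $|\chi_\mu|\asymp 1-\lambda$ is fine.

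There is, however, one step in your case analysis that fails as written: the intermediate window $1<\log M_\lambda<e$. There you are in your second case, where you bound $|\chi_\mu|\leq 2C^{-1}(\log\log M_\lambda)^{-2}$ and conclude $h_\mu/|\chi_\mu|\gtrsim C(\log\log M_\lambda)^2$, claiming this beats the required threshold, which in this window is an absolute constant (since $\max\{1,\log(S_\mu/h_\mu)\}\asymp 1$ there). But $(\log\log M_\lambda)^2$ can be arbitrarily close to $0$ when $\log M_\lambda$ is just above $1$, so no fixed choice of $C$ makes $C(\log\log M_\lambda)^2$ exceed that absolute constant uniformly; the branch of the minimum you chose gives a true but useless bound. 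The fix is immediate and uses nothing new: in this window $\min\{\log M_\lambda,(\log\log M_\lambda)^{-2}\}\leq\log M_\lambda<e$, so \eqref{BernoulliCond} already forces $1-\lambda< e\,C^{-1}$ and hence $h_\mu/|\chi_\mu|\gtrsim C$, which does beat the absolute constant once $C$ is large. This is exactly the point the paper handles by introducing the crossover value $\eta'$ with $\eta'=(\log\eta')^{-2}$ and enlarging $C$ on the middle range $\log 2<\eta<2\eta'$; with that one-line repair your argument closes and matches the paper's proof.
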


    We estimate that a direct application of our method would lead to $C \approx 10^{10}$ in Corollary~\ref{BernoulliCorollary}. It would be an interesting further direction to try to optimise $C$ for Bernoulli convolutions and in particular for the case $\lambda = 1 - \frac{1}{n}$. 
    
    Our most general result, Theorem~\ref{MainResult}, also applies to complex Bernoulli convolutions, which are defined analogously for $\lambda  \in \mathbb{D} = \{ \lambda \in \C \,:\, |\lambda| < 1 \}$. When $|\lambda| \in (0,2^{-1/2})$, then $\dim \nu_{\lambda} \leq \frac{\log 2}{|\log \lambda|} < 2$ and $\nu_{\lambda}$ is singular to the Lebesgue measure on $\C$. It was shown by Shmerkin-Solomyak \cite{ShmerkinSolomyak2016} that the set of $\lambda \in \C$ with $|\lambda| \in (2^{-1/2}, 1)$ and $\nu_{\lambda}$ is singular has Hausdorff dimension zero, whereas Solomyak-Xu \cite{SolomyakXu2003} showed that $\nu_{\lambda}$ is absolutely continuous on $\C$ for a non-real algebraic $\lambda \in \mathbb{D}$ with $M_{\lambda} = 2$. We extend Corollary~\ref{BernoulliCorollary} to complex parameters while assuming \eqref{eq:SomeImaginaryPart} in order to ensure that the rotation part of $\lambda$ mixes fast enough and so that our measure is sufficiently non-degenerate (see section~\ref{subsection:MainResult}). 

    \begin{corollary}\label{ComplexBernoulliCorollary}
    For every $\varepsilon > 0$ there is a constant $C \geq 1$ such that the following holds. Let $\lambda \in \mathbb{C}$ be a complex algebraic number such that $|\lambda| \in (2^{-1/2}, 1)$ and 
    \begin{equation}
        |\mathrm{Im}(\lambda)| \geq \varepsilon. \label{eq:SomeImaginaryPart}
    \end{equation} Then the Bernoulli convolution $\nu_{\lambda}$ is absolutely continuous on $\C$ if 
    \begin{equation*}
        |\lambda| > 1 - C^{-1}\min\{\log M_{\lambda},(\log \log M_{\lambda})^{-2} \} .
    \end{equation*}
    \end{corollary}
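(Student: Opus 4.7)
The plan is to realize the complex Bernoulli convolution as a homogeneous self-similar measure on $\R^2 \cong \C$. Writing $\lambda = |\lambda|e^{i\theta}$, the generating measure is
\begin{equation*}
\mu = \tfrac{1}{2}\delta_{g_+} + \tfrac{1}{2}\delta_{g_-}, \qquad g_\pm(x) = |\lambda|\, R_\theta\, x \pm 1,
\end{equation*}
where $R_\theta \in \SO(2)$ is rotation by $\theta$. The hypothesis \eqref{eq:SomeImaginaryPart} forces $|\sin\theta| \geq \varepsilon/|\lambda| \geq \varepsilon$, so $\theta$ stays uniformly bounded away from $\{0,\pi\}$ and the cyclic group $\langle R_\theta\rangle$ acts irreducibly on $\R^2$. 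The two maps $g_\pm$ have distinct fixed points $\pm(1-\lambda)^{-1}$, so $\mu$ has no common fixed point, and $S_\mu < \infty$ since $\lambda$ is algebraic.

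I would then invoke Theorem~\ref{MainResult} with $U(\mu) = \delta_{R_\theta}$. The three quantities to estimate are $|\chi_\mu| = -\log|\lambda|$, the splitting rate $S_\mu \leq \log M_\lambda$ by the complex analogue of \eqref{SeparationMahlerBound}, and the random walk entropy bound $h_\mu \gtrsim \min\{\log 2,\, \log M_\lambda\}$ obtained by adapting the entropy--Mahler comparison of \cite{BreuillardVarju2020} to complex $\lambda$. The first is immediate from the definition; the second should be a formal extension of \cite{Kittle2023}*{Corollary 5.9} using the minimal polynomial of $\lambda$ over $\Z$ (which treats real and complex roots uniformly); the third requires rerunning the $\lambda$-polynomial counting of \cite{BreuillardVarju2020} for walks on $\R^2$ driven by a common rotation. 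Plugging these bounds into the main hypothesis $h_\mu / |\chi_\mu| > C\left(\max\{1, \log(S_\mu/h_\mu)\}\right)^2$ and splitting into two regimes recovers the stated condition: when $\log M_\lambda$ is small the bound reduces to $1-|\lambda| < C^{-1}\log M_\lambda$, while when $\log M_\lambda$ is large one has $h_\mu \asymp \log 2$ and $\log(S_\mu/h_\mu) \asymp \log\log M_\lambda$, yielding $1-|\lambda| < C^{-1}(\log\log M_\lambda)^{-2}$.

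The main obstacle is keeping the constant $C$ in Theorem~\ref{MainResult} uniform in $\lambda$ through $\varepsilon$ alone. Since $\mu_U = \delta_{R_\theta}$ is a single Dirac mass it has no $L^2$ spectral gap on $O(2)$, so Theorem~\ref{SpectralGapTheorem} is not directly available; instead one must verify that the non-degeneracy quantities entering Theorem~\ref{MainResult} depend on $\theta$ only through $|\sin\theta|$, which is in turn controlled by $\varepsilon$. This should reduce to a Schur-type averaging estimate for the orbit of $R_\theta$ on the unit circle of $\R^2$, in the spirit of the computation at the end of Section~\ref{section:mixnondeg}. A secondary technical point is the adaptation of the Breuillard--Varj\'u entropy bound to complex parameters, which is expected to proceed along the same algebraic lines but requires care with archimedean places outside $\R$ when estimating heights of products of $\lambda$-polynomials.
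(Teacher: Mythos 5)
Your skeleton is the same as the paper's: realize $\nu_\lambda$ as the self-similar measure of $\frac{1}{2}\delta_{g_+}+\frac{1}{2}\delta_{g_-}$ with $g_\pm(x)=|\lambda|R_\theta x\pm 1$, invoke Theorem~\ref{MainResult} with the explicit Lyapunov exponent, $S_\mu\le\log M_\lambda$, and the entropy--Mahler comparison, and finish with the same two-regime case split as in Corollary~\ref{BernoulliCorollary} (the paper simply reuses \cite{BreuillardVarju2020} for complex algebraic $\lambda$ rather than rerunning it). You also correctly isolate the one non-routine ingredient: since $U(\mu)=\delta_{R_\theta}$ varies with $\lambda$ and has no spectral gap, neither Theorem~\ref{MainResultFixU} nor Proposition~\ref{UnifNonDeg} yields well-mixing and non-degeneracy constants depending on $\varepsilon$ alone, which is exactly why the paper gives a direct proof of these two properties.

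The gap is in how you propose to supply that ingredient. For well-mixing an orbit-averaging estimate does work, since $\frac{1}{T+1}\sum_{j=0}^{T}\cos^2(j\theta+\psi)\ge\frac{1}{2}-\frac{1}{2(T+1)|\sin\theta|}$ and $|\sin\theta|\ge|\mathrm{Im}(\lambda)|\ge\varepsilon$, so $T\asymp\varepsilon^{-1}$ suffices. But for $(\alpha_0,\theta,A)$-non-degeneracy, a ``Schur-type'' argument in the spirit of section~\ref{section:mixnondeg} (Lemma~\ref{IrreducibleTrick}, Lemma~\ref{lemma:dense_tends_to_haar}, Proposition~\ref{prop:homo_close_to_normal}) cannot give constants through $\varepsilon$ alone: those results average against the Haar measure of the closure of $\langle R_\theta\rangle$, and the closure may be a finite cyclic group or all of $SO(2)$, with an equidistribution rate for $\{j\theta\}$ that is not controlled by $|\sin\theta|$. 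The paper avoids equidistribution entirely. It splits into $|\lambda|\le\lambda_0$ (where non-degeneracy is checked directly) and $|\lambda|\ge\lambda_0$, where it rescales to $B_\lambda=\sqrt{1-|\lambda|^2}\sum_{i\ge 0}\pm\lambda^i$, bounds $\mathcal{W}_1(\nu_\lambda',N(0,\Sigma))\ll\sqrt{1-|\lambda|^2}$ by the Berry--Esseen estimate Lemma~\ref{BerryEssenType}, and lower-bounds the least eigenvalue of the exact covariance via $x^T\Sigma x=(1-|\lambda|^2)\sum_i|\lambda^i\cdot x|^2$, using that two consecutive powers $\lambda^i,\lambda^{i+1}$ cannot both be nearly orthogonal to a fixed direction when $|\sin\theta|\ge\varepsilon$; hence $\Sigma\gg_\varepsilon I$ and the argument of Lemma~\ref{CloseToNormalNonDegen} (adapted to a non-spherical Gaussian) gives uniform non-degeneracy. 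Some such direct covariance estimate on the measure itself is what your sketch is missing; without it, your constant $C$ would depend on $\theta$ (equivalently on $\lambda$) and not only on $\varepsilon$.
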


    \subsubsection*{\normalfont\textbf{Discussion of other work}}

    In addition to the discussed above \cite{Garsia1962}, \cite{SolomyakXu2003}, \cite{LindenstraussVarju206}, \cite{Varju2019} and \cite{Kittle2021} there is little known about explicit examples of absolutely continuous self-similar measures. To the authors knowledge, the only other papers addressing this topic are \cite{DaiFengWang2007} and \cite{Streck24}, which are concerned with homogeneous self-similar measures on $\R$ whose contraction rate $\lambda$ satisfies that all of its Galois conjugates have absolute value $< 1$. 

    A related problem is to study the Furstenberg measure of $\mathrm{SL}_2(\R)$ or of arbitrary simple non-compact Lie groups. The first examples of absolutely continuous Furstenberg measures arising from finitely supported generating measures were established by \cite{Bourgain2012}, giving an intricate number theoretic construction and also providing examples with a $C^k$-density for any $k\geq 1$. Bourgain's methods were generalised and further used by \cite{BoutonnetIoanaSalehiGolsefidy2017}, \cite{Lequen2022} and \cite{Kogler2022}. Moreover, numerous new examples we recently given by \cite{Kittle2023}.

    Returning to self-similar measures, we observe that the behavior of generic self-similar measures on $\R$ or $\C$ is better understood. \cite{Shmerkin2014} showed, thereby improving \cite{Solomyak1995}, that the set of $\lambda \in (1/2,1)$ such that the Bernoulli convolution $\nu_{\lambda}$ is singular has Hausdorff dimension zero. In \cite{SagliettiShmerkinSolomyak2018} it was shown that when the translation part (with distinct translations) and the probability vector is fixed, then generic one-dimensional self-similar measures on $\R$ are almost surely absolutely continuous in the range where the similarity dimension $>1$. This was generalised to $\C$ by \cite{SolomyakSpiewak2023}. A further line of research is to show that certain parametrized families of self-similar measures or other types of invariant function systems are generically absolutely continuous, see for example \cite{Hochman2014}, \cite{Hochman2017}, \cite{ShmerkinSolomyak2016Second} and \cite{BaranySimonSolomyak2022}.

    We finally mention that Fourier decay of self-similar measures was studied by numerous authors recently. The interested reader is referred to \cite{LiSahlsten2020}, \cite{Bremont2021}, \cite{LiSahlsten2022}, \cite{Rapaport2022}, \cite{Solomyak2022}, \cite{VarjuYu2022} and \cite{BakerKhalilSahlsten2024} and as well as \cite{AlgomRHWang} and \cite{BakerSahlsten2023} for self-conformal measures.
    
    \subsection*{Acknowledgement} The first-named author gratefully acknowledges support from the Heilbronn Institute for Mathematical Research. This work is part of the second-named author's PhD thesis conducted at the University of Oxford. We  thank Emmanuel Breuillard and Péter Varjú for comments on a preliminary draft and Timothée Bénard for pointing out that \eqref{PolynomialTailDecay} follows from \cite{GuivarchLePage2016}. 

    \section{Main Result and Outline}
    \label{OutlineNotation}

In this section we first state our main results and give an outline of the proof of the main theorem in section~\ref{section:Outline}. Then we collect for the convenience of the reader some notation used throughout this paper in section~\ref{section:Notation} and comment on the organisation of the paper in section~\ref{section:organisation}.

\subsection{Main Result} \label{subsection:MainResult}

Let $\mu$ be a probability measure on $G = \mathrm{Sim}(\R^d)$. To state our main results in full generality we introduce notions that capture how well $U(\mu)$ mixes on $O(d)$ and how degenerate $\nu$ is. 

Denote by $\gamma_1, \gamma_2, \dots$ independent samples from $\mu$, write $$q_n := \gamma_1 \gamma_2 \dots \gamma_n$$ and given $\kappa > 0$ let $\tau_{\kappa}$ be the stopping time defined by $$\tau_{\kappa} := \inf \{ n \geq 1 : \rho(q_{n}) \leq \kappa \}.$$ We then have the following definitions.
    \begin{definition}
        Let $\mu$ be a probability measure on $G$ generating a self-similar measure $\nu$. \begin{enumerate}[label = (\roman*)]
            \item We say that $\mu$ is \textbf{$(\alpha_0,\theta,A)$-non-degenerate} for $\alpha_0 \in (0,1)$ and $\theta,A > 0$ if for any proper subspace $W \subset \R^d$ and $y\in \R^d$, $$\nu(\{ x \in \R^d \,:\, |x - (y + W)| < \theta \text{ or } |x| \geq A \}) \leq \alpha_0.$$ 
            \item We say that $\mu$ is \textbf{$(c,T)$-well-mixing} for $c \in (0,1)$ and $T \geq 0$ if for any unit vectors $x, y \in \R^d$ we have $$\mathbb{E}[|x \cdot U(q_{F})y|^2] \geq c,$$ 
            where $F$ is a uniform random variable on $[0, T]$ which is independent of the $\gamma_i$.
        \end{enumerate}
    \end{definition}

    For $d=1$ the measure $\mu$ will always be $(1, 1)$-well-mixing. As we show in section~\ref{MixingSection}, when $U(\mu)$ is fixed and irreducible, there exists $(c,T)$ depending only on $U(\mu)$ such that $\mu$ is $(c,T)$-well-mixing. This follows as $U(q_F) \to \Haarof{H}$ in distribution as $T \to \infty$, where $H$ is the closure of the subgroup generated by $\mathrm{supp}(U(\mu))$ and $\Haarof{H}$ the Haar probability measure on $H$. The latter would not be true if we fix $F$ to be a deterministic random variable and therefore we have introduced the above definition. Moreover, it is easy to see that $(c,T)$-well-mixing in continuous in $\mu_U$.

    Dealing with non-degeneracy is more involved and uniform results for many classes of self-similar measures do not hold. However, instead of our given measure we can consider a conjugated measure to establish uniform non-degeneracy results. Indeed, for $\mu = \sum_{i = 1}^k p_i\delta_{g_i}$ a measure on $G$ and $h\in G$ we denote $$\mu_h = \sum_{i = 1}^k p_i\delta_{hg_ih^{-1}} \quad \text{ and } \quad \mu_h' =\frac{1}{2}\delta_e + \frac{1}{2}\sum_{i = 1}^k p_i\delta_{hg_ih^{-1}}.$$ Then as we show in Lemma~\ref{ConjugationLemma}, absolute continuity of any of the self-similar measures of $\mu,\mu_h$ or $\mu_h'$ is equivalent and all relevant quantities such as $h_{\mu}$, $S_{\mu}$ and $|\chi_{\mu}|$ are the same or comparable. 
    
    Towards Theorem~\ref{MainResultFixU}, Theorem~\ref{SpectralGapTheorem} and Theorem~\ref{MainResultContAvFixU}, as we state in Proposition~\ref{UnifNonDeg} and Proposition~\ref{UnifNonDegContAv} we have essentially uniform $(c,T)$-mixing and uniform $(\alpha_0,\theta, A)$-non-degeneracy as long as we assume $(U_1, \ldots , U_k) \in \mathcal{I}$. We first state a uniform mixing result adapted for Theorem~\ref{MainResultFixU} and Theorem~\ref{SpectralGapTheorem} in the contracting case. 

    \begin{proposition}\label{UnifNonDeg}
    Let $d\geq 1$, $\eps \in (0,1)$ and let $\mathcal{I} \subset \mathcal{I}_k$ be a compact subset. Then there exists $\tilde{\rho} \in (0,1)$, $(c,T)$ and $(\alpha_0,\theta, A)$ depending on $d, \eps$ and $\mathcal{I}$ such that the following holds. Let $\mu = \sum_{i = 1}^k p_i \delta_{g_i}$ be a contracting probability measure on $G$ without a common fixed point and with $(U_1, \ldots , U_k) \in \mathcal{I}$ and $$p_i \geq \eps \quad \text{ as well as } \quad \rho(g_i) \in (\tilde{\rho},1) \quad \quad  \text{ for all } \quad  1 \leq i \leq k.$$ Then there is $h\in G$ such that $\mu_h' =  \frac{1}{2}\delta_e + \frac{1}{2}\sum_{i = 1}^k p_i\delta_{hg_ih^{-1}}$ is $(c,T)$-well-mixing and $(\alpha_0,\theta, A)$-non-degenerate. 

    Moreover, if $\mathrm{gap}_H(U(\mu)) \geq \eps > 0$ for $H$ the closure of the subgroup generated by the support of $U(\mu)$ (without assuming $(U_1, \ldots , U_k) \in \mathcal{I}$), then there exist $\tilde{\rho}$, $(c,T)$ and $(\alpha_0,\theta, A)$ depending only on $d$ and $\eps$ such that the above conclusion holds.
    \end{proposition}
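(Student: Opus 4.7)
The two conclusions (well-mixing and non-degeneracy) are essentially independent and I would prove them separately. For the \textbf{well-mixing} statement, the key observation is that the stopping time $\tau_\kappa$ depends only on the $\rho$-coordinates of the $\gamma_j$, so by the strong Markov property the increments $\gamma_{\tau_\kappa+1},\ldots$ are independent of $(\tau_\kappa, q_{\tau_\kappa})$ and their $U$-parts form a fresh copy of the $\mu_U$-random walk. Writing $Z = U(q_{\tau_\kappa})$ and $W = U(\gamma_{\tau_\kappa+1})\cdots U(\gamma_{\tau_\kappa+F})$, for unit vectors $x,y$,
\[
\mathbb{E}[|x\cdot U(q_{\tau_\kappa+F})y|^2] = \mathbb{E}_Z\, \mathbb{E}_{F,W}\bigl[|Z^\top x \cdot Wy|^2\bigr],
\]
and the inner expectation is $\int|v\cdot Wy|^2\,d\bar\mu_U^T(W)$ where $\bar\mu_U^T = \frac{1}{T+1}\sum_{n=0}^T \mu_U^{*n}$ and $v=Z^\top x$ is a unit vector. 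Since $H$ is the closure of the group generated by $\mathrm{supp}(\mu_U)$, the Cesàro averages $\bar\mu_U^T$ converge weakly to the Haar measure $\Haarof{H}$; by irreducibility and Schur's lemma, $\int|v\cdot hy|^2\,d\Haarof{H}(h) = 1/d$ for every unit $v,y$. Choosing $T=T(\mu_U)$ large enough and $\kappa_0$ so small that $\tau_\kappa$ is typically large yields $c \geq 1/(2d)$, uniformly in $\mu$ (the quantities $p_i$, $\rho(g_i)$ and $b(g_i)$ do not enter because the factorisation above isolates $W$). Under the spectral gap assumption, $\bar\mu_U^T \to \Haarof{H}$ at an exponential rate governed only by $\eps$, so $(c,T)$ can be taken to depend only on $d$ and $\eps$.

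For the \textbf{non-degeneracy} statement, I would construct $h$ from the fixed points. Each $g_i$ is contracting and hence has a unique fixed point $x_i\in\R^d$, and by hypothesis not all $x_i$ coincide, so we may relabel so that $x_1\ne x_2$. Take $h$ to be the similarity with $U(h)=\Id$ defined by $h(y) = |x_1-x_2|^{-1}(y - x_1)$, so that $hg_1h^{-1}$ fixes $0$ and $hg_2h^{-1}$ fixes a unit vector $e$; conjugation by $h$ preserves $U(\mu)=\mu_U$, the weights $p_i$, and the contraction rates $\rho(g_i)$. As noted in the excerpt, the self-similar measure of $\mu_h'$ is exactly the self-similar measure $\nu_h$ of $\mu_h$, so it suffices to show $\nu_h$ is $(\alpha_0,\theta,A)$-non-degenerate with uniform constants. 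The $A$-part follows from a standard diameter bound: since $\rho(g_i)\in(\tilde\rho,1)$ and the fixed points of $hg_1h^{-1}, hg_2h^{-1}$ lie in the unit ball, a short iteration argument together with Remark~\ref{rhoRemark} (which bounds $|\chi_\mu|$, hence the typical time to contract by a definite factor) shows $\mathrm{supp}(\nu_h)$ is contained in a ball of radius $A(d,\eps,\mu_U)$.

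The harder task is the hyperplane bound. Here I would argue by contradiction: suppose $\nu_h$ puts mass $>\alpha_0$ on a $\theta$-neighbourhood of some affine hyperplane $y+W$. Pull back by one step of the random walk to get
\[
\nu_h = \sum_i p_i\, (hg_ih^{-1})_*\nu_h,
\]
which forces most of the pushforwards $(hg_ih^{-1})_*\nu_h$ to concentrate near a hyperplane. Iterating $m$ times and using that $U(\mu_h)^{*m}=\mu_U^{*m}$ is close to the Haar average on $H$, the hyperplane would be nearly invariant under a set of rotations that generates $H$; irreducibility then gives a contradiction for small enough $\theta$. Combined with the quantitative version — which uses the diameter bound $A$ to keep translations harmless and Schur-type averaging to produce a definite amount of spread in every direction — this yields $(\alpha_0,\theta)$ depending only on $d,\eps,\mu_U$ (and only on $d,\eps$ in the spectral gap case, since the mixing is quantitative).

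The main obstacle I anticipate is making the hyperplane-concentration step genuinely uniform in $\mu$: the rotations $\mu_U$ are fixed, but the translations $b(g_i)$ and contraction rates $\rho(g_i)\in(\tilde\rho,1)$ vary freely subject only to the no-common-fixed-point condition. The trick is that our choice of $h$ normalises exactly the degree of freedom that could ruin non-degeneracy (the common scale of the attractor), so that after conjugation the remaining variation is compact and irreducibility of $\mu_U$ forces a uniform lower bound on the spread of $\nu_h$ in every direction.
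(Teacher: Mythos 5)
Your well-mixing argument is fine and is essentially the paper's: the $U$-part of the walk is decoupled from everything else, Ces\`aro averages of $\mu_U^{*n}$ converge to $\Haarof{H}$, Schur's lemma (Lemma~\ref{IrreducibleTrick}) gives $\E_{h\sim\Haarof{H}}[|x\cdot hy|^2]=1/d$, and a spectral gap makes the rate, hence $(c,T)$, uniform (Lemma~\ref{Ufixeduniformmixing} and Lemma~\ref{UnifWellMixSpecGap}).

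The non-degeneracy part has a genuine gap, and it is located exactly where you placed your confidence: the choice of $h$. Normalising the fixed points of $g_1,g_2$ to $0$ and a unit vector does \emph{not} normalise the scale of the self-similar measure, because the proposition must hold for all $\rho(g_i)\in(\tilde\rho,1)$, i.e.\ with contraction rates arbitrarily close to $1$. Concretely, in $d=1$ take $g_1(x)=\rho x$, $g_2(x)=\rho x+(1-\rho)$ (fixed points $0$ and $1$, no common fixed point, already in your normal form). The self-similar measure is a rescaled Bernoulli convolution centred at $1/2$ with standard deviation $\asymp\sqrt{1-\rho}\to 0$, so for any fixed $\theta>0$ it puts mass $\to1$ within $\theta$ of the single point $y+W$ with $W=\{0\}$: the $(\alpha_0,\theta)$-part fails, and no choice of constants depending only on $d,\eps,\mu_U$ survives. (In the opposite regime, e.g.\ nontrivial rotation parts, the translation vectors $b_i=(I-\rho U_i)x_i$ stay of order $1$ and the spread is $\asymp(1-\rho^2)^{-1/2}\to\infty$, so the $A$-part fails instead.) Your claimed diameter bound $A(d,\eps,\mu_U)$ on $\supp \nu_h$ is therefore false, and your irreducibility-contradiction for the hyperplane bound cannot rescue it: the degenerate limit above is a near-Dirac mass, which is compatible with full rotational symmetry, so near-invariance under rotations generating $H$ yields no contradiction. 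The scale itself must be renormalised, which is what the paper's $h$ does: it centres the measure at zero and rescales by (essentially) its standard deviation, proving via a Sakhanenko/Berry--Esseen argument that the homogeneous-contraction model is $\mathcal{W}_3$-close to $N(0,I)$ (Proposition~\ref{prop:homo_close_to_normal}), transferring this to variable contraction rates by the $\mathcal{PW}_1$-comparison Lemma~\ref{lemm:inhom_close_to_normal} after replacing $\mu$ by $\mu^{*n}$ so that Lemma~\ref{lemm:distance_estimate} supplies the required $\hat\rho$-condition, and finally deducing $(\alpha_0,\theta,A)$-non-degeneracy from Gaussian proximity (Lemma~\ref{CloseToNormalNonDegen}). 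Some quantitative substitute for this renormalisation-to-unit-spread step is unavoidable in your outline.
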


    For Theorem~\ref{MainResultContAvFixU} we state a similar result for contracting on average measures.

    \begin{proposition}\label{UnifNonDegContAv}
     Let $d\geq 1$, $\eps \in (0,1)$ and let $\mathcal{I} \subset \mathcal{I}_k$ be a compact subset. Then there exists $\tilde{\rho} \in (0,1)$, $(c,T)$ depending on $d, \eps$ and $\mathcal{I}$ and $(\alpha_0,\theta, A)$ such that the following holds. Let $\mu = \sum_{i = 1}^k p_i \delta_{g_i}$ be a contracting on average probability measure on $G$ without a common fixed point satisfying $(U_1, \ldots , U_k) \in \mathcal{I}$ and $p_i \geq \eps$ for all $1\leq i \leq k$. Assume that for some $\hat{\rho} \in (\tilde{\rho},1)$ we have $$\frac{\E_{\gamma\sim \mu}[|\hat{\rho} - \rho(\gamma)|]}{1 - \E_{\gamma\sim\mu}[\rho(\gamma)]} < 1 - \varepsilon.$$ Then there is $h\in G$ such that $\mu_h' =  \frac{1}{2}\delta_e + \frac{1}{2}\sum_{i = 1}^k p_i\delta_{hg_ih^{-1}}$ is $(c,T)$-well-mixing and $(\alpha_0,\theta, A)$-non-degenerate.
    \end{proposition}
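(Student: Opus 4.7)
The plan is to mirror the proof of Proposition~\ref{UnifNonDeg}, but to use the $\hat{\rho}$-condition as a substitute for the uniform bound $\rho(g_i) \in (\tilde\rho,1)$ on the individual contraction rates. A crucial simplification is that the lazy factor $\tfrac{1}{2}\delta_e$ does not affect the stationary measure: $\nu_h'$ is equal to $\nu_h = h_*\nu$. So it suffices to choose a similarity $h \in G$ such that (i) the conjugated rotation walk mixes well on $O(d)$ and (ii) the pushforward $h_*\nu$ avoids proper affine subspaces and is concentrated in a bounded ball, uniformly across all $\mu$ allowed by the hypothesis.

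For the mixing assertion, I write $h(x) = sUx+t$, so that $U(\mu_h') = \tfrac{1}{2}\delta_e + \tfrac{1}{2} U\mu_U U^{-1}$. Irreducibility of $\mu_U$ is preserved under conjugation, and the $U^{-1}m_H U$-invariance makes it irrelevant which $U$ was chosen. The laziness only halves the number of effective rotations, so by the uniform convergence $\mu_U^{*n}\to m_H$ established in section~\ref{section:mixnondeg}, together with Schur's lemma giving $\E_{g\sim m_H}[|x\cdot gy|^2] = 1/d$, the randomised time window $F\in[0,T]$ with $T$ large enough (depending only on $\mu_U$) forces $U(q_{\tau_\kappa+F})$ to approximate $m_H$ in the appropriate Wasserstein sense. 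The only new ingredient is to verify that $\tau_\kappa$ has sufficiently controlled moments: the $\hat\rho$-condition bounds $1-\E[\rho(\gamma)]$ away from $0$ and bounds $\E[|\hat\rho-\rho(\gamma)|]$ in terms of it, which gives concentration of $\rho(q_n)$ around $\hat\rho^n$ and hence $\tau_\kappa \asymp -\log\kappa/|\log\hat\rho|$ with small fluctuations.

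For the non-degeneracy assertion, the tail bound is handled by \eqref{PolynomialTailDecay}: the Guivarch--Le Page moment estimates give an exponent $\alpha$ depending only on $d,\varepsilon,\mu_U$ and the $\hat\rho$-condition, which is exactly the data we are allowed to condition on. After choosing the scaling and translation of $h$ so that $h_*\nu$ is centred and normalised to have spatial scale $\sim 1$, the radius $A$ is then straightforward. The delicate part is the hyperplane avoidance. Qualitatively, the absence of a common fixed point combined with irreducibility of $\mu_U$ forces $\nu$ off every proper affine subspace. Quantitatively one argues by contradiction and compactness: if the statement failed, one could extract a sequence of counter-examples $\mu^{(n)}$ converging (after passing to convergent subsequences of the finitely many parameters $(p_i, \rho_i, b_i)$ up to rescaling $h$) to a limiting measure whose self-similar measure concentrates on a proper affine subspace, which would force a common fixed point on that subspace and violate the hypotheses.

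The main obstacle is precisely the last step: to carry out the compactness/contradiction argument for the hyperplane bound in the contracting-on-average regime. For purely contracting measures one can work with the compact attractor of the IFS, but here some $g_i$ expand and the attractor is replaced by the (unbounded) support of $\nu$. The remedy is to work with stopping-time blocks $q_{\tau_\kappa}$, which under the $\hat\rho$-condition behave like an effectively contracting IFS with uniformly bounded-ratio contractions; the compactness argument is then applied to this derived system, and the blockwise subspace avoidance is upgraded to the subspace avoidance for $\nu$ via the stationarity relation $\mu^{*\tau_\kappa}*\nu = \nu$.
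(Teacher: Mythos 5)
Your mixing argument and your observation that the lazy factor does not change the stationary measure are fine and essentially coincide with the paper's Lemma~\ref{Ufixeduniformmixing} (note only that, by the strong Markov property, $U(q_{\tau_{\kappa}+F})=U(q_{\tau_{\kappa}})U(\tilde q_F)$ with an independent increment, so no control on $\tau_{\kappa}$ is needed; your claim that the $\hat{\rho}$-condition gives concentration of $\rho(q_n)$ around $\hat{\rho}^{\,n}$ is in any case unjustified, since it is only a first-moment condition and $\chi_{\mu}$ need not equal $\log\hat\rho$). The genuine gap is in the non-degeneracy step. A compactness/contradiction argument over the parameters $(p_i,\rho_i,b_i)$ cannot produce the uniform constants $(\alpha_0,\theta,A)$, because the admissible class is not compact in any sense in which $\mu\mapsto h_*\nu$ is continuous: the proposition must hold uniformly as $\hat\rho\uparrow 1$ (indeed $\tilde\rho$ is close to $1$, so this is the only regime the hypothesis addresses), and in that limit the individual $\rho_i$ may tend to $1$ (or be expanding with no uniform upper bound), the normalising conjugation $h=h(\mu)$ degenerates, and the renormalised measures $h_*\nu^{(n)}$ do not converge to the self-similar measure of any limiting IFS — they converge to a Gaussian. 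Extracting this Gaussian limit is exactly the quantitative content one has to prove, and it is what the paper does: Proposition~\ref{prop:homo_close_to_normal} proves a Berry--Esseen/Sakhanenko-type statement (Theorem~\ref{theo:l_p_wasserstien_p}, together with the Schur-type Lemma~\ref{IrreducibleTrick} and the mixing of the lazy rotation walk) showing $\mathcal{W}_3(\tilde\nu_h,N(0,I))$ is small for the homogenised system with uniform ratio $\hat\rho$; Lemma~\ref{lemm:inhom_close_to_normal} then transfers this to the actual measure via the bound $\mathcal{PW}_1(\nu,\tilde\nu)\le \frac{\E[|\rho(\gamma)-\hat\rho|]}{1-\E[\rho(\gamma)]}\sup_p\E_{x\sim\tilde\nu}|px|$, which is precisely where the hypothesis $\frac{\E[|\hat\rho-\rho(\gamma)|]}{1-\E[\rho(\gamma)]}<1-\eps$ enters quantitatively; and Lemma~\ref{CloseToNormalNonDegen} converts $\mathcal{PW}_1(\nu_h,N(0,I))<\sqrt{2/\pi}-\eps'$ into explicit $(\alpha_0,\theta,A)$. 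Your proposal contains no substitute for this quantitative step, and your stopping-time remedy does not help: the block system $q_{\tau_\kappa}$ has unboundedly many branches with no uniform ratio bound, so the finite-parameter compactness you invoke does not apply to it either.

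Two further points would also need repair even in a softer argument. First, the radius $A$: the exponent and implied constant in \eqref{PolynomialTailDecay} depend on $\mu$, so Guivarc'h--Le Page does not by itself give a tail bound uniform over the class allowed by the hypothesis; moreover ``normalising $h_*\nu$ to spatial scale $\sim 1$'' via second moments is problematic, since a contracting on average measure satisfying the $\hat\rho$-condition need not have $\E[\rho(\gamma)^2]<1$, so $\nu$ may have infinite second moment. In the paper both the hyperplane bound and the choice of $A$ come out of the single inequality $\mathcal{PW}_1(\nu_h,N(0,I))<\sqrt{2/\pi}-\eps'$ via Markov's inequality inside Lemma~\ref{CloseToNormalNonDegen}, which only uses first moments. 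Second, even granting a limit object in your contradiction argument, concentration of a stationary measure on a proper affine subspace yields an invariant affine subspace, which is ruled out by irreducibility of $\mu_U$ together with the no-common-fixed-point assumption, not by a common fixed point alone; this is minor, but the main obstruction is the lack of any uniform quantitative mechanism in the regime $\hat\rho\to 1$.
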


    Proposition~\ref{UnifNonDeg} and Proposition~\ref{UnifNonDegContAv} are proved in section~\ref{section:mixnondeg}. We are now in a suitable position to state our main result. Theorem~\ref{MainResultFixU}, Theorem~\ref{SpectralGapTheorem} and Theorem~\ref{MainResultContAvFixU} follow from the main result Theorem~\ref{MainResult} by applying Proposition~\ref{UnifNonDeg}, Remark~\ref{rhoRemark} and Proposition~\ref{UnifNonDegContAv} as well as Lemma~\ref{ConjugationLemma}.

	\begin{theorem}\label{MainResult} For every $d \in \Z_{\geq 1}$ and $R, c, T, \alpha_0, \theta, A > 0$ with $c, \alpha_0 \in (0,1)$ and $T \geq 0$ there is a constant $C = C(d,R, c, T, \alpha_0, \theta,A)$ depending on $d, R, c, T, \alpha_0, \theta$ and $A$ such that the following holds. Let $\mu$ be a finitely supported, contracting on average, $(c,T)$-well-mixing and $(\alpha_0 , \theta,A)$-non-degenerate probability measure on $G$ with $\mathrm{supp}(\mu) \subset \{ g \in G \,:\, \rho(g) \in [R^{-1},R] \}$ and $S_{\mu} < \infty$. Then the associated self-similar measure $\nu$ is absolutely continuous if $$\frac{h_{\mu}}{|\chi_{\mu}|} > C \left( \max\left\{ 1, \log \frac{S_{\mu}}{h_{\mu}} \right\} \right)^2.$$  
    \end{theorem}

    For convenience of the reader, we formally deduce Theorem~\ref{MainResultFixU}, Theorem~\ref{SpectralGapTheorem} and Theorem~\ref{MainResultContAvFixU} from these results.

    \begin{proof}(of Theorem~\ref{MainResultFixU}, Theorem~\ref{SpectralGapTheorem} and Theorem~\ref{MainResultContAvFixU})
       We only prove Theorem~\ref{MainResultFixU} formally as the other deductions are analogous. Let $d,k,\eps, \mathcal{I},C$ and $\mu$ be as in Theorem~\ref{MainResultFixU}. Then by Remark~\ref{rhoRemark} there is $\tilde{\rho} \in (0,1)$ sufficiently close to $1$ in terms of $C$ and $\eps$ such that $\rho(g_i) \in (\tilde{\rho},1)$ for all $1\leq i \leq k$. Therefore, the assumptions of Proposition~\ref{UnifNonDeg} hold and so there is $(c,T)$ and $(\alpha_0,\theta, A)$ depending only on $d, \eps$ and $\mathcal{I}$ such that a conjugate $\mu'$ of $\frac{1}{2} + \frac{1}{2}\mu$ is $(c,T)$-well-mixing and $(\alpha_0,\theta, A)$-non-degenrate. As by Lemma~\ref{ConjugationLemma} it holds that $2\chi_{\mu} = \chi_{\mu'}$, $2h_{\mu} = h_{\mu'}$ and $S_{\mu} = S_{\mu'}$, Theorem~\ref{MainResultFixU} follows from Theorem~\ref{MainResult}.
    \end{proof}

    A similar result to Theorem~\ref{MainResult} for Furstenberg measures of $\mathrm{PSL}_2(\R)$ was established by the first-named author \cite{Kittle2023}. However in \cite{Kittle2023} it is necessary to assume that $\alpha_0 \in (0,1/3)$ and we currently can't prove an analogue of Proposition~\ref{UnifNonDeg} for Furstenberg measures. Therefore Theorem~\ref{MainResultFixU} can be deduced in the case of self-similar measures and we also note that the examples of absolutely continuous Furstenberg measures in \cite{Kittle2023} are more intricate.

    Next, we state a version of our main theorem for $d\geq 3$. Note that when $d\leq 2$, $h_{U(\mu)} = 0$ since $O(1)$ and $O(2)$ are abelian, so the following result only makes sense when $d \geq 3$. 

    \begin{theorem}\label{HighdimMainTheorem} Let $d \geq 3$ and $R, c, T, \alpha_0, \theta, A > 0$ with $c, \alpha_0 \in (0,1)$ and $T \geq 1$. Then there is a constant $C = C(d,R, c, T, \alpha_0, \theta,A)$ such that the following holds. Let $\mu$ be a finitely supported, contracting on average, $(c,T)$-well-mixing and $(\alpha_0 , \theta,A)$-non-degenerate probability measure on $G$ with $\mathrm{supp}(\mu) \subset \{ g \in G \,:\, \rho(g) \in [R^{-1},R] \}$. Moreover, assume that $S_{\mu} < \infty$. Then $\nu$ is absolutely continuous if $$\frac{h_{U(\mu)}}{|\chi_{\mu}|} \geq C \max\left\{ 1, \log \left( \frac{ S_{U(\mu)}}{h_{U(\mu)}} \right)  \right\}^2. $$    
    \end{theorem}

    We again formally deduce Theorem~\ref{MainTheoremHighDim} from Theorem~\ref{HighdimMainTheorem}.

    \begin{proof}(of Theorem~\ref{MainTheoremHighDim})
        As in the above proof of Theorem~\ref{MainResultFixU}, the result \eqref{Eq:MainTheoremHighDim} follows from Theorem~\ref{HighdimMainTheorem} and Proposition~\ref{UnifNonDeg}, Lemma~\ref{ConjugationLemma} and Remark~\ref{rhoRemark}. To deduce \eqref{MainTheoremHighDimUnifTits} from \eqref{Eq:MainTheoremHighDim}, it suffices to show that there is a constant $c$ depending on $d$ and $\eps$ such that $h_{U(\mu)} \geq c$, which follow as Corollary 6.9 of \cite{HochmanSolomyak2017} using Breuillard's uniform Tits alternative \cite{Breuillard2008}.
    \end{proof}

\subsection{Outline} \label{section:Outline}

We give a sketch for the proof of Theorem~\ref{MainResult}. Our proof extends the strategy of \cite{Kittle2023} to self-similar measures and generalises it to higher dimensions, which in turn is inspired by ideas and techniques developed in \cite{Hochman2014}, \cite{Hochman2017}, \cite{Varju2019} and \cite{Kittle2021}. Proposition~\ref{UnifNonDeg} will be discussed and proved in section~\ref{section:mixnondeg}. An entropy theory for random walks on general Lie groups was developed in \cite{KittleKoglerEntropy} and will be used in this paper.

Let $\mu$ be a measure on $G = \mathrm{Sim}(\R^d)$ and let $\gamma_1, \gamma_2, \ldots$ be independent $\mu$-distributed random variables. For a stopping time $\tau$ write $q_{\tau} = \gamma_1 \cdots \gamma_{\tau}$. Note that if $x$ is a sample of $\nu$ then so is $q_{\tau} x$. The basic idea of our proof is to decompose $q_{\tau} x$ as a sum 
\begin{equation}\label{BasicDecomposition}
    q_{\tau} x = X_1 + \dots + X_{n}
\end{equation}
with $X_1, \ldots , X_n$ independent random variables. We aim to show that for each scale $r > 0$ and a suitable stopping time $\tau$ that we can find a decomposition \eqref{BasicDecomposition} such that for all $i \in [n]$, 
\begin{equation}\label{Outline:DecompositionGoal}
    |X_i| \leq C^{-1}r \quad \text{ and } \quad  \sum_{i = 1}^n \mathrm{Var} \, X_i \geq C(\log \log r^{-1}) r^2I 
\end{equation} for a sufficiently large fixed constant $C = C(d) > 0$ only depending on $d$, where $\Var \, X_j$ is the covariance matrix of $X_j$ and we denote by $\geq$ the partial order defined in \eqref{MatrixPartialOrder}. The proof of Theorem~\ref{MainResult} comprises to establish \eqref{Outline:DecompositionGoal} and to deduce from \eqref{Outline:DecompositionGoal} that $\nu$ is absolutely continuous. For the former we use adequate entropy results and for the latter we work with the detail of a measure. The constant $C$ is be closely related to the one from Theorem~\ref{MainResult}.

\subsubsection*{\normalfont\textbf{From Decomposition to Absolute Continuity}} The notion of detail $s_{r}(\nu)$ at scale $r > 0$ of a measure $\nu$ is a tool introduced in \cite{Kittle2021} measuring how smooth $\nu$ is at scale $r$. Detail is an analogue of the entropy between scales $1 - H(\nu; r|2r)$ used by \cite{Varju2019}, yet with better properties. Our goal is to deduce from \eqref{Outline:DecompositionGoal} that our self-similar measure $\nu$ satisfies for $r$ sufficiently small,  
\begin{equation}\label{Outline:DetailDecay}
    s_{r}(\nu) \leq (\log r^{-1})^{-2},
\end{equation}
which implies that $\nu$ is absolutely continuous, as shown in \cite{Kittle2021}.  

A novelty introduced in \cite{Kittle2023} is a strong product bound for detail on $\R$, which we prove for $\R^d$ in this paper. Indeed, if $\lambda_1, \ldots, \lambda_k$ are measures on $\R^d$, $a < b$ and $r > 0$ with $s_r(\lambda_i) \leq \alpha$ for some $\alpha > 0$ and all $r \in [a,b]$ and $1\leq i \leq k$, then, as shown in Corollary~\ref{StrongProduct}, 
\begin{equation}\label{Outline:StrongProduct}
    s_{a\sqrt{k}}(\lambda_1 * \cdots * \lambda_k) \leq Q'(d)^{k-1}(\alpha^k + k! ka^2b^{-2})
\end{equation}
for some constant $Q'(d)$ depending only on $d$. To prove \eqref{Outline:StrongProduct}, \cite{Kittle2023} introduced $k$ order detail, which we generalise to $\R^d$. We note that \eqref{Outline:StrongProduct} is stronger than the product bounds \cite[Theorem 1.17]{Kittle2021} and \cite[Theorem 3]{Varju2019} and is required in our proof. 

To convert \eqref{Outline:DecompositionGoal} into \eqref{Outline:DetailDecay}, we first partition $[n]$ as $J_1 \sqcup \ldots \sqcup J_k$ for $k\asymp \log \log r^{-1}$ such that the random variables $Y_j = \sum_{i \in J_j} X_i$ satisfy $\mathrm{Var}\, Y_j \gg_d C$. Then we apply a Berry-Essen type result to deduce that each $Y_j$ is well-approximated by a Gaussian random variable and therefore that $s_r(Y_j) \leq \alpha$ for some constant $\alpha$ depending on $C$, with $\alpha$ tending to zero as $C$ tends to $\infty$. Finally we conclude by \eqref{Outline:StrongProduct} that we roughly get $s_r(\nu) \leq Q'(d)^k\alpha^k = e^{k(\log Q'(d) + \log \alpha)}$. We choose $k \asymp \log \log r^{-1}$ and therefore deduce \eqref{Outline:DetailDecay} provided that $\alpha$ is sufficiently small in terms of $d$ or equivalently $C$ is sufficiently large. This proves that $\nu$ is absolutely continuous.

\subsubsection*{\normalfont\textbf{From Decomposition on $\R^d$ to Decomposition on $G$}} It remains to explain how to establish \eqref{Outline:DecompositionGoal}, which we first translate into an analogous question on $G$. Indeed, we will make a decomposition of $q_{\tau}$ into 
\begin{equation}\label{Outline:Decomposition}
    q_{\tau} = g_1\exp(U_1) g_2\exp(U_2) \cdots g_n \exp(U_n)
\end{equation}
for random variables $g_1, \ldots , g_n$ on $G$ and $U_1, \ldots , U_n$ on the Lie algebra $\mathfrak{g}$ of $G$. In order to express $q_{\tau}v$ as a sum of random variables using \eqref{Outline:Decomposition}, we apply Taylor's theorem in Proposition~\ref{MainTaylorBound} to deduce 
\begin{equation}\label{Outline:TaylorBound}
    q_{\tau}v \approx g_1\cdots g_n v + \sum_{i = 1}^n \zeta_i(U_i),
\end{equation}
where $$\zeta_i = D_u(g_1g_2\cdots g_i \exp(u)g_{i + 1}g_{i + 2}\cdots g_n v)|_{u = 0}.$$

For notational convenience we write in this outline of proofs $$g_i' = g_1\cdots g_i \quad \text{ and } \quad g_i'' = g_{i + 1} \cdots g_n$$ and denote $$\rho_x = D_u(\exp(u)x)|_{u = 0}.$$ Then by the chain rule, as shown in Lemma~\ref{ZetaVariance}, $$\var(\zeta_i(U_i)) = \rho(g_i')^2 \, U(g_i')\var(\rho_{g_i''x}(U_i)) U(g_i')^T.$$

We will use the $(c,T)$-well-mixing and $(\alpha_0,\theta, A)$-non-degeneracy condition to ensure that 
\begin{equation}\label{Outline:VarTraceBound}
    \var(\zeta_i(U_i)) \geq  c_1 \rho(g_i')^2\tr(U_i)I = c_1 \tr(\rho(g_i')U_i)I
\end{equation}
for some constant $c_1 > 0$ depending on $d,c,T,\alpha_0,\theta$ and $A$ and where $\tr(U_i)$ is the trace of the covariance matrix of $U_i$. This will be shown in Proposition~\ref{InitialDecompositionV} by ensuring that each of the $g_i$ is a product of sufficiently many $\gamma_j$ such that we can apply well-mixing and non-degeneracy as $g_ix$ is close in distribution to $\nu$. In fact, we exploit suitable properties of the derivative of $\rho_x$ and use a principal component decomposition.

So in order to achieve \eqref{Outline:DecompositionGoal}, we require that 
\begin{equation}\label{Outline:DecompositionNewGoal}
    |U_i| \leq \rho(g_i')^{-1}r \quad\quad \text{ and } \quad\quad \sum_{i = 1}^n \tr(\rho(g_i')U_i) \geq C^3 c_1^{-1} ( \log \log r^{-1} ) r^2
\end{equation}
for the constant $C$ from \eqref{Outline:DecompositionGoal}. Note that to arrive at \eqref{Outline:DecompositionGoal} we replace $U_i$ by $C^{-1}U_i$ and use \eqref{Outline:VarTraceBound}.

\subsubsection*{\normalfont\textbf{Entropy Gap and Trace Bounds for Stopped Random Walk}} We prove \eqref{Outline:DecompositionNewGoal} by establishing suitable entropy bounds on $G$ and then translate them to the necessary trace bounds. We use the following notation. For a random variable $g$ on $G$ and $s > 0$, we define $\tr(g; s)$ to be the supremum of all $t\geq 0$ such that we can find some $\sigma$-algebra $\mathscr{A}$ and some $\mathscr{A}$-measurable random variable $h$ taking values in $G$ such that $$|\log(h^{-1}g)| \leq s \quad \text{ and } \quad \E[\tr(\log(h^{-1}g)|\mathscr{A})] \geq ts^2,$$ where $\log: G \to \mathfrak{g}$ is the Lie group logarithm and we assume that $h^{-1}g$ is supported on a small ball around the identity. The reason we need to work with the conditional trace is to use \eqref{Outline:EntropyTraceBound}. 

To establish \eqref{Outline:DecompositionNewGoal} we therefore need to find a collection of scales $s_i = \rho(g_i')^{-1}r$ such that 
\begin{equation}\label{Outline:DecompositionFinalGoal}
    \sum_{i = 1}^n \mathrm{tr}(q_{\tau}; s_i) \geq C c_1^{-1} \log \log r^{-1}
\end{equation} for $C$ an absolute constant depending only on $d$.

To show \eqref{Outline:DecompositionFinalGoal} one converts entropy estimates for $q_{\tau}$ into trace estimates, using in essence that for an absolutely continuous random variable $Z$ on $\R^{\ell}$ we have
\begin{equation}\label{Outline:RdEntropyTraceBound}
    H(Z) \leq \frac{\ell}{2}\log\left(\frac{2\pi e}{\ell} \cdot \tr(Z)\right),
\end{equation}
where $H$ is the differential entropy and $\tr(Z)$ is the trace of the covariance matrix of $Z$. Equality holds in \eqref{Outline:RdEntropyTraceBound} if and only if $Z$ is a spherical Gaussian.

We will work with entropy between scales on $G$. Precise definitions are given in section~\ref{section:vartrdef}. For the purposes of this outline consider the entropy between scales  defined for a random variable $g$ taking values in $G$, two scales $r_1,r_2 > 0$ and a parameter $a > 0$ as $$H_a(g; r_1| r_2) = (H(gs_{r_1,a}) - H(s_{r_1,a})) - (H(gs_{r_2,a}) - H(s_{r_2,a})),$$ where $H(\cdot)$ is the differential entropy and $s_{r,a}$ is a smoothing function supported on a ball of radius $ar$ and satisfying for $\ell = \dim \mathfrak{g}$ that
\begin{equation}\label{Outlinesraestimate}
    \mathrm{tr}(\log(s_{r,a})) \asymp \ell r^2 \quad \text{ and } \quad H(s_{r,a}) = \frac{\ell}{2}\log 2\pi e r^2 + O_d(e^{-a^2/4}) - O_{d,a}(r).
\end{equation}
The function $s_{r,a}$ is chosen such that $H(s_{r,a})$ is essentially maximal while being compactly supported, which is necessary towards establishing \eqref{Outline:DecompositionFinalGoal}. The parameter $a > 0$ is useful as it gives us a uniform error bound in \eqref{Outlinesraestimate}. By using moreover \eqref{Outline:RdEntropyTraceBound}, we relate in \cite{KittleKoglerEntropy}*{Proposition 1.5} entropy between scales and the trace by 
\begin{equation}\label{Outline:EntropyTraceBound}
    \tr(g; 2ar) \gg a^{-2}(H_a(g;r|2r) - O_d(e^{-a^2/4}) - O_{d,a}(r)).
\end{equation}

For $\kappa > 0$ denote by $$\tau_{\kappa} = \inf\{ n \geq 1\,:\, \rho(\gamma_1\cdots \gamma_n) \leq \kappa \}.$$ It is then shown in Proposition~\ref{EntropyBetweenScalesIncrease} for $r_1 < r_2$ and with $r_1 \leq \kappa^{\frac{S_{\mu}}{|\chi_{\mu}|}}$  that as $\kappa \to 0$ the following entropy gap holds:
\begin{equation}\label{Outline:EntropyIncrease}
    H_a(q_{\tau_{\kappa}}; r_1|r_2) \geq \left( \frac{h_{\mu}}{|\chi_{\mu}|} - d \right)\log \kappa^{-1} +  \ell \cdot \log r_2 + o_{\mu,d,a}(\log \kappa^{-1}).
\end{equation}
We will give a sketch of the proof of \eqref{Outline:EntropyIncrease} in the beginning of section~\ref{EntropyIsomSection} and just note that the main point of \eqref{Outline:EntropyIncrease} is that most of the elements in the support of $q_{\tau_{\kappa}}$ are separated by $\kappa^{\frac{S_{\mu}}{|\chi_{\mu}|}}$, which by standard properties of entropy implies that $H(q_{\tau_{\kappa}}s_{r_1,a}) \approx H(q_{\tau_{\kappa}}) + H(s_{r_1,a})$. As we have to use a stopping time in \eqref{Outline:EntropyIncrease}, we will need to work with $q_{\tau}$ instead of a deterministic time throughout our proof.

By \eqref{Outline:EntropyIncrease} it follows, assuming $h_{\mu}/|\chi_{\mu}|$ is sufficiently large and $\kappa$ is sufficiently small, that 
\begin{equation}\label{Outline:LotsofEntropy}
    H_{a}(q_{\tau_{\kappa}}; \kappa^{\frac{S_{\mu}}{|\chi_{\mu}|}}| \kappa^{\frac{h_{\mu}}{2\ell|\chi_{\mu}|}}) \gg_d \frac{h_{\mu}}{|\chi_{\mu}|} \log \kappa^{-1}.
\end{equation}

Using \eqref{Outline:LotsofEntropy} and \eqref{Outline:EntropyTraceBound}, we show in Proposition~\ref{LotsOfTrace} with setting $S = 2\max\{ S_{\mu}, h_{\mu} \}$ that for a collection of scales $$s_i \in (\kappa^{\frac{S}{|\chi_{\mu}|}},\kappa^{\frac{h_{\mu}}{2\ell|\chi_{\mu}|}}) \quad\quad \text{ with } \quad\quad 1 \leq i \leq \hat{m}$$ and $\hat{m}$ being a fixed constant depending on $S_{\mu}$ and $\chi_{\mu}$ that 
\begin{equation}\label{Outline:LotsofTrace}
    \sum_{i = 1}^{\hat{m}} \mathrm{tr}(q_{\tau_{\kappa}}; s_i) \gg_d \frac{h_{\mu}}{|\chi_{\mu}|}  \max\left\{ 1,\log \frac{S_{\mu}}{h_{\mu}}  \right\}^{-1}.
\end{equation}
As we explain at the beginning of section~\ref{EntropyIsomSection}, the error term $\max\left\{ 1,\log \frac{S_{\mu}}{h_{\mu}}  \right\}^{-1}$ arises from the error $O_d(e^{-a^2/4})$ in \eqref{Outline:EntropyTraceBound}.

\subsubsection*{\normalfont\textbf{Conclusion of Proof}} The trace bound \eqref{Outline:LotsofTrace} is not sufficient to establish \eqref{Outline:DecompositionFinalGoal} as we require a lower bound depending on $\log \log r^{-1}$. To achieve such a bound and to conclude the proof, we concatenate several decompositions arising from \eqref{Outline:LotsofTrace} and therefore develop a suitable theory of such decompositions in section~\ref{section:decomposition}.

It therefore remains to find sufficiently many parameters $\kappa_1, \ldots, \kappa_m$ such that the resulting intervals $$(\kappa_1^{\frac{S}{|\chi_{\mu}|}},\kappa_1^{\frac{h_{\mu}}{2\ell|\chi_{\mu}|}}), \quad (\kappa_2^{\frac{S}{|\chi_{\mu}|}},\kappa_2^{\frac{h_{\mu}}{2\ell|\chi_{\mu}|}}), \quad \ldots \quad (\kappa_m^{\frac{S}{|\chi_{\mu}|}},\kappa_m^{\frac{h_{\mu}}{2\ell|\chi_{\mu}|}})$$ are disjoint. As we require that all of the scales are $\geq r$, we set $\kappa_1 = r^{\frac{|\chi_{\mu}|}{S}}$. On the other hand, we want all scales to be sufficiently small. We, for example, therefore require that $\kappa_m^{\frac{h_{\mu}}{2\ell|\chi_{\mu}|}} < e^{-10}$. Thus setting $\kappa_{i + 1} = \kappa_i^{\frac{h_{\mu}}{3\ell S}}$, thereby ensuring that the resulting intervals are disjoint (provided $h_{\mu}/\chi_{\mu}$ is sufficiently large), a calculation shows that the maximal $m$ we can take is $$ m \asymp \max\left\{ 1,\log \frac{S_{\mu}}{h_{\mu}}  \right\}^{-1}\log \log r^{-1} .$$ Combining all of the above, it follows that when summing over all the scales $$\sum_{i} \mathrm{tr}(q_{\tau_{\kappa_1}}; s_i) \gg_d \frac{h_{\mu}}{|\chi_{\mu}|}  \max\left\{ 1,\log \frac{S_{\mu}}{h_{\mu}}  \right\}^{-2} \log \log r^{-1}.$$ We therefore require in order to satisfy \eqref{Outline:DecompositionFinalGoal} that $$\frac{h_{\mu}}{|\chi_{\mu}|}  \max\left\{ 1,\log \frac{S_{\mu}}{h_{\mu}}  \right\}^{-2}  \geq C^3c_1^{-1},$$ which leads us to the condition from Theorem~\ref{MainResult} and concludes our sketch of the proof.

\subsection{Notation} \label{section:Notation} We use the asymptotic notation $A \ll B$ or $A = O(B)$ to denote that $|A| \leq CB$ for a constant $C > 0$. If the constant $C$ depends on additional parameters we add subscripts. Moreover, $A \asymp B$ denotes $A \ll B$ and $B \ll A$. 

For an integer $n \geq 1$ we abbreviate $[n] = \{ 1,2,\ldots , n \}$. On $\R^d$ the euclidean norm is denoted $|\cdot|$.

Given two positive semi-definite symmetric real $d \times d$ matrices $M_1$ and $M_2$ we write 
\begin{equation}\label{MatrixPartialOrder}
    M_1 \geq M_2 \quad\quad \text{ if and only if } \quad\quad x^TM_1 x \geq x^TM_2 x \quad \text{ for all } x \in \R^d.
\end{equation}

For a random variable $X$ on $\R^d$ we denote by $\mathrm{Var}(X)$ the covariance matrix of $X$ and by $\mathrm{tr}(X) = \tr\, \mathrm{Var}(X)$ the trace of the covariance matrix.

Given a metric space $(M,d)$, $p \in [1,\infty)$ and two probability measures $\lambda_1$ and $\lambda_2$ on $M$, we define the $L^p$-Wasserstein metric as
\begin{equation}\label{WassersteinDef}
    \mathcal{W}_p(\lambda_1, \lambda_2) = \inf_{\gamma \in \Gamma(\lambda_1,\lambda_2)} \left( \int_{M\times M} d(x,y)^p \, d\gamma(x,y)\right)^{\frac{1}{p}},
\end{equation}
where $\Gamma(\lambda_1,\lambda_2)$ is the set of couplings of $\lambda_1$ and $\lambda_2$, i.e. of probability measures $\gamma$ on $M \times M$ whose projections to the first coordinate is $\lambda_1$ and to the second is $\lambda_2$.

Throughout this paper we fix $d \geq 1$ and write $G = \mathrm{Sim}(\R^d)$. The Lie algebra of $G$ will be denoted $\mathfrak{g}$ and $\ell = \dim \mathfrak{g}$. We usually consider a fixed probability measure $\mu$ on $G$ and independent samples $\gamma_1, \gamma_2, \ldots$ of $\mu$.  We write for $\kappa > 0$ $$q_n = \gamma_1\cdots \gamma_n \quad\quad  \text{ and } \quad\quad  \tau_{\kappa} = \inf\{ n \geq 1 \,;\, \rho(\gamma_n) \leq \kappa \}.$$

When $\mu$ is a probability measure on $G = \mathrm{Sim}(\R^d)$ and $\nu$ is a probability measure $\R^d$ we denote by $\mu * \nu$ the probability measure uniquely characterized by $$(\mu*\nu)(f) = \int\int f(gx) \, d\mu(g)d\nu(x)$$ for $f\in C_c(\R^d)$. When $\mu = \sum_{i}p_i \delta_{g_i}$ is finitely supported, then 
\begin{equation}\label{Equation:ConcreteConvolution}
    \mu * \nu = \sum_{i} p_i g_i\nu,
\end{equation}
where $g_i\nu$ is the pushforward of $\nu$ by $g_i$ defined by $(g_i\nu)(B) = \nu(g_i^{-1}B)$ for all Borel sets $B \subset \R^d$.

The various notions of entropy between scales as well as $\tr(g,r)$ are the same as in \cite{KittleKoglerEntropy} and will be recalled in section~\ref{section:vartrdef}. 

We will denote by $\Haarof{G}$ a normalised Haar measure on $\mathrm{Sim}(\R^d)$. Moreover if $H\subset O(d)$ is a closed subgroup, we will denote by $\Haarof{H}$ the Haar probability measure on $H$. For a probability measure $\mu_U$ on $H$, the $L^2$-spectral gap of $\mu_U$ in $H$ is defined as 
\begin{equation}\label{SpectralGapDef}
    \mathrm{gap}_H(\mu_U) = 1 - ||T_{\mu_U}|_{L^2_0(G)}||,
\end{equation}
where $(T_{\mu_U}f)(k) = \int f(hk) \, d\mu_U(h)$ for $f \in L^2(H)$ and $L^2_0(H) = \{ f \in L^2(H) \,:\, \Haarof{H}(f) = 0\}$ for $||\circ||$ the operator norm.

\subsection{Organisation} \label{section:organisation} 

In section~\ref{section:prelim} the Taylor expansion bound \eqref{Outline:TaylorBound} is proved and we establish several probabilistic preliminaries. We discuss order $k$ detail in section~\ref{section:Detail}, establish  \eqref{Outline:GoodDetailBound} as well as show how to convert \eqref{Outline:DecompositionGoal} into suitable detail bounds. In section~\ref{EntropyIsomSection} we prove \eqref{Outline:EntropyIncrease} and \eqref{Outline:LotsofTrace}. Finally, we deduce Theorem~\ref{MainResult} as well as Theorem~\ref{HighdimMainTheorem} in section~\ref{section:decomposition} by developing a decomposition theory for stopped random walks. We study $(c,T)$-well-mixing and $(\alpha_0,\theta,A)$-non-degeneracy in section~\ref{section:mixnondeg} and prove Proposition~\ref{UnifNonDeg} and Proposition~\ref{UnifNonDegContAv}. In section~\ref{section:examples} we establish explicit examples and in particular we prove all the corollaries stated in the introduction.

    \section{Preliminaries}

    \label{section:prelim}

In this section we first study the derivatives of the $G$ action on $\R^d$ in section~\ref{subsection:DerivativeBounds} and then versions of the large deviation principle in section~\ref{subsection:LDP}.

\subsection{Derivative Bounds}

\label{subsection:DerivativeBounds}

\subsubsection{Basic Properties}

Let $G = \mathrm{Sim}(\R^d)$ with Lie algebra $\mathfrak{g} = \mathrm{Lie}(G)$. For $x \in \R^d$ consider the map $$ w_x: \mathfrak{g} \to \R^d, \quad\quad u \mapsto \exp(u)x.$$ Denote by $\psi_x = D_0w_x : \mathfrak{g} \to \R^d$ the differential at zero of $w_x$. 

Note that we can embed $G = \mathrm{Sim}(\R^d)$ into $\mathrm{GL}_{d + 1}(\R)$ via the map $$g \mapsto \begin{pmatrix}
    r(g)U(g) & b(g) \\ 0 & 1
\end{pmatrix}.$$ We can therefore identify $\mathfrak{g}$ as a matrix Lie algebra and so can write $$\mathfrak{g} = \left\{ \begin{pmatrix}
        \alpha & \beta \\ 0 & 0
    \end{pmatrix}
    \,:\, \alpha\in \R I +   \mathfrak{so}_d(\R), \beta \in \R^d \right\} \subset \mathfrak{gl}_{n + 1}(\R)$$  Thus for $u = (\begin{smallmatrix}
        \alpha & \beta \\ 0 & 0
    \end{smallmatrix})$ it follows that $\psi_x(u) = u(\begin{smallmatrix}
    x \\ 1
\end{smallmatrix}) = \alpha x + \beta$. With this viewpoint we also use the following convenient notation
\begin{equation}\label{NotationConvention}
    ux = \psi_x(u) = \alpha x + \beta
\end{equation}

We fix an inner product on $\mathfrak{g}$ and denote by $|\circ|$ the associated norm. Moreover, we choose an ordered orthonormal basis of $\mathfrak{g}$, endowing $\mathfrak{g}$ with a coordinate system. So every element $u \in \mathfrak{g}$ can be written as a sum $u = \sum_{i = 1}^{\ell} u_i$, where $u_i$ is the projection of $u$ to the $i$-th basis vector. On numerous occasions we will consider derivatives with respect to $u_i$.

In the following lemma, some properties about the derivatives of $w_x, \psi_x$ and the map $g$ are collected. For notational convenience, we denote throughout this subsection by $\frac{\partial f}{\partial x}$ the derivative $D_x f$ of a function $f: \R^{d_1} \to \R^{d_2}$ at a vector $x \in \R^{d_1}$. We furthermore write $\ell = \dim \mathfrak{g}$.

\begin{lemma}\label{BasicDerivativeProp}
    The following properties hold:
    \begin{enumerate}[label = (\roman*)]
        \item Let $g = \rho U + b \in G$. Then for all $x \in \R^d$, it holds that $\frac{\partial g}{\partial x} = \rho U$ and all of the second derivatives of $g$ are zero.
        \item Whenever $x \in \R^d$ and $|u| \leq 1$ and $1 \leq i,j \leq \ell$, $$\bigg| \frac{\partial w_x}{\partial u_i} \bigg| \ll_d \max(|x|,1) \quad \text{ and }\quad \bigg| \frac{\partial w_x}{\partial u_i \partial u_j}\bigg| \ll_d \max(|x|,1) .$$
        \item For any $x_1, x_2 \in \R^d$ we have that $$||\psi_{x_1} - \psi_{x_2}|| \ll_d |x_1 - x_2|.$$
        \item Let $u \in \mathfrak{g} \backslash \{ 0 \}$. Then there is a proper subspace $W_u \subset \R^d$ and a vector $u_0 \in \R^d$ such that if $\psi_x(u) = 0$ then $x \in u_0 + W_u$ for $x \in \R^d$.
        \item For all $\theta, A > 0$ there is $\delta > 0$ such that the following is true. Let $v \in \mathfrak{g}$ be a unit vector. Then there is a proper subspace $W_v \subset \R^d$ and a vector $v_0 \in \R^d$ such that if $$x \in \R^d \backslash B_{\theta}(v_0 + W_v) \quad\text{ and } \quad |x| \leq A$$ for $B_{\theta}(v_0 + W_v)$ the $\theta$-ball around $v_0 + W_v$ then $$|\psi_x(v)| \geq \delta.$$
    \end{enumerate}
\end{lemma}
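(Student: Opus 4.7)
The matrix embedding $g\mapsto(\begin{smallmatrix}\rho U & b \\ 0 & 1\end{smallmatrix})$ and the resulting formula $\psi_x(u)=\alpha x+\beta$ reduce parts (i)--(iv) to routine manipulations that I would dispatch in turn. For (i) I would differentiate $g(x)=\rho Ux+b$ directly in $x$. For (ii) I would expand $w_x(u)=\exp(u)\begin{pmatrix}x\\1\end{pmatrix}$ as a convergent matrix power series for $|u|\le 1$ and differentiate term by term in the $u_i$; each partial is then bounded uniformly, in terms of $d$, by a constant multiple of the norm of $\begin{pmatrix}x\\1\end{pmatrix}$, which yields the stated bound in the regime of interest. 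For (iii), the difference $(\psi_{x_1}-\psi_{x_2})(u)=\alpha(x_1-x_2)$ combined with $|\alpha|\le |u|$ gives the operator-norm estimate at once. For (iv), if $\alpha=0$ then $u\ne 0$ forces $\beta\ne 0$ and the zero set of $\psi_{\cdot}(u)$ is empty; otherwise the zero set, if non-empty, is a translate of the proper subspace $\ker(\alpha)\subset \R^d$.

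The substantive step is (v), which I would prove by an explicit split according to the size of the rotation/scaling part $\alpha$ of a unit vector $v\in\mathfrak g$. Fix the threshold $\epsilon_0=\min(1/(4A),1/2)$. In the regime $|\alpha|\le\epsilon_0$ the unit condition forces $|\beta|\ge\sqrt{3}/2$, and the reverse triangle inequality gives $|\psi_x(v)|\ge|\beta|-|\alpha||x|\ge 1/4$ for every $|x|\le A$; any choice of $W_v$ and $v_0$ then suffices (take $W_v=\{0\}$, $v_0=0$). In the regime $|\alpha|>\epsilon_0$, I would let $e\in\R^d$ be a unit right singular vector of $\alpha$ for its largest singular value $\sigma_{\max}(\alpha)$; the Frobenius-versus-operator inequality gives $\sigma_{\max}(\alpha)\ge|\alpha|/\sqrt{d}\ge\epsilon_0/\sqrt{d}$. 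I then take $W_v=\{e\}^\perp$ (a hyperplane, hence a proper subspace) and pick $v_0\in\R e$ to cancel the $\alpha e$-component of $\alpha v_0+\beta$, explicitly $v_0=-\langle\beta,\alpha e\rangle|\alpha e|^{-2}e$. Decomposing any $x$ as $v_0+te+w$ with $w\perp e$ and using the right-singular identity $\alpha^T\alpha e=\sigma_{\max}(\alpha)^2\,e$ to kill the $\alpha w$-contribution along $\alpha e$, the inner product of $\psi_x(v)$ with $\alpha e$ collapses to $t|\alpha e|^2$, giving $|\psi_x(v)|\ge|t|\,\sigma_{\max}(\alpha)\ge\theta\epsilon_0/\sqrt{d}$ whenever $|t|=d(x,v_0+W_v)\ge\theta$. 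Combining the two regimes produces the uniform constant $\delta=\min(1/4,\theta\epsilon_0/\sqrt{d})$.

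The main obstacle I would flag is that in the large-$\alpha$ regime the matrix $\alpha$ can still be severely degenerate: an antisymmetric part in $\mathfrak{so}_d$ can contribute arbitrarily small nonzero singular values, so the naive choice $W_v=\ker(\alpha)$ from part (iv) combined with a $\sigma_+(\alpha)\cdot d(x,v_0+W_v)$ lower bound is doomed because the smallest positive singular value is not uniformly bounded below on the unit sphere. The trick that makes (v) work is to sacrifice tightness of the affine subspace and replace $\ker(\alpha)$ by a hyperplane orthogonal to the \emph{largest} singular direction of $\alpha$, so that the estimate only ever sees $\sigma_{\max}(\alpha)$, which is controlled from below by $|\alpha|$ via a dimension-free norm inequality.
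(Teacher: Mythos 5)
Your argument is correct, and for parts (i)--(iv) it matches the paper's (one-line) proof: (i) is immediate, (iii) is the same computation $|\alpha x_1-\alpha x_2|\le \|\alpha\|\,|x_1-x_2|\ll_d|u|\,|x_1-x_2|$, and (iv) is the same choice $W_u=\ker(\alpha)$ with a translate, the degenerate cases being vacuous. Where you genuinely diverge is (v): the paper disposes of it with "follows from (iv) by continuity", i.e.\ a soft compactness argument on the unit sphere of $\mathfrak g$ and the ball $\{|x|\le A\}$ that produces a non-effective $\delta$, whereas you give an explicit quantitative construction, splitting on $|\alpha|\lessgtr\epsilon_0=\min(1/(4A),1/2)$ and, in the large-$\alpha$ regime, replacing $\ker(\alpha)$ by the hyperplane $\{e\}^{\perp}$ through the top right-singular vector, so that the projection of $\psi_x(v)$ onto $\alpha e$ collapses to $t\,\sigma_{\max}(\alpha)^2$ and only $\sigma_{\max}(\alpha)\ge|\alpha|/\sqrt d$ enters. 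This buys an explicit $\delta=\min(1/4,\theta\epsilon_0/\sqrt d)$ and correctly identifies why a naive quantitative version of the paper's (iv)-based choice would fail (the smallest positive singular value of a unit $\alpha$ is not uniformly bounded below), at the cost of a longer argument; the paper's route is shorter but non-constructive. One small caveat common to both treatments: in (ii) the literal bound $\ll_d|x|$ degenerates at $x=0$ (the derivative in the translation directions of $\mathfrak g$ has unit size there), and what your power-series bound actually gives, like the paper's "by compactness", is $\ll_d|x|+1$; your hedge "in the regime of interest" acknowledges this, and it is how the estimate is used downstream.
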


\begin{proof}
    (i) follows by definition and (ii) by compactness of $\{u\in \mathfrak{g} \,:\, |u| \leq 1 \}$ and using that a pure translation by a small vector has norm $O_d(1)$. For (iii) using notation \eqref{NotationConvention} it holds for $u = (\begin{smallmatrix}
        \alpha & \beta \\ 0 & 0
    \end{smallmatrix}) \in \mathfrak{g}$ with $|u| \leq 1$ that 
    \begin{align*}
        |\psi_{x_1}(u) - \psi_{x_2}(u)| = |\alpha x_1 - \alpha x_2| &\leq ||\alpha||\cdot |x_1 - x_2| \\ &\ll_d |\alpha| \cdot |x_1 - x_2| \leq |u|\cdot |x_1 - x_2|
    \end{align*}
    using that the operator norm $||\circ||$ is equivalent to the inner product norm on $\mathfrak{g}$. To show (iv),  we may assume that $\beta \in \mathrm{Im}(\alpha)$ as otherwise there is nothing to show. Then set $W_u = \ker(\alpha)$ and $u_0 \in \R^d$ such that $\alpha u_0 = -\beta$, implying the claim.  (v) follows from (iv) by continuity.
\end{proof}

For $u \in \mathfrak{g} \backslash \{ 0 \}$ we define   $$E_{\theta}(u) = \R^d \backslash B_{\theta}(u_0 + W_u).$$ 

Given a random variable $U$ taking values in $\mathfrak{g}$, we say that $u \in \mathfrak{g}$ is a first principal component if it is an eigenvector of its covariance matrix with maximal eigenvalue. Set $$E_{\theta}(U) = \bigcup_{v \in P} E_{\theta}(v),$$ where $P$ is the set of first principal components of $U$. Similarly if $\mu$ is a probability measure which is the law of a random variable $U$ then we define $E_{\theta}(\mu) = E_{\theta}(U)$. Recall that given a random variable $U$ in $\R^{\ell}$, we denote by $\mathrm{tr}(U)$ the trace of the covariance matrix of $U$.

\begin{proposition}\label{FirstDerivativeBound}
    For all $\theta,A > 0$ there is some $\delta = \delta(d,\theta,A) > 0$ such that the following is true. Suppose that $U$ is a random variable taking values in $\mathfrak{g}$ and that $x \in \R^d$ with $|x| \leq A$. Suppose that $x \in E_\theta(U)$. Then $$\tr(Ux) \geq \delta \cdot \tr(U).$$
\end{proposition}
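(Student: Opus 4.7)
The plan is to reduce the statement to a direct application of Lemma~\ref{BasicDerivativeProp}(v), using a principal component decomposition of the covariance matrix of $U$ together with linearity of $\psi_x$.

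First I would observe that since $\psi_x : \mathfrak{g} \to \R^d$ is linear with some matrix $M_x$, we have $\Var(\psi_x(U)) = M_x \Var(U) M_x^T$, and diagonalising $\Var(U)$ in an orthonormal basis $\{e_1, \dots, e_\ell\}$ of eigenvectors with eigenvalues $\lambda_1 \geq \lambda_2 \geq \dots \geq \lambda_\ell \geq 0$ yields
\begin{equation*}
\tr(Ux) \;=\; \tr\bigl(M_x \Var(U) M_x^T\bigr) \;=\; \sum_{i=1}^\ell \lambda_i\, |\psi_x(e_i)|^2.
\end{equation*}
In particular, dropping all but the term corresponding to any unit first principal component $v$ (i.e.\ an eigenvector with eigenvalue $\lambda_{\max} = \lambda_1$), we get $\tr(Ux) \geq \lambda_{\max} \, |\psi_x(v)|^2$.

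Next, since $x \in E_\theta(U) = \bigcup_{v \in P} E_\theta(v)$, there exists a unit first principal component $v$ with $x \in E_\theta(v)$. By definition, $x \in \R^d \setminus B_\theta(v_0 + W_v)$ and by hypothesis $|x| \leq A$, so Lemma~\ref{BasicDerivativeProp}(v) applied to $v$ yields some $\delta' = \delta'(d, \theta, A) > 0$ with $|\psi_x(v)| \geq \delta'$. Combining with $\lambda_{\max} \geq \tr(U)/\ell$ gives
\begin{equation*}
\tr(Ux) \;\geq\; \lambda_{\max}\, |\psi_x(v)|^2 \;\geq\; \frac{\delta'^2}{\ell}\, \tr(U),
\end{equation*}
so we may take $\delta = \delta'^2 / \ell$.

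There is no real obstacle here: the only nontrivial ingredient is Lemma~\ref{BasicDerivativeProp}(v), which already packages the needed uniform lower bound for $|\psi_x(v)|$ away from the degenerate affine subspace. The rest is bookkeeping: identifying $\tr(Ux)$ as a sum weighted by eigenvalues of $\Var(U)$, and exploiting that a first principal component carries at least a $1/\ell$ fraction of the total trace.
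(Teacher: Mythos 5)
Your proof is correct and follows essentially the same route as the paper: diagonalise $\Var(U)$, keep only the contribution of a first principal component $v$ with $x \in E_\theta(v)$, bound $|\psi_x(v)|$ from below via Lemma~\ref{BasicDerivativeProp}(v), and use $\lambda_{\max} \geq \tr(U)/\ell$. The only cosmetic difference is that you compute $\tr(Ux)$ via $\tr(M_x\Var(U)M_x^T)$ while the paper expands $\E[|\psi_x(U)|^2]$ for mean-zero $U$ using the uncorrelated components; these are the same calculation.
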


\begin{proof}
    We applied here the notation \eqref{NotationConvention} that $\psi_x(U) = Ux$. Write $\ell = \dim \mathfrak{g}$ and let $w_1, \ldots , w_{\ell}$ be an orthonormal basis of eigenvectors of the covariance matrix $\mathrm{Var}(U)$. We may assume that $U$ has mean zero. Denote by $U_i = \langle U, w_i \rangle = U^Tw_i$ for $1 \leq i \leq \ell$ and assume without loss of generality that $\mathrm{Var}(U_1) \geq \ldots \geq \mathrm{Var}(U_{\ell})$ so that $w_1$ is a principal component. Then the $(U_i)_{1 \leq i \leq \ell}$ are uncorrelated since for $i \neq j$ 
    \begin{align*}
       \mathrm{cov}(U_i,U_j) = \E[U_i U_j] &= \E[\langle U^T w_i, U^T w_j \rangle] \\ &= \E[ \langle UU^T w_i, w_j \rangle ] =  \langle \mathrm{Var}(U)w_i, w_j \rangle = 0 
    \end{align*}
    and it holds that $U = \sum_{i = 1}^{\ell} U_i w_i$ and that $\mathrm{Var}(U_1) \geq \frac{1}{\ell}\mathrm{tr}(U)$. Also by Proposition~\ref{BasicDerivativeProp} (v) it holds that $|\psi_x(w_1)| \geq \delta$. We then compute 
    \begin{align*}
        \tr(\rho_x(U)) = \E[|\rho_x(U)|^2] = \E\left[ \sum_{i = 1}^{\ell} U_i^2 |\rho_x(w_i)|^2  \right] \geq \E[U_1^2 |\rho_x(w_1)|^2] \geq \frac{\delta}{\ell} \tr(U).
    \end{align*}
\end{proof}

\begin{lemma}\label{ZetaVariance}
    Let $U$ be a random variable on $\mathfrak{g}$ and let $g\in G$ and $x \in \R^d$. Denote $$\zeta = D_u g\exp(u)x|_{u = 0}.$$ Then $$\var(\zeta(U)) = \rho(g)^2 \cdot U(g) \psi_x \circ \mathrm{Var}(U) \circ \psi_x^T U(g)^T.$$
\end{lemma}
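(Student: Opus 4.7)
The plan is to recognise this as a direct application of the chain rule plus the standard covariance transformation rule for linear maps. The map under consideration is $\mathfrak{g} \to \R^d$, $u \mapsto g\exp(u)x$, which factors as $u \mapsto \exp(u)x \mapsto g\cdot(\exp(u)x)$, i.e.\ as $w_x$ followed by the action of $g$. By the chain rule the differential at $u=0$ therefore splits as $\zeta = (D_{x}\,g) \circ (D_{0} w_x) = (D_x g)\circ \psi_x$.

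The first factor is computed by Lemma~\ref{BasicDerivativeProp}(i): since $g(y)=\rho(g)U(g)y+b(g)$, the differential $D_x g$ is simply the linear map $\rho(g)U(g)$ (independent of $x$). Thus as a linear map from $\mathfrak{g}$ to $\R^d$ we have the clean identification
\[
\zeta = \rho(g)\,U(g)\,\psi_x.
\]

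It remains to push a random vector $U$ in $\mathfrak{g}$ through this linear map and read off the covariance. For any linear map $L:\mathfrak{g}\to\R^d$ and any random variable $U$ with finite second moments, $\mathrm{Var}(LU)=L\,\mathrm{Var}(U)\,L^T$ (this is immediate from the definition of the covariance matrix, since centring commutes with $L$ and $(LU)(LU)^T = L UU^T L^T$). Applying this with $L=\rho(g)U(g)\psi_x$ gives
\[
\mathrm{Var}(\zeta(U)) = \rho(g)^2\, U(g)\,\psi_x\,\mathrm{Var}(U)\,\psi_x^T\, U(g)^T,
\]
which is the claimed formula. There is no real obstacle here; the only thing to be careful about is the bookkeeping of the transpose under the composition $U(g)\psi_x$, which is handled by the general rule $(AB)^T = B^T A^T$ and the fact that the scalar $\rho(g)$ commutes with everything and squares.
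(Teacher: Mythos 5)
Your proposal is correct and follows essentially the same route as the paper: the chain rule identifies $\zeta = \rho(g)U(g)\psi_x$ as a linear map, and the covariance transformation rule $\mathrm{Var}(LU)=L\,\mathrm{Var}(U)\,L^T$ then yields the formula. The paper's proof is just a terser version of the same two steps, so there is nothing to add.
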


\begin{proof}
    Note that by the chain rule $\zeta(U) = \rho(g)U(g)\psi_x(U)$ and therefore $$\var \zeta(U) = \rho(g)^2 U(g)\var(\psi_x(U))U(g)^T$$ Viewing $\psi_x: \mathfrak{g} \to \R^d$ as a matrix with our choice of coordinate system we write $\psi_x(U) = \psi_x \circ U$ and the claim follows.
\end{proof}

\subsubsection{Taylor Expansion Bound} The aim of this subsection is to prove the following proposition, which crucially relies on the $G$ action on $\R^d$ having vanishing second derivatives. 

\begin{proposition}\label{MainTaylorBound}
    For every $A > 0$ there exists $C = C(d,A) > 1$ such that the following holds. Let $n \geq 1$, $r \in (0,1)$ and let $u^{(1)}, \ldots , u^{(n)} \in \mathfrak{g}$. Let $g_1, \ldots , g_n \in G$ with $$\rho(g_i) < 1, \quad \quad |b(g_i)| \leq A \quad \text{ and } \quad |u^{(i)}| \leq \rho(g_1 \cdots g_i)^{-1}r < 1.$$ Let $v \in \R^d$ with $|v| \leq A$ and write $$x = g_1 \exp(u^{(1)})\cdots g_n\exp(u^{(n)})v$$ and $$\zeta_i = D_0(g_1g_2 \cdots g_{i}\exp(u)g_{i + 1} \cdots g_{n-1}g_n v)$$ and let $$S = g_1\cdots g_n v + \sum_{i = 1}^n \zeta_i(u^{(i)}).$$ Then it holds that $$|x-S| \leq C^n\rho(g_1 \cdots g_n)^{-1}r^2.$$
\end{proposition}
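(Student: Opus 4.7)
The plan is to use a telescoping decomposition. For $0 \leq k \leq n$ define $x_k := g_1 \exp(u^{(1)}) \cdots g_k \exp(u^{(k)}) g_{k+1} \cdots g_n v$, so that $x_0 = g_1 \cdots g_n v$ and $x_n = x$. A direct computation from the definition of $S$ gives
\[ x - S \;=\; \sum_{k=1}^{n} \bigl( x_k - x_{k-1} - \zeta_k(u^{(k)}) \bigr), \]
so it suffices to bound each summand by something of order $C^n \rho(g_1\cdots g_n)^{-1} r^2 / n$.

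Set $h_k := g_1 \exp(u^{(1)}) \cdots g_{k-1} \exp(u^{(k-1)}) g_k$, $\tilde h_k := g_1 \cdots g_k$, $y_k := g_{k+1} \cdots g_n v$, and $R_k := \rho(g_1 \cdots g_k)$, and write $L_h := \rho(h) U(h)$ for the linear part of $h \in G$. Since each $g_i$ and each $\exp(u^{(i)})$ is affine on $\R^d$ (second derivative zero, by Lemma~\ref{BasicDerivativeProp}(i)), we obtain $x_k - x_{k-1} = L_{h_k}\bigl(\exp(u^{(k)}) y_k - y_k\bigr)$. Taylor-expanding in the matrix embedding $G \hookrightarrow GL_{d+1}(\R)$ yields the pointwise bound $|\exp(u) y - y - \psi_y(u)| \ll_d |u|^2(|y|+1)$, so
\[ x_k - x_{k-1} \;=\; L_{h_k} \psi_{y_k}(u^{(k)}) + O_d\bigl(\rho(h_k) |u^{(k)}|^2 (|y_k|+1)\bigr). \]
On the other hand $\zeta_k(u) = L_{\tilde h_k} \psi_{y_k}(u)$ by the chain rule applied to $g_1 \cdots g_k \circ \exp(u) \circ y_k$, so it remains to estimate $\|L_{h_k} - L_{\tilde h_k}\|$. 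Expanding each $L_{\exp(u^{(i)})} = \mathrm{id} + O_d(|u^{(i)}|)$, telescoping, and using $|u^{(i)}| \leq R_i^{-1} r$ together with $R_i \geq R_k$ for $i \leq k$ yields $\|L_{h_k} - L_{\tilde h_k}\| \ll_d n r$ to leading order, with higher-order corrections smaller in $r$.

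To collect constants, $|b(g_j)| \leq A$ and $\rho(g_j) < 1$ give $|y_k| \leq A(n+1)$, and $\rho(\exp(u^{(i)})) \leq e^{C_d |u^{(i)}|} \leq e^{C_d}$ gives $\rho(h_k) \leq e^{C_d n} R_k$. Combining with $|u^{(k)}| \leq R_k^{-1} r$ and $|\psi_{y_k}(u^{(k)})| \ll_d (|y_k|+1)|u^{(k)}|$ bounds each summand $|x_k - x_{k-1} - \zeta_k(u^{(k)})|$ by $C_d^n \cdot p(n, A) \cdot R_k^{-1} r^2$ for some polynomial $p$. Summing over $k$ and using $R_k^{-1} \leq R_n^{-1}$ produces $|x - S| \leq n \cdot C_d^n \cdot p(n, A) \cdot R_n^{-1} r^2$, which is $\leq C^n R_n^{-1} r^2$ once $C = C(d, A)$ is chosen large enough. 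The only real obstacle is bookkeeping: the factors $\rho(\exp(u^{(i)}))$ that enter $L_{h_k}$ introduce a nuisance factor $e^{C_d n}$, but this is absorbed into the allowed $C^n$ growth, and the uniform bound $|u^{(i)}| < 1$ from the hypothesis ensures that both the Taylor expansion of $\exp$ and these $O(|u^{(i)}|)$ estimates are valid.
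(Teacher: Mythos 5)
Your argument is correct, and it takes a genuinely different route from the paper's. The paper treats $x$ as a single $C^2$ function of all the blocks $u^{(1)},\ldots,u^{(n)}$ jointly and applies a multivariate Taylor theorem (Theorem~\ref{Taylor}), so the whole burden falls on uniform bounds for the mixed second partials $\partial^2 x/\partial u^{(k)}\partial u^{(\ell)}$ (Proposition~\ref{DerivativeBound}, which in turn needs Lemma~\ref{wSecondDerivative}) together with the exponential bound \eqref{ProdBound} on the intermediate images $g_i\exp(u^{(i)})\cdots g_n\exp(u^{(n)})v$, which contain the perturbations. You instead interpolate: inserting one $\exp(u^{(k)})$ at a time, you only ever Taylor-expand in a single variable (the elementary estimate $|\exp(u)y-y-\psi_y(u)|\ll_d |u|^2(|y|+1)$), and the cross-interaction between different perturbations — which in the paper lives in the mixed partials — appears in your scheme as the operator-norm discrepancy $\|L_{h_k}-L_{\tilde h_k}\|$, controlled by telescoping the linear parts with $\|L_{\exp(u^{(i)})}-I\|\ll_d |u^{(i)}|\leq R_i^{-1}r$ and $R_i\geq R_k$. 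Both routes rest on the same structural facts (affineness of the $G$-action as in Lemma~\ref{BasicDerivativeProp}(i), the monotonicity $R_k^{-1}\leq R_n^{-1}$, and the scale constraint $|u^{(i)}|\leq R_i^{-1}r<1$), and both happily discard factors $e^{O_d(n)}$ coming from $\rho(\exp(u^{(i)}))$ into the allowed $C^n$. What your version buys is self-containedness — no mixed-partial lemma and no multivariate Taylor theorem are needed, and since your intermediate point $y_k=g_{k+1}\cdots g_nv$ contains no perturbations you get the polynomial bound $|y_k|\leq A(n+1)$ in place of the exponential \eqref{ProdBound}; what the paper's version buys is that it reuses derivative estimates it sets up anyway and handles all cross terms in one stroke of a standard theorem. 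Both yield the stated $C^n\rho(g_1\cdots g_n)^{-1}r^2$ after absorbing the polynomial-in-$n$ prefactors.
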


To prove Proposition~\ref{MainTaylorBound} we use the following version of Taylor's theorem.

\begin{theorem}\label{Taylor}
    Let $f: \R^n \to \R$ be a $C^2$-function, let $R_1, \ldots , R_n > 0$ and write $B = [-R_1, R_1] \times \ldots \times [-R_n, R_n]$. For integers $i,j \in [1,n]$ let $K_{ij} = \sup_B |\frac{\partial^2 f}{\partial x_i \partial x_j}|$ and let $x\in B$. Then we have that $$\bigg| f(x) - f(0) - \sum_{i = 1}^n x_i \frac{\partial f}{\partial x_i}\bigg|_{x = 0}  \bigg| \leq \frac{1}{2}\sum_{i,j = 1}^n  K_{i,j}\, |x_i|\,|x_j|.$$
\end{theorem}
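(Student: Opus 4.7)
The plan is to reduce to the one-dimensional integral form of Taylor's theorem by restricting $f$ to the line segment from $0$ to $x$. Concretely, I would define $g:[0,1] \to \R$ by $g(t) = f(tx)$, which is $C^2$ because $f$ is $C^2$ and $tx \in U$ for all $t \in [0,1]$ (since $U$ is a box containing both $0$ and $x$, hence the whole segment). Then the classical one-dimensional Taylor expansion with integral remainder gives
\begin{equation*}
    g(1) = g(0) + g'(0) + \int_0^1 (1-t) g''(t)\, dt.
\end{equation*}

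Next I would unfold $g'$ and $g''$ via the chain rule:
\begin{equation*}
    g'(t) = \sum_{i=1}^n x_i \frac{\partial f}{\partial x_i}(tx), \qquad g''(t) = \sum_{i,j=1}^n x_i x_j \frac{\partial^2 f}{\partial x_i \partial x_j}(tx).
\end{equation*}
Evaluating at $t = 0$ recovers the first-order Taylor polynomial $f(0) + \sum_i x_i \frac{\partial f}{\partial x_i}\big|_0$, so
\begin{equation*}
    f(x) - f(0) - \sum_{i=1}^n x_i \frac{\partial f}{\partial x_i}\bigg|_{0} = \int_0^1 (1-t) \sum_{i,j=1}^n x_i x_j \frac{\partial^2 f}{\partial x_i \partial x_j}(tx)\, dt.
\end{equation*}

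Finally I would take absolute values inside the integral and use $tx \in U$ together with the definition $K_{ij} = \sup_U |\partial_i \partial_j f|$ to bound $|\partial_i \partial_j f(tx)| \leq K_{ij}$. Pulling the factor $\int_0^1 (1-t)\, dt = 1/2$ out yields
\begin{equation*}
    \bigg| f(x) - f(0) - \sum_{i=1}^n x_i \frac{\partial f}{\partial x_i}\bigg|_{0} \bigg| \leq \frac{1}{2} \sum_{i,j=1}^n |x_i|\, K_{ij}\, |x_j|,
\end{equation*}
which (interpreting the right-hand side of the statement as this quadratic form in $|x_i|$, or noting $K_{ij} \geq 0$) gives the claimed inequality. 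There is no real obstacle here: the statement is a standard second-order Taylor estimate, and the only point requiring a brief justification is the convexity of $U$ ensuring $tx \in U$ for $t \in [0,1]$, which is immediate from $U$ being a product of intervals containing $0$.
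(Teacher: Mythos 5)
Your proof is correct and is exactly the standard argument the paper implicitly invokes: Theorem~\ref{Taylor} is stated there without proof as a known version of Taylor's theorem, and the one-dimensional reduction along $t \mapsto f(tx)$ with integral remainder plus the chain rule is the expected route. Your caveat about absolute values is well taken — the literal signed bound $\frac{1}{2}\sum_{i,j} x_i K_{ij} x_j$ can fail when the $x_i$ have mixed signs (e.g.\ $f(x_1,x_2) = -x_1x_2$ with $x=(1,-1)$), so the statement should be read with $|x_i|$, which is the form you prove and the form actually used in the applications (Proposition~\ref{DerivativeBound} and Proposition~\ref{MainTaylorBound}), whereas your alternative remark that $K_{ij}\geq 0$ alone does not repair the signed version.
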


\begin{lemma}\label{wSecondDerivative}
    Let $$w : \mathfrak{g} \times \mathfrak{g} \longrightarrow \R^d, \quad\quad (x,y) \longmapsto \exp(x)g\exp(y)v $$ for fixed $g,v$. Then if $|x|, |y| \leq 1$ it holds that $$\bigg|\frac{\partial w(x,y)}{\partial x_i \partial y_i}\bigg| \ll_d \rho(g)\max(|v|,1).$$
\end{lemma}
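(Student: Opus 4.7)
The plan is a direct chain-rule computation, combined with Lemma~\ref{BasicDerivativeProp}(i), which asserts that similarities have vanishing second derivatives. As a consequence only the two exponential factors in $w(x,y) = \exp(x)\,g\,\exp(y)\,v$ can contribute to the mixed second partial; the factor $g$ enters only through its linear part $\rho(g)U(g)$, which produces precisely the single factor of $\rho(g)$ appearing in the claim.

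First I would compute $\partial_{y_i} w$. Since the map $z \mapsto \exp(x)\,g(z)$ from $\R^d$ to $\R^d$ is a similarity with constant differential $\rho(\exp(x))\,U(\exp(x))\cdot \rho(g)\,U(g)$ by Lemma~\ref{BasicDerivativeProp}(i), the chain rule gives
\begin{equation*}
\frac{\partial w}{\partial y_i}(x,y) \;=\; \rho(\exp(x))\,U(\exp(x))\cdot \rho(g)\,U(g)\cdot \frac{\partial}{\partial y_i}\bigl[\exp(y)\,v\bigr].
\end{equation*}
Only the leading factor depends on $x$, so differentiating once more yields
\begin{equation*}
\frac{\partial^2 w}{\partial x_i\,\partial y_i}(x,y) \;=\; \frac{\partial}{\partial x_i}\bigl[\rho(\exp(x))\,U(\exp(x))\bigr]\cdot \rho(g)\,U(g)\cdot \frac{\partial}{\partial y_i}\bigl[\exp(y)\,v\bigr].
\end{equation*}

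Next I would bound each factor separately. The map $x\mapsto \rho(\exp(x))\,U(\exp(x))$ is smooth, and for $|x|\leq 1$ its first partials have operator norm $O_d(1)$ by compactness; one has $\|U(g)\|=1$ trivially; and $|\partial_{y_i}[\exp(y)\,v]|\ll_d |v|$ for $|y|\leq 1$, which follows from Lemma~\ref{BasicDerivativeProp}(ii) at $y=0$ together with a routine compactness argument for general $|y|\leq 1$ exploiting $|\exp(y)\,v|\ll_d |v|$. Multiplying the three bounds gives the claim.

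No substantive obstacle is anticipated; the only conceptual point is that the correct single power of $\rho(g)$ is automatic because $g$ enters via its linear part $\rho(g)U(g)$ and has no second derivative by Lemma~\ref{BasicDerivativeProp}(i), so a second factor of $\rho(g)$ cannot appear.
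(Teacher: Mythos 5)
Your proposal is correct and follows essentially the same route as the paper: a chain-rule factorisation in which the single factor $\rho(g)$ comes from the constant differential $\rho(g)U(g)$ of the similarity $g$, while the $\exp(x)$- and $\exp(y)$-dependent factors are bounded by compactness on $|x|,|y|\leq 1$ (the paper phrases this through the intermediate variables $\hat{v}=\exp(y)v$ and $\tilde{v}=g\hat{v}$, but the computation is the same). The only cosmetic remark is that Lemma~\ref{BasicDerivativeProp}(ii) is already stated for all $|u|\leq 1$, so the bound $|\partial_{y_i}[\exp(y)v]|\ll_d |v|$ needs no separate compactness argument beyond citing it.
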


\begin{proof}
    Let $\hat{v} = \exp(y)v$ and by Lemma~\ref{BasicDerivativeProp} (ii), $| \frac{\partial \hat{v}}{\partial y_i} | \ll_d \max(|v|,1).$ Now let $\tilde{v}= g\hat{v}$. Therefore, by Lemma~\ref{BasicDerivativeProp} (i), $||\frac{\partial \tilde{v}}{\partial\hat{v}}|| \leq \rho(g)$ and moreover, since $w = \exp(x)\tilde{v}$ and $|x| \leq 1$, it is readily shown that $||\frac{\partial^2 w}{\partial x_i \partial \tilde{v}}|| \ll_d 1$. We conclude therefore by the chain rule
    $$\bigg|\frac{\partial w}{\partial x_i \partial y_i}\bigg| = \bigg|\bigg|\frac{\partial w}{\partial x_i \partial \tilde{v}}\bigg|\bigg|\cdot \bigg|\bigg|\frac{\partial \tilde{v}}{\partial\hat{v}}\bigg|\bigg| \cdot \bigg| \frac{\partial \hat{v}}{\partial y_i} \bigg| \ll_d \rho(g)\max(|v|,1).$$
\end{proof}

\begin{proposition}\label{DerivativeBound}
      There exists a constants $C = C(d) > 1$ such that the following holds. Suppose that $n \in \Z_{>0}$, $g_1, g_2, \ldots , g_n \in G$ and let $u^{(1)}, \ldots , u^{(n)} \in \mathfrak{g}$ be such that $|u^{(i)}| \leq 1$.

      Let $v\in \R^d$ and $$x = g_1 \exp(u^{(1)})g_2\exp(u^{(2)}) \cdots g_n \exp(u^{(n)})v.$$ Then for any $1 \leq i,j \leq \ell$ and any integers $k,m \in [1,n]$ with $k\leq m$ we have $$\bigg|\frac{\partial^2x}{\partial u_i^{(k)} \partial u_j^{(m)}} \bigg| \leq C^n\rho(g_1\cdots g_{m})\max(|g_{m + 1}\exp(u^{(m + 1)}) \cdots g_{n}\exp(u^{(n)}) v|,1).$$
\end{proposition}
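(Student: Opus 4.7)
The plan is to exploit the fact that each map $g_m \exp(u^{(m)}) : \R^d \to \R^d$ is a similarity, hence affine, so its second derivatives with respect to the spatial argument vanish by Lemma~\ref{BasicDerivativeProp}(i). For $1 \leq m \leq n$ set
\[
y^{(m)} = g_m \exp(u^{(m)}) g_{m+1}\exp(u^{(m+1)})\cdots g_n\exp(u^{(n)}) v,
\]
with the convention $y^{(n+1)} = v$, so that $x = y^{(1)}$ and $y^{(m)} = \Phi_m(u^{(m)}, y^{(m+1)})$ where $\Phi_m : \mathfrak{g} \times \R^d \to \R^d$ is defined by $\Phi_m(u, y) := g_m \exp(u) y$. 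Since $\Phi_m$ is affine in its spatial argument $y$, we have $\partial^2 \Phi_m / \partial y_i \partial y_j \equiv 0$, while its linear part in $y$ has operator norm $\rho(g_m)\rho(\exp(u)) \leq C_0(d)\rho(g_m)$ when $|u| \leq 1$. Combined with Lemma~\ref{BasicDerivativeProp}(ii), the following bounds hold uniformly in $|u| \leq 1$:
\[
\left|\frac{\partial \Phi_m}{\partial u_i}(u,y)\right| \ll_d \rho(g_m)|y|, \quad \left|\frac{\partial^2 \Phi_m}{\partial u_i \partial u_j}(u,y)\right| \ll_d \rho(g_m)|y|, \quad \left|\frac{\partial^2 \Phi_m}{\partial u_i \partial y}(u,y)\right| \ll_d \rho(g_m).
\]

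Iterating the chain rule and using that each $\partial \Phi_m / \partial y$ is independent of $y$ (by affineness), the first partial derivative becomes
\[
\frac{\partial y^{(1)}}{\partial u_j^{(\ell)}} = \left(\prod_{m=1}^{\ell-1} \frac{\partial \Phi_m}{\partial y}\bigg|_{(u^{(m)}, y^{(m+1)})}\right) \frac{\partial \Phi_\ell}{\partial u_j}\bigg|_{(u^{(\ell)}, y^{(\ell+1)})},
\]
where each factor $\partial \Phi_m / \partial y$ depends only on $u^{(m)}$. Differentiating once more with respect to $u_i^{(k)}$ for $k < \ell$: the quantity $y^{(\ell+1)}$ is independent of $u^{(k)}$ (since it involves only $u^{(\ell+1)}, \ldots, u^{(n)}$), so only the factor indexed by $m = k$ contributes, yielding
\[
\frac{\partial^2 y^{(1)}}{\partial u_i^{(k)} \partial u_j^{(\ell)}} = \left(\prod_{m=1}^{k-1} \frac{\partial \Phi_m}{\partial y}\right) \frac{\partial^2 \Phi_k}{\partial u_i \partial y} \left(\prod_{m=k+1}^{\ell-1} \frac{\partial \Phi_m}{\partial y}\right) \frac{\partial \Phi_\ell}{\partial u_j}.
\]
Bounding each factor in operator norm using the three inequalities above gives the estimate $\ll_d C_0(d)^{\ell-1}\rho(g_1)\cdots\rho(g_\ell)|y^{(\ell+1)}|$, which is dominated by $C^n\rho(g_1\cdots g_\ell)|y^{(\ell+1)}|$ for a suitable $C = C(d)$.

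The case $k = \ell$ is analogous and in fact simpler: since $y^{(k+1)}$ does not depend on $u^{(k)}$, differentiating $\partial y^{(1)}/\partial u_j^{(k)}$ once more in $u_i^{(k)}$ produces only $(\prod_{m<k} \partial \Phi_m/\partial y) \cdot \partial^2 \Phi_k/\partial u_i \partial u_j$, and the same bounds give the required estimate via the middle inequality from the first display. The computation is essentially chain-rule bookkeeping and there is no real obstacle; the substantive point is that the affineness of each $\Phi_m$ in its spatial argument eliminates all higher-order cross terms, which is precisely what keeps the final bound at the single clean factor $|y^{(\ell+1)}| = |g_{\ell+1}\exp(u^{(\ell+1)})\cdots g_n\exp(u^{(n)})v|$ of the proposition's statement rather than generating a sum with contributions from intermediate $|y^{(m)}|$.
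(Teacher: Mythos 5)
Your proof is correct and rests on the same two ingredients as the paper's argument: the affineness of each similarity in its spatial argument (so all second derivatives in that variable vanish, Lemma~\ref{BasicDerivativeProp}(i)) together with the uniform bounds for derivatives of $u \mapsto \exp(u)$ on $\{|u| \leq 1\}$ from Lemma~\ref{BasicDerivativeProp}(ii) and compactness. Your explicit product formula for the Jacobian merely reorganises the paper's bookkeeping — the paper groups the word into blocks and invokes Lemma~\ref{wSecondDerivative} for the case $k<\ell$, while your mixed-derivative bound $\big\|\partial^2 \Phi_k/\partial u_i \partial y\big\| \ll_d \rho(g_k)$ is the same compactness estimate in disguise — so this is essentially the paper's proof.
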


\begin{proof}
    First, we deal with the case $k = m$. Let $$a = g_1 \exp(u^{(1)})g_2\exp(u^{(2)}) \cdots g_{k-1} \exp(u^{(k-1)})g_k$$ and $$b = g_{k + 1} \exp(u^{(k+1)})g_{k + 2}\exp(u^{(k + 2)}) \cdots g_{n} \exp(u^{(n)})v$$ and let $\tilde{b} = \exp(u^{(k)})b$. We have $$\frac{\partial x}{\partial u_i^{(k)}} = \frac{\partial x}{\partial \tilde{b}}  \frac{\partial \tilde{b}}{\partial u_i^{(k)}}.$$ Note that by Lemma~\ref{BasicDerivativeProp} (i) all of the second derivatives of $x$ with respect to $\tilde{b}$ are zero and therefore
    \begin{equation}\label{secondderivative}
        \bigg|\frac{\partial^2 x}{\partial u_i^{(k)}\partial u_j^{(k)}}\bigg|  \leq \bigg| \bigg| \frac{\partial x}{\partial \tilde{b}}\bigg|\bigg| \cdot \bigg| \frac{\partial^2 \tilde{b}}  {\partial u_i^{(k)}\partial u_j^{(k)}}\bigg| .
    \end{equation}
    Thus by Lemma~\ref{BasicDerivativeProp} (i) and (ii) we conclude that $$\bigg|\frac{\partial^2 x}{\partial u_i^{(k)}\partial u_j^{(k)}}\bigg| \ll_d \rho(a)\max(|b|,1) \leq C^n\rho(g_1\cdots g_{\ell})\max(|b|,1)$$ for a suitable constant $C > 1$ using that $\rho(\exp(u^{(i)}))$ is bounded. 

    For the case $k < m$ we consider  
    \begin{align*}
        a_1 &= g_1 \exp(u^{(1)})g_2\exp(u^{(2)}) \cdots g_{k-1} \exp(u^{(k-1)})g_k \\
        a_2 &= g_{k + 1} \exp(u^{(k+1)})g_{k + 2}\exp(u^{(k + 2)}) \cdots g_{m}  \\
        b &= g_{m + 1} \exp(u^{(m+1)})g_{m + 2}\exp(u^{(m + 2)}) \cdots g_{n} \exp(u^{(n)})v.
    \end{align*} Then we consider $\tilde{b} = \exp(u^{(k)})a_2\exp(u^{(m)})b$ and as before we conclude $$\frac{\partial^2 x}{\partial u_i^{(k)}\partial u_j^{(m)}} = \frac{\partial x}{\partial \tilde{b}}\frac{\partial^2 \tilde{b}}{\partial u_i^{(k)}\partial u_j^{(m)}}.$$ We again arrive at \eqref{secondderivative} and deduce the claim as in the case $k = m$ using Lemma~\ref{wSecondDerivative} instead of Lemma~\ref{BasicDerivativeProp} (i).
\end{proof}

\begin{proof}(of Proposition~\ref{MainTaylorBound})
    We first show that there is a constant $C_1 = C_1(A,d)$ depending on $A$ such that for all $1 \leq i \leq n$ we have that 
    \begin{equation}\label{ProdBound}
        |g_i\exp(u^{(i)})\cdots g_n\exp(u^{(n)})v| \leq C_1^{n-i + 1}.
    \end{equation}
    Indeed, we note that for any $u \in \mathfrak{g}$ with $|u| \leq 1$ and $v_0 \in \R^d$ it holds that $|\exp(u)v_0 - v_0| \leq C_2(|v_0| + 1)$ for an absolute constant $C_2 = C_2(d)$. Without loss of generality we assume that $C_2(d) > 1$. Therefore $|\exp(u^{(n)})v| \leq C_2 (2|v| + 1)$. Next note that as $\rho(g_n) < 1$, 
    \begin{align*}
        |g_n\exp(u^{(n)})v| &\leq |g_n\exp(u^{(n)})v - g_n(0)| + |g_n(0)| \\ &\leq \rho(g_n)|\exp(u^{(n)})v| + |b(g_n)| \\   & \leq C_2(2|v| + |b(g_n)| + 1)  \leq 4C_2(A + 1),
    \end{align*} using that $\rho(g_n) < 1$ and that $|v| \leq A$ and $|b(g_n)| \leq A$. Continuing this argument inductively, we may conclude that $$|g_i\exp(u^{(i)})\cdots g_n\exp(u^{(n)})v| \leq 4^{n-i + 1}C_2^{n-i + 1}(A + (n-i) + 1),$$ which implies \eqref{ProdBound}.
    
	Note that by the assumptions $$\rho(g_1\cdots g_{\ell}) |u^{(\ell)}|^2 \leq \rho(g_1\cdots g_{\ell})^{-1}r^2 \leq \rho(g_1\cdots g_n)^{-1}r^2.$$ Therefore, by applying Theorem~\ref{Taylor} together with Proposition~\ref{DerivativeBound} and \eqref{ProdBound} for a sufficiently large constant $C$ depending on $A$ and $d$ in each of the coordinates of $\R^d$, $$|x - S| \leq dn^2C^n \rho(g_1\cdots g_n)^{-1} r^2,$$ which implies the claim upon enlarging the constant $C$.
\end{proof}

\subsection{Large Deviation Principle} \label{subsection:LDP}

In this subsection we review various versions of the large deviation principle. Throughout this section we denote by $\mu$ a measure on $G$ and by $\gamma_1, \gamma_2, \ldots$ independent samples from $\mu$. Applying the classical large deviation principle to $\rho$, we can state the following.

\begin{lemma}\label{rhoLDP} Let $\mu$ be a compactly supported, contracting on average probability measure on $G$. Then for every $\eps > 0$ there is $\delta = \delta(\mu,\eps) > 0$ such that for all sufficiently large $n$,
\begin{equation*}
    \mathbb{P}\Big[ \, | n\chi_{\mu} - \log \rho(\gamma_1) \cdots \rho(\gamma_n) | > \eps n \,\Big] \leq e^{-\delta n}.
\end{equation*}
\end{lemma}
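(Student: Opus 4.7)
The plan is to observe that this is an immediate consequence of Cramér's theorem, once one notices that $\log \rho(\gamma_1 \cdots \gamma_n)$ is literally a sum of i.i.d.\ real-valued random variables. Indeed, the map $\rho : G \to \R_{>0}$ is a group homomorphism (the scaling factor of a composition of similarities is the product of scaling factors), so $\log \rho : G \to \R$ is an additive homomorphism, and hence
\[
\log \rho(\gamma_1 \cdots \gamma_n) \;=\; \sum_{i=1}^n \log \rho(\gamma_i).
\]
The random variables $X_i := \log \rho(\gamma_i)$ are i.i.d.\ with common mean $\E[X_1] = \chi_\mu$, by the definition of the Lyapunov exponent.

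With this reduction, the statement is exactly the classical Cramér large deviation upper bound for i.i.d.\ real random variables: for any $\eps > 0$ there exists $\delta = \delta(\mu, \eps) > 0$ such that
\[
\Prob\bigl[\,\bigl|\tfrac{1}{n}\sum_{i=1}^n X_i - \chi_\mu\bigr| > \eps \,\bigr] \leq e^{-\delta n}
\]
for all sufficiently large $n$. To get this bound one only needs that the moment generating function $\E[e^{t X_1}]$ is finite for $t$ in a neighbourhood of $0$, in which case a standard Chernoff/Markov argument applied to $e^{tX_1}$ and the dual argument for $-X_1$ yields the two one-sided bounds, which combine to give the two-sided bound with $\delta = \min\{I(\chi_\mu + \eps), I(\chi_\mu - \eps)\} > 0$ where $I$ is the Legendre transform of $\log \E[e^{tX_1}]$.

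The only thing to verify is the integrability hypothesis. Throughout the paper $\mu$ is finitely supported, so $X_1 = \log \rho(\gamma_1)$ is a bounded random variable, and in fact Hoeffding's inequality gives the conclusion directly with an explicit $\delta$ depending only on $\eps$ and the range $[\log R^{-1}, \log R]$ of $\log \rho$ on $\supp{\mu}$. This is the main (and only) ``obstacle'': one needs to record that the finite support of $\mu$ forces boundedness of $X_1$, so that no moment hypothesis has to be added to the statement. No further argument is required.
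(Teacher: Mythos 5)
Your proposal is correct and follows essentially the same route as the paper, which offers no separate argument beyond the remark that the lemma is the classical large deviation principle applied to the i.i.d.\ sums $\sum_{i=1}^n \log\rho(\gamma_i)$, exactly as you set it up via the homomorphism property of $\rho$. Your explicit note that the finite (or compact) support used throughout the paper supplies the needed exponential-moment/boundedness hypothesis is a fair reading of the statement, since "contracting on average" alone would not suffice for Cramér's bound.
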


We generalise Lemma~\ref{rhoLDP} to stopping times. 

\begin{lemma}\label{rhotauLDP}
    Let $\mu$ be a compactly supported contracting on average probability measure on $G$ and let $\kappa > 0$ and denote $$\tau_{\kappa} = \inf\{ n \geq 1 \,:\, \rho(\gamma_1 \ldots \gamma_n) \leq \kappa \}.$$ Then for every $\eps > 0$ there is $\delta > 0$ such that for sufficiently small $\kappa$ $$\mathbb{P}\Big[ \Big|\tau_{\kappa} -  \frac{\log \kappa^{-1}}{|\chi_{\mu}|} \Big| > \eps \log \kappa^{-1} \Big] \leq e^{-\delta \log \kappa^{-1}}$$
\end{lemma}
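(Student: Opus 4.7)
The plan is to reduce this first-passage large deviation statement to the ordinary LDP of Lemma~\ref{rhoLDP} applied to the random walk $S_n := -\log \rho(\gamma_1 \cdots \gamma_n) = -\sum_{i=1}^{n} \log \rho(\gamma_i)$. Since $\rho$ is multiplicative on $G$ and $\mu$ is compactly supported, the increments $-\log \rho(\gamma_i)$ are i.i.d., bounded by some constant $K = K(\mu) > 0$, with mean $|\chi_\mu| > 0$. Writing $L = \log \kappa^{-1}$ and $t = L/|\chi_\mu|$, the stopping time can be expressed via
\begin{equation*}
    \{\tau_\kappa > n\} \subseteq \{S_n < L\}, \qquad \{\tau_\kappa \leq n\} \subseteq \bigcup_{m \leq n} \{S_m \geq L\},
\end{equation*}
so it suffices to bound the corresponding probabilities for the walk $S_n$.

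For the upper tail, fix $\varepsilon > 0$ and set $N = \lfloor (1+\varepsilon) t \rfloor$. Then $\tau_\kappa > (1+\varepsilon) t$ forces $S_N < L$, and since $N|\chi_\mu| = (1+\varepsilon) L + O(1)$, this in turn implies $N|\chi_\mu| - S_N \geq \varepsilon' N$ for any $\varepsilon' < \varepsilon |\chi_\mu|/(1+\varepsilon)$ once $\kappa$ is small enough. Lemma~\ref{rhoLDP} applied with this $\varepsilon'$ gives $\Pr[\tau_\kappa > (1+\varepsilon)t] \leq e^{-\delta_1 N} \leq e^{-\delta_1' L}$ for some $\delta_1' > 0$.

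For the lower tail, set $M = \lfloor (1-\varepsilon) t \rfloor$. The key trick is that because $|\log\rho(\gamma_i)| \leq K$, we have the deterministic bound $S_m \leq mK$, so $\{S_m \geq L\} = \varnothing$ for all $m < L/K$. A union bound therefore yields
\begin{equation*}
    \Pr[\tau_\kappa \leq M] \;\leq\; \sum_{m = \lceil L/K \rceil}^{M} \Pr[S_m \geq L].
\end{equation*}
For each such $m$ we have $L \geq (1-\varepsilon)^{-1} m |\chi_\mu|$, so $S_m \geq L$ implies $S_m - m|\chi_\mu| \geq \varepsilon'' m$ with $\varepsilon'' = \varepsilon|\chi_\mu|/(1-\varepsilon)$; since moreover $m \geq L/K \to \infty$ as $\kappa \to 0$, Lemma~\ref{rhoLDP} applies uniformly to give $\Pr[S_m \geq L] \leq e^{-\delta_2 m}$ for some $\delta_2 > 0$. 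Summing the resulting geometric series over $m \geq L/K$ produces the bound $\Pr[\tau_\kappa \leq (1-\varepsilon)t] \ll e^{-\delta_2 L/K}$. Combining the two tail estimates and taking $\delta$ smaller than both exponents gives the claim.

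The only mildly delicate step is the lower tail, where one has to handle the fact that the union bound a priori sums over all $m \leq M$: both ingredients, compact support (to kill the small-$m$ terms deterministically) and the uniform-in-large-$n$ form of Lemma~\ref{rhoLDP}, are needed. Everything else is the standard translation between a first-passage bound and a deviation bound for a random walk with positive drift.
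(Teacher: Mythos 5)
Your proposal is correct and follows essentially the same route as the paper: the upper tail is handled by applying Lemma~\ref{rhoLDP} at the single deterministic time just beyond $(\log\kappa^{-1})/|\chi_{\mu}|$, and the lower tail by noting that compact support forces $\tau_{\kappa} \geq \log\kappa^{-1}/K$ (the paper's $K$ is $|\log R|$ with $R=\inf_{g\in\mathrm{supp}(\mu)}\rho(g)$) and then taking a union bound over the remaining times, applying Lemma~\ref{rhoLDP} to each. The only cosmetic difference is that you sum a geometric series where the paper multiplies a uniform term bound by the $O_{\mu}(\log\kappa^{-1})$ number of possible values of $k$; both give the same exponential decay.
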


\begin{proof}
    If $\tau_{\kappa} > \frac{\log \kappa^{-1}}{|\chi_{\mu}|} + \eps \log \kappa^{-1}$ then $$\rho(\gamma_1 \cdots \gamma_{\lfloor \frac{\log \kappa^{-1}}{|\chi_{\mu}|} + \eps \log \kappa^{-1} \rfloor}) \geq \kappa,$$ which by Lemma~\ref{rhoLDP} has probability at most $e^{-\delta \log \kappa^{-1}}$ for some $\delta > 0$ and sufficiently small $\kappa$.

    Write $R = \inf\{ \rho(g) \,:\, g \in \mathrm{supp}(\mu) \} \in (0,1)$, which is non-zero since $\mu$ is compactly supported. Therefore when $\tau_{\kappa} < \frac{\log \kappa^{-1}}{|\chi_{\mu}|} - \eps \log \kappa^{-1}$ happens there must be some integer $$k \in \left[\frac{\log \kappa^{-1}}{|\log R|},  \frac{\log \kappa^{-1}}{|\chi_{\mu}|} - \eps \log \kappa^{-1} \right]$$ such that $$\log \rho(\gamma_1 \cdots \gamma_k) \leq \log \kappa.$$ Note that for sufficiently small $\kappa$ we have $k |\chi_{\mu}| \leq \log \kappa^{-1} - \eps |\chi_{\mu}| \, |\log R|$ and therefore 
    \begin{equation}\label{ApplyLDP}
        \log \rho(\gamma_1 \cdots \gamma_k) \leq \log \kappa \leq k(\chi_{\mu}  + \eps |\log R| \chi_{\mu}).
    \end{equation}
    By Lemma~\ref{rhoLDP} the probability that \eqref{ApplyLDP} happens is $\leq e^{-\delta' k} = e^{-\delta' O_{\mu}(\log \kappa^{-1})}$ for some $\delta' > 0$. Since there are at most $O_{\mu}(\log \kappa^{-1})$ many possibilities for $k$, the claim follows by the union bound. 
\end{proof}

From Lemma~\ref{rhoLDP} and \eqref{PolynomialTailDecay} we can deduce the following corollary.

\begin{corollary} \label{coro:decay_estimates}
    Let $\mu$ be a contracting on average probability measure on $G$. Then for every $\eps > 0$ there is $\delta = \delta(\mu,\eps) > 0$ such that for all sufficiently large $N$
    \begin{equation}\label{rhoLPDBorelCantelli}
        \mathbb{P}\Big[ \, \exists n \geq N : \rho(\gamma_1 \cdots \gamma_n) \geq \exp( (\chi_{\mu} + \eps) n) \Big] \leq e^{-\delta N}
    \end{equation}
    and
    \begin{equation*}\label{bdiff}
        \mathbb{P}\Big[ \, \exists n, m \geq N : |b(\gamma_1 \cdots \gamma_n) - b(\gamma_1 \cdots \gamma_m)| \geq \exp( (\chi_{\mu} + \eps) \min(m, n)) \Big] \leq e^{-\delta N}.
    \end{equation*}
\end{corollary}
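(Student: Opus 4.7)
\medskip

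The plan is to handle the two inequalities separately, observing first that we may assume $\chi_\mu + \eps < 0$ (otherwise the supremum bound in the first assertion becomes eventually vacuous for large $N$, and similarly for the second once one notes that on the good event $\rho(q_k)$ decays exponentially). We also use that in the setting of the paper $\mu$ is finitely supported (hence compactly supported), so there is a constant $B > 0$ with $|b(\gamma)| \leq B$ for all $\gamma \in \supp\mu$.

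For the first inequality I would proceed by a straightforward Borel--Cantelli argument. For each fixed $n$, Lemma~\ref{rhoLDP} applied with $\eps/2$ supplies some $\delta' > 0$ such that
\[
\mathbb{P}\Big[\log\rho(q_n) - n\chi_\mu > (\eps/2)\, n\Big] \leq e^{-\delta' n}
\]
for all sufficiently large $n$. A union bound over $n \geq N$ yields
\[
\mathbb{P}\Big[\,\exists n \geq N : \rho(q_n) \geq \exp((\chi_\mu + \eps) n)\,\Big] \leq \sum_{n \geq N} e^{-\delta' n} \leq 2 e^{-\delta' N},
\]
so the first assertion holds after slightly shrinking $\delta'$.

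For the second inequality the idea is to express $b$ cocycle-wise. Using $b(g_1 g_2) = \rho(g_1)U(g_1)b(g_2) + b(g_1)$ iteratively gives
\[
b(q_n) = \sum_{k=0}^{n-1} \rho(q_k) U(q_k) b(\gamma_{k+1}),
\]
so for $m \leq n$ one has $b(q_n) - b(q_m) = \sum_{k=m}^{n-1} \rho(q_k) U(q_k) b(\gamma_{k+1})$ and hence
\[
|b(q_n) - b(q_m)| \leq B \sum_{k=m}^{\infty} \rho(q_k).
\]
Now apply the first inequality with $\eps/2$ in place of $\eps$: on an event of probability at least $1 - e^{-\delta' N}$ we have $\rho(q_k) \leq \exp((\chi_\mu + \eps/2)k)$ for every $k \geq N$, and on that event, for $m \geq N$,
\[
\sum_{k=m}^{\infty} \rho(q_k) \leq \frac{\exp((\chi_\mu+\eps/2)m)}{1 - \exp(\chi_\mu+\eps/2)}.
\]
Since $\chi_\mu + \eps/2 < 0$, the constant in the denominator is a finite quantity depending only on $\mu$ and $\eps$, and for $m \geq N$ sufficiently large the factor $B \cdot (1-\exp(\chi_\mu+\eps/2))^{-1}$ is absorbed into $\exp((\eps/2)m)$, yielding $|b(q_n) - b(q_m)| \leq \exp((\chi_\mu+\eps)m) = \exp((\chi_\mu+\eps)\min(m,n))$. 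The main subtlety, rather than any real obstacle, is to make sure that the bound on $\rho(q_k)$ is applied uniformly in $k \geq N$, which is exactly the content of the first inequality and so comes for free; otherwise the proof is a routine chaining of Lemma~\ref{rhoLDP} with the cocycle identity for $b$.
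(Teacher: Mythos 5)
Your proof is correct, and the first half is exactly the paper's argument (Lemma~\ref{rhoLDP} plus a union/Borel--Cantelli bound over $n\geq N$). For the second inequality you take a genuinely different route: you iterate the cocycle identity down to single letters, $b(q_n)-b(q_m)=\sum_{k=m}^{n-1}\rho(q_k)U(q_k)b(\gamma_{k+1})$, bound each $|b(\gamma_{k+1})|$ by a constant $B$ (compact support), and sum a geometric series controlled by the first inequality applied with $\eps/2$. The paper instead stops the cocycle identity after one step, writing $b(q_m)-b(q_n)=\rho(q_n)U(q_n)b(\gamma_{n+1}\cdots\gamma_m)$ for $m>n$, controls $\rho(q_n)$ by \eqref{rhoLPDBorelCantelli}, and controls the block term $|b(\gamma_{n+1}\cdots\gamma_m)|$ via the polynomial tail decay \eqref{PolynomialTailDecay} of the stationary measure together with Borel--Cantelli. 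Your version is more elementary in that it avoids \eqref{PolynomialTailDecay} entirely, at the price of needing the translation parts in $\supp\mu$ to be uniformly bounded; the paper's version needs no such pointwise bound but invokes the tail estimate for $\nu$ (which in the paper is itself justified for finitely supported $\mu$, so in context both arguments cover the same cases). Two small points to tidy up: your justification for assuming $\chi_\mu+\eps<0$ is not quite right as stated --- since $\mu$ may contain expanding similarities, the event $\rho(q_n)\geq\exp((\chi_\mu+\eps)n)\geq 1$ is not vacuous when $\chi_\mu+\eps\geq 0$ --- but the reduction is legitimate anyway because both events are monotone in $\eps$ (and the first inequality's union-bound proof needs no sign assumption at all), so it suffices to treat $\eps<|\chi_\mu|$; and you should record explicitly that the threshold $N$ after which $B\,(1-e^{\chi_\mu+\eps/2})^{-1}\leq e^{(\eps/2)N}$ depends only on $\mu$ and $\eps$, which is all the statement requires.
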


\begin{proof}
    Equation \eqref{rhoLPDBorelCantelli} follows from Lemma~\ref{rhoLDP} and Borel-Cantelli. For \eqref{bdiff} note that when $m \geq n + 1$, $$|b(\gamma_1 \cdots \gamma_n) - b(\gamma_1 \cdots \gamma_m)| \leq  \rho(\gamma_1\cdots \gamma_n)|b(\gamma_{n + 1}\cdots \gamma_m)|.$$ Therefore by \eqref{rhoLPDBorelCantelli} it suffices to show that for sufficiently large $N$ we have that $$\mathbb{P}[\exists k \geq 1 \,:\, |b(\gamma_1\cdots \gamma_k)| \geq e^{\eps N} ] \leq e^{-\delta N},$$ which readily follows from \eqref{PolynomialTailDecay} and Borel-Cantelli as $b(\gamma_1\cdots \gamma_k)$ converges exponentially fast in distribution to $\nu$.
\end{proof}

The next lemma was proved in \cite{Kittle2023}.

\begin{lemma}(Corollary 7.9 of \cite{Kittle2023})\label{EffectiveCramerR}
    There is a constant $c > 0$ such that the following is true for all $a \in [0,1)$ and $n\geq 1$. Let $X_1, \ldots , X_n$ be random variables taking values in $[0,1]$ and  let $m_1, \ldots , m_n \geq 0$ be such that we have almost surely
    $\E[X_i | X_1 , \ldots , X_{i-1}] \geq m_i$ for $1 \leq i \leq n$. Suppose that $\sum_{i = 1}^n m_i = an$. Then $$\log \mathbb{P}\left[X_1 + \ldots + X_n \leq \frac{1}{2}na\right] \leq - cna.$$
\end{lemma}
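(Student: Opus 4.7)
The plan is to run a standard Chernoff / exponential martingale argument adapted to the conditional lower bounds $\E[X_i \mid X_1,\dots,X_{i-1}] \geq m_i$. The natural object to control is $\E[\exp(-t(X_1+\dots+X_n))]$ for some fixed $t>0$, which, after applying Markov's inequality to $-\sum X_i$, will directly yield an upper bound on $\mathbb{P}[\sum X_i \leq na/2]$.

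First I would establish the one-variable chord bound: for every $x \in [0,1]$ and $t>0$,
\[
e^{-tx} \leq 1 - (1-e^{-t})x,
\]
which is immediate from convexity of $x \mapsto e^{-tx}$ on $[0,1]$ (the chord between the endpoints lies above the graph). Taking conditional expectations given $\mathcal{F}_{i-1} = \sigma(X_1,\dots,X_{i-1})$ and using the hypothesis on the conditional mean, this yields almost surely
\[
\E[e^{-tX_i}\mid \mathcal{F}_{i-1}] \leq 1 - (1-e^{-t})\E[X_i\mid\mathcal{F}_{i-1}] \leq 1 - (1-e^{-t})m_i \leq e^{-(1-e^{-t})m_i},
\]
where the last step uses $1-y \leq e^{-y}$ and the fact that $m_i \in [0,1]$ (so the factor in parentheses is nonnegative).

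Then I would iterate using the tower property: writing $S_n = X_1+\dots+X_n$,
\[
\E[e^{-tS_n}] = \E\!\left[e^{-tS_{n-1}} \E[e^{-tX_n}\mid \mathcal{F}_{n-1}]\right] \leq e^{-(1-e^{-t})m_n}\E[e^{-tS_{n-1}}],
\]
and inductively one obtains $\E[e^{-tS_n}] \leq \exp(-(1-e^{-t})\sum_{i=1}^n m_i) = \exp(-(1-e^{-t})an)$. Combining with Markov's inequality,
\[
\mathbb{P}[S_n \leq na/2] = \mathbb{P}[e^{-tS_n} \geq e^{-tna/2}] \leq \exp\!\left(\tfrac{t}{2}an - (1-e^{-t})an\right) = \exp\!\left(-an\bigl(1-e^{-t}-\tfrac{t}{2}\bigr)\right).
\]

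Finally I would choose $t$ to make the exponent strictly negative; for instance $t = \log 2$ gives $1 - e^{-t} - t/2 = \tfrac{1}{2} - \tfrac{\log 2}{2} > 0$, so the constant $c := \tfrac{1}{2} - \tfrac{\log 2}{2}$ works. There is no serious obstacle here: the only delicate point is ensuring the chord bound is applied in the correct direction (it works because we are bounding $\E[e^{-tX_i}]$ from above using a lower bound on $\E[X_i\mid\mathcal{F}_{i-1}]$, which is why we need the variables to take values in $[0,1]$). The result holds uniformly in $a \in [0,1)$ since $c$ is absolute and the bound is linear in $a$.
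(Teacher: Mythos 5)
Your proof is correct: the chord bound $e^{-tx}\leq 1-(1-e^{-t})x$ on $[0,1]$, the tower-property iteration using the conditional mean lower bounds, and Markov's inequality applied to $e^{-tS_n}$ with $t=\log 2$ give $\log\mathbb{P}[S_n\leq na/2]\leq -\bigl(\tfrac12-\tfrac{\log 2}{2}\bigr)na$, exactly as claimed, and every inequality is applied in the right direction. Note that the paper itself gives no proof here — it simply cites Corollary 7.9 of \cite{Kittle2023} — so your contribution is a short, self-contained Chernoff-type argument, which is the standard route for such effective Cramér bounds and supplies an explicit admissible constant $c=\tfrac12-\tfrac{\log 2}{2}$.
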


We generalise Lemma~\ref{EffectiveCramerR} to higher dimensions.

\begin{lemma}\label{lemma:cramer}
    There is some absolute constant $c>0$ such that the following is true. Suppose that $X_1, \dots, X_n$ are random $d \times d$ symmetric positive semi-definite matrices such that $X_i \leq b I$ for some $b > 0$ and
    $$\mathbb{E}[X_i | X_1, \dots, X_{i-1}] \geq m_i I.$$
    Suppose that $\sum_{i=1}^{n}m_i = an$. Then there is some constant $C = C(a/b, d)$ depending only on $a/b$ and $d$ such that $$\mathbb{P}\left[X_1 + \dots + X_n > \frac{na}{4}I\right] \geq 1 - Ce^{-can}$$
\end{lemma}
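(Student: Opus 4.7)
The plan is to reduce the matrix inequality to the one-dimensional Lemma~\ref{EffectiveCramerR} by testing against a single direction. For any fixed unit vector $v \in \R^d$, the PSD inequality $X_1 + \cdots + X_n \leq \tfrac{na}{4} I$ forces the scalar inequality $\sum_i v^T X_i v \leq \tfrac{na}{4}$, so it suffices to bound the probability of the latter. I would set $Y_i := v^T X_i v$; the hypothesis $0 \leq X_i \leq bI$ gives $Y_i \in [0,b]$, and the conditional expectation hypothesis $\mathbb{E}[X_i \mid X_1,\dots,X_{i-1}] \geq m_i I$ passes to $\mathbb{E}[Y_i \mid X_1,\dots,X_{i-1}] \geq m_i$, with $\sum_i m_i = an$.

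Next, I would rescale by $b$: set $\widetilde{Y}_i = Y_i/b \in [0,1]$ and $\widetilde{m}_i = m_i/b$, so that $\sum_i \widetilde{m}_i = (a/b)n$ and the rescaled variables fit the hypotheses of Lemma~\ref{EffectiveCramerR} with parameter $\widetilde{a} = a/b \in (0,1)$. The conclusion of that lemma then reads
$$\mathbb{P}\Bigl[\sum_i Y_i \leq \tfrac{na}{4}\Bigr] \leq \mathbb{P}\Bigl[\sum_i Y_i \leq \tfrac{na}{2}\Bigr] \leq \exp\bigl(-c (a/b) n\bigr),$$
which, by the direction-by-direction containment above, bounds $\mathbb{P}[X_1 + \cdots + X_n \leq \tfrac{na}{4} I]$ by the same quantity.

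The main obstacle is bookkeeping: the natural output of this reduction is a rate of $-c(a/b)n$, whereas the lemma is stated with rate $-can$ and an additive correction $C(a/b,d)$. Since $\alpha := a/b$ is a parameter of the problem (with $\alpha \in (0,1]$ because $m_i \leq b$ automatically), the two forms are equivalent up to rescaling of the absolute constant and an additive term absorbed into $C(\alpha, d)$. If one wishes to see the dimension dependence of $C$ arise naturally, the cleanest alternative is to invoke Tropp's martingale matrix Chernoff bound for $\lambda_{\min}$: the event $\lambda_{\max}(\sum X_i) \leq \tfrac{na}{4}$ implies $\lambda_{\min}(\sum X_i) \leq \tfrac{na}{4}$, and the matrix Chernoff inequality applied with $\mu_{\min} \geq an$ and $\delta \geq 3/4$ yields a bound of the form $d \exp(-c'(a/b)n)$, so that the $\log d$ enters additively. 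Either route produces the stated conclusion, and given that the $1$-dimensional Lemma~\ref{EffectiveCramerR} is already in hand, I expect the former projection-based argument to be the intended one.
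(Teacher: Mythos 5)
There is a genuine gap, and it lies in which event you are bounding. Because the semidefinite order \eqref{MatrixPartialOrder} is not total, the event $\{X_1+\cdots+X_n \leq \tfrac{na}{4}I\}$ that you control by testing a single fixed direction $v$ (smallness of the quadratic form in \emph{every} direction) is very different from the event the lemma actually needs to control, namely the failure of the lower bound $X_1+\cdots+X_n \geq \tfrac{na}{4}I$, i.e.\ smallness of the quadratic form in \emph{some} direction. The latter is what is used downstream (in Proposition~\ref{DecompositionToDetail} the lemma is invoked to get $\mathbb{P}[\sum_i \Sigma_i \geq (C_3+1)kI] \geq 1-\exp(-\cdots)$), and it is what the paper's proof establishes: one fixes a finite $\tfrac{a}{8b}$-net $S$ of the unit sphere (so $|S|$ depends only on $d$ and $a/b$), applies the scalar Lemma~\ref{EffectiveCramerR} in each net direction with the larger threshold $\tfrac{na}{2}$, takes a union bound — this is exactly where the additive constant $C(a/b,d)=\log|S|$ and the dimension dependence come from — and then uses the deterministic bound $\|Y_n\|\leq bn$ to pass from the net to all unit vectors, degrading the threshold from $\tfrac{na}{2}$ to $\tfrac{na}{4}$. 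Your single-direction reduction has no mechanism for any of this; the fact that it produces no $d$-dependence and no additive constant should have been a warning that it is proving a much weaker (and, for the application, useless) statement. Your aside about a matrix Chernoff bound for $\lambda_{\min}$ is actually aimed at the right event, but as stated it is only a pointer: the hypothesis here is a conditional-mean lower bound along a filtration, so one would need an adapted/martingale version of that inequality and some care to fit it, and you do not carry this out.

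A secondary but real error is the bookkeeping claim: the projection argument (after the necessary rescaling by $b$) yields a rate $-c(a/b)n$, and this is \emph{not} equivalent to $-can + C(a/b,d)$ — the discrepancy $can(1-1/b)$ grows linearly in $n$ and cannot be absorbed into an additive constant. (The paper itself is terse on this normalization when it applies Lemma~\ref{EffectiveCramerR} to $x^TY_nx \in [0,b]$, but your explicit assertion of equivalence is false as written, so the constants in your conclusion do not match the statement.)
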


Here we are using the partial ordering \eqref{MatrixPartialOrder}.

\begin{proof}
    For convenience write $Y_n = X_1 + \ldots + X_n$ and choose a set $S$ of unit vectors in $\R^d$ such that if $y$ is any unit vector in $\R^d$ then there exists $x \in S$ with $\| x-y \| \leq \frac{a}{8b}$. Note that the size of $S$ depends only on $d$ and $a/b$.

    By Lemma~\ref{EffectiveCramerR} we know that for any $x \in S$, $$\log \mathbb{P}\left[x^{T}Y_n x \leq \frac{na}{2}\right] \leq - can.$$ Let $A$ be the event that there exists some $x \in S$ with $x^{T}Y_n x \leq \frac{na}{2}$. We have that $\log \mathbb{P}[A]$ is at most $-can + \log |S|$. It suffices therefore to show that on $A^C$ we have $Y_n > \frac{na}{4}I$.

    Indeed let $y \in \R^d$ be a unit vector. Choose some $x \in \R^d$ with $\| x-y\| \leq a /8 b$. Suppose that $A^C$ occurs. Note that we must have $Y_n \leq bnI$ and therefore $||Y_n|| \leq bn$. This means
    \begin{align*}
        y^TY_n y & = x^TY_n x + x^TY_n (y-x) + (y-x)^TY_n y\\
        & > \frac{an}{2} - 2bn \cdot \frac{a}{8b}  = \frac{an}{4}.
    \end{align*}
    and result follows.
\end{proof}

\subsection{$\mathcal{I}_k$ is open}

Recall that we defined for every $k \geq 2$  $$\mathcal{I}_{k} = \{ (U_1, \ldots , U_{k}) \in O(d)^{k} \,:\, \text{the group generated by } U_1,\ldots, U_{k} \text{ is irreducible}  \}.$$ We provide a brief proof of the claim mentioned in the introduction that $\mathcal{I}_k$ is an open subset of $O(d)^{k}$.

\begin{lemma}\label{I_kOpen}
    The set $\mathcal{I}_k$ is an open subset of $O(d)^k$.
\end{lemma}

\begin{proof}
    We show that the complement $O(d)^k \backslash \mathcal{I}_k$ is closed. The complement $O(d)^k \setminus \mathcal{I}_k$ is the set of all $k$-tuples $(U_1, \dots, U_k)$ where the generated group $\langle U_1, \dots, U_k \rangle$ is reducible, i.e. there exists a non-trivial, proper subspace $V \subset \mathbb{R}^d$ such that $U_iV \subseteq V$ for all $i = 1, \dots, k$. Let $P$ be the projection matrix onto a subspace $V$. The condition $U_iV \subseteq V$ is equivalent to the matrix condition $U_i P = P U_i$. 
    
    The set of all subspaces, or equivalently projection matrices, with a fixed rank $\ell$ can be identified with the Grassmannian $\mathrm{Gr}(\ell, d)$, which is a compact space. Then consider the set $S_{\ell}$ defined as
$$
S_{\ell} = \left\{(U_1, \dots, U_k, P) \in O(d)^k \times \mathrm{Gr}(\ell,d) \mid U_i P = P U_i \text{ for all } i=1, \dots, k\right\}.$$
To see that $S_{\ell}$ is a closed subset of $O(d)^k \times \mathrm{Gr}(\ell,d)$, consider the continuous map $\Phi: O(d)^k \times \mathrm{Gr}(\ell,d) \to (\mathbb{R}^{d \times d})^k$ given by
$$
f(U_1, \dots, U_k, P) = (U_1 P - P U_1, \dots, U_k P - P U_k)
$$
Then $S_{\ell} = f^{-1}((0, \ldots , 0))$ and therefore $S_{\ell}$ is closed. 

Finally, write $$S = \bigcup_{\ell = 1}^{d-1} S_{\ell},$$  As $S$ is a union of closed sets, it is closed and therefore compact as $O(d)^k \times \bigcup_{\ell = 1}^{d-1} \mathrm{Gr}(\ell,d)$ is compact.

To conclude the argument, observe that the set $O(d)^k \setminus \mathcal{I}_k$ is closed as it is the image of the compact set $S$ under the continuous projection map $\pi: O(d)^k \times \bigcup_{\ell=1}^{d-1} \mathrm{Gr}(\ell,d) \to O(d)^k$ given by $\pi(U_1, \dots, U_k, P) = (U_1, \dots, U_k)$.

\end{proof}

    \section{Order $k$ Detail}

    \label{section:Detail}

The goal of this section is to prove the product bound \eqref{Outline:StrongProduct} and to show how to convert \eqref{Outline:DecompositionGoal} into suitable estimates for detail. We first recall in section~\ref{Detail:Definition} the definition of the detail $s_r(\lambda)$ of a measure $\lambda$ on $\R^d$ at scale $r > 0$ that was first introduced by \cite{Kittle2021}.  We then expand the definition and results of order $k$ detail $s_r^{(k)}(\lambda)$ of a measure from \cite{Kittle2023} to measures on $\R^d$.

As mentioned in the outline of proofs, the advantage of using $k$-order detail over detail is that it leads to stronger product bounds. Indeed, we will show in Lemma~\ref{BasicProductBound} that 
\begin{equation}\label{BasicorderkBound}
    s_r^{(k)}(\lambda_1 * \cdots * \lambda_k) \leq s_r(\lambda_1)\cdots s_r(\lambda_k)
\end{equation}
for measures $\lambda_1, \ldots , \lambda_k$ on $\R^d$ and $r > 0$. Moreover, if $s_{r}^{(k)}(\lambda) \leq \alpha$ for all $r \in [a,b]$ and some $k \geq 1$ then we show in Proposition~\ref{kto1DetailBound} for a constant $Q'(d)$ depending only on $d$ that 
\begin{equation}\label{Outline:GoodDetailBound}
    s_{a\sqrt{k}}(\lambda) \leq Q'(d)^{k-1}(\alpha + k! ka^2b^{-2}).
\end{equation} Combining \eqref{BasicorderkBound} and \eqref{Outline:GoodDetailBound}, we deduce the strong product bound (Corollary~\ref{StrongProduct}) mentioned at \eqref{Outline:StrongProduct} in the outline of proofs. 

In section~\ref{Detail:Wasserstein}, we show that the difference in the detail of two measures is bounded in term of their Wasserstein distance. Finally, in section~\ref{Detail:Convert} we show how to convert the conditions from \eqref{Outline:DecompositionGoal} into good estimates for detail. The latter requires Berry-Essen type results, the Wasserstein distance bounds from section~\ref{Detail:Wasserstein},  \eqref{BasicorderkBound} and a suitable partition of $\sum_i X_i$.

All of these results will be used in section~\ref{section:decomposition}.

\subsection{Definitions}\label{Detail:Definition}

Denote by $\eta_y$ the standard Gaussian density on $\R^d$ with covariance matrix $y \cdot \mathrm{I}_d$, i.e. $$\eta_y(x) = \frac{1}{(2\pi y)^{d/2}} \exp\left( - \frac{||x||^2}{2y} \right).$$ Moreover, we write $$\eta_y^{(1)} = \frac{\partial}{\partial y} \eta_y.$$

Given a probability measure $\lambda$ on $\R^d$ the detail of $\lambda$ at scale $r > 0$ is defined as $$s_r(\lambda) = r^2 \, Q(d) \, ||\lambda * \eta_{r^2}^{(1)}||_1,$$ where $Q(d) = ||\eta_1^{(1)}||^{-1} = \frac{1}{2}\Gamma(\frac{d}{2})(\frac{d}{2e})^{-d/2}$ and note that by Stirling's approximation $d^{-1/2} \leq Q(d) \leq e d^{-1/2}$ for all $d\geq 1$. Moreover, $r^2  Q(d) = ||\eta_{r^2}^{(1)}||^{-1}$ and therefore $s_r(\lambda) \leq 1$ for every probability measure $\lambda$.

\begin{proposition}\cite[section 2]{Kittle2021}\label{DetailBasic}
    Let $\lambda$ and $\mu$ be probability measures on $\R^d$. Then the following properties hold:
    \begin{enumerate}[label = (\roman*)]
        \item Suppose that there is $\beta > 1$ such that $s_r(\lambda) < (\log r^{-1})^{-\beta}$ for sufficiently small $r$. Then $\lambda$ is absolutely continuous. 
        \item $s_r(\lambda * \mu) \leq s_r(\lambda).$
    \end{enumerate}
\end{proposition}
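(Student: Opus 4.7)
The plan is to prove (ii) by a direct Young-type convolution estimate, and (i) via a Cauchy argument in $L^{1}$ applied to the smoothed family $\lambda * \eta_{r^{2}}$ as $r \to 0$.

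For (ii), I would use associativity and commutativity of convolution to write $(\lambda * \mu) * \eta_{r^{2}}^{(1)} = \mu * (\lambda * \eta_{r^{2}}^{(1)})$. Since $\mu$ is a probability measure, convolution with $\mu$ is an $L^{1}$-contraction: $\| \mu * f \|_{1} \le \| f \|_{1}$ for every $f \in L^{1}(\R^{d})$. Multiplying both sides by $r^{2} Q(d)$ gives $s_{r}(\lambda * \mu) \le s_{r}(\lambda)$ immediately.

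The substantive statement is (i). The strategy is to show that the smoothed densities $\lambda * \eta_{r^{2}}$ form a Cauchy net in $L^{1}(\R^{d})$ as $r \to 0$. Since this family also converges weakly to $\lambda$ (Gaussian approximation of the identity), uniqueness of limits then forces $\lambda$ to be absolutely continuous with $L^{1}$-density equal to the Cauchy limit. The driving identity is the fundamental theorem of calculus in the parameter $y$:
\[
\eta_{r_{2}^{2}} - \eta_{r_{1}^{2}} = \int_{r_{1}^{2}}^{r_{2}^{2}} \eta_{y}^{(1)} \, dy.
\]
Convolving with $\lambda$ and applying Minkowski's integral inequality yields
\[
\bigl\| \lambda * \eta_{r_{2}^{2}} - \lambda * \eta_{r_{1}^{2}} \bigr\|_{1} \le \int_{r_{1}^{2}}^{r_{2}^{2}} \| \lambda * \eta_{y}^{(1)} \|_{1} \, dy = \frac{1}{Q(d)} \int_{r_{1}^{2}}^{r_{2}^{2}} \frac{s_{\sqrt{y}}(\lambda)}{y} \, dy,
\]
where the last equality uses the definition of detail. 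Substituting $r = \sqrt{y}$ rewrites the right-hand side as $\frac{2}{Q(d)} \int_{r_{1}}^{r_{2}} s_{r}(\lambda) \frac{dr}{r}$, and a further substitution $u = \log r^{-1}$ reduces the hypothesis $s_{r}(\lambda) \le (\log r^{-1})^{-\beta}$ with $\beta > 1$ to a convergent tail integral $\int^{\infty} u^{-\beta} \, du$. Hence the $L^{1}$-differences tend to $0$ uniformly as $r_{1}, r_{2} \to 0$, establishing the Cauchy property.

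The only mildly delicate step is identifying the $L^{1}$-limit $f$ with a density for $\lambda$: this follows because $L^{1}$-convergence implies weak convergence against $C_{c}(\R^{d})$, while $\lambda * \eta_{r^{2}} \to \lambda$ weakly by approximation of the identity, so uniqueness of weak limits gives $d\lambda = f \, dx$. No genuine obstacle appears; the argument is essentially a one-parameter telescoping combined with the logarithmic integrability afforded by $\beta > 1$.
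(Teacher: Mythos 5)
Your proof is correct, and it follows essentially the same route as the cited source: the paper itself does not reprove this proposition but quotes it from \cite{Kittle2021}, where part (ii) is exactly the Young/$L^1$-contraction observation and part (i) is proved by the same telescoping identity $\eta_{r_2^2}-\eta_{r_1^2}=\int_{r_1^2}^{r_2^2}\eta_y^{(1)}\,dy$, yielding that $\lambda*\eta_{r^2}$ is Cauchy in $L^1$ because $\int_0 s_r(\lambda)\,\frac{dr}{r}$ converges under the hypothesis $\beta>1$, and then identifying the $L^1$-limit with a density of $\lambda$ via weak convergence. No gaps; your treatment of the Fubini/Minkowski step and the identification of the limit is exactly what is needed.
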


\begin{definition}
    Given a probability measure $\lambda$ on $\R^d$ and some $k \geq 1$ we define the \textbf{order $k$ detail of $\lambda$ at scale $r$} as $$s_r^{(k)}(\lambda) = r^{2k} \, Q(d)^k \, ||  \lambda *   \eta_{kr^2}^{(k)}  ||_1,$$ where $\eta_y^{(k)} = \frac{\partial^k}{\partial y^k}\eta_y$. 
\end{definition}

\subsection{Bounding Detail}

We have the following properties:

\begin{lemma}\label{BasicProductBound}
    Let $k \geq 1$ and let $\lambda_1, \lambda_2, \ldots , \lambda_k$ be probability measures on $\R^d$. Then  
    \begin{equation}\label{kDetailtoDetailBound}
        s_r^{(k)}(\lambda_1 * \lambda_2 * \ldots * \lambda_k) \leq s_r(\lambda_1) s_2(\lambda_2)\cdots s_r(\lambda_k).
    \end{equation}
    In particular, for any probability measure $\lambda$ on $\R^d$ and $k\geq 1$,
    \begin{equation}\label{kDetailBasic}
        s_r^{(k)}(\lambda) \leq 1.
    \end{equation}
\end{lemma}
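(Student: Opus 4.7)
The plan is to exploit the Gaussian semigroup structure of $\eta_y$. My first step will be to establish the convolution identity
\begin{equation*}
    \underbrace{\eta_{r^2}^{(1)} * \eta_{r^2}^{(1)} * \cdots * \eta_{r^2}^{(1)}}_{k \text{ copies}} \;=\; \eta_{kr^2}^{(k)}.
\end{equation*}
This is cleanest on the Fourier side: since $\widehat{\eta_y}(\xi) = e^{-y|\xi|^2/2}$, differentiating $m$ times in $y$ yields $\widehat{\eta_y^{(m)}}(\xi) = (-|\xi|^2/2)^m e^{-y|\xi|^2/2}$, and both sides of the identity then have Fourier transform $(-|\xi|^2/2)^k e^{-kr^2|\xi|^2/2}$. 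Alternatively, one can induct on $k$ using $\partial_{y_1}\partial_{y_2}(\eta_{y_1}*\eta_{y_2}) = \partial_{y_1}\partial_{y_2}\eta_{y_1 + y_2} = \eta_{y_1+y_2}^{(2)}$, and continue pushing derivatives onto successive factors.

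Given this identity, the proof of the displayed inequality for $s_r^{(k)}$ is a one-line application of Young's inequality. I would rewrite
\begin{equation*}
    (\lambda_1 * \cdots * \lambda_k) * \eta_{kr^2}^{(k)} \;=\; (\lambda_1 * \eta_{r^2}^{(1)}) * (\lambda_2 * \eta_{r^2}^{(1)}) * \cdots * (\lambda_k * \eta_{r^2}^{(1)}),
\end{equation*}
and then invoke the submultiplicativity of the $L^1$-norm under convolution, $\|f_1 * \cdots * f_k\|_1 \leq \prod_i \|f_i\|_1$, to obtain the inequality $\|(\lambda_1 * \cdots * \lambda_k) * \eta_{kr^2}^{(k)}\|_1 \leq \prod_{i=1}^k \|\lambda_i * \eta_{r^2}^{(1)}\|_1$. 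Multiplying both sides by the normalisation factor $r^{2k} Q(d)^k = \prod_{i=1}^k r^2 Q(d)$ turns this directly into the desired multiplicative bound on $s_r^{(k)}$.

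For the specialisation $s_r^{(k)}(\lambda) \leq 1$, I would apply the just-proven bound to $\lambda_1 = \lambda$ and $\lambda_2 = \cdots = \lambda_k = \delta_0$, noting that $\lambda_1 * \cdots * \lambda_k = \lambda$. The normalisation $Q(d) = \|\eta_1^{(1)}\|_1^{-1}$ is chosen precisely so that $s_r(\delta_0) = r^2 Q(d) \|\eta_{r^2}^{(1)}\|_1 = 1$ (a scaling argument reduces $\|\eta_{r^2}^{(1)}\|_1$ to $r^{-2}\|\eta_1^{(1)}\|_1$), and $s_r(\lambda) \leq 1$ is the standard bound observed immediately after the definition of detail. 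Combining these yields $s_r^{(k)}(\lambda) \leq 1$.

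The argument has no substantive obstacle; the only item requiring verification is the differentiated-Gaussian convolution identity, which is essentially a trivial Fourier calculation. The real difficulty with order-$k$ detail will come later, in converting this clean multiplicative bound into the strong product bound for $s_r$ itself (via the planned estimate $s_{a\sqrt{k}}(\lambda_1 * \cdots * \lambda_k) \leq Q'(d)^{k-1}(\alpha^k + k!\,k a^2 b^{-2})$); the present lemma is the elementary bookkeeping step that makes that reduction possible.
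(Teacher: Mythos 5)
Your proposal is correct and follows essentially the same route as the paper: the key identity $\eta^{(k)}_{kr^2} = \eta^{(1)}_{r^2} * \cdots * \eta^{(1)}_{r^2}$ followed by $L^1$-submultiplicativity of convolution, and the same specialisation $\lambda_2 = \cdots = \lambda_k = \delta_0$ for the bound $s_r^{(k)}(\lambda) \leq 1$. The only cosmetic difference is that the paper verifies the Gaussian identity via the heat equation $\partial_y \eta_y = \tfrac{1}{2}\sum_i \partial^2_{x_i}\eta_y$ rather than on the Fourier side, which is an equally valid calculation.
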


\begin{proof}
    Recall that by the Heat equation $\frac{\partial}{\partial y}\eta_y(x) = \frac{1}{2} \sum_{i = 1}^d\frac{\partial^2}{\partial x_i^2}\eta_y(x)$ and therefore by standard properties of convolution 
    \begin{align*}
       \eta^{(k)}_{kr^2} &= \frac{1}{2^k}\sum_{i_1, \ldots , i_k = 1}^d\frac{\partial^{2}}{\partial x_{i_1}^{2}} \cdots \frac{\partial^{2}}{\partial x_{i_k}^{2}} \eta_{kr^2} \\
       &= \underbrace{\left( \frac{1}{2} \sum_{i = 1}^d\frac{\partial^2}{\partial x_i^2}\eta_{r^2}  \right) * \left( \frac{1}{2} \sum_{i = 1}^d\frac{\partial^2}{\partial x_i^2}\eta_{r^2}  \right) * \cdots * \left( \frac{1}{2} \sum_{i = 1}^d\frac{\partial^2}{\partial x_i^2}\eta_{r^2}  \right)}_{k \text{ times}} \\
       &= \underbrace{\eta_{r^2}^{(1)} * \eta_{r^2}^{(1)} * \cdots * \eta_{r^2}^{(1)}}_{k \text{ times}}.
    \end{align*} This concludes the proof of \eqref{kDetailtoDetailBound} as
    \begin{align*}
        ||\lambda_1 * \ldots * \lambda_k * \eta_{kr^2}^{(k)}||_1 &= ||\lambda_1 * \eta_{r^2}^{(1)} * \lambda_2 * \eta_{r^2}^{(1)} * \cdots * \lambda_k * \eta_{r^2}^{(1)}||_1 \\
        &\leq ||\lambda_1 * \eta_{r^2}^{(1)}||_1 \cdot ||\lambda_2 * \eta_{r^2}^{(1)}|| _1  \cdots ||\lambda_k * \eta_{r^2}^{(1)}||_1.
    \end{align*}

    To show \eqref{kDetailBasic} we set $\lambda_1 = \lambda$ and $\lambda_2 = \ldots = \lambda_k = \delta_e$ and use that $s_r(\lambda_i) \leq 1$.
\end{proof}

\begin{lemma}\label{ktok-1DetailBound}
    Let $k$ be an integer greater than $1$ and suppose that $\lambda$ is a probability measure on $\R^d$. Suppose that $a, b, c > 0$ and $\alpha \in (0,1)$. Assume that $a < b$ and that for all $r\in [a,b]$ it holds that $$s_r^{(k)}(\lambda) \leq \alpha + cr^{2k}.$$ Then for all $r \in \left[ a\sqrt{\frac{k}{k-1}}, b \sqrt{\frac{k}{k-1}}  \right]$ we have $$s_{r}^{(k-1)}(\lambda) \leq 2eQ(d)^{-1} \left(\alpha  + (b^{-2(k-1)} + ckb^2)r^{2(k-1)}\right).$$
\end{lemma}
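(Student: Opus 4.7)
The plan is to integrate the relation $\eta_y^{(k)}=\partial_y\eta_y^{(k-1)}$ from a well-chosen endpoint so that the order-$k$ hypothesis pins down $\eta_y^{(k-1)}$, then reparametrise between the two scalings $y=(k-1)r^2$ (for order $k-1$) and $y=ks^2$ (for order $k$). For $r$ in the stated range, set $y=(k-1)r^2$, so $y\in[ka^2,kb^2]$; writing $y=ks^2$ with $s\in[a,b]$ realises the same $y$ as the order-$k$ parameter. I would start from the fundamental-theorem identity
\begin{equation*}
\eta_y^{(k-1)}=\eta_{kb^2}^{(k-1)}-\int_y^{kb^2}\eta_t^{(k)}\,dt,
\end{equation*}
convolve with $\lambda$ and take $L^1$-norms to get
\begin{equation*}
\|\lambda*\eta_y^{(k-1)}\|_1\leq \|\eta_{kb^2}^{(k-1)}\|_1+\int_y^{kb^2}\|\lambda*\eta_t^{(k)}\|_1\,dt.
\end{equation*}

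Next I would estimate the two pieces. From $\eta_y^{(m)}=(\eta_{y/m}^{(1)})^{*m}$ (already observed in the proof of Lemma~\ref{BasicProductBound}) and Young's inequality, $\|\eta_y^{(m)}\|_1\leq (m/y)^m Q(d)^{-m}$, so $\|\eta_{kb^2}^{(k-1)}\|_1\leq Q(d)^{-(k-1)}b^{-2(k-1)}$. For the integrand, the hypothesis $s_s^{(k)}(\lambda)\leq\alpha+cs^{2k}$ unpacks at $t=ks^2$ to
\begin{equation*}
\|\lambda*\eta_t^{(k)}\|_1\leq (k/t)^k Q(d)^{-k}\alpha+ Q(d)^{-k}c\qquad\text{for }t\in[ka^2,kb^2].
\end{equation*}
Integrating the two pieces gives $\int_y^{kb^2}(k/t)^k\,dt\leq k^k y^{-(k-1)}/(k-1)$ and $\int_y^{kb^2}dt\leq kb^2$.

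Finally I would substitute back $y=(k-1)r^2$, using $y^{k-1}=(k-1)^{k-1}r^{2(k-1)}$, so that $k^k/((k-1)y^{k-1})=(k/(k-1))^k r^{-2(k-1)}$, and use the elementary bound $(k/(k-1))^k\leq 2e$ valid for all $k\geq2$ (since $(1+1/(k-1))^{k-1}\leq e$ and $k/(k-1)\leq2$). Multiplying the resulting bound on $\|\lambda*\eta_y^{(k-1)}\|_1$ by $r^{2(k-1)}Q(d)^{k-1}$ yields
\begin{equation*}
s_r^{(k-1)}(\lambda)\leq 2eQ(d)^{-1}\alpha+\bigl(b^{-2(k-1)}+ckb^2Q(d)^{-1}\bigr)r^{2(k-1)},
\end{equation*}
which in turn is dominated by the claimed $2eQ(d)^{-1}\bigl(\alpha+(b^{-2(k-1)}+ckb^2)r^{2(k-1)}\bigr)$ since $2eQ(d)^{-1}\geq 1$.

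There is no genuine obstacle; the only subtleties are choosing the upper endpoint of integration to be $kb^2$ (so the first term of the telescoping decomposition is harmless, of order $b^{-2(k-1)}$) and tracking the reparametrisation between the two natural scalings, together with the uniform bound $(k/(k-1))^k\leq 2e$. Everything else is straight bookkeeping using the scaling of $\|\eta_y^{(m)}\|_1$ and Young's inequality.
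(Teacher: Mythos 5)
Your proposal is correct and follows essentially the same route as the paper's proof: the fundamental-theorem decomposition $\eta_y^{(k-1)}=\eta_{kb^2}^{(k-1)}-\int_y^{kb^2}\eta_t^{(k)}\,dt$, the same bounds on $\|\eta_{kb^2}^{(k-1)}\|_1$ and on $\|\lambda*\eta_t^{(k)}\|_1$ from the hypothesis, the same reparametrisation $y=(k-1)r^2$ versus $y=ks^2$, and the same uniform bound $(k/(k-1))^k\leq 2e$. The bookkeeping and final constants also match the paper.
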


\begin{proof}
    By the assumption and the definition of detail for $y \in [ka^2, kb^2]$ and writing $y = kr^2$, $$||\lambda * \eta_{y}^{(k)}||_1 \leq r^{-2k}Q(d)^{-k}(\alpha + cr^{2k}) = \alpha y^{-k}k^k Q(d)^{-k} + cQ(d)^{-k}.$$ Therefore with $y \in [ka^2, kb^2]$, 
    \begin{align*}
        ||\lambda * \eta^{(k-1)}_y||_1 &\leq ||\lambda * \eta^{(k-1)}_{kb^2}||_1 + \int_y^{kb^2} ||\lambda * \eta^{(k)}_u||_1 \, du \\
        &\leq ||\eta^{(k-1)}_{kb^2}||_1 + \int_y^{kb^2} \alpha u^{-k}k^k Q(d)^{-k} + cQ(d)^{-k} \, du \\
        &\leq (\tfrac{kb^2}{k-1})^{-(k - 1)} Q(d)^{-(k-1)} + \alpha k^k Q(d)^{-k} \tfrac{y^{-(k - 1)}}{k-1} +  Q(d)^{-k}c k b^2 ,
    \end{align*} where we bounded in the last inequality $||\eta^{(k-1)}_{kb^2}||_1$ by using that order $(k-1)$-detail is at most one, $\int_y^{kb^2} \alpha u^{-k}k^k Q(d)^{-k} \, du$ by $\int_y^{\infty} \alpha u^{-k}k^k Q(d)^{-k} \, du$ and $\int_y^{kb^2}  cQ(d)^{-k} \, du$ by $\int_0^{kb^2} cQ(d)^{-k} \, du$. Using that $(\tfrac{k}{k-1})^{-(k - 1)} < 1$ we therefore get 
    $$||\lambda * \eta^{(k-1)}_y||_1  \leq \alpha k^k Q(d)^{-k} \frac{y^{-(k - 1)}}{k-1} + (b^{-(2k-2)} + Q(d)^{-1}ckb^2)Q(d)^{-(k-1)}.$$

    Substituting the definition of order $k$ detail gives for $y = (k-1) r^2 \in [ka^2, kb^2]$ or equivalently  $r \in \left[ a\sqrt{\frac{k}{k-1}}, b \sqrt{\frac{k}{k-1}}  \right]$,
    \begin{align*}
    s_r^{(k-1)}(\lambda) &= r^{2(k-1)}Q(d)^{k-1}||\lambda * \eta^{(k-1)}_{(k-1)r^2}||_1 \\
    &\leq \alpha r^{2(k-1)} k^k Q(d)^{-1} \frac{((k-1)r^2)^{-(k - 1)}}{k-1} +r^{2(k-1)}(b^{-2(k-1)} + Q(d)^{-1}ckb^2) \\
    &\leq \alpha Q(d)^{-1}\left( 1 + \frac{1}{k-1}  \right)^k + (b^{-2(k-1)} + Q(d)^{-1}ckb^2)r^{2(k-1)}.
    \end{align*} Finally using that $\left( 1 + \frac{1}{k-1}  \right)^k \leq 2e$ and that $2eQ(d)^{-1} \geq 1$ the proof is concluded.
\end{proof}

\begin{proposition}\label{kto1DetailBound}
    Let $k$ be an integer greater than $1$ and suppose that $\lambda$ is a probability measure on $\R^d$. Suppose that $a, b > 0$ and $\alpha \in (0,1)$. Assume that $a < b$ and that for all $r \in [a,b]$ we have $$s_r^{(k)}(\lambda) \leq \alpha.$$ Then we have that $$s_{a\sqrt{k}}(\lambda) \leq Q'(d)^{k-1}(\alpha + k! \cdot k a^2b^{-2})$$ for $Q'(d) = 4eQ(d)^{-1} \geq 1$.
\end{proposition}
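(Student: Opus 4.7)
The plan is to iterate Lemma~\ref{ktok-1DetailBound} exactly $k-1$ times, lowering the order of detail one step at a time while the interval on which the estimate is valid expands from $[a,b]$ to $[a\sqrt{k},b\sqrt{k}]$. Writing $M := 2eQ(d)^{-1}$ and $b_j := b\sqrt{k/j}$, I will prove by downward induction on $j \in \{k, k-1, \ldots, 1\}$ the invariant
$$s_r^{(j)}(\lambda) \leq \alpha_j + c_j r^{2j} \quad \text{for all } r \in \bigl[a\sqrt{k/j},\, b\sqrt{k/j}\bigr].$$
The base case $j = k$ is the hypothesis of the proposition, with $\alpha_k = \alpha$ and $c_k = 0$. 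The inductive step from $j$ to $j-1$ is a direct application of Lemma~\ref{ktok-1DetailBound} at order $j$ on the interval $[a\sqrt{k/j}, b\sqrt{k/j}]$, which produces the recurrences
$$\alpha_{j-1} = M \alpha_j, \qquad c_{j-1} = M\bigl(b_j^{-2(j-1)} + j\, b_j^2\, c_j\bigr),$$
with the new estimate valid on the enlarged interval $[a\sqrt{k/(j-1)}, b\sqrt{k/(j-1)}]$.

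When the iteration terminates at $j = 1$, the target scale $r = a\sqrt{k}$ lies in $[a\sqrt{k}, b\sqrt{k}]$ and the invariant specialises to
$$s_{a\sqrt{k}}(\lambda) \leq M^{k-1}\alpha + c_1 \cdot k a^2.$$
Since $Q'(d) = 2M$, the first summand is at most $Q'(d)^{k-1}\alpha$ and the proposition reduces to showing $c_1 b^2 \leq Q'(d)^{k-1} k!$. Introducing the renormalised variables $d_j := c_j b^{2j}$ and substituting $b_j = b\sqrt{k/j}$ collapses the recursion to the clean form
$$d_{j-1} = M\left(\tfrac{j}{k}\right)^{j-1} + Mk\, d_j, \qquad d_k = 0,$$
which unrolls explicitly to
$$d_1 = \tfrac{1}{k}\sum_{m=2}^{k} M^{m-1} m^{m-1}.$$

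The main obstacle is the final combinatorial estimate $d_1 \leq (2M)^{k-1} k!$ from this closed-form sum. This requires a careful term-by-term analysis exploiting the shrinking factor $(j/k)^{j-1} \leq 1$ together with Stirling-type comparisons to absorb the $m^{m-1}$ into $k!$ at the cost of the geometric factor $2^{k-1}$. Once this inequality is in hand, the desired bound $s_{a\sqrt{k}}(\lambda) \leq Q'(d)^{k-1}\bigl(\alpha + k!\, k\, a^2 b^{-2}\bigr)$ follows immediately, since every other ingredient is a straightforward iteration of the single-step lemma.
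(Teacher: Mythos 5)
Your plan follows the paper's strategy exactly: a downward induction on the order $j$, applying Lemma~\ref{ktok-1DetailBound} once per step while the window of validity expands from $[a\sqrt{k/j},b\sqrt{k/j}]$ to $[a\sqrt{k/(j-1)},b\sqrt{k/(j-1)}]$. Your bookkeeping is moreover the faithful one: since the upper endpoint of the step-$j$ window is $b_j=b\sqrt{k/j}$, the lemma's error term contributes $j\,b_j^2=k b^2$ at every step, which is what produces your recursion $d_{j-1}=M(j/k)^{j-1}+Mk\,d_j$ and the closed form $d_1=\tfrac1k\sum_{m=2}^{k}M^{m-1}m^{m-1}$.

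The genuine gap is the step you defer as the ``main obstacle'': the inequality $d_1\le (2M)^{k-1}k!$ is not merely delicate, it is false once $k$ is large, so no term-by-term or Stirling argument can establish it. Indeed the single summand $m=k$ already equals $M^{k-1}k^{k-2}$, while by Stirling $2^{k-1}k!\asymp\sqrt{k}\,k^{k}(2/e)^{k}$, so $k^{k-2}/(2^{k-1}k!)\gg (e/2)^{k}k^{-5/2}\to\infty$; numerically the top term alone exceeds $(2M)^{k-1}k!$ for $k\ge 30$, and the proposition must hold for all $k$ (in the application $k\asymp\log\log r^{-1}$ is unbounded). Your route therefore only delivers the weaker conclusion with $k!\,k\,a^2b^{-2}$ replaced by roughly $k^{k}a^2b^{-2}$ (which would in fact still suffice where the proposition is later used, since there $b=rk^k$, but it is not the stated bound). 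By contrast, the paper closes the induction by carrying the hypothesis $s_r^{(j)}(\lambda)\le Q'(d)^{k-j}\bigl(\alpha+\tfrac{k!}{j!}b^{-2j}r^{2j}\bigr)$ and absorbing a per-step factor $j$ rather than your $k$; that factor $j$ comes from inserting $b$ instead of $b\sqrt{k/j}$ as the upper endpoint in the $ckb^2$ term of the lemma, and your computation shows that the faithful application of the lemma does not yield it — so the obstruction you ran into is structural to this route, not something a sharper estimate of your sum can repair.
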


\begin{proof}
    We will show by induction for $j = k,k-1, \ldots , 1$ that for all $r\in \left[ a\sqrt{\frac{k}{j}},  b\sqrt{\frac{k}{j}} \right]$ we have 
    \begin{equation}\label{InductiveStep}
        s_r^{(j)}(\lambda) \leq Q'(d)^{k-j}\left(\alpha + \frac{k!}{j!}b^{-2j}r^{2j}\right),
    \end{equation} which implies the claim by setting $j = 1$ and $r = a\sqrt{k}$.
    The case $j = k$ follows from the conditions of the lemma. For the inductive step assume now that for all $r\in \left[ a\sqrt{\frac{k}{j}},  b\sqrt{\frac{k}{j}} \right]$ we have that \eqref{InductiveStep} holds. Then by Lemma~\ref{ktok-1DetailBound} we have for all $r\in \left[ a\sqrt{\frac{k}{j-1}},  b\sqrt{\frac{k}{j-1}} \right]$ 
    \begin{align*}
        s_r^{(j-1)}(\lambda) &\leq  Q'(d)^{k-j}2eQ(d)^{-1}\left(\alpha + \left(b^{-2(j-1)} + \frac{k!}{j!}b^{-2j}jb^2\right)r^{2(j-1)}\right) \\
        &\leq Q'(d)^{k-j}2eQ(d)^{-1}\left(\alpha + \left(1 + \frac{k!}{(j-1)!}\right)b^{-2(j-1)}r^{2(j-1)}\right) \\
        &\leq Q'(d)^{k-(j-1)}\left(\alpha + \frac{k!}{(j-1)!}b^{-2(j-1)}r^{2(j-1)}\right).
    \end{align*}
\end{proof}

Combining Lemma~\ref{BasicProductBound} and Proposition~\ref{kto1DetailBound}, we arrive at the following corollary.

\begin{corollary}\label{StrongProduct}
    Let $k \geq 1$ and let $\lambda_1, \lambda_2, \ldots , \lambda_k$ be probability measures on $\R^d$. Suppose that $a, b > 0$ and $\alpha \in (0,1)$. Assume that $a < b$ and that for all $r \in [a,b]$ and $i \in [k]$ we have $$s_r(\lambda_i) \leq \alpha.$$ Then it holds that $$s_{a\sqrt{k}}(\lambda) \leq Q'(d)^{k-1}(\alpha^k + k! \cdot k a^2b^{-2}).$$ 
\end{corollary}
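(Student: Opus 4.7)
The plan is to combine the two main results of this section, namely Lemma~\ref{BasicProductBound} and Proposition~\ref{kto1DetailBound}, in a direct two-step manner.

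First I would set $\lambda = \lambda_1 * \lambda_2 * \cdots * \lambda_k$ and use Lemma~\ref{BasicProductBound} to transfer information about the ordinary detail of the factors into information about the order $k$ detail of the convolution. Specifically, for every $r \in [a,b]$, the bound $s_r(\lambda_i) \leq \alpha$ on each factor, together with \eqref{kDetailtoDetailBound}, immediately yields
\begin{equation*}
    s_r^{(k)}(\lambda) \;\leq\; s_r(\lambda_1)\, s_r(\lambda_2) \cdots s_r(\lambda_k) \;\leq\; \alpha^k.
\end{equation*}
This holds uniformly on the whole interval $[a,b]$, which is exactly the hypothesis needed to apply Proposition~\ref{kto1DetailBound}.

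Next I would apply Proposition~\ref{kto1DetailBound} to $\lambda$ with the parameter $\alpha$ in that proposition replaced by $\alpha^k$ and the same choice of $a,b,k$. This gives
\begin{equation*}
    s_{a\sqrt{k}}(\lambda) \;\leq\; Q'(d)^{k-1}\bigl( \alpha^k + k!\cdot k \, a^2 b^{-2} \bigr),
\end{equation*}
which is precisely the conclusion of Corollary~\ref{StrongProduct}.

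There is essentially no obstacle: both ingredients have already been proved in this section and the argument is a one-line composition. The only thing to double-check is that the hypothesis $\alpha^k \in (0,1)$ required by Proposition~\ref{kto1DetailBound} is inherited from $\alpha \in (0,1)$, which is immediate. Thus the corollary follows with no further work.
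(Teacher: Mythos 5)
Your proposal is correct and is exactly the paper's own argument: the paper derives Corollary~\ref{StrongProduct} by applying Lemma~\ref{BasicProductBound} to get $s_r^{(k)}(\lambda_1 * \cdots * \lambda_k) \leq \alpha^k$ on $[a,b]$ and then invoking Proposition~\ref{kto1DetailBound} with $\alpha^k$ in place of $\alpha$. The only pedantic remark is that Proposition~\ref{kto1DetailBound} is stated for integers $k > 1$, so the case $k = 1$ of the corollary should be noted separately (it is immediate from $s_a(\lambda_1) \leq \alpha \leq \alpha + a^2b^{-2}$), but this does not affect the substance of your argument.
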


\subsection{Wasserstein Distance} \label{Detail:Wasserstein}

Recall as in \eqref{WassersteinDef} that the Wasserstein 1-distance on $\R^d$ between $\lambda_1$ and $\lambda_2$ is defined as $$\mathcal{W}_1(\lambda_1, \lambda_2) = \inf_{\gamma \in \Gamma(\lambda_1, \lambda_2)}  \int_{\R^d \times \R^d} |x-y| \, d\gamma(x,y),$$ where $\Gamma(\lambda_1, \lambda_2)$ is the set of couplings between $\lambda_1$ and $\lambda_2$. We show that detail is bounded up to a constant by the Wasserstein distance.

\begin{lemma}\label{DetailWassersteinBound}
    Let $\lambda_1$ and $\lambda_2$ be probability measures on $\R^d$. Then for $k \geq 1$ and $r > 0$, $$|s_r^{(k)}(\lambda_1) - s_r^{(k)}(\lambda_2)| \leq edr^{-1}\mathcal{W}_1(\lambda_1, \lambda_2),$$ where $e$ is Euler's number. 
\end{lemma}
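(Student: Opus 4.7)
The plan is to start from the definition $s_r^{(k)}(\lambda)=r^{2k}Q(d)^k\,\|\lambda*\eta_{kr^2}^{(k)}\|_1$ and apply the reverse triangle inequality in $L^1$, giving
\[
|s_r^{(k)}(\lambda_1)-s_r^{(k)}(\lambda_2)|\leq r^{2k}Q(d)^k\,\|(\lambda_1-\lambda_2)*\eta_{kr^2}^{(k)}\|_1.
\]
The first main step is to bound $\|\mu*f-\nu*f\|_1\leq\|\nabla f\|_1\,\mathcal{W}_1(\mu,\nu)$ for any $f$ with $\nabla f\in L^1(\R^d)$. This follows by choosing an $\varepsilon$-optimal coupling $\gamma$ between $\mu$ and $\nu$ and writing $f(z-x)-f(z-y)=-\int_0^1\nabla f(z-y+t(y-x))\cdot(x-y)\,dt$; integrating the absolute value in $z$ using Fubini and translation invariance of $L^1$ pulls out the factor $|x-y|$, after which one integrates against $\gamma$ and sends $\varepsilon\to 0$.

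The second main step is to estimate $r^{2k}Q(d)^k\|\nabla\eta_{kr^2}^{(k)}\|_1$. Here I would invoke the convolution identity $\eta_{kr^2}^{(k)}=(\eta_{r^2}^{(1)})^{*k}$ from Lemma~\ref{BasicProductBound}, differentiate one factor, and apply Young's inequality:
\[
\|\nabla\eta_{kr^2}^{(k)}\|_1\leq\|\nabla\eta_{r^2}^{(1)}\|_1\cdot\|\eta_{r^2}^{(1)}\|_1^{k-1}.
\]
Using the defining normalization $\|\eta_{r^2}^{(1)}\|_1=r^{-2}Q(d)^{-1}$ together with the scaling identity $\eta_{r^2}^{(1)}(x)=r^{-d-2}\eta_1^{(1)}(x/r)$ (which implies $\|\nabla\eta_{r^2}^{(1)}\|_1=r^{-3}\|\nabla\eta_1^{(1)}\|_1$), all the powers of $r$ and $Q(d)$ telescope, yielding
\[
r^{2k}Q(d)^k\,\|\nabla\eta_{kr^2}^{(k)}\|_1\leq r^{-1}\,Q(d)\,\|\nabla\eta_1^{(1)}\|_1.
\]

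The final step is to verify the dimension-dependent bound $Q(d)\,\|\nabla\eta_1^{(1)}\|_1\leq ed$. From the heat equation one has the explicit expressions $\eta_1^{(1)}(x)=\tfrac{1}{2}(|x|^2-d)\eta_1(x)$ and $\nabla\eta_1^{(1)}(x)=\tfrac{x}{2}(2+d-|x|^2)\eta_1(x)$, so after passing to polar coordinates the inequality becomes
\[
\mathbb{E}\bigl[|U|\,\bigl|2+d-|U|^2\bigr|\bigr]\leq e\,d\,\mathbb{E}\bigl[\,\bigl||U|^2-d\bigr|\,\bigr]
\]
for $U\sim N(0,I_d)$. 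Integration by parts against the chi-squared density evaluates the right-hand factor exactly as $\mathbb{E}[\,||U|^2-d|\,]=4(d/(2e))^{d/2}/\Gamma(d/2)=2/Q(d)$, matching the definition of $Q(d)$; the left-hand moment is bounded using $|2+d-|U|^2|\leq 2+||U|^2-d|$ with Cauchy--Schwarz (plus $\mathbb{E}|U|^2=d$ and $\operatorname{Var}(|U|^2)=2d$), giving $\mathbb{E}[|U|\,|2+d-|U|^2|]\leq\sqrt{d(2d+4)}+2\sqrt{d}$. Combining this with Stirling's formula for $\Gamma(d/2)$, supplemented by a direct numerical check in low dimension, completes the verification. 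The main obstacle is precisely this last step: the convolution and Wasserstein estimates are mechanical, but extracting the clean constant $ed$ requires a careful comparison of Gaussian-weighted moments and a sharp enough lower bound on $\mathbb{E}[\,||U|^2-d|\,]$ relative to $Q(d)$.
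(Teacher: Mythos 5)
Your argument is correct, and its second half takes a genuinely different route from the paper. The first half coincides: the paper also bounds $\|(\lambda_1-\lambda_2)*\eta^{(k)}_{kr^2}\|_1$ by coupling $X\sim\lambda_1$, $Y\sim\lambda_2$ and integrating $|\nabla\eta^{(k)}_{kr^2}|$ along the segment from $X$ to $Y$, which is exactly your coupling/FTC/Fubini step. Where you diverge is in estimating $\|\nabla\eta^{(k)}_{kr^2}\|_1$: the paper splits $kr^2$ into $k+1$ Gaussian factors of variance $\tfrac{k}{k+1}r^2$, writing $\partial_{x_i}\eta^{(k)}_{kr^2}=\bigl(\partial_{x_i}\eta_{\frac{k}{k+1}r^2}\bigr)*\bigl(\eta^{(1)}_{\frac{k}{k+1}r^2}\bigr)^{*k}$, computes $\|\partial_{x_i}\eta_{s}\|_1$ exactly, sums over the $d$ coordinates, and gets the factor $e$ from $\bigl(\tfrac{k+1}{k}\bigr)^{(k+1)/2}\le e$; the constant $ed$ thus falls out with no further moment computations. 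You instead keep the $k$-fold factorization $\eta^{(k)}_{kr^2}=(\eta^{(1)}_{r^2})^{*k}$, differentiate a single factor, and use scaling to reduce everything to the $k$- and $r$-independent constant $Q(d)\,\|\nabla\eta^{(1)}_1\|_1$, which must then be checked against $ed$. That check is the only delicate point, and it does go through as you outline: with $\nabla\eta^{(1)}_1(x)=\tfrac{x}{2}(2+d-|x|^2)\eta_1(x)$, Cauchy--Schwarz gives $\E\bigl[|U|\,|2+d-|U|^2|\bigr]\le\sqrt{d(2d+4)}$, while $\E\bigl[\,||U|^2-d|\,\bigr]=2/Q(d)\ge 2\sqrt{d}/e$ (the paper's Stirling bound $Q(d)\le e d^{-1/2}$), which settles every $d\ge 2$; for $d=1$ a direct Gaussian computation gives $\E[|U|\,|3-U^2|]/\E[|U^2-1|]\approx 1.56<e$. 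Your route costs this extra moment estimate but buys a cleaner, $k$-uniform reduction and in fact reveals that the true constant $Q(d)\|\nabla\eta^{(1)}_1\|_1$ grows only like $\sqrt{d}$ for large $d$, i.e.\ your bound is sharper in the dimension than the stated $ed$, which is all the lemma requires.
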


\begin{proof}
    Let $X$ and $Y$ be random variables with laws $\lambda_1$ and $\lambda_2$ respectively. Then $$(\lambda_1 - \lambda_2)*\eta^{(k)}_{kr}(v) = \E\left[ \eta^{(k)}_{kr}(v-X)  -  \eta^{(k)}_{kr}(v-Y)  \right]$$ and therefore $$\big|(\lambda_1 - \lambda_2)*\eta^{(k)}_{kr}(v)\big| \leq \E\left[\big| \eta^{(k)}_{kr}(v-X)  -  \eta^{(k)}_{kr}(v-Y)  \big|\right].$$ Note that  $$\big| \eta^{(k)}_{kr}(v-X)  -  \eta^{(k)}_{kr}(v-Y)  \big| \leq \int_X^Y \big| \nabla \eta^{(k)}_{kr}(v-u) \big| \, |du|,$$ where $\int_x^y \cdot |du|$ is understood to be the integral along the shortest path between $x$ and $y$ and $\nabla$ is the gradient. Thus 
    \begin{align*}
        ||(\lambda_1 - \lambda_2)*\eta^{(k)}_{kr}||_1 &\leq \int_{\R^d}  \E\left[\int_X^Y \big| \nabla \eta^{(k)}_{kr}(v-u) \big| \, |du| \right]  \, dv \\
        &=   \E\left[\int_X^Y \int_{\R^d} \big| \nabla \eta^{(k)}_{kr}(v-u) \big|  \, dv  \, |du| \right] \\
        &= ||\nabla \eta_{kr}^{(k)}||_1  \E[|X-Y|] \\
        &\leq \left( \sum_{i = 1}^d \bigg|\bigg|\frac{\partial}{\partial x_i}\eta^{(k)}_{kr}\bigg|\bigg|_1 \right) \E[|X-Y|]
    \end{align*} We next bound $||\frac{\partial}{\partial x_i}\eta^{(k)}_{kr}||_1$. As in the proof of Lemma~\ref{BasicProductBound}, it follows that $$\frac{\partial}{\partial x_i}\eta^{(k)}_{kr^2}  = \left(\frac{\partial}{\partial x_i}\eta_{\frac{k}{k+1} r^2}\right) * \underbrace{\eta^{(1)}_{\frac{k}{k+1} r^2} * \ldots  * \eta^{(1)}_{\frac{k}{k+1} r^2}}_{k\text{ times}}.$$ Using standard properties of Gaussian integrals, $$\bigg|\bigg|\frac{\partial}{\partial x_i}\eta_{\frac{k}{k+1} r^2}\bigg|\bigg|_1 = \sqrt{\frac{2(k + 1)}{k\pi }} r^{-1} \leq \sqrt{\frac{k + 1}{k}} r^{-1}$$ and therefore 
    \begin{align*}
      \bigg|\bigg|\frac{\partial}{\partial x_i}\eta^{(k)}_{kr}\bigg|\bigg|_1 &\leq \bigg|\bigg|\frac{\partial}{\partial x_i}\eta_{\frac{k}{k+1} r^2}\bigg|\bigg|_1 \cdot \big|\big|\eta^{(1)}_{\frac{k}{k+1} r^2}\big|\big|_1^k \\
    &\leq \left( \frac{k+ 1}{k}  \right)^{(k+1)/2} Q(d)^{-k} r^{-2k-1}. 
    \end{align*} Using that $\left( \frac{k+ 1}{k}  \right)^{(k+1)/2} \leq e$, we conclude  
    \begin{align*}
        |s_r^{(k)}(\lambda_1) - s_r^{(k)}(\lambda_2)| &\leq r^{2k} Q(d)^k ||(\lambda_1 - \lambda_2)*\eta_{kr}^{(k)}||_1 \\
        &\leq de r^{-1}\E[|X-Y|].
    \end{align*}
    Choosing a coupling for $X$ and $Y$ which minimizes $\E[|X-Y|]$ gives the required result. 
\end{proof}

\subsection{Small Random Variables Bound in $\R^d$} \label{Detail:Convert}

The aim of this subsection is to show that the sum of independent random variables in $\R^d$ has small detail whenever they are support close to $0$ and have a sufficiently large variance. To state our result, we use the partial order \eqref{MatrixPartialOrder} for positive semi-definite symmetric matrices. 

\begin{proposition}\label{kDetailBerryEssen}
    For every positive integer $d\geq 1$ and every $\alpha > 0$ there exists some $C = C(\alpha,d) > 0$ such that the following is true for all $r > 0$ and positive integers $k$. Let $X_1, X_2, \ldots , X_n$ be independent random variables taking values in $\R^d$ such that almost surely $$|X_i| \leq C^{-1} r \quad \text{ and } \quad \sum_{i = 1}^n \mathrm{Var} \, X_i \geq C k r^2 I.$$ Then $$s_r^{(k)}(X_1 + \ldots + X_n) \leq \alpha^k.$$
\end{proposition}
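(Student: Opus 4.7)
The plan is to write $X_1 + \cdots + X_n$ as a sum of $k$ \emph{independent} random variables $Y_1, \ldots, Y_k$ each satisfying $s_r(Y_j) \le \alpha$, and then invoke Lemma~\ref{BasicProductBound} to conclude
$$s_r^{(k)}(X_1 + \cdots + X_n) \;=\; s_r^{(k)}(Y_1 * \cdots * Y_k) \;\le\; \prod_{j=1}^{k} s_r(Y_j) \;\le\; \alpha^k.$$
Concretely, I would partition $[n]$ into disjoint subsets $S_1, \ldots, S_k$ and set $Y_j := \sum_{i \in S_j} X_i$.

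The first step is the matrix partition. Writing $V_i := \mathrm{Var}(X_i)$, I want $\Sigma_j := \sum_{i \in S_j} V_i \ge C' r^2 I$ for every $j$, where $C' = C'(\alpha,d)$ is a large constant to be fixed later. The hypotheses give $V_i \le C^{-2} r^2 I$ and $\sum_i V_i \ge Ckr^2 I$, so this is a matrix balancing problem. Assigning each index independently to a uniformly random group yields $\mathbb{E}\Sigma_j \ge Cr^2 I$, and the matrix Chernoff inequality, applied to the independent positive-semidefinite contributions $\mathbf{1}[Y_i = j] V_i \le C^{-2}r^2 I$, gives
$$\Pr\!\left[\lambda_{\min}(\Sigma_j) \le \tfrac{C}{2} r^2\right] \;\le\; d\,\exp\!\big(-\Omega(C^3)\big).$$
A union bound over the $k$ groups then furnishes a good partition, taking $C$ large enough (with any mild $\log k$ factor either absorbed into the final constant or eliminated by derandomising via the method of conditional expectations).

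The second step is to bound $s_r(Y_j)$ by comparing $Y_j$ with its matching Gaussian $G_j \sim N(\mathbb{E} Y_j, \Sigma_j)$. A direct calculation (differentiating the density of $G_j * \eta_{tI}$ in $t$ at $t=r^2$ and integrating against the Gaussian) yields $s_r(G_j) = O_d((C')^{-1})$ whenever $\Sigma_j \ge C' r^2 I$, so the first contribution sits below $\alpha/2$ once $C'$ is sufficiently large in terms of $\alpha,d$. A quantitative multivariate CLT in Wasserstein distance of Berry--Esseen type (e.g.\ a Bentkus-style bound, using $|X_i| \le C^{-1}r$ and $\Sigma_j \ge C' r^2 I$) then produces $\mathcal{W}_1(Y_j, G_j) = O_d(C^{-1} r)$; Lemma~\ref{DetailWassersteinBound} converts this to $|s_r(Y_j) - s_r(G_j)| = O_d(C^{-1})$, which is also below $\alpha/2$ for $C$ large. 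Combining the two gives $s_r(Y_j) \le \alpha$, and Step~3 is just the invocation of Lemma~\ref{BasicProductBound} above.

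The main obstacle is Step~1. In dimension one this is a transparent greedy argument: processing the $V_i$ in order and closing a group whenever the running sum exceeds $Cr^2$ produces $\ge k$ groups each of variance $\asymp Cr^2$. In $\R^d$, however, one must balance the covariance simultaneously in every direction, which forces us into matrix concentration. The technically delicate point is therefore to execute this partition with a constant $C$ that depends only on $\alpha$ and $d$, uniformly in $k$ — requiring either a careful union-bound/derandomisation argument or a refined deterministic balancing scheme.
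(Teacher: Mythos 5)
Your architecture is the same as the paper's: partition $[n]$ into $k$ blocks whose covariance sums each dominate a large multiple of $r^2 I$, compare each block sum with its matching Gaussian via a Wasserstein Berry--Esseen estimate together with Lemma~\ref{DetailWassersteinBound} (this is the paper's Lemma~\ref{DetailBerryEssen}, including the computation $s_r(N(0,C'r^2I)) = (C'+1)^{-1}$), and then multiply using Lemma~\ref{BasicProductBound}. Steps 2 and 3 of your proposal are sound. The gap is in Step 1, exactly at the point you flag and then dismiss. Random assignment plus matrix Chernoff gives per-block failure probability $d\exp(-\Omega(C^3))$, so the union bound over the $k$ blocks only closes when $dk\exp(-\Omega(C^3)) < 1$, i.e.\ when $C \gg (\log k)^{1/3}$. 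This cannot be ``absorbed into the final constant'': the proposition demands $C = C(\alpha,d)$ uniformly in the unbounded parameter $k$, and $C$ enters the hypothesis $\sum_i \var X_i \geq Ckr^2 I$, so you are not free to let it grow with $k$. Nor does derandomisation by conditional expectations rescue this: that method only converts a positive success probability into a deterministic construction, it does not improve the first-moment estimate, so once $kd\exp(-\Omega(C^3)) \geq 1$ there is nothing to derandomise towards ``all $k$ blocks good''.

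The step is repairable, in two ways. (a) Keep your probabilistic partition but weaken the target: since detail never exceeds $1$, it suffices that at least $k/2$ blocks satisfy $s_r(Y_j) \leq \alpha^2$, for then the product bound still yields $(\alpha^2)^{k/2} = \alpha^k$; the expected number of bad blocks is at most $kd\exp(-\Omega(C^3)) \leq k/4$ once $C$ is large in terms of $d$ alone, so by Markov a good realisation of the assignment exists, with no $\log k$ loss (and since the assignment depends only on the deterministic covariances, the $Y_j$ remain independent). (b) Do what the paper does: a deterministic balancing argument. After conjugating so that $\sum_i \var X_i$ becomes a multiple of the identity, choose the partition minimising $\sum_j \bigl|\sum_{i\in J_j} \var X_i\bigr|^2$ in the Frobenius inner product; local optimality under moving a single index, combined with $\|\var X_i\| \leq C^{-2}r^2$, gives $\sum_{i\in J_j} \var X_i \geq \bigl(C - d\sqrt{2C^{-1}} - 2d^{3/2}C^{-2}\bigr) r^2 I$ for every $j$ (Corollary~\ref{PartitionMatrixSums}), again uniformly in $k$. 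As written, though, your Step 1 does not establish the statement.
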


Proposition~\ref{kDetailBerryEssen} relies on a higher dimensional Berry-Essen type result, which implies Proposition~\ref{kDetailBerryEssen} for $k=1$, as deduced in Lemma~\ref{DetailBerryEssen}. To prove the higher dimensional Berry-Essen type result we first need the following.
	
	\begin{theorem} \label{BerryEssenType1D}
		Let $X_1, X_2, \dots, X_n$ be independent random variables taking values in $\R$ with mean $0$ and for each $i \in [n]$ let $\mathbb{E}[X_i^2] = \omega_i^2$ and $\mathbb{E}[|X_i|^3] = \gamma_i^3 < \infty$. Let $\omega^2 = \sum_{i=1}^n \omega_i^2$ and let $S = X_1 + \dots + X_n$. Let $N$ be a normal distribution with mean $0$ and variance $\omega^2$. Then for an absolute implied constant
		
		\begin{equation*}
		\mathcal{W}_1(S, N) \ll \frac{\sum_{i=1}^n \gamma_i^3}{\sum_{i=1}^n \omega_i^2}.
		\end{equation*}
		
	\end{theorem}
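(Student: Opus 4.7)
\medskip

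The plan is to establish this bound via Stein's method for the Gaussian distribution, which is well-suited to controlling $\mathcal{W}_1$ because the test functions required are just Lipschitz.

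First I would reduce to the standardised case by homogeneity: replacing $X_i$ by $X_i/\omega$ scales $\mathcal{W}_1(S,N)$ by $\omega^{-1}$ and sends $\gamma_i^3/\omega^2$ to $\gamma_i^3/\omega^3$, so it suffices to treat the case $\sum \omega_i^2 = 1$ and prove $\mathcal{W}_1(S,N) \ll \sum_i \gamma_i^3$ where $N$ is standard normal. By Kantorovich--Rubinstein duality this reduces to bounding $|\mathbb{E}[f(S)] - \mathbb{E}[f(N)]|$ uniformly over all $1$-Lipschitz $f$.

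For such an $f$, I would invoke the Stein equation $g'(x) - xg(x) = f(x) - \mathbb{E}[f(N)]$, whose solution $g = g_f$ is explicit and satisfies the standard estimates $\|g\|_\infty, \|g'\|_\infty, \|g''\|_\infty \ll \|f'\|_\infty$ (see e.g.\ Chen--Goldstein--Shao). The task is then to estimate $\mathbb{E}[g'(S) - Sg(S)]$. Writing $S_i = S - X_i$ (independent of $X_i$) and Taylor expanding with integral remainder:
\begin{equation*}
g(S_i + X_i) = g(S_i) + X_i g'(S_i) + \int_0^{X_i}(X_i - t)g''(S_i + t)\,dt,
\end{equation*}
I would take expectations, apply independence, and use $\mathbb{E}[X_i] = 0$, $\mathbb{E}[X_i^2] = \omega_i^2$ to get
\begin{equation*}
\mathbb{E}[X_i g(S)] = \omega_i^2\, \mathbb{E}[g'(S_i)] + R_i, \qquad |R_i| \leq \tfrac{1}{2}\|g''\|_\infty\,\gamma_i^3.
\end{equation*}
Summing over $i$ and using $\sum_i \omega_i^2 = 1$ yields
\begin{equation*}
\mathbb{E}[g'(S) - Sg(S)] = \sum_i \omega_i^2 \bigl(\mathbb{E}[g'(S)] - \mathbb{E}[g'(S_i)]\bigr) - \sum_i R_i.
\end{equation*}

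For each difference in the first sum I would apply the bound $|g'(S_i+X_i) - g'(S_i)| \leq \|g''\|_\infty |X_i|$ followed by $\mathbb{E}|X_i| \leq \omega_i$ (Cauchy--Schwarz), giving a contribution of order $\sum_i \omega_i^3$. By Lyapunov's inequality $\omega_i^3 \leq \gamma_i^3$, so this is absorbed into $\sum_i \gamma_i^3$. Combining, $|\mathbb{E}[f(S)] - \mathbb{E}[f(N)]| = |\mathbb{E}[g'(S) - Sg(S)]| \ll \sum_i \gamma_i^3$, which completes the proof after undoing the rescaling. I expect the main technical obstacle to be verifying the uniform $L^\infty$ bounds on $g, g', g''$ with the correct dependence on $\|f'\|_\infty$; these are classical but must be checked via direct estimates on the explicit integral formula for $g$, and it is crucial that they do not degrade with the tails of $f$ (which is why Stein's method outperforms the naive Lindeberg swap combined with Gaussian smoothing, which would only yield a rate of order $(\sum \gamma_i^3)^{1/3}$).
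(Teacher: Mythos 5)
Your argument is correct, but it is worth noting that the paper does not prove this statement at all: it simply cites Erickson's 1973 paper, where this $L^1$ (Wasserstein) Berry--Esseen bound is established by direct analytic means. Your proposal supplies a self-contained alternative via Stein's method, and the details check out: the rescaling to $\omega = 1$ is legitimate (both $\mathcal{W}_1$ and the right-hand side scale correctly), Kantorovich--Rubinstein duality reduces the problem to $1$-Lipschitz test functions, the standard bounds $\|g\|_\infty, \|g'\|_\infty, \|g''\|_\infty \ll \|f'\|_\infty$ for the Stein solution are exactly the classical estimates (Chen--Goldstein--Shao, Lemma 2.4), and the leave-one-out decomposition gives the remainder $\sum_i R_i \ll \|g''\|_\infty \sum_i \gamma_i^3$ plus the coupling term $\sum_i \omega_i^2\,|\mathbb{E}[g'(S)] - \mathbb{E}[g'(S_i)]| \ll \|g''\|_\infty \sum_i \omega_i^3$, which Lyapunov's inequality $\omega_i^3 \leq \gamma_i^3$ absorbs into the claimed bound. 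The only technical point you should make explicit is that for merely Lipschitz $f$ the Stein solution $g$ has $g'$ Lipschitz, so $g''$ exists only almost everywhere; either restrict the supremum in the duality to smooth $1$-Lipschitz $f$ (dense for this purpose) or phrase the Taylor step with the a.e.\ second derivative, both of which are routine. In short, what the citation buys the paper is brevity; what your approach buys is a transparent, quantitative proof with explicit constants that stays entirely within elementary probabilistic tools.
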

	\begin{proof}
		A proof of this result may be found in \cite{Erickson_1973}.
	\end{proof}

	From this we may deduce the following higher dimensional Berry-Essen type result by using projections onto one-dimensional subspaces.
	
	\begin{lemma}\label{BerryEssenType}
		Let $X_1, X_2, \ldots , X_n$ be independent random variables taking values in $\R^d$ with mean $0$ and for each $i \in [n]$ write $$\Sigma_i = \mathrm{Var} \, X_i.$$ Suppose that $\delta > 0$ is such that for each $i \in [n]$ we have $|X_i| \leq \delta$ almost surely. Let $\Sigma = \sum_{i = 1}^n \Sigma_i$ and $S = X_1 + \ldots + X_n$. Let $N$ be a multivariate normal distribution with mean $0$ and covariance matrix $\Sigma$. Then $$\mathcal{W}_1(S, N) \ll_d \delta.$$
	\end{lemma}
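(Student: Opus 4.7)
The plan is to deduce Lemma~\ref{BerryEssenType} from Theorem~\ref{BerryEssenType1D} via a Lindeberg-type Gaussian interpolation. A naive approach of applying the 1D result to projections onto each coordinate would give $\mathcal{W}_1(S_j, N_j) \ll \delta$ for all $j \in [d]$, since $|(X_i)_j| \leq \delta$ implies $\E[|(X_i)_j|^3] \leq \delta\,\E[(X_i)_j^2]$; but these coordinate-wise bounds do not combine cleanly into a joint coupling in $\R^d$, as the 1D optimal couplings are in general not jointly realizable.

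Instead I would introduce independent Gaussians $N_1, \ldots, N_n$ with $N_i \sim \mathcal{N}(0, \Sigma_i)$, independent of $(X_j)$, so that $\sum_i N_i \stackrel{d}{=} N$. Defining the interpolants $T_i := \sum_{j \leq i} X_j + \sum_{j > i} N_j$ gives $T_0 \stackrel{d}{=} N$ and $T_n \stackrel{d}{=} S$, and the triangle inequality yields $\mathcal{W}_1(S, N) \leq \sum_{i=1}^n \mathcal{W}_1(T_i, T_{i-1})$. With $R_i = T_i - X_i = T_{i-1} - N_i$ independent of $X_i$ and $N_i$, Kantorovich--Rubinstein duality gives $\mathcal{W}_1(T_i, T_{i-1}) = \sup_{\|f\|_{\mathrm{Lip}} \leq 1} \E[f(R_i + X_i) - f(R_i + N_i)]$. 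Conditioning on $R_i$ and Taylor-expanding $f$ around $R_i$ to third order, the zeroth and first-order terms cancel because $\E[X_i] = \E[N_i] = 0$, the second-order terms cancel because $\mathrm{Var}(X_i) = \mathrm{Var}(N_i) = \Sigma_i$, and the cubic remainder is bounded by $(\E[|X_i|^3] + \E[|N_i|^3]) \cdot \|\nabla^3 f\|_\infty \ll_d \delta\,\mathrm{tr}(\Sigma_i) \cdot \|\nabla^3 f\|_\infty$, using $E[|X_i|^3] \le \delta\,\mathrm{tr}(\Sigma_i)$.

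The main obstacle will be making sense of $\|\nabla^3 f\|_\infty$ for a merely Lipschitz test function $f$. I would resolve this via the Gaussian smoothing already present in $R_i$: convolving $f$ with the density of the Gaussian component $\sum_{j > i} N_j$ of $R_i$ produces a smooth function whose third derivatives are bounded by $c_d\,\sigma_i^{-3}$, where $\sigma_i^2$ is the smallest eigenvalue of $\sum_{j > i} \Sigma_j$. Summing the resulting estimates $\delta\,\mathrm{tr}(\Sigma_i)/\sigma_i^3$ telescopes to a bound depending only on $d$ and $\delta$. Boundary terms near $i = n$, where the Gaussian smoothing becomes insufficient, will be handled by reordering the swaps in decreasing order of $\mathrm{tr}(\Sigma_i)$ so that only the last few, of small total variance, lack smoothing, or by the direct estimate $\mathcal{W}_1(T_i, T_{i-1}) \ll \sqrt{\mathrm{tr}(\Sigma_i)}$ obtained from the coupling $(R_i + X_i, R_i + N_i)$, which is negligible in this regime. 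Combining these yields $\mathcal{W}_1(S, N) \ll_d \delta$.
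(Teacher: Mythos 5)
Your interpolation scheme is structurally fine up to the last step, but the key quantitative claim --- that the per-swap errors $\delta\,\mathrm{tr}(\Sigma_i)\,\|\nabla^3(f*\phi_i)\|_\infty$ ``telescope'' to a bound depending only on $d$ and $\delta$ --- is false, and this is a genuine gap rather than a technicality. First, for a $1$-Lipschitz $f$ convolved with a Gaussian whose covariance has smallest eigenvalue $\sigma_i^2$, the correct third-derivative bound is of order $\sigma_i^{-2}$ (one derivative falls on $f$, two on the kernel), not $\sigma_i^{-3}$; but with either exponent the sum does not stay bounded. Already in the model case $\Sigma_i=\frac1n I$ (allowed as soon as $n\geq d/\delta^2$) the $i$-th term is of order $\delta\,d/(n-i)$, so the sum is of order $\delta\,d\log n$, which is unbounded in terms of $d$ and $\delta$. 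This is exactly the well-known logarithmic loss of the naive Lindeberg/Stein smoothing argument for Wasserstein CLT bounds; removing it is the content of genuinely harder work (Zhai, Bonis, Eldan--Mikulincer--Zhai), not of a reordering of the swaps. Your two patches do not address it: ordering by $\mathrm{tr}(\Sigma_i)$ does nothing about the harmonic accumulation coming from the bulk, and the crude estimate $\mathcal{W}_1(T_i,T_{i-1})\ll\sqrt{\mathrm{tr}(\Sigma_i)}$ only helps for a final block of small total variance (optimizing the cutoff still leaves something like $\delta\log(\cdot)$ or a $\delta^{1/3}$-type bound, not $\ll_d\delta$). There is the further problem that $\sigma_i^2$ is the smallest eigenvalue of $\sum_{j>i}\Sigma_j$, which the hypotheses allow to vanish (the lemma permits singular $\Sigma$), and trace-based reordering gives no control on smallest eigenvalues of the tails.

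For comparison, the paper's own proof is precisely the route you set aside: the one-dimensional case follows from Erickson's inequality (Theorem~\ref{BerryEssenType1D}) together with $\E[|X_i|^3]\le\delta\,\E[|X_i|^2]$, and the case $d\geq 1$ is then handled by projecting $S$ and $N$ onto the coordinate axes. Your objection --- that coordinate-wise $\mathcal{W}_1$ bounds do not in general control the joint $\mathcal{W}_1$ distance, since the one-dimensional optimal couplings need not be jointly realizable --- is a legitimate criticism of that step as literally stated, so you have identified a real subtlety in the paper's terse argument; but your replacement does not close it, because the Lindeberg swap with Gaussian smoothing of Lipschitz test functions cannot by itself produce a log-free bound. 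A repair should instead import a genuinely multivariate input, for instance a Wasserstein Berry--Esseen theorem of the type the paper itself quotes later (Sakhanenko's Theorem~\ref{theo:l_p_wasserstien_p}, at least after reducing to $\sum_i\Sigma_i$ proportional to the identity), rather than trying to upgrade the smooth-test-function estimate.
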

	
	\begin{proof}
		First we will deduce this from Theorem~\ref{BerryEssenType1D} when $d=1$. In this case simply note that
		\begin{align*}
		\sum_{i=1}^{n} \gamma_i^3  = \sum_{i=1}^n \mathbb{E}[|X_i|^3]  \leq \sum_{i=1}^n \mathbb{E}[\delta |X_i|^2]  = \delta \sum_{i = 1}^n \omega_i^2,
		\end{align*}
		showing the claim.
		
		Now in the case $d \geq 1$ the lemma follows by using, as shown in \cite{BayraktarGuoi2021}*{Theorem 2.1}, that  $$\mathcal{W}_1(S,N) \ll_d \sup_{p} \mathcal{W}_1(pS,pN),$$ where the supremum is taken over all one dimensional projections $p$. The result is therefore deduced as in the one dimensional case by using that $\mathbb{E}[|pX_i|^3] \leq \delta \mathbb{E}[|pX_i|^2]$.
	\end{proof}

\begin{lemma}\label{DetailBerryEssen}
    For every positive integer $d\geq 1$ and every $\alpha > 0$ there exists some $C = C(\alpha,d) > 0$ such that the following is true. Let $r > 0$ and let $X_1, X_2, \ldots , X_n$ be independent random variables taking values in $\R^d$ such that $$|X_i| \leq C^{-1} r \quad \text{ and } \quad \sum_{i = 1}^n \mathrm{Var} \, X_i \geq C r^2 I.$$ Then $$s_r(X_1 + \ldots + X_n) \leq \alpha.$$
\end{lemma}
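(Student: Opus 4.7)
The plan is to compare $S := X_1 + \cdots + X_n$ with the centred Gaussian $N \sim \mathcal{N}(0, \Sigma)$ of matching covariance $\Sigma := \sum_i \mathrm{Var}(X_i) \geq Cr^2 I$, using the Berry--Esseen bound to control the Wasserstein distance and then computing the detail of $N$ directly. Detail is translation-invariant, so after replacing each $X_i$ by $X_i - \mathbb{E}[X_i]$—which at most doubles the almost-sure bound while preserving the variances—I may assume $\mathbb{E}[X_i] = 0$, at the cost of replacing $C$ by $C/2$.

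With this reduction, Lemma~\ref{BerryEssenType} applied with $\delta = C^{-1}r$ gives $\mathcal{W}_1(S, N) \ll_d C^{-1}r$, and the Wasserstein--detail comparison of Lemma~\ref{DetailWassersteinBound} at $k = 1$ yields
$$
|s_r(S) - s_r(N)| \leq ed\,r^{-1}\mathcal{W}_1(S, N) \ll_d C^{-1}.
$$
It therefore suffices to bound $s_r(N)$ directly.

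Write $M := \Sigma + r^2 I$, so every eigenvalue $\lambda_i$ of $M$ satisfies $\lambda_i \geq Cr^2$. The density of $N * \eta_{r^2}$ equals that of $\mathcal{N}(0, M)$, call it $f$, and by the heat equation $N * \eta_{r^2}^{(1)} = \partial_y|_{y=r^2}(N*\eta_y) = \tfrac{1}{2}\Delta f$. A direct Gaussian calculation gives $\Delta f(x) = f(x)(x^T M^{-2}x - \mathrm{tr}(M^{-1}))$, which in an orthonormal eigenbasis of $M$ reads $\sum_i \lambda_i^{-1}(z_i^2 - 1)$. Writing $Y \sim \mathcal{N}(0, M)$ as $Y = \sum_i \sqrt{\lambda_i}\,Z_i e_i$ with $Z_i$ iid $\mathcal{N}(0,1)$, taking $L^1$ norms and applying Cauchy--Schwarz yields
$$
\|N * \eta_{r^2}^{(1)}\|_1 = \tfrac{1}{2}\,\mathbb{E}\bigg|\sum_i \lambda_i^{-1}(Z_i^2 - 1)\bigg| \leq \tfrac{1}{2}\sqrt{2 \sum_i \lambda_i^{-2}} \leq \frac{\sqrt{d/2}}{Cr^2}.
$$
Hence $s_r(N) = r^2 Q(d)\|N * \eta_{r^2}^{(1)}\|_1 \leq Q(d)\sqrt{d/2}\,C^{-1} \ll C^{-1}$ using $Q(d) \leq e/\sqrt{d}$.

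Combining the two estimates gives $s_r(S) \ll_d C^{-1}$, so choosing $C = C(\alpha, d)$ sufficiently large makes the right-hand side at most $\alpha$. The only substantive step is the direct Gaussian estimate, which is where the variance lower bound is decisive: it ensures that convolving $N$ with $\eta_{r^2}$ flattens its density sufficiently at scale $r$ so that the $L^1$ derivative $\partial_y f_y$ is small. Everything else is a routine invocation of the Berry--Esseen bound and the Wasserstein--detail comparison already in place.
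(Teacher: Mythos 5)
Your proof is correct and follows essentially the same route as the paper: centre the variables, apply the multivariate Berry--Esseen bound (Lemma~\ref{BerryEssenType}) together with the Wasserstein--detail comparison (Lemma~\ref{DetailWassersteinBound}), and then bound the detail of the matching Gaussian $N$. The only difference is cosmetic and lies in that last step: the paper bounds $s_r(N)$ by the detail of a spherical Gaussian using monotonicity of detail under convolution (since $\Sigma \geq Cr^2 I$) together with the explicit value of $\|\eta_y^{(1)}\|_1$, whereas you recompute $\|N*\eta_{r^2}^{(1)}\|_1$ directly via the Laplacian identity and Cauchy--Schwarz; both yield $s_r(N)\ll C^{-1}$.
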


\begin{proof}
    Denote for $1\leq i \leq n$ by $X_i' = X_i - \E[X_i]$ and let $S' = \sum_{i = 1}^n X_i'$. Note that $s_r(\sum_{i = 1}^n X_i) = s_r(S')$. Write $\Sigma_i = \mathrm{Var} \, X_i$ and let $\Sigma = \sum_{i = 1}^n \Sigma_i$. Let $N$ be a multivariate normal distribution with mean $0$ and covariance matrix $\Sigma$. Note that $|X_i'| \leq 2 C^{-1} r$ almost surely. Therefore by Lemma~\ref{BerryEssenType}, $$\mathcal{W}_1(S',N) \ll_d C^{-1} r.$$ Also
    \begin{align*}
        s_r(N) & \leq s_r(\eta_{C^2 r^2} ) = \frac{\| \eta_{C^2 r^2 + r^2}^{(1)} \|}{\| \eta_{r^2}^{(1)} \|} = \frac{1}{C^2 + 1}.
    \end{align*}
    Thus by Lemma~\ref{DetailWassersteinBound}, $$s_r(X_1 + \ldots + X_n) = s_r(S')  \ll_d C^{-1} + \frac{1}{1 + C^2} ,$$ implying the claim. 
\end{proof}

The proof of Proposition~\ref{kDetailBerryEssen} in the case $k \geq 2$ is more involved than the proof in the case $k=1$. In order to prove this proposition we also need the following lemma and a corollary of it.

\begin{lemma}\label{EucVecLemma}
    Let $V$ be a Euclidean vector space, let $v_1, \dots, v_n \in V$ and write $S =  v_1 + \dots + v_n$. Let $c_1, c_2 > 0$ be such that for all $i \in [n]$ we have $$|v_i| \leq c_1 \quad \text{ and } \quad v_i \cdot S \geq c_2 |v_i| |S|.$$ Let $k$ be a positive integer. Then we can partition $[n]$ as $J_1 \sqcup J_2 \sqcup \dots \sqcup J_k$ such that for each $j \in [k]$ we have $$|S_j - \tfrac{1}{k}S| < c_2^{-1} \sqrt{\tfrac{2c_1}{k}|S|} + 2 c_2^{-2} c_1$$
    where $S_j = \sum_{i \in J_j} v_i$.
\end{lemma}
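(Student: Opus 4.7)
The plan is to decompose each $v_i$ into components along and perpendicular to $S$, bound each separately, and combine via Pythagoras. Set $\hat S = S/|S|$, $\alpha_i = v_i \cdot \hat S$ and $v_i^\perp = v_i - \alpha_i \hat S$; the hypothesis $v_i \cdot S \geq c_2 |v_i||S|$ forces $\alpha_i \geq c_2 |v_i| \geq 0$ with $\sum_i \alpha_i = |S|$, while $|v_i^\perp| = \sqrt{|v_i|^2 - \alpha_i^2} \leq \sqrt{1-c_2^2}\,|v_i| \leq c_2^{-1} \alpha_i$. In particular $\sum_i |v_i|^2 \leq c_1 \sum_i |v_i| \leq c_1 c_2^{-1}|S|$ and $\sum_i |v_i^\perp|^2 \leq c_2^{-2} c_1 |S|$.

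For the parallel component I would sort so that $\alpha_1 \geq \cdots \geq \alpha_n$ and construct the partition greedily: process vectors in this order and assign each $v_i$ to the bucket $J_j$ whose current partial sum $s_j = \sum_{i' \in J_j,\, i' < i} \alpha_{i'}$ is minimal. A standard exchange argument, tracking the last element added to the bucket with the largest final $s_j$, yields the deterministic bound $\bigl|s_j - |S|/k\bigr| \leq c_1$ for every $j$, which controls the $\hat S$-component of $S_j - \tfrac{1}{k}S$. For the orthogonal residual $S_j^\perp = \sum_{i \in J_j} v_i^\perp$ I would refine the greedy by randomising over the buckets that achieve the current minimum (so that the parallel bound is preserved) and by grouping items into consecutive blocks of $k$ indices in the sorted order, inside which a random round-robin is applied. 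A second-moment computation then gives $\E[|S_j^\perp|^2] \leq c_2^{-2} c_1 |S|/k$, and using the identity $\sum_j S_j^\perp = 0$ together with a pigeonhole/rebalancing step one extracts a realisation in which $|S_j^\perp|^2 \leq 2 c_2^{-2} c_1 |S|/k$ for every $j$ simultaneously. Applying Pythagoras $\bigl|S_j - \tfrac{1}{k}S\bigr|^2 = \bigl(s_j - |S|/k\bigr)^2 + |S_j^\perp|^2$ together with $\sqrt{a^2 + b^2} \leq a + b$ then delivers the claimed bound, the $2 c_2^{-2} c_1$ term comfortably absorbing the parallel contribution $c_1$.

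The main obstacle is precisely this simultaneous bound on $|S_j^\perp|$ for all $j$: an independent uniform assignment gives the correct expected second moment $c_2^{-2} c_1 |S|/k$ per bucket, but a direct Markov or vector Hoeffding union bound over $j$ loses (at least) a logarithmic factor and, more seriously, the $1/k$ dependence required by the lemma. Achieving the correct $1/k$ scaling via a structured randomised allocation that prevents the orthogonal fluctuations from concentrating in a single bucket, while simultaneously preserving the deterministic greedy parallel balance $|s_j - |S|/k| \leq c_1$, is the technical heart of the argument.
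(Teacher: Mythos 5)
There is a genuine gap, and it sits exactly where you place it yourself: the simultaneous bound on the orthogonal parts. Your parallel-component analysis is fine (greedy loading gives $|s_j - |S|/k| \leq c_1$ deterministically), and the second-moment computation $\E\,[\,|S_j^\perp|^2\,] \leq c_2^{-2} c_1 |S|/k$ for a randomised round-robin is plausible, but the passage from this expectation to a single realisation with $|S_j^\perp|^2 \leq 2 c_2^{-2} c_1 |S|/k$ \emph{for every} $j$ is only asserted ("a pigeonhole/rebalancing step"), not proved. Markov's inequality per bucket gives failure probability up to $1/2$ per bucket, so a union bound over $k$ buckets yields nothing; the identity $\sum_j S_j^\perp = 0$ does not cap the magnitude of any individual $S_j^\perp$; and any rebalancing that moves items between buckets to repair a bad bucket threatens the deterministic parallel balance you need to preserve, with no argument offered that both constraints can be met at once. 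Since you flag this as the technical heart of the argument, the proposal as written is a plan with its central step missing rather than a proof.

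For comparison, the paper avoids the parallel/perpendicular split and the probabilistic allocation entirely: it chooses the partition minimising $\sum_{j=1}^k |S_j|^2$ and exploits local optimality. Moving a single element $i$ from its bucket to any other bucket cannot decrease the objective, which after expanding gives $S_j \cdot S_j \leq S_j \cdot S_{j'} + \sum_{i \in J_j} |v_i|^2$, hence the pairwise bound $|S_j - S_{j'}|^2 \leq A_j + A_{j'}$ with $A_j = \sum_{i \in J_j}|v_i|^2$, and therefore $|S_j - \tfrac1k S| \leq \sqrt{2\max_{j'} A_{j'}}$. The angle hypothesis is then used once, to bound $A_j \leq c_2^{-2} c_1 |S_j|$, and a short quadratic-inequality manipulation in $\Lambda = \sqrt{\max_{j'} A_{j'}}$ delivers the stated estimate. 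The variational choice of partition is what gives you the uniform-in-$j$ control that your randomised scheme struggles to produce; if you want to salvage your approach, you would need either a concentration argument strong enough to beat the union bound at scale $|S|/k$, or a derandomisation/exchange step, at which point you are essentially reconstructing the paper's argument.
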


\begin{proof}
    Choose the $J_j$ such that 
    \begin{equation}
        \sum_{j=1}^{k} | S_j |^2 \label{eq:partition_sum}
    \end{equation} is minimized. For each $i \in [n]$ let $j(i)$ denote the unique $j \in [k]$ such that $i \in J_j$. For each $i \in [n]$ and $j' \in [k]$ we know that moving $i$ from $J_{j(i)}$ to $J_{j'}$ cannot decrease the sum in \eqref{eq:partition_sum}. Therefore
    $$|S_{j(i)} - v_i|^2 + |S_{j'} + v_i |^2 \geq |S_{j(i)}|^2 + |S_{j'}|^2.$$
    Expanding this out and cancelling gives $$S_{j(i)} \cdot v_i - |v_i|^2 \leq S_{j'} \cdot v_i$$ and summing over all $i \in J_j$, we get $$S_j \cdot S_j \leq S_j \cdot S_{j'} + \sum_{i \in J_j} |v_i|^2.$$ Let $A_j$ denote $ \sum_{i \in J_j} |v_i|^2$. Note that the above equation gives $|S_j - S_{j'}|^2 \leq A_j + A_{j'}$
    and so
    \begin{align}
        |S_j - \tfrac{1}{k}S| \leq \max_{j' \in [k]} |S_j - S_{j'}| 
        \leq \sqrt{2 \max_{j' \in [k]} A_{j'}}. \label{eq:distance_to_s} 
    \end{align}
    Now let $\Lambda^2 = \max_{j' \in [k]} A_{j'}$. We compute
    \begin{align*}
        \sum_{i \in J_j} |v_i|^2 & \leq c_2^{-2} |S|^{-2} \sum_{i \in J_j} (v_i \cdot S)^2 \\
        & \leq c_2^{-2} |S|^{-2} \sum_{i \in J_j} (v_i \cdot S) c_1 |S| \\
        & = c_2^{-2} c_1 |S|^{-1} S \cdot S_j
         \leq c_2^{-2} c_1 |S_j|
         \leq c_2^{-2} c_1 (\tfrac{1}{k}|S|  + \sqrt{2} \Lambda).
    \end{align*}
    Therefore $\Lambda^2 \leq c_2^{-2} c_1 (|S| / k + \sqrt{2} \Lambda)$, which gives $$\left(\Lambda - c_2^{-2} c_1 / \sqrt{2} \right)^2 \leq c_2^{-2} c_1 |S| / k + c_2^{-4} c_1^2 / 2$$ and so
    \begin{align*}
        \Lambda & \leq \sqrt{\frac{c_2^{-2} c_1 |S|}{k} + \frac{c_2^{-4} c_1^2} {2}} + \frac{c_2^{-2} c_1}{\sqrt{2}}\\
        & \leq c_2^{-1} \sqrt{\tfrac{c_1}{k} |S|} + c_2^{-2} c_1 \sqrt{2},
    \end{align*} showing the required result by \eqref{eq:distance_to_s}.
\end{proof}

\begin{corollary} \label{PartitionMatrixSums}
    Let $A_1, \dots, A_n$ be symmetric positive semi-definite $d \times d$ matrices. Suppose that $\sum_{i=1}^n A_i \geq CkI$ and that for each $i \in [n]$ we have $\| A_i \| \leq c$. Then we can partition $[n]$ as $J_1 \sqcup J_2 \sqcup \dots \sqcup J_k$ such that for each $j \in [k]$ we have $$\sum_{i \in J_j} A_i \geq \left(C - d \sqrt{2 c C} - 2 d^{3/2} c\right) I.$$
\end{corollary}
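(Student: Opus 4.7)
The plan is to apply the preceding vector lemma in the Euclidean space $V$ of real symmetric $d\times d$ matrices equipped with the Frobenius (Hilbert--Schmidt) inner product $\langle A,B\rangle=\tr(AB)$, taking $v_i:=A_i$, so that the ``$S$'' of the lemma coincides with $S=\sum_i A_i$ of the corollary. First I would verify the two hypotheses of the lemma. The norm bound is immediate: $|A_i|_F\le \sqrt{d}\,\|A_i\|_{op}\le \sqrt{d}\,c$, so set $c_1=\sqrt{d}c$. For the alignment condition, since $S\succeq CkI$ and $A_i\succeq 0$, one has $\tr(A_iS)\ge Ck\cdot\tr(A_i)$. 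The elementary PSD inequality $\tr(A_i)=\sum_\ell\lambda_\ell\ge\sqrt{\sum_\ell\lambda_\ell^2}=|A_i|_F$ then gives $\tr(A_iS)\ge Ck\,|A_i|_F$. Combined with the lower bound $|S|_F\ge\sqrt{d}\,\lambda_{\min}(S)\ge Ck\sqrt{d}$ (with equality iff $S=CkI$), this yields $\tr(A_iS)\ge (Ck/|S|_F)\,|A_i|_F\,|S|_F$; in the extremal situation $|S|_F=Ck\sqrt{d}$ we may take $c_2=1/\sqrt{d}$.

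Plugging these constants into the lemma's conclusion gives
\[
\bigl|S_j-\tfrac{1}{k}S\bigr|_F \;\le\; c_2^{-1}\sqrt{\tfrac{2c_1}{k}|S|_F}+2c_2^{-2}c_1 \;=\; d\sqrt{2cC}+2d^{3/2}c.
\]
Because the operator norm is dominated by the Frobenius norm, we conclude $\bigl\|S_j-\tfrac{1}{k}S\bigr\|_{op}\le d\sqrt{2cC}+2d^{3/2}c$. Combined with $\tfrac{1}{k}S\succeq CI$, this implies $S_j\succeq \bigl(C-d\sqrt{2cC}-2d^{3/2}c\bigr)I$, which is exactly the desired PSD lower bound for each $j\in[k]$.

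The main obstacle is the case where $|S|_F$ strictly exceeds its lower bound $Ck\sqrt{d}$, for then a naive choice $c_2=Ck/|S|_F$ shrinks and the lemma's output would degrade like a polynomial in $|S|_F$. The resolution I anticipate is to re-examine the proof of the lemma inside the symmetric-matrix setup and exploit the tighter PSD inequality $|A_i|_F^2\le c\cdot\tr(A_i)$, which gives $\sum_{i\in J_j}|A_i|_F^2\le c\,\tr(S_j)$. Feeding this into the quadratic inequality for $\Lambda^2=\max_j\sum_{i\in J_j}|A_i|_F^2$ (in place of the generic $|v_i|\le c_1$ bound) causes the $|S|_F$ factors to cancel against one another rather than accumulate, so the final bound depends only on the trace constraint $\tr(S_j)\approx\tr(S)/k$ together with $S\succeq CkI$, and the claimed constants $d\sqrt{2cC}+2d^{3/2}c$ are recovered uniformly in $|S|_F$.
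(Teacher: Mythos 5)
Your setup is the right one — the vector lemma applied in the space of symmetric matrices with the Frobenius inner product, with $c_1=\sqrt{d}\,c$ — and you correctly spotted the sticking point: unless $S=\sum_i A_i$ is (a multiple of) the identity, the alignment constant is only $c_2=Ck/|S|_F$ and the lemma's error term grows with $|S|_F$. But your proposed repair does not close this gap. Re-running the lemma's proof with the tighter bound $|A_i|_F^2\le c\,\tr(A_i)$ gives $\Lambda^2\le c\,\max_j \tr(S_j)\le c\big(\tr(S)/k+\sqrt{2d}\,\Lambda\big)$, hence a deviation bound $\big|S_j-\tfrac1k S\big|_F$ of order $\sqrt{c\,\tr(S)/k}+\sqrt{d}\,c$. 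The factor $\tr(S)/k$ does not cancel and is not controlled by $C$: the hypothesis only bounds $\lambda_{\min}(S)$ from below, so one can take, say, $d=2$ with exactly enough matrices $c\,e_2e_2^T$ to make the $e_2$-eigenvalue of $S$ equal to $Ck$, and arbitrarily many additional matrices $c\,e_1e_1^T$; then $\tr(S)/k$ is arbitrarily large compared with $C$, your bound on $\big|S_j-\tfrac1k S\big|_F$ exceeds $C$, and since the lemma controls only the Frobenius norm of the deviation, with no directional information, nothing prevents that error from sitting in the $e_2$ (minimal-eigenvalue) direction and destroying the conclusion $\sum_{i\in J_j}A_i\ge (C-d\sqrt{2cC}-2d^{3/2}c)I$. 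So as written the argument only covers the extremal case $S=CkI$.

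The missing idea is to renormalise \emph{before} invoking the lemma, which is how the paper proceeds. Write $M=\sum_i A_i$ and set $Q=\sqrt{Ck}\,M^{-1/2}$, so $\|Q\|\le 1$ because $M\ge CkI$, and put $A_i'=QA_iQ$. Then $\|A_i'\|\le c$ still holds and $\sum_i A_i'=CkI$ exactly, so in the lemma one legitimately has $|S|_F=Ck\sqrt{d}$ and $c_2=1/\sqrt{d}$, giving $\big\|\sum_{i\in J_j}A_i'-CI\big\|\le d\sqrt{2cC}+2d^{3/2}c$ for the minimizing partition. Transferring back, $\sum_{i\in J_j}A_i=Q^{-1}\big(\sum_{i\in J_j}A_i'\big)Q^{-1}\ge \big(C-d\sqrt{2cC}-2d^{3/2}c\big)Q^{-2}\ge \big(C-d\sqrt{2cC}-2d^{3/2}c\big)I$, since $Q^{-2}=M/(Ck)\ge I$ (and the statement is vacuous if the constant is negative). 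Without this renormalisation step your argument has a genuine gap.
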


\begin{proof}
    Let $M = \sum_{i=1}^n A_i$. We know that $M$ is symmetric positive semi-definite and so it may be diagonalised as $M = P^{-1} D P$ for some orthogonal matrix $P$ and a diagonal matrix $D$ with non-zero real entries. Since $M \geq CkI$ all of the diagonal entries of $D$ are at least $Ck$. Let $D' = \sqrt{CkD^{-1}}$ be a diagonal matrix and for each $i \in [n]$ let $A_i' = Q A_i Q$ where $Q = P^{-1} D' P$. Note that $A_i'$ is symmetric positive semi-definite, $\| A_i' \| \leq c$ as $\| Q \| \leq 1$ and that $\sum_{i=1}^n A_i' = CkI$ since $$QMQ = (P^{-1}D'P)(P^{-1}DP)(P^{-1}D'P) = P^{-1}D'DD'P = CkI.$$

    We now apply Lemma~\ref{EucVecLemma} with $V$ being the space of symmetric $d \times d$ matrices with inner product given by $A \cdot B = \sum_{x = 1}^n \sum_{y = 1}^n A_{xy} B_{xy} = \tr \, AB$ and with $v_1, \dots, v_n$ being $A_1', \dots, A_n'$. We will denote the norm induced by this inner product by $|\cdot|$. Note that given a symmetric matrix $A$ we have that $|A|^2$ is equal to the sum of the squares of the eigenvalues of $A$ and so in particular $\|\cdot\| \leq |\cdot| \leq \sqrt{d} \| \cdot \|$. This means that we can take $c_1 = \sqrt{d} c$ so that $|A_1'| \leq c_1$.
    
    All that we need to do is find some lower bound on $A_i' \cdot CkI$ in terms of $| A_i' |\cdot |CkI|$. Note that $\tr \, A_i'$ is equal to the sum of the eigenvalues of $A_i'$ and that $|A_i'|^2$ is equal to the sum of the squares of these eigenvalues. In particular since the eigenvalues are non-negative $\tr \, A_i' \geq | A_i'|$ and so
    \begin{align*}
        A_i' \cdot CkI & = Ck \, \tr \, A_i' \geq Ck |A_i'|
        = |A_i'| \cdot |CkI| / \sqrt{d}.
    \end{align*}
    This means that we can take $c_2 = 1 / \sqrt{d}$.

    We now apply Lemma~\ref{EucVecLemma} with $S = \sum_{i=1}^n A_i' = CkI$ to construct our partition $[n] = J_1 \sqcup J_2 \sqcup \dots \sqcup J_k$ such that for all $j \in [k]$, $$\left\| \sum_{i \in J_j} A_i' - CI \right\| \leq \left| \sum_{i \in J_j} A_i' - CI \right| \leq d \sqrt{2 c C} + 2 d^{3/2} c.$$ Therefore
		$$\left\| \sum_{i \in J_j} A_i - CQ^{-2} \right\| \leq (d \sqrt{2 c C} + 2 d^{3/2} c)||Q^{-2}||$$
		and hence,
		\begin{align*}
		\sum_{i \in J_j} A_i  & \geq C  Q^{-2} - \left( d \sqrt{2 c C} + 2 d^{3/2} c\right)  ||Q^{-2}|| I\\
		&\geq CI - \left(d \sqrt{2 c C} - 2 d^{3/2} c\right) ||Q^{-2}|| I \\
		& \geq \left(C - d \sqrt{2 c C} - 2 d^{3/2} c\right) I
		\end{align*} using in the penultimate line that $Q^{-2} = P^{-1}(D')^{-2}P$ is symmetric and all eigenvalues are $\geq 1$ and in the last line that $||Q^{-1}|| \geq 1$.
\end{proof}

Finally we can prove Proposition~\ref{kDetailBerryEssen}.

\begin{proof}[Proof of Proposition~\ref{kDetailBerryEssen}]
    Note that since $|X_i| \leq C^{-1} r$ almost surely we have $\| \var X_i \| \leq C^{-2} r^2$. By Corollary~\ref{PartitionMatrixSums} we can partition $[n]$ as $J_1 \sqcup J_2 \sqcup \dots \sqcup J_k$ such that for each $j \in [k]$ we have $$\sum_{i \in J_j} \var X_i \geq \left( C - d \sqrt{2C^{-1}} - 2 d^{3/2} C^{-2} \right) r^{2} I.$$

    This means that by Lemma~\ref{DetailBerryEssen}, provided that $C$ is sufficiently large in terms of $d$, we know that $$s_r\left( \sum_{i \in J_j} X_i \right) \leq \alpha.$$ The result now follows from Proposition~\ref{BasicProductBound}.
\end{proof}

    \section{Entropy Gap and Variance Growth on $\mathrm{Sim}(\R^d)$}
    
    \label{EntropyIsomSection}

    Throughout this section we use various results from \cite{KittleKoglerEntropy} and we refer the reader to the latter paper for an extensive discussion of the used results. We first give an outline of how  the main results in this section are established. 

For $\mu$ a probability measure on $G = \mathrm{Sim}(\R^d)$ we denote by $\gamma_1, \gamma_2, \ldots$ independent $\mu$-distributed samples of $\mu$ and write $$q_n = \gamma_1\cdots \gamma_n.$$  For $\kappa > 0$ be denote by $\tau_{\kappa}$ the stopping time $$\tau_{\kappa} = \inf\{ n\,:\, \rho(q_n) \leq \kappa \}.$$

The goal of this section is to give bounds for $\sum_{i = 1}^N \mathrm{tr}(q_{\tau_{\kappa}}, s_i)$ for suitable scales $s_i$, for $\mathrm{tr}(q_{\tau_{\kappa}}, s_i)$ as defined in section~\ref{section:vartrdef}. Towards the proof of our main theorem as discussed in section~\ref{section:Outline}, it would be ideal to give a bound roughly of the form
\begin{equation}\label{IdealTraceSumBound}
    \sum_{i = 1}^{N} \mathrm{tr}(q_{\tau_{\kappa}}, 2^ia r) \gg \frac{h_{\mu}}{|\chi_{\mu}|} \log \kappa^{-1} \quad\quad \text{ with } \quad r  \approx \kappa^{\frac{S_{\mu}}{|\chi_{\mu}|}} \quad \text{ and } \quad 2^Nr \approx \kappa^{\frac{h_{\mu}}{2\ell|\chi_{\mu}|}}
\end{equation} for sufficiently small $\kappa$. As we explain below, we can't quite achieve \eqref{IdealTraceSumBound} and the bound we arrive at will also depend on the separation rate $S_{\mu}$. To estimate the left hand side of \eqref{IdealTraceSumBound} we apply \cite{KittleKoglerEntropy}*{Theorem 1.5} to each of the terms $\mathrm{tr}(q_{\tau_{\kappa}}, 2^ia r)$ which gives
\begin{equation}\label{SketchTraceSum}
    \sum_{i = 1}^{N} \mathrm{tr}(q_{\tau_{\kappa}}, 2^ia r) \gg a^{-2}(H_a(q_{\tau_{\kappa}}; r|2^Nr) + O_{d}(Ne^{-a^2/4}) + O_{d}(r) )
\end{equation}
having used that by a telescoping sum $$H_a(q_{\tau_{\kappa}}; r|2^Nr) = \sum_{i = 1}^N H_a(q_{\tau_{\kappa}}; 2^{i - 1}r|2^i r).$$ The main contribution from \eqref{IdealTraceSumBound} comes from suitable estimates for $H_a(q_{\tau}; r|2^Nr)$. Indeed, we will show in Proposition~\ref{EntropyBetweenScalesIncrease} that, up to negligible error terms, 
\begin{equation}\label{ApproxEntropyBetweenScales}
    H_a(q_{\tau_{\kappa}}; r|2^Nr)  \gg \frac{h_{\mu}}{|\chi_{\mu}|} \log \kappa^{-1}.
\end{equation}
To show this, we recall that $$H_a(q_{\tau_{\kappa}}; r|2^Nr) = H_a(q_{\tau_{\kappa}}; r) - H_a(q_{\tau_{\kappa}}; 2^N r)$$ and therefore we need to estimate the terms $H_a(q_{\tau_{\kappa}}; r)$ and $H_a(q_{\tau_{\kappa}}; 2^N r)$. To bound the first term (see \cite{KittleKoglerEntropy}), we use that with high probability $\tau_{\kappa} \approx  \log(\kappa^{-1})/|\chi_{\mu}|$ and so the points in the support of $q_{\tau_{\kappa}}$ are separated by distance $r \approx \kappa^{\frac{S_{\mu}}{|\chi_{\mu}|}} \approx \exp(-S_{\mu}\tau_{\kappa})$. For the second term we exploit decay properties of smoothenings of our self-similar measure. 

Combining \eqref{SketchTraceSum} with \eqref{ApproxEntropyBetweenScales} would lead to \eqref{IdealTraceSumBound} would it not be for the error term $O_{d}(Ne^{-a^2/\ell})$. Indeed, to not cancel out the lower bound from \eqref{ApproxEntropyBetweenScales} we require that $$Ne^{-a^2/\ell} \leq c \frac{h_{\mu}}{|\chi_{\mu}|} \log \kappa^{-1}$$ for a sufficiently small constant $c$. By our choice of $N$ it holds that $N \approx \frac{S_{\mu}}{|\chi_{\mu}|} \log \kappa^{-1}$ and therefore $$e^{-a^2/\ell} \leq c \frac{h_{\mu}}{S_{\mu}}.$$ So we have to set $$a^2 = c\max\left\{ 1,\log \frac{S_{\mu}}{h_{\mu}}  \right\}.$$ Applying then \eqref{SketchTraceSum}, since the error term $O_d(r)$ is negligible, we conclude that 
\begin{equation}\label{IntroLotsofTrace}
    \sum_{i = 1}^{N} \mathrm{tr}(q_{\tau_{\kappa}}, 2^i a r) \gg \frac{h_{\mu}}{|\chi_{\mu}|} \log \kappa^{-1} \max\left\{ 1,\log \frac{S_{\mu}}{h_{\mu}}  \right\}^{-1}.
\end{equation}
We will deduce a more precise result from \cite{KittleKoglerEntropy}*{Proposition 1.5} in Proposition~\ref{LotsOfTrace}.

\subsection{Definitions of entropy and trace}\label{section:vartrdef}

For the convenience of the reader, we recall the definitions from \cite{KittleKoglerEntropy}. The following smoothing functions are used: For given $r > 0$ and $a \geq 1$, denote by $\beta_{a,r}$ a random variable on $\mathfrak{g}$ with density function $f_{a,r}: \mathfrak{g} \to \R$ defined as $$f_{a,r}(x) = \begin{cases}
    C_{a,r}e^{-\frac{|x|^2}{2r^2}} & \text{if } |x| \leq ar, \\
    0 &\text{otherwise},
    \end{cases}$$ where $C_{a,r}$ is a normalizing constant to ensure that $f_{a,r}$ integrates to $1$. We then set 
    \begin{equation}\label{sardef}
        s_{a,r} = \exp(\beta_{a,r})
    \end{equation}
    and then define the entropy of a $G$-valued random variable $g$ at scale $r > 0$ with respect to the parameter $a \geq 1$ as 
    \begin{equation}\label{Hadef}
        H_a(g;r) = H(g; s_{a,r}) = H(gs_{a,r}) - H(s_{a,r}),
    \end{equation}
    where $H$ is the differential entropy with respect to the Haar measure $\Haarof{G}$ on $G$. The entropy between scales $r_1, r_2 > 0$ is defined as 
\begin{align*}\label{DefEntScales}
    H_a(g;r_1|r_2) &= H(g; s_{r_1,a}| s_{r_2,a}) = H_a(g;r_1) - H_a(g;r_2) \\ &= (H(gs_{r_1,a}) - H(s_{r_1,a})) - ( H(gs_{r_2,a}) - H(s_{r_2,a})).
\end{align*}

We define the trace of $g$ (a $G$-valued random variable) at scale $r$, denoted $\tr(g; r)$,  to be the supremum of $t\geq 0$ such there exists some $\sigma$-algebra $\mathscr{A}$ and a $\mathscr{A}$-measurable random variable $h$ taking values in $G$ such that $$|\log(h^{-1}g)| \leq r \quad \text{ and } \quad \E[\tr_h(g|\mathscr{A})] \geq tr^2.$$

\subsection{Entropy Gap of Stopped Random Walk}

In this subsection we show that the entropy between scales is large for a suitable stopped random walk on $G = \mathrm{Sim}(\R^d)$. Indeed, we establish the following more precise version of \eqref{ApproxEntropyBetweenScales}.

\begin{proposition}\label{EntropyBetweenScalesIncrease}
    Let $\mu$ be a finitely supported, contracting on average probability measure on $G$.  Suppose that $S_{\mu} < \infty$ and  that $h_{\mu}/|\chi_{\mu}|$ is sufficiently large. Let $S > S_{\mu}$, $\kappa > 0$ and $a \geq 1$ and suppose that $0 < r_1 < r_2 < a^{-1}$ with $r_1 < \exp( -S \log(\kappa^{-1})/|\chi_{\mu}|)$. Then as $\kappa \to 0$,
    \begin{align*}
          H_a(q_{\tau_{\kappa}}; r_1|r_2) \geq \left( \frac{\RWEntropy}{|\chi_{\mu}|} - d  \right) \log \kappa^{-1} + H(s_{r_2,a}) + o_{\mu,d,S,a}(\log \kappa^{-1}).
    \end{align*}
\end{proposition}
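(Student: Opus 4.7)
The plan is to split $H_a(q_{\tau_\kappa}; r_1|r_2) = H_a(q_{\tau_\kappa}; r_1) - H_a(q_{\tau_\kappa}; r_2)$ and bound the two terms separately, using exponential separation at the small scale and a Haar volume computation at the large scale.

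For the lower bound on $H_a(q_{\tau_\kappa}; r_1)$, I would first establish the Shannon entropy estimate $H(q_{\tau_\kappa}) \geq (h_\mu/|\chi_\mu|)\log\kappa^{-1} - o(\log\kappa^{-1})$. This combines subadditivity of $H(\mu^{*n}) \geq h_\mu n$ with the concentration $\tau_\kappa = \log\kappa^{-1}/|\chi_\mu| + o(\log\kappa^{-1})$ from Lemma~\ref{rhotauLDP}; conditioning on the value of $\tau_\kappa$ costs only $H(\tau_\kappa) = O(\log\log\kappa^{-1})$, which is negligible. I would then promote this to the smoothed entropy using the exponential separation. Fix $S' \in (S_\mu, S)$; by definition of $S_\mu$ we have $M_n \geq e^{-S' n}$ for all sufficiently large $n$, so for typical $\tau_\kappa \approx \log\kappa^{-1}/|\chi_\mu|$ the atoms of $q_{\tau_\kappa}$ are pairwise separated by at least $e^{-S' \tau_\kappa} > r_1$. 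On this event the smoothed measure $q_{\tau_\kappa} * s_{a,r_1}$ decomposes into essentially disjoint translates of $s_{a,r_1}$ weighted by the atoms, yielding
\[ H(q_{\tau_\kappa} s_{a,r_1}) = H(q_{\tau_\kappa}) + H(s_{a,r_1}) + o(\log\kappa^{-1}), \]
and hence $H_a(q_{\tau_\kappa}; r_1) = H(q_{\tau_\kappa}) + o(\log\kappa^{-1})$.

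For the upper bound on $H_a(q_{\tau_\kappa}; r_2)$, I would observe that the essential support of $q_{\tau_\kappa}$ lies in the set $E$ of $g \in G$ with $\log\rho(g) \in [\log\kappa - O_\mu(1), \log\kappa]$ (using compactness of $\mathrm{supp}(\mu)$ at the stopping time), $U(g) \in O(d)$, and $|b(g)| \leq A$ for a suitable fixed $A$; the atypical event $|b(q_{\tau_\kappa})| > A$ has probability $O(A^{-\alpha})$ by \eqref{PolynomialTailDecay} and contributes only $o(\log\kappa^{-1})$ to the entropy via the trivial bound $H \leq \tau_\kappa \log k$ combined with Markov. The left Haar measure on $G = \mathrm{Sim}(\R^d)$ has density proportional to $\rho^{-d-1}$ in the coordinates $(\rho, U, b)$, which gives $\mathrm{vol}_{\mathrm{Haar}}(E) = O_{\mu, A, d}(\kappa^{-d})$, and the same holds for the $O(ar_2)$-enlargement of $E$ containing the support of $q_{\tau_\kappa} s_{a,r_2}$. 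Since a probability density on a set of Haar volume $V$ has entropy at most $\log V$, we obtain $H(q_{\tau_\kappa} s_{a, r_2}) \leq d\log\kappa^{-1} + O_{\mu,d,a}(1)$, and therefore $H_a(q_{\tau_\kappa}; r_2) \leq d\log\kappa^{-1} - H(s_{a,r_2}) + O_{\mu,d,a}(1)$.

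Subtracting the two bounds yields the desired inequality, since $O_{\mu,d,a}(1) = o(\log\kappa^{-1})$ as $\kappa \to 0$. The main obstacle is the quantitative control of the ``collision'' probability in the lower bound: one must show that pairs of atoms within distance $r_1$ contribute a negligible entropy defect. I would handle this by partitioning the support of $q_{\tau_\kappa}$ into well-separated atoms and colliding ones using $M_n \geq e^{-S' n}$, and estimating the probability of the colliding regime via the LDP for $\tau_\kappa$ together with a union bound over the (polynomially many) support atoms.
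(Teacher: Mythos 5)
Your plan has the same skeleton as the paper's proof: the paper proves the proposition by combining Lemma~\ref{EntScaleGrowth1} (the lower bound on $H_a(q_{\tau_\kappa};r_1)$, which it delegates to \cite{KittleKoglerEntropy}*{Corollary 1.3} and which rests on exactly the mechanism you describe, namely Shannon entropy of the stopped walk plus exponential separation of the atoms at scale $r_1$) with Lemma~\ref{EntScaleGrowth2} (the bound $H(q_{\tau_\kappa}s_{r_2,a})\le d\log\kappa^{-1}+o_{\mu,d,a}(\log\kappa^{-1})$). Where you genuinely diverge is the large-scale term: the paper never truncates the translation part. It splits the differential entropy along the product structure $(\rho,U,b)$ by a Kullback--Leibler decomposition against $\rho^{-(d+1)}d\rho\,dU\,db$, extracts the full $d\log\kappa^{-1}$ from the $\rho$-marginal (Lemma~\ref{rhotauLDP} confines $\rho(q_{\tau_\kappa})$ to $[\kappa^{1+\eps},\kappa]$ up to an exponentially small event), gets the $U$-marginal for free since $dU$ is a probability measure, and shows the $b$-marginal contributes only $O_{\mu,d,a}(1)$ via a dyadic-annulus Jensen argument driven by the polynomial tail \eqref{PolynomialTailDecay}. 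Your truncation-plus-max-entropy (Haar volume) bound is a more elementary route to the same term and can be made to work, but note that \eqref{PolynomialTailDecay} is a statement about $\nu$, not about $b(q_{\tau_\kappa})$; you need the transfer $|b(q_{\tau_\kappa})|\le|q_{\tau_\kappa}(x)|+\kappa|x|$ with $x\sim\nu$ independent, exactly as in the paper's proof, and the event where $\tau_\kappa$ is atypically large must be handled by the LDP before the bound $H\le\tau_\kappa\log k$ is legitimate.

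Two steps need repair. First, with $A$ \emph{fixed} the bad event $\{|b(q_{\tau_\kappa})|>A\}$ has probability $\asymp A^{-\alpha}$, a constant independent of $\kappa$, while its entropy contribution is of order $\log\kappa^{-1}$; the product $O(A^{-\alpha}\log\kappa^{-1})$ is not $o(\log\kappa^{-1})$. You must either let $A=A(\kappa)\to\infty$ slowly (the volume of your set $E$ then only picks up a harmless $d\log A$), or run the estimate for every fixed $A$ and pass to a limsup in $A$ at the end. Second, in the small-scale bound, ``conditioning on $\tau_\kappa$ costs only $H(\tau_\kappa)$'' is not by itself a proof that $H(q_{\tau_\kappa})\ge(h_\mu/|\chi_\mu|)\log\kappa^{-1}-o(\log\kappa^{-1})$: conditioned on $\tau_\kappa=n$ the law of $q_n$ is no longer $\mu^{*n}$, so $H(q_{\tau_\kappa}\mid\tau_\kappa)$ does not compare directly with $H(\mu^{*n})\ge h_\mu n$. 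The standard fix is a reconstruction argument: with $m\approx(1-\eps)\log\kappa^{-1}/|\chi_\mu|$ one recovers $q_m$ from $(q_{\tau_\kappa},\tau_\kappa,\gamma_{m+1},\dots,\gamma_{\tau_\kappa})$ on $\{\tau_\kappa\ge m\}$, giving $h_\mu m\le H(q_{\tau_\kappa})+H(\tau_\kappa)+O(\eps\log\kappa^{-1})$ up to LDP error terms; this is precisely the content of the cited \cite{KittleKoglerEntropy}*{Corollary 1.3}. With these repairs, and with the separation constant chosen as some $S'\in(S_\mu,S)$ so that the atoms are $2ar_1$-separated for small $\kappa$, your plan does yield the proposition.
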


Proposition~\ref{EntropyBetweenScalesIncrease} directly follows from Lemma~\ref{EntScaleGrowth1} and Lemma~\ref{EntScaleGrowth2}. 
 
\begin{lemma}\label{EntScaleGrowth1}
    Under the assumptions of Proposition~\ref{EntropyBetweenScalesIncrease}, as $\kappa \to 0$, $$H_a(q_{\tau_{\kappa}}; r_1) \geq \frac{h_{\mu}}{|\chi_{\mu}|} \log \kappa^{-1} + o_{\mu,d,S,a}(\log \kappa^{-1}).$$
\end{lemma}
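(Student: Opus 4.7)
The plan is to prove this in two stages. First, I will establish a Shannon-entropy lower bound
$$H(q_{\tau_\kappa}) \geq \frac{h_\mu}{|\chi_\mu|}\log\kappa^{-1} + o_{\mu}(\log\kappa^{-1}),$$
using subadditivity of random walk entropy combined with the large deviation principle for $\tau_\kappa$ (Lemma~\ref{rhotauLDP}). Second, I will transfer this to the entropy at scale $H_a(q_{\tau_\kappa}; r_1)$ using the separation hypothesis $r_1 < \exp(-S \log\kappa^{-1}/|\chi_\mu|)$ with $S > S_\mu$, which forces the atoms of $q_{\tau_\kappa}$ to be spread far apart relative to the kernel $s_{r_1, a}$. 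The passage from $H$ to $H_a$ at a well-separated scale is a standard consequence of the entropy machinery developed in \cite{KittleKoglerEntropy}.

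For the first stage, fix $\epsilon > 0$ (to be sent to $0$ slowly with $\kappa$) and set $N = \lceil (1+\epsilon) \log\kappa^{-1}/|\chi_\mu|\rceil$. By Lemma~\ref{rhotauLDP}, $\mathbb{P}[\tau_\kappa > N] \leq e^{-\delta(\epsilon)\log\kappa^{-1}}$ and $\mathbb{E}[\tau_\kappa] = (1 + O(\epsilon))\log\kappa^{-1}/|\chi_\mu|$. By the definition of $h_\mu$ as an infimum, $H(q_N) = H(\mu^{*N}) \geq h_\mu N$. On the event $E = \{\tau_\kappa \leq N\}$ one has $q_N = q_{\tau_\kappa} \cdot \gamma_{\tau_\kappa + 1} \cdots \gamma_N$, and by the strong Markov property, given $\tau_\kappa = n$ the tail factor $\gamma_{n+1}\cdots\gamma_N$ is independent of $q_{\tau_\kappa}$ with Shannon entropy at most $(N-n)H(\mu)$. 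Combining the chain rule with the bound $H(\tau_\kappa) = O(\log\log\kappa^{-1})$ (which follows from the polynomial tail of $\tau_\kappa$ implicit in Lemma~\ref{rhotauLDP}) and absorbing the contribution from the low-probability event $E^c$, one obtains
$$H(q_N) \leq H(q_{\tau_\kappa}) + H(\mu)\bigl(N - \mathbb{E}[\tau_\kappa]\bigr) + o(\log\kappa^{-1}).$$
Rearranging with $H(q_N) \geq h_\mu N$ gives $H(q_{\tau_\kappa}) \geq h_\mu\log\kappa^{-1}/|\chi_\mu| - O(\epsilon \log\kappa^{-1}) - o(\log\kappa^{-1})$, and the claim follows upon letting $\epsilon = \epsilon(\kappa) \to 0$ slowly enough that $\delta(\epsilon)\log\kappa^{-1} \to \infty$ (for instance $\epsilon = 1/\log\log\kappa^{-1}$).

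For the second stage, fix $S'$ with $S_\mu < S' < S$ and further restrict $\epsilon$ so that $S'(1+\epsilon) < S$. For sufficiently small $\kappa$ we have $M_N \geq e^{-S'N} > e^{-S\log\kappa^{-1}/|\chi_\mu|} > r_1$, and in fact $M_N / (a r_1) \to \infty$. On the event $E$ the atoms of $q_{\tau_\kappa}$ lie in $\bigcup_{n \leq N}\mathrm{supp}(\mu^{*n})$ and are therefore pairwise separated by at least $M_N$, i.e.\ by much more than the diameter $2a r_1$ of the support of $s_{r_1, a}$. Hence the translates of $s_{r_1, a}$ at distinct atoms of $q_{\tau_\kappa}|_E$ have disjoint supports, and a standard entropy decomposition (as used in \cite{KittleKoglerEntropy}) gives $H(q_{\tau_\kappa} s_{r_1,a}) \geq H(q_{\tau_\kappa}) + H(s_{r_1, a}) - o(\log\kappa^{-1})$, i.e.\ $H_a(q_{\tau_\kappa}; r_1) \geq H(q_{\tau_\kappa}) - o_{\mu,d,S,a}(\log\kappa^{-1})$. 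Combining with the first stage yields the conclusion. The main difficulty is calibrating $\epsilon$ to simultaneously control the LDP tail $e^{-\delta(\epsilon)\log\kappa^{-1}}$ and the entropy loss $\epsilon\, H(\mu)\log\kappa^{-1}$; the secondary technical point is that the lower bound $M_n \geq e^{-S' n}$ holds only for $n$ large, which is harmless since $N \to \infty$ as $\kappa \to 0$.
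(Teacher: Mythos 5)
Your argument is correct in substance, but it takes a different route from the paper's text: the paper disposes of Lemma~\ref{EntScaleGrowth1} in one line by invoking \cite{KittleKoglerEntropy}*{Corollary 1.3} together with the large deviation principle of Lemma~\ref{rhotauLDP}, whereas you reconstruct the underlying argument from scratch. Your two stages are exactly the content that the cited corollary packages: (a) the stopping-time comparison $h_\mu N \leq H(q_N) \leq H(\tau_\kappa) + H(q_{\tau_\kappa}\mid\tau_\kappa) + H(\mu)\,\E[(N-\tau_\kappa)^+] + o(\log\kappa^{-1})$, which is legitimate by the strong Markov property and $H(\tau_\kappa) \ll \log\E[\tau_\kappa] \ll \log\log\kappa^{-1}$; and (b) the separation step, where $M_N \geq e^{-S'N}$ with $S'(1+\epsilon) < S$ makes the translates $g\,s_{r_1,a}$ at distinct atoms mutually singular, so that $H(q_{\tau_\kappa}s_{r_1,a}) \geq H(q_{\tau_\kappa}) + H(s_{r_1,a}) - o(\log\kappa^{-1})$. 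Two points you leave implicit deserve a word: the identity $H(g\,s_{r_1,a}) = H(s_{r_1,a})$ for a deterministic left translate uses that the reference measure $\rho^{-(d+1)}d\rho\,dU\,db$ is the \emph{left} Haar measure on the non-unimodular group $\mathrm{Sim}(\R^d)$ (fortunately it is, as the formula in Lemma~\ref{EntScaleGrowth2} shows), and the low-probability event $\{\tau_\kappa > N\}$ must be split off by conditioning, its entropy contribution being $\ll \Pr[\tau_\kappa>N]\cdot\E[\tau_\kappa\mid\tau_\kappa>N]\log|\mathrm{supp}(\mu)| = o(\log\kappa^{-1})$ by the exponential tail. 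Also, the calibration $\epsilon=\epsilon(\kappa)\to 0$ is unnecessary: for each fixed $\epsilon$ you get $\liminf_{\kappa\to0} H_a(q_{\tau_\kappa};r_1)/\log\kappa^{-1} \geq h_\mu/|\chi_\mu| - O(\epsilon)$, and letting $\epsilon\to 0$ afterwards already gives the stated $o_{\mu,d,S,a}(\log\kappa^{-1})$ bound, avoiding any diagonal argument with the $\kappa$-threshold in Lemma~\ref{rhotauLDP}. With these caveats your proof is a faithful, self-contained substitute for the citation; what the paper's approach buys is brevity and reuse of the general entropy framework, while yours makes the mechanism (concentration of $\tau_\kappa$ plus separation at scale $r_1$) explicit within this paper.
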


\begin{proof}
    This follows from \cite{KittleKoglerEntropy}*{Corollary 1.3} since $q_{\tau}$ satisfies a large deviation principle by Lemma~\ref{rhotauLDP} and we refer to the latter paper for a discussion.
\end{proof}

\begin{lemma}\label{EntScaleGrowth2}
    Under the assumptions of Proposition~\ref{EntropyBetweenScalesIncrease}, as $\kappa \to 0$,  $$H(q_{\tau_{\kappa}} s_{r_2,a}) \leq d \log \kappa^{-1} + o_{\mu,d,a}(\log \kappa^{-1}).$$ 
\end{lemma}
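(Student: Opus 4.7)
The plan is to exploit the semidirect-product structure $G = \R^+ \ltimes (O(d) \ltimes \R^d)$ in order to compute $H(q_{\tau_\kappa} s_{r_2,a})$ almost explicitly. Parameterising $g \in G$ by $(\rho(g), U(g), b(g))$, a standard Jacobian computation gives that the left Haar measure on $G$ is proportional to $\rho^{-(d+1)}\,d\rho\,dU\,db$, and in the coordinate $\tilde\rho := \log \rho$ it becomes $e^{-d\tilde\rho}\,d\tilde\rho\,dU\,db$. If a $G$-valued random variable $X$ has density $\tilde f$ with respect to $d\tilde\rho\,dU\,db$, then its Haar density is $\tilde f\,e^{d\tilde\rho}$, which yields the key identity
\begin{equation*}
    H(X) \;=\; H_{\mathrm{Leb}}(\tilde\rho, U, b) \;-\; d\,\mathbb{E}[\tilde\rho].
\end{equation*}

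Applied to $X = q_{\tau_\kappa} s_{r_2,a}$, the definition of the stopping time $\tau_\kappa$ together with finiteness of $\mathrm{supp}(\mu)$ forces $\rho(q_{\tau_\kappa}) \in [\rho_{\min}\kappa,\kappa]$ where $\rho_{\min} := \inf_{g\in\mathrm{supp}(\mu)}\rho(g) > 0$, while the smoothing factor satisfies $|\log \rho(s_{r_2,a})| \le a r_2 \le 1$. Hence $\mathbb{E}[\tilde\rho] = \log \kappa + O_\mu(1)$, and the $-d\,\mathbb{E}[\tilde\rho]$ term produces precisely the main contribution $d\log\kappa^{-1} + O_\mu(1)$. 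What remains is to show $H_{\mathrm{Leb}}(\tilde\rho, U, b) = O_{\mu,d}(1)$ uniformly in small $\kappa$; by subadditivity it suffices to bound the three marginal entropies $H(\tilde\rho)$, $H(U)$ and $H(b)$ separately. The first is $O_\mu(1)$ because the support of $\tilde\rho$ sits in an interval of length $\log\rho_{\min}^{-1} + 2 a r_2 = O_\mu(1)$; the second is $O_d(1)$ because $O(d)$ is compact.

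The only step that requires any real work, and the main obstacle, is the bound $H(b) = O_{\mu,d}(1)$. The plan is to apply the Gibbs inequality $H(b) \le -\mathbb{E}[\log p(b)]$ with a fixed test density $p(x) \propto (1+|x|^2)^{-\beta}$ for some $\beta > d/2$, which reduces the problem to showing $\mathbb{E}[\log(1+|b|^2)] = O_\mu(1)$ uniformly in $\kappa$. Since $b(q_{\tau_\kappa} s_{r_2,a}) = b(q_{\tau_\kappa}) + \rho(q_{\tau_\kappa}) U(q_{\tau_\kappa}) b(s_{r_2,a})$ with the second term of size $O(\kappa)$, this in turn reduces to a uniform bound on $\mathbb{E}[\log(1+|b(q_{\tau_\kappa})|^2)]$. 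I would couple $b(q_{\tau_\kappa})$ to the almost-sure limit $b_\infty = \lim_{n\to\infty} b(q_n) \sim \nu$ by continuing the random walk past time $\tau_\kappa$: by the strong Markov property one obtains $b_\infty = b(q_{\tau_\kappa}) + \rho(q_{\tau_\kappa}) U(q_{\tau_\kappa}) \widetilde b_\infty$ where $\widetilde b_\infty$ has law $\nu$ and is independent of $q_{\tau_\kappa}$. The polynomial tail \eqref{PolynomialTailDecay} for $\nu$ then transfers via the triangle inequality to a uniform polynomial tail $\Pr[|b(q_{\tau_\kappa})| \ge R] \ll_\mu R^{-\alpha}$ valid for all $\kappa \le 1$, giving $\mathbb{E}[\log(1+|b(q_{\tau_\kappa})|^2)] = O_\mu(1)$. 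Combining everything produces $H(q_{\tau_\kappa} s_{r_2,a}) \le d\log\kappa^{-1} + O_{\mu,d}(1) = d\log\kappa^{-1} + o_{\mu,d,a}(\log\kappa^{-1})$ as $\kappa \to 0$, as required.
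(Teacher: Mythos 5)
Your proof is correct, and it follows the same basic skeleton as the paper's: exploit the factorisation of Haar measure on $\mathrm{Sim}(\R^d)$ as $\rho^{-(d+1)}d\rho\,dU\,db$, bound the three coordinate contributions separately, extract the main term $d\log\kappa^{-1}$ from the scaling coordinate, get $\leq 0$ from the compact $O(d)$ factor, and control the translation part by a uniform polynomial tail for $b(q_{\tau_{\kappa}})$ obtained by comparing with the self-similar measure at the stopping time (your strong-Markov decomposition $b_\infty = b(q_{\tau_{\kappa}}) + \rho(q_{\tau_{\kappa}})U(q_{\tau_{\kappa}})\widetilde b_\infty$ is the same device as the paper's use of $q_{\tau}(x)\sim\nu$ for an independent $x\sim\nu$). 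Where you genuinely diverge is in the execution of two steps, both to your advantage. First, for the scaling coordinate the paper fixes $\eps>0$, restricts to the event $\rho(q_{\tau_{\kappa}})\geq\kappa^{1+\eps}$ via the large-deviation Lemma~\ref{rhotauLDP}, and handles the complementary event with a separate Shannon-entropy estimate; you instead observe that finite support forces the deterministic confinement $\rho(q_{\tau_{\kappa}})\in(\rho_{\min}\kappa,\kappa]$, which removes the event-splitting and the LDP input entirely and upgrades the error from $o_{\mu,d,S,a}(\log\kappa^{-1})$ (with the $\eps$-loss $d(1+\eps)\log\kappa^{-1}$) to a clean $O_{\mu,d}(1)$, more than what the lemma asks for. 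Second, to convert the tail bound on $b(q_{\tau_{\kappa}}s_{r_2,a})$ into an entropy bound, the paper runs a dyadic-shell decomposition combined with Jensen applied to $h(x)=-x\log x$, while you use the cross-entropy (Gibbs) inequality against a fixed heavy-tailed test density $(1+|x|^2)^{-\beta}$, $\beta>d/2$, reducing everything to finiteness of $\E[\log(1+|b|^2)]$; both are valid, yours is the more standard one-line reduction. The only hygiene points worth making explicit in a write-up are that the joint law of $(\log\rho,U,b)$ under $q_{\tau_{\kappa}}s_{r_2,a}$ is absolutely continuous (so the marginal/cross-entropy bounds apply) and that $|\log\rho(s_{r_2,a})|$ and $|b(s_{r_2,a})|$ are $O_d(1)$ because $|\beta_{a,r_2}|\leq ar_2\leq 1$ under the hypothesis $r_2<a^{-1}$; neither is a gap.
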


\begin{proof}
    For convenience write $\tau = \tau_{\kappa}$ and $K = |\mathrm{supp}(\mu)|$. We use the product structure on $G$ combined with \cite{KittleKoglerEntropy}*{Lemma 2.5}. Indeed, note that a choice of Haar measure on $G$ is given as $$\int f \, d\Haarof{G} = \int f(\rho U + b) \, \rho^{-(d+1)}  d\rho dU db,$$ for $dr, db$ the Lebesgue measure and $dU$ the Haar probability measure on $O(d)$. Therefore by \cite{KittleKoglerEntropy}*{Lemma 2.5}, $H(q_{\tau} s_{r_2,a}) \leq $ $$D_{\mathrm{KL}}(\rho(q_{\tau} s_{r_2,a})\, || \, \rho^{-(d + 1)}d\rho) + D_{\mathrm{KL}}(U(q_{\tau} s_{r_2,a})\, || \, dU) + D_{\mathrm{KL}}(b(q_{\tau} s_{r_2,a})\, || \, db).$$ We give suitable bounds for each these terms. As $dU$ is a probability measure $D_{\mathrm{KL}}(U(q_{\tau} s_{r_2,a})\, || \, dU) \leq 0$ by \cite{KittleKoglerEntropy}*{Lemma 2.4}. 

    We next deal with $D_{\mathrm{KL}}(b(q_{\tau} s_{r_2,a})\, || \, db)$. Denote by $\nu_{\tau}$ the distribution of $b(q_{\tau} s_{r_2,a})$. We claim that there is $\alpha = \alpha(\mu,d,a)$ such that 
    \begin{equation}\label{nutaudecay}
        \nu_{\tau}(B_R^C) \leq R^{-\alpha}
    \end{equation}
    for all sufficiently small $\kappa$ and sufficiently large $R$. Note that $$|b(q_{\tau} s_{r_2,a})| = |\rho(q_{\tau})U(q_{\tau})b(s_{r_2,a}) + b(q_{\tau})| \leq \kappa |b(s_{r_2,a})| + |b(q_{\tau})|$$ and therefore it suffices to show \eqref{nutaudecay} for the distribution of $b(q_{\tau})$, which we denote by $\nu'_{\tau}$. For $x \in \R^d$, $$|b(q_{\tau}) - q_{\tau}(x)| \leq |q_{\tau}(0) - q_{\tau}(x)| \leq \rho(q_{\tau})|x| \leq \kappa|x|$$ and so $|b(q_{\tau})| \leq |q_{\tau}(x)| + \kappa|x|$. Therefore if $R\leq |b(q_{\tau})|$ then either $R/2 \leq |q_{\tau}(x)|$ or $R/2 \leq \kappa|x|$. Also note that if $x$ is sampled from $\nu$ independently from $\gamma_1, \gamma_2,\ldots$ then $q_{\tau}(x)$ has law $\nu$. By \eqref{PolynomialTailDecay} this implies that $$\nu_{\tau}'(B_R^c) \leq \nu(B_{R/2}^c) + \nu(B_{R/2\kappa}^c) \leq R^{-\alpha_2}2^{\alpha_2}(1 + \kappa^{-1})^{-\alpha_2},$$ showing \eqref{nutaudecay}.

    To conclude we deduce from \eqref{nutaudecay} that $D_{\mathrm{KL}}( \nu_{\tau}  \, || \, db)$ is bounded by a constant depending on $\mu, d$ and $a $ and therefore is $\leq o_{\mu,d,a}(\log \kappa^{-1})$. Indeed denote by $f_{\tau}$ the density of $\nu_{\tau}$ such that $$D_{\mathrm{KL}}( \nu_{\tau}  \, || \, db) = \int -f_{\tau} \log f_{\tau} \, d\Haarof{\R^d}.$$ Also let $L > 1$ be a constant and for $i = 0,1,2,\ldots $  write $p_i = \nu_{\tau}(B_{L^{i+1}}\backslash B_{L^{i}})$ such that $p_i \leq \nu_{\tau}(B_{L^i}^c) \leq L^{-i\alpha}$. Thus it holds by Jensen's inequality for $h(x) = -x\log x$, 
    \begin{align*}
        D_{\mathrm{KL}}( \nu_{\tau}  \, || \, db) &= \sum_{i \geq 0} \int_{B_{L^{i+1}}\backslash B_{L^{i}}} -f_{\tau} \log f_{\tau} \, d\Haarof{\R^d} \\
        &= \sum_{i \geq 0} \int_{B_{L^{i+1}}\backslash B_{L^{i}}} -f_{\tau} \log \left( \frac{f_{\tau}p_i}{p_i} \right) \, d\Haarof{\R^d} \\
        &= \sum_{i \geq 0} \left(\int h(f_{\tau}p_i) \frac{1_{B_{L^{i+1}}\backslash B_{L^{i}}}}{p_i} \, d\Haarof{\R^d} + p_i\log(p_i) \right) \\
        &\leq \sum_{i \geq 0} h(p_i) \leq \sum_{0 \leq i \leq I} h(p_i) + \sum_{i \geq  I} h(L^{-i\alpha})  < \infty,
    \end{align*} having used in the last line that $\log(p_i) \leq 0$ and that $h(x)$ is monotonically decreasing for small $x$ and therefore $h(p_i) \leq h(L^{-i\alpha})$ for $i \geq I$ with $I$ sufficiently large.
    
    Finally, we estimate $D_{\mathrm{KL}}(\rho(q_{\tau_{\kappa}} s_{r_2,a})\, || \, \rho^{-(d + 1)}d\rho)$.  Fix $\eps > 0$ and let $A$ be the event that $\rho(q_{\tau}) \geq \kappa^{(1 + \eps)}.$ By Lemma~\ref{rhotauLDP} there is $\delta > 0$ only depending on $\mu$ and $\eps$ such that $\mathbb{P}[A^c] \leq \kappa^{\delta}$. By \cite{KittleKoglerEntropy}*{Lemma 2.4}, 
    \begin{align*}
        D_{\mathrm{KL}}(\mathcal{L}(\rho(q_{\tau_{\kappa}} s_{r_2,a}))|_A\, || \, \rho^{-(d + 1)}d\rho) &\leq \log\left( \int_{\kappa^{1 + \eps}}^{\infty} \rho^{-(d + 1)} \, d\rho \right) \\ &= \log\left( d^{-1}\kappa^{-d(1 + \eps)} \right) \leq  d(1 + \eps)\log \kappa^{-1}.
    \end{align*}
    To bound $H(\mathcal{L}(q_{\tau}s_{r_2,a})|_{A^c})$, we note that as in \cite{KittleKoglerEntropy}*{Lemma 2.3} it suffices to bound the Shannon entropy of $H(\mathcal{L}(q_{\tau})|_{A^c})$. If $\tau \leq 2\frac{\log \kappa^{-1}}{|\chi_{\mu}|}$, the contribution can be bounded by $\kappa^{\delta}\frac{2\log \kappa^{-1}}{|\chi_{\mu}|}\log K$. By the large deviation principle, when $n \geq 2\frac{\log \kappa^{-1}}{|\chi_{\mu}|}$ it holds that $\mathbb{P}[\tau = n] \leq \alpha^n$ for some $\alpha \in (0,1)$. Therefore the contribution in this case is $\leq \alpha^n n \log K$ where $\alpha \in (0,1)$ is some constant depending on $\mu$. Summing over all $n \geq 2\frac{\log \kappa^{-1}}{|\chi_{\mu}|}$ and using \cite{KittleKoglerEntropy}*{Lemma 2.2}, we conclude that $H(\mathcal{L}(q_{\tau}s_{r_2,a})|_{A^c})$ is bounded and therefore $o_{\mu,\eps}(\log \kappa^{-1})$. As $\eps > 0$ was arbitrary the claim follows. 
\end{proof}

\subsection{Trace Bounds for Stopped Random Walk}

In this subsection we give a precise proof of \eqref{IntroLotsofTrace} using results from \cite{KittleKoglerDimension}. We show further that $s_{i + 1} \geq \kappa^{-3}s_i$ in order to concatenate proper decompositions as defined and discussed in section~\ref{section:decomposition}.

\begin{proposition}\label{LotsOfTrace}
    Let $\mu$ be a finitely supported, contracting on average probability measure on $G = \mathrm{Sim}(\R^d)$ and write $\ell = \dim G = \frac{d(d + 1)}{2} + 1$. Suppose that $S_{\mu} < \infty$ and that $\RWEntropy/|\chi_{\mu}|$ is sufficiently large. Let $S > S_{\mu}$ be chosen such that $S \in [\RWEntropy, 2\RWEntropy]$. Suppose that $\kappa$ is sufficiently small (depending on $\mu$ and $S$) and let $\widehat{m} = \lfloor \frac{S}{100 |\chi_{\mu}|} \rfloor$.

    Then there exist $s_1, s_2, \ldots , s_{\widehat{m}} > 0$ such that for each $i \in [\widehat{m}]$, $$s_i \in (\kappa^{\frac{S}{|\chi_{\mu}|}}, \kappa^{\frac{\RWEntropy}{2\ell|\chi_{\mu}|}})$$ and for each $i \in [\widehat{m}-1]$ $s_{i + 1} \geq \kappa^{-3}s_i$ and $$\sum_{i = 1}^{\widehat{m}}  \tr(q_{\tau_{\kappa}}; s_i) \gg_d  \left(  \frac{\RWEntropy}{|\chi_{\mu}|} \right) \max\left\{ 1, \log \frac{S}{\RWEntropy} \right\}^{-1}.$$ 
\end{proposition}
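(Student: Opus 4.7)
The plan combines the entropy-gap statement Proposition~\ref{EntropyBetweenScalesIncrease} with the entropy-to-trace inequality \eqref{Outline:EntropyTraceBound} from \cite{KittleKoglerEntropy}*{Proposition 1.5}, together with a pigeonhole at the end. The shape of the argument is the one sketched at the top of this section: first extract a large entropy across the full range of scales, then distribute it over dyadic sub-scales as a sum of traces, and finally coarsen to the prescribed $\hat m$ scales.

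First, fix an auxiliary smoothing parameter $a \geq 1$ (to be tuned later) and set $r_1 = a^{-1}\kappa^{S/|\chi_\mu|}$ and $r_2 = a\,\kappa^{h_\mu/(2\ell|\chi_\mu|)}$. Proposition~\ref{EntropyBetweenScalesIncrease} applied with these $r_1,r_2$, combined with the standard estimate $H(s_{r_2,a}) \geq \ell \log r_2 - O_{d,a}(1)$ and the assumption that $h_\mu/|\chi_\mu|$ is large compared to $d$ and $\ell$, yields
\[
H_a(q_{\tau_\kappa}; r_1 \,|\, r_2) \gg \frac{h_\mu}{|\chi_\mu|} \log \kappa^{-1}.
\]

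Second, introduce the dyadic sub-scales $\rho_j = 2^j r_1$ for $0 \leq j < N$, where $N = \lceil \log_2(r_2/r_1) \rceil \asymp (S/|\chi_\mu|)\log \kappa^{-1}$. Applying \eqref{Outline:EntropyTraceBound} at each $\rho_j$ and summing, the telescoping identity $\sum_j H_a(q_{\tau_\kappa};\rho_j|2\rho_j) = H_a(q_{\tau_\kappa};r_1|r_2)$ gives
\[
\sum_{j=0}^{N-1}\mathrm{tr}(q_{\tau_\kappa};\,2a\rho_j) \gg_d a^{-2}\left(\frac{h_\mu}{|\chi_\mu|}\log \kappa^{-1} - N\cdot O_d(e^{-a^2/4}) - O_{d,a}(Nr_2)\right).
\]
The term $O_{d,a}(Nr_2)$ is negligible because $r_2 = \kappa^{\Omega(1)}$; and choosing $a^2 = C_0 \max\{1,\log(S/h_\mu)\}$ with $C_0 = C_0(d)$ sufficiently large makes $Ne^{-a^2/4}$ a small fraction of the main entropy. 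The dyadic trace sum is therefore
\[
\sum_{j=0}^{N-1}\mathrm{tr}(q_{\tau_\kappa};\,2a\rho_j) \gg_d \frac{h_\mu}{|\chi_\mu|}\max\{1,\log(S/h_\mu)\}^{-1}\log \kappa^{-1}.
\]

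Finally, partition $\{0,1,\dots,N-1\}$ into consecutive blocks of log-width $3\log_2\kappa^{-1}$ (i.e.\ of multiplicative width $\kappa^{-3}$), keep only every second block so that retained blocks are separated by an empty block, and from each retained block pick the dyadic scale $s_i = 2a\rho_{j_i}$ at which $\mathrm{tr}(q_{\tau_\kappa};\,2a\rho_j)$ is maximal across the block. By the choice of $r_1,r_2$ each $s_i$ lies in the prescribed range $(\kappa^{S/|\chi_\mu|},\kappa^{h_\mu/(2\ell|\chi_\mu|)})$, the separation $s_{i+1}\geq \kappa^{-3}s_i$ follows from the empty-block spacing, and the number of retained blocks exceeds $\hat m = \lfloor S/(100|\chi_\mu|)\rfloor$ for small $\kappa$, because the total number of blocks is $\asymp S/|\chi_\mu|$. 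A trivial averaging in each block (max $\geq$ average) shows that the sum over the selected $s_i$ captures a fraction $\gtrsim 1/\log \kappa^{-1}$ of the full dyadic trace sum, which cancels the remaining $\log\kappa^{-1}$ and delivers the claim.

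The main obstacle is the tuning of $a$ in Step~2: with $N\asymp (S/|\chi_\mu|)\log\kappa^{-1}$ dyadic sub-scales, the cumulative error $N e^{-a^2/4}$ can easily dominate the main term $(h_\mu/|\chi_\mu|)\log\kappa^{-1}$, and the ratio $N/\bigl((h_\mu/|\chi_\mu|)\log\kappa^{-1}\bigr)\asymp S/h_\mu$ is exactly what forces the choice $a^2 \asymp \max\{1,\log(S/h_\mu)\}$ and produces the $\max\{1,\log(S/h_\mu)\}^{-1}$ penalty in the final bound. The pigeonhole in the last step is routine but must be set up carefully so that the retained $s_i$ remain strictly inside the prescribed range and meet the $\kappa^{-3}$ separation uniformly across $i$.
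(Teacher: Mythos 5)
Your proposal is correct and follows essentially the same route as the paper: the paper's proof simply applies \cite{KittleKoglerEntropy}*{Proposition 1.5} as a black box (with a scale-ratio parameter $A$ chosen in $[\kappa^{-3},\kappa^{-50}]$) to the entropy gap from Proposition~\ref{EntropyBetweenScalesIncrease}, and that cited proposition packages exactly your dyadic telescoping of $H_a(q_{\tau_\kappa};r_1|r_2)$, the per-scale entropy-to-trace inequality, and the block selection, while your choice $a^2\asymp\max\{1,\log(S/h_\mu)\}$ matches the paper's $a=2\sqrt{\log(4CS/(c\log 2\cdot h_\mu))}$. Two small bookkeeping points to tighten: with $r_2=a\,\kappa^{h_\mu/(2\ell|\chi_\mu|)}$ your selected scales $s_i=2a\rho_j$ near the top can exceed the prescribed upper bound $\kappa^{h_\mu/(2\ell|\chi_\mu|)}$ by the constant factor $2a^2$, so shrink $r_2$ by a constant (the paper takes $r_2=\tfrac14 a^{-1}\kappa^{h_\mu/(2\ell|\chi_\mu|)}$); and in the final step you should choose the parity of retained blocks carrying at least half the dyadic trace mass and then keep the top $\widehat{m}$ block maxima (top-$k$ averaging), which costs only an absolute constant since the number of retained blocks is $\asymp S/|\chi_\mu|\asymp\widehat{m}$.
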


\begin{proof}
    This follows from \cite{KittleKoglerEntropy}*{Proposition 1.5}. Indeed, we set $r_1 = a^{-1}\kappa^{\frac{S}{|\chi_{\mu}|}}$ and $r_2 = \frac{1}{4}a^{-1}\kappa^{\frac{\RWEntropy}{2\ell|\chi_{\mu}|}}$. Then by \cite{KittleKoglerEntropy}*{Lemma 4.6} provided that $ar_2$ is sufficiently small it holds that $H_a(s_{a,r_2}) = -\frac{h_{\mu}}{2|\chi_{\mu}|}\log \kappa^{-1} + o_{d,a}(\log \kappa^{-1}).$ Therefore it follows from Proposition~\ref{EntropyBetweenScalesIncrease} that $$H_a(q_{\tau_{\kappa}}; r_1'| r_2') \gg_d \frac{h_{\mu}}{|\chi_{\mu}|}\log \kappa^{-1} + o_{\mu,d,S,a}(\log \kappa^{-1})$$ for $r_1' \in [r_1,2r_1]$ and $r_2' \in [r_2/2,2r_2]$.

    Set $A = \kappa^{\frac{\RWEntropy}{4\widehat{m} \ell |\chi_{\mu}|}-\frac{S}{2\widehat{m}|\chi_{\mu}|}}.$ Note that, provided $\RWEntropy/|\chi_{\mu}|$ is sufficiently large, we have $\kappa^{-3} \leq A \leq \kappa^{-50}$. We now apply \cite{KittleKoglerEntropy}*{Proposition 1.5} with $A$ to deduce that there exist $s_1, s_2, \ldots , s_{\widehat{m}} > 0$ with $s_i \in (\kappa^{\frac{S}{|\chi_{\mu}|}}, \kappa^{\frac{\RWEntropy}{2\ell|\chi_{\mu}|}})$ such that for constants $c, C$ only depending on $d$ we have that $$\sum_{i = 1}^{\widehat{m}}  \tr(q_{\tau_{\kappa}}; s_i) \geq \frac{c\frac{h_{\mu}}{|\chi_{\mu}|}\log \kappa^{-1} - CN e^{-\frac{a^2}{4}} + o_{\mu,d,S,a}(\log \kappa^{-1}) }{a^2 \log \kappa^{-1}}$$ for $$ N = \left\lceil \left( \frac{S}{|\chi_{\mu}|} - \frac{h_{\mu}}{2\ell |\chi_{\mu}|} \right) \frac{\log \kappa^{-1}}{ \log 2}  \right\rceil - 1. $$  We take our value of $a$ to be $$a = 2\sqrt{  \log\left(  \frac{4 C}{c \log 2} \frac{S}{\RWEntropy}  \right) }.$$ Then $$CNe^{-\frac{a^2}{4}} \leq c\frac{\RWEntropy}{4 |\chi_{\mu}|}  \log \kappa^{-1}$$ and the claim follows readily.
\end{proof}

    \section{Decomposition of Stopped Random Walk}

    \label{section:decomposition}

In this section Theorem~\ref{MainResult} is proved. We construct samples from $\nu$ in a suitable way in order to bound the order $k$ detail of $\nu$. Given a probability measure $\mu$ on $G = \mathrm{Sim}(\R^d)$ we denote by $\gamma_1, \gamma_2, \ldots$ independent $\mu$-distributed random variables and write $q_n = \gamma_1 \cdots \gamma_n$. Recall that if $x$ is distributed like $\nu$ and $\tau$ is a stopping time, then by Lemma 2.24 from \cite{Kittle2023} the random variable $q_{\tau}x$ is distributed like $\nu$.

As discussed in the outline of proofs, one uses Proposition~\ref{LotsOfTrace} to make a decomposition 
\begin{equation}\label{Decomposition}
    q_{\tau_{\kappa}}x = g_1\exp(U_1)g_2\exp(U_2)\cdots g_n \exp(U_n)x
\end{equation}
with a suitable $\kappa>0$ and integer $n \geq 1$ that satisfies for $1 \leq i \leq n$, 
\begin{equation}\label{TraceInq}
    |U_i| \leq \rho(g_1\cdots g_i)^{-1}r \quad \text{ and } \quad \sum_{i = 1}^n \tr(\rho(g_1\cdots g_i)U_i) \geq Cr^{2}
\end{equation}
for a sufficiently large constant $C$ and a given scale $r > 0$. The definition of $\mathrm{tr}(q_{\tau_{\kappa}}, s_i)$ requires us to work with a $\sigma$-algebra $\mathscr{A}$ and with the conditional trace in \eqref{TraceInq}. As stated in \eqref{Outline:DecompositionFinalGoal}, we need to have \eqref{TraceInq} at $O(\log \log r^{-1})$ many suitable times $\kappa_i$. 

Indeed, in order to deduce \eqref{TraceInq} from Proposition~\ref{LotsOfTrace} we need to combine all the information at the scales $s_1, \ldots , s_{\hat{m}}$. One also needs to ensure that the assumptions from the Taylor-approximation result Proposition~\ref{MainTaylorBound} are satisfied for each scale $s_i$ and that we can apply our $(c,T)$-well-mixing and $(\alpha_0,\theta,A)$-non-degeneracy conditions to deduce that $$\Var(\zeta_i(U_i)) \geq c_1\mathrm{tr}(\rho(g_1\cdots g_i)U_i)I$$ for $c_1$ a constant depending on $d,c,T,\alpha_0,\theta$ and $A$. We will achieve the latter by ensuring that each $g_i$ is a product of sufficiently many $\gamma_i$ so that $g_ix$ is in distribution sufficiently close to $\nu$.

To combine the trace bounds at the various scales while ensuring that the above conditions are satisfied, a theory of decompositions of the form \eqref{Decomposition} will be developed. We call decompositions \eqref{Decomposition} satisfying suitable properties \textit{proper decompositions}. It is important for our purposes to track the amount of variance we can gain from a given proper decomposition, which is a quantity we will call the variance sum and  denote by $V(\mu,n,K,\kappa,A;r)$ (see definition~\ref{VDef} for the various parameters).

In section~\ref{section:ExistenceProper} we will show that there exist proper decompositions that allow us to compare the variance sum $V$ and $\mathrm{tr}$. Proper decompositions can be concatenated in such a way that the variance sum is additive, as is shown in section~\ref{section:Concatenation}. We establish how to convert an estimate on the variance sum $V$ into an estimate for detail in section~\ref{section:FromVariancetoDetail}. The proof of Theorem~\ref{MainResult} culminates in section~\ref{section:ProofMain} combining the previous results. Finally, we establish Theorem~\ref{HighdimMainTheorem} in section~\ref{subsection:HighDimMain}.

\subsection{Proper Decompositions}

\begin{definition}\label{ProperDecomp}
    Let $\mu$ be a probability measure on $G$, let $n,K \in \Z_{\geq 0}$ and let $A, r > 0$ and $r \in (0,1)$. Then a \textbf{proper decomposition} of $(\mu,n,K,A)$ at scale $r$ consists of the following data
    \begin{enumerate}[label = (\roman*)]
        \item $f = (f_i)_{i = 1}^n$ and $h = (h_i)_{i = 1}^n$ random variables taking values in $G$,
        \item $U = (U_i)_{i = 1}^n$ random variables taking values in $\mathfrak{g}$,
        \item $\mathscr{A}_0 \subset \mathscr{A}_1 \subset \ldots \subset \mathscr{A}_n$ a nested sequence of $\sigma$-algebras,
        \item $\gamma = (\gamma_i)_{i = 1}^{\infty}$ be i.i.d. samples from $\mu$ and let $\mathscr{F} = (\mathscr{F}_i)_{i = 1}^{\infty}$ be a filtration for $\gamma$ with $\gamma_{i + 1}$ being independent from $\mathscr{F}_i$ for $i \geq 1$,
        \item stopping times $S = (S_i)_{i = 1}^n$ and $T = (T_i)_{i = 1}^n$ for the filtration $\mathscr{F}$,
        \item $m = (m_i)_{i = 1}^n$ non-negative real numbers,
    \end{enumerate}

    satisfying the following properties:
        
    \begin{enumerate}[label=\textbf{A\arabic*}]
        \item The stopping times satisfy $$S_1 \leq T_1 \leq  S_2 \leq T_2 \leq \ldots \leq S_n \leq T_n,$$ $S_1 \geq K$ as well as $S_i \geq T_{i-1} +K$ and $T_i \geq S_i + K$ for $i \in [n]$, \label{item:stop}
        \item We have $f_1 \exp(U_1) = \gamma_1 \dots \gamma_{S_1}$ and for $2 \leq i \leq n$ we have $f_i \exp(U_i) = \gamma_{T_{i-1} + 1} \cdots \gamma_{S_i}$. Furthermore for each $i$ we have that $f_i$ is $\mathscr{A}_i$-measurable, \label{item:f_idef}
        \item $h_i = \gamma_{S_i + 1} \cdots \gamma_{T_i}$ and $h_i$ is $\mathscr{A}_i$-measurable, \label{item:h_idef}
        \item $\rho(f_i) < 1$ for all $1 \leq i \leq n$, \label{item:rhof_i}
        \item Whenever $|b(h_i)| > A$, we have $U_i = 0$, \label{item:bh_i}
        \item For each $1 \leq i \leq n$ we have $$|U_i| \leq \rho(f_1 h_1 f_2 h_2 \cdots h_{i-1}f_{i})^{-1} r,$$ \label{item:U_ibound}
        \item For each $1 \leq i \leq n$,  we have that $U_i$ is conditionally independent of $\mathscr{A}_n$ given $\mathscr{A}_i$, \label{item:U_1ind1}
        \item The $U_i$ are conditionally independent given $\mathscr{A}_n$, \label{item:U_iind2}
        \item For each $1 \leq i \leq n$, it holds  $$\E\left[   \frac{\var(\rho(f_i) U(f_i) U_i b(h_i)|\mathscr{A}_i)}{\rho(f_1h_1 f_2h_2 \cdots f_{i-1}h_{i-1})^{-2} r^2} \,|\, \mathscr{A}_{i-1}  \right] \geq m_i I.$$ \label{item:m_i}
    \end{enumerate}
\end{definition}

Note that in \ref{item:m_i} by $\var$ we mean the covariance matrix and we are using the ordering given by positive semi-definiteness \eqref{MatrixPartialOrder} and we denote, as in section~\ref{subsection:DerivativeBounds}, by $U_ib(h_i) = \psi_{b(h_i)}(U_i)$.

A proper decomposition as above gives us 
\begin{equation}
    \gamma_1\cdots \gamma_{T_n} = f_1\exp(U_1)h_1f_2 \exp(U_2)h_2 \cdots h_{n-1}f_n\exp(U_n)h_n
\end{equation}

We briefly comment on the various properties of proper decompositions. We use the parameter $K$ and \ref{item:stop} to ensure that each of the $f_ix$ and $h_ix$ for $x \in \R^d$ are close in distribution to $\nu$. Properties \ref{item:rhof_i}, \ref{item:bh_i} and \ref{item:U_ibound} are needed in order to apply Proposition~\ref{MainTaylorBound}. We require \ref{item:U_1ind1} so that we have $\Var(U_i|\mathscr{A}_n) = \mathrm{Var}(U_i|\mathscr{A}_i)$ and in particular the latter is a $\mathscr{A}_i$-measurable random variable. \ref{item:U_iind2} is needed so that $[U_1|\mathscr{A}_n], \ldots ,[U_n|\mathscr{A}_n]$ are independent random variables and therefore we can apply Proposition~\ref{kDetailBerryEssen}.

One works with two sequences of random variables $f$ and $h$ instead of one in order to be able to concatenate proper decompositions as in Proposition~\ref{VarSumAdds}. Indeed, if we had proper decompositions of the form $$\gamma_1\cdots \gamma_{T_n} = g_1\exp(U_1)g_2 \exp(U_2)g_3 \cdots g_{n}\exp(U_n)g_{n + 1}$$ we could show a variant of \eqref{BasicConcat} and all other results on proper decompositions. However we could not prove anything like Proposition~\ref{VarSumAdds}, whose flexible choice of the parameter $M$ is necessary to apply Proposition~\ref{LotsOfTrace}. 

Next, we define the $V$ function mentioned above. The additional parameter $\kappa > 0$ is introduced in order to be able to concatenate the decompositions in a suitable way (Proposition~\ref{VarSumAdds}).

\begin{definition}\label{VDef}
    Given $(\mu,n,K,A)$ and $\kappa,r > 0$ we denote by $$V(\mu,n,K, \kappa ,A; r)$$ the \textbf{variance sum} defined as the supremum for $k = 0,1,2, \ldots , n$ of all possible values of $$\sum_{i = 1}^k m_i$$ for a proper decomposition of $(\mu,k,K,A)$ at scale $r$ with $\rho(f_1h_1 \cdots f_k h_k) \geq \kappa$ almost surely. 
\end{definition}

It is clear that for any $\kappa' > 0$ with $\kappa' \leq \kappa$ we have 
\begin{equation}\label{TrivialVarianceBound}
    V(\mu,n,K, \kappa' ,A; r) \geq V(\mu,n,K, \kappa ,A; r).
\end{equation}

\subsection{Existence of Proper Decompositions}\label{section:ExistenceProper}

We show that for a suitable dependence of the involved parameters, we can construct proper decompositions comparing the variance sum and the trace.

\begin{proposition} \label{InitialDecompositionV}
    Let $d \in \Z_{\geq 1}$ and $c, T, \alpha_0, \theta, A, R > 0$ with $c, \alpha_0 \in (0,1)$ and $T \geq 1$. Then there exists $c_1 = c_1(d,R,c, T,\alpha_0,\theta, A) > 0$ such that the following is true. Let $\mu$ be a contracting on average, $(c, T)$-well-mixing and $(\alpha_0, \theta, A)$-non-degenerate probability measure on $G$ such that $\rho(g) \in [R^{-1}, R]$ for all $g \in \mathrm{supp}(\mu)$. 
    
    Let $\kappa, s > 0$ with $\kappa$ and $s$ sufficiently small (in terms of $\mu$ and $R$). Let $K$ be sufficiently large in terms of $\mu$, $R$, and $T$. Then $$V(\mu,1,K,R^{-3K} \kappa, A ; R^{-K} \kappa s) \geq c_1 \mathrm{tr}(q_{\tau_{\kappa}}; s). $$
\end{proposition}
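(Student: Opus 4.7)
I will construct an explicit single-step ($n=1$) proper decomposition of $(\mu, 1, K, A)$ at scale $r = R^{-K}\kappa s$ whose variance parameter $m_1$ satisfies $m_1 \geq c_1 t$, with $t := \mathrm{tr}(q_{\tau_\kappa}; s)$. By the supremum in the definition of the trace, for any $\varepsilon > 0$ I select a $\sigma$-algebra $\mathscr{A}$ and an $\mathscr{A}$-measurable $G$-valued random variable $h$ with $|\log(h^{-1}q_{\tau_\kappa})| \leq s$ and $\mathbb{E}[\mathrm{tr}_h(q_{\tau_\kappa}|\mathscr{A})] \geq (t-\varepsilon)s^2$; write $U := \log(h^{-1}q_{\tau_\kappa})$ so that $q_{\tau_\kappa} = h\exp(U)$.

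I take $S_1 = \tau_\kappa$ and $T_1 = \tau_\kappa + K$, so that $h_1 = \gamma_{\tau_\kappa + 1}\cdots\gamma_{\tau_\kappa + K}$, and let $\mathscr{A}_1$ be generated by $\mathscr{A}$, by $h_1$, and by the indicator of the good event $G := \{|U| \leq \rho(h)^{-1} r\} \cap \{|b(h_1)| \leq A\}$. Set $f_1 := h\mathbf{1}_G + q_{\tau_\kappa}\mathbf{1}_{G^c}$ and $U_1 := U\mathbf{1}_G$, adjoining $q_{\tau_\kappa}\mathbf{1}_{G^c}$ to $\mathscr{A}_1$ so that $f_1$ is measurable. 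Most of the structural conditions \ref{item:stop}--\ref{item:U_iind2} are then routine: $\tau_\kappa \geq K$ for $\kappa$ sufficiently small by Lemma~\ref{rhotauLDP}; $\rho(f_1) \leq \kappa e^s < 1$; $|U_1| \leq \rho(f_1)^{-1} r$ is enforced by $G$; the constraint $\rho(f_1 h_1) \geq R^{-3K}\kappa$ follows from $\rho(h) \geq R^{-1}\kappa e^{-s}$ and $\rho(h_1) \geq R^{-K}$; and the conditional independences for $n = 1$ reduce to independence of $h_1$ from $(\mathscr{A}, U)$.

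The core estimate is the matrix inequality \ref{item:m_i}. By Lemma~\ref{ZetaVariance}, the conditional covariance factorises as $\rho(f_1)^2\, U(f_1)\,\psi_{b(h_1)}\,\mathrm{Var}(U_1 \mid \mathscr{A}_1)\,\psi_{b(h_1)}^T\, U(f_1)^T$. Taking expectation conditional on $\mathscr{A}_0$, two averaging effects combine: (i) by $(c,T)$-well-mixing the rotation $U(f_1) \approx U(q_{\tau_\kappa})$ mixes so that $\mathbb{E}[U(f_1) M U(f_1)^T] \gtrsim (c/d)\, \mathrm{tr}(M)\, I$ for positive semi-definite $M$ (to make this rigorous one averages over an additional uniform random time $F \in [0, T]$ built into the decomposition by slightly randomising $S_1$), and (ii) by $(\alpha_0, \theta, A)$-non-degeneracy together with Proposition~\ref{FirstDerivativeBound}, $\mathrm{tr}(\psi_{b(h_1)} V \psi_{b(h_1)}^T) \geq \delta\, \mathrm{tr}(V)$ on a set of $b(h_1)$ of probability at least $1 - \alpha_0$. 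Combining these, using $\rho(f_1) \gtrsim \kappa$, and dividing by $r^2 = R^{-2K}\kappa^2 s^2$ yields $m_1 \gtrsim (c\delta(1-\alpha_0)/d)\cdot R^{2K}\,\mathbb{E}[\mathrm{tr}(\mathrm{Var}(U_1|\mathscr{A}_1))]/s^2$.

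\textbf{Main obstacle.} The delicate trade-off is that the size constraint \ref{item:U_ibound} forces $|U_1| \lesssim R^{-K} s$, potentially much smaller than the natural trace scale $s$, so truncating $U$ to the good event $G$ may reduce $\mathrm{tr}(\mathrm{Var}(U_1|\mathscr{A}_1))$ by as much as a factor of $R^{2K}$. This loss is precisely compensated by the factor $R^{2K}$ arising from $r^{-2}$, and since the trace parameter $t$ is uniformly bounded by $1$ one has enough slack in both regimes: careful bookkeeping yields $m_1 \geq c_1 t$ with $c_1$ depending only on $d, R, c, T, \alpha_0, \theta$, and $A$. A secondary subtlety is ensuring the well-mixing applies directly to $U(f_1)$; this is where the randomisation of $S_1$ via an auxiliary uniform time in $[0, T]$ enters, and it is accommodated by a further enlargement of $\mathscr{A}_1$ that does not destroy the independences needed in \ref{item:U_1ind1}.
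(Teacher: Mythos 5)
Your construction takes $f_1\exp(U_1)$ to be (essentially) $q_{\tau_\kappa}$ itself, and this creates two gaps that I do not see how to repair. First, the truncation step: since $\rho(f_1)\approx\kappa$, condition \ref{item:U_ibound} at scale $r=R^{-K}\kappa s$ forces $|U_1|\lesssim R^{-K}s$, while the trace data only gives $|U|\leq s$. Your claim that restricting $U$ to the event $\{|U|\leq\rho(h)^{-1}r\}$ "may reduce the variance by at most a factor $R^{2K}$", which is then cancelled by the $r^{-2}$ normalisation, is not a valid estimate: truncating a random variable of size $\leq s$ to a ball of radius $R^{-K}s$ can annihilate its conditional variance entirely (e.g.\ $U=\pm sv$ with probability $1/2$ each has $\tr(\Var(U|\mathscr{A}))=s^2$ but $U\mathbf{1}_{\{|U|\leq R^{-K}s\}}=0$), and $c_1$ must be uniform in $K$, which is allowed to be arbitrarily large. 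Second, the mixing step: $(c,T)$-well-mixing bounds $\E[|x\cdot U(q_{\tau_\kappa+F})y|^2]$ only for vectors $y$ that are deterministic (or independent of the mixing rotation), whereas in your construction the rotation $U(f_1)\approx U(q_{\tau_\kappa})$ and the principal component $P_1$ of $\Var(U_1b(h_1)\mid\mathscr{A})$ are built from the same block of the walk and are correlated, so randomising $S_1$ by an auxiliary $F\in[0,T]$ does not make the hypothesis applicable.

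The paper's proof avoids both problems with one structural idea that your proposal is missing: it does not use $q_{\tau_\kappa}$ as the first block, but prepends an independent warm-up segment. Concretely, it sets $\underline{S}=\inf\{n:\rho(q_n)\leq R^{-K-1}\}+F$ with $F$ uniform on $[0,T]$, writes $\underline{f}=\gamma_1\cdots\gamma_{\underline{S}}$, and only then runs a fresh stopped walk $g=\gamma_{\underline{S}+1}\cdots\gamma_{S_1}$, which by the strong Markov property has the law of $q_{\tau_\kappa}$ and carries the trace data $g=\overline{f}\exp(V)$, $|V|\leq s$. Taking $f_1=\underline{f}\,\overline{f}$ gives $\rho(f_1)\asymp R^{-K}\kappa$, so $\rho(f_1)^{-1}r\asymp s$ and no truncation of $V$ is needed (only the harmless $|b(h_1)|\leq A$ modification, handled by non-degeneracy and Proposition~\ref{FirstDerivativeBound}); moreover $U(\underline{f})=U(q_{\tau_{R^{-K-1}}+F})$ is independent of $(g,h_1,\mathscr{A})$, which is exactly what legitimises the application of $(c,T)$-well-mixing to the direction $U(f')P_1$. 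This warm-up block is also the reason the statement involves the scale $R^{-K}\kappa s$ and the floor $R^{-3K}\kappa$; without it the claimed inequality does not follow.
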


\begin{proof}
    We construct a proper decomposition with $n = 1$. Let $F$ be uniform on $[0, T] \cap \Z$ and independent of $\gamma$. Let $\underline{S}$ be defined as $$\underline{S} = \inf\{ n \,:\, \rho(q_n) \leq R^{-K-1}  \} + F$$ and let $$S_1 := \inf\{ n \geq \underline{S} \,:\, \rho(\gamma_{\underline{S} + 1}\cdots \gamma_n) \leq \kappa \}.$$ Denote $$\underline{f} = \gamma_1 \cdots \gamma_{\underline{S}} \quad\quad \text{ and } \quad\quad g = \gamma_{\underline{S} + 1} \gamma_{\underline{S} + 2} \cdots \gamma_{S_1}.$$ 

    By the definition of $\mathrm{tr}(q_{\tau_{\kappa}},s)$ there is some $\sigma$-algebra $\mathscr{A}$, some random variable $V$ taking values in $\mathfrak{g}$, some $\mathscr{A}$-measurable random variable $\overline{f}$ taking values in $G$ such that $g = \overline{f}\exp(V)$ with $|V| \leq s$ and 
    \begin{equation}\label{TraceDef}
        \E[\tr(V|\mathscr{A})] \geq \frac{1}{2} s^2 \mathrm{tr}(q_{\tau_{\kappa}},s).
    \end{equation}
    We define $T_1 = S_1 + K$ and set $$h_1 = \gamma_{S_1 + 1} \gamma_{S_1 + 2} \cdots \gamma_{T_1}.$$
    
    Denote
    \begin{equation*}
        U_1 = \begin{cases}
        V & \text{if } |b(h_1)| \leq A, \\ 
        0 & \text{otherwise}
        \end{cases} \quad \text{ and } \quad 
        f_1 = \begin{cases}
        \underline{f}\overline{f} & \text{if } |b(h_1)| \leq A, \\ 
        \underline{f}g & \text{otherwise.}
        \end{cases}
    \end{equation*}
    Furthermore we set $\mathscr{A}_1 = \sigma(\underline{f},f_1, h_1, \mathscr{A})$.

    We have
    \begin{align*}
        R^{-K-3} R^{-T} \kappa \leq \rho(\underline{f}g) \leq R^{-K-1} R^T \kappa.
    \end{align*} In particular, we note that $|U_1| \leq s $ and so providing $\kappa$ and $s$ are sufficiently small in terms of $R$, we have $R^{-K-4}R^{-T}  \kappa \leq \rho( f_1) \leq R^{T-K}\kappa < 1$ for $K$ sufficiently large in terms of $T$. This means that $|U_1| \leq s \leq \rho(f_1)^{-1} R^{T-K}\kappa s$.
    
    Now let $x \in \R^d$ be a unit vector. We wish to show that

    $$\mathbb{E}\left[\var(x \cdot \rho(f_1) U(f_1) U_1 b(h_1)| \mathscr{A}_1) \right] \geq c_1 \mathrm{tr}(q_{\tau_{\kappa}}; s) R^{-2K} \kappa^2 s^{2}.$$
    
    Let $f' = \underline{f}^{-1} f_1$ and let $P_1, \dots , P_d$ be orthogonal eigenvectors of the covariance matrix of $(U_1 b(h_1)|\mathscr{A})$ with eigenvalues $\lambda_1 \geq \dots \geq \lambda_d$. We have 
    \begin{align}
        \MoveEqLeft \var(x \cdot \rho(f_1) U(f_1) U_1 b(h_1)| \mathscr{A}_1) \nonumber \\  \geq &  R^{-2K-8} R^{-2T} \kappa^2  \var(x \cdot U(\underline{f})  U(f')  U_1 b(h_1)| \mathscr{A}_1 ) \nonumber \\
         = &R^{-2K-8} R^{-2T} \kappa^2 \sum_{i=1}^d \left| x \cdot U(\underline{f}) U(f') P_i \right|^2 \lambda_i \nonumber\\
         \geq &R^{-2K-8} R^{-2T}\kappa^2  \left| x \cdot U(\underline{f}) U(f') P_1 \right|^2 \tr(U_1 b(h_1) | \mathscr{A}_1) / d. \label{CompExp}
    \end{align}

    By Proposition~\ref{FirstDerivativeBound} we know that when $b(h_1) \in E_{\theta}(V)$ and $|b(h_1)| \leq A$ we have $$\tr(U_1 b(h_1) | \mathscr{A}_1) \geq \delta \cdot \tr (U_1| \mathscr{A}_1).$$ By our $(\alpha_0, \theta, A)$-non-degeneracy condition and since $\mu^{*n}*\delta_{x}$ converges to $\nu$ exponentially fast (see for example \cite{KittleKoglerTail}*{Lemma 2.2}) we know that providing $K$ is sufficiently large this happens, conditional on $\mathscr{A}$, with probability at least $\frac{1}{2}(1 - \alpha)$. Therefore by \eqref{TraceDef} $$\mathbb{E}[\tr(U_1 b(h_1) | \mathscr{A}_1)] \geq \frac{1}{4}(1 - \alpha) \delta \mathrm{tr}(q_{\tau_{\kappa}}; s) s^2 .$$

    By our $(c, T)$-well-mixing condition we have that providing $K$ is sufficiently large in terms of $\mu$, $$\mathbb{E}\left[ \left| x \cdot U(\underline{f}) U(f') P_1 \right|^2 | \sigma(h_1, \mathscr{A}) \right] \geq c.$$ Clearly $\var(U_1 b(h_1) | \mathscr{A}_1)$ is $\sigma(h_1, \mathscr{A})$-measurable. Therefore, by \eqref{CompExp} and conditioning by $\sigma(h_1, \mathscr{A})$,
    \begin{align*}
         \MoveEqLeft \mathbb{E}\left[\var(x \cdot \rho(f_1) U(f_1) U_1 b(h_1)| \mathscr{A}_1)\right] \\ 
         \geq &  R^{-2K-8} R^{-2T}\kappa^2 d^{-1}  \mathbb{E}\left[\left| x \cdot U(\underline{f}) U(f') P_1 \right|^2 \tr (U_1 b(h_1) | \mathscr{A}_1)  \right] \\ 
         \geq &  R^{-2K-8} R^{-2T}\kappa^2 d^{-1} \mathbb{E}\left[ \mathbb{E}\left[ \left| x \cdot U(\underline{f}) U(f') P_1 \right|^2 \tr (U_1 b(h_1) | \mathscr{A}_1)  \, \Big| \,\sigma(h_1,\mathscr{A}) \right]\right] \\
         \\ 
         \geq & R^{-2K-8} R^{-2T}\kappa^2 d^{-1} \mathbb{E}\left[ \mathbb{E}\left[ \left| x \cdot U(\underline{f}) U(f') P_1 \right|^2  \, \Big| \,\sigma(h_1,\mathscr{A}) \right] \tr (U_1 b(h_1) | \mathscr{A}_1) \right] \\
         \geq & R^{-2K-8} R^{-2T}\kappa^2 d^{-1}c \cdot\mathbb{E}\left[ \tr (U_1 b(h_1) | \mathscr{A}_1) \right] \\
        \geq &  R^{-2K-8} R^{-2T} d^{-1}c \cdot \frac{1}{4}(1 - \alpha) \delta \mathrm{tr}(q_{\tau_{\kappa}}; s)  \kappa^2 s^2 \\
        = &  c_1 \mathrm{tr}(q_{\tau_{\kappa}}; s) R^{-2K} \kappa^2 s^2
    \end{align*}
    where $c_1 = R^{-8} R^{-2T} d^{-1} (1 - \alpha) \delta c / 4$. Since this is true for any unit vector $x \in \R^d$ we have $$\mathbb{E}\left[\frac{\var(\rho(f_1) U(f_1) U_1 b(h_1)| \mathscr{A}_1)}{R^{-2K} \kappa^2 s^2}\right] \geq c_1 \mathrm{tr}(q_{\tau_{\kappa}}; s) I$$ as required. Finally note that
    \begin{align*}
        \rho(f_1 h_1) \geq R^{-1} \rho(\underline{f} g h_1) \geq R^{-1} R^{-K-3}R^{-T} \kappa \cdot R^{-K} = \kappa  R^{-2K-4-T} \geq R^{-3K}\kappa 
    \end{align*}
    providing $K$ is sufficiently large in terms of $T$ and $R$.
\end{proof}

\subsection{Concatenating Decompositions}\label{section:Concatenation}

We note that it is straightforward to show that for any measure $\mu$ and any admissible choice of coefficients, the variance sum is additive
\begin{align}
    \MoveEqLeft V(\mu, n_1 + n_2, K, \kappa_1\kappa_2,A; r) \nonumber \\&\geq V(\mu, n_1, K, \kappa_1,A; r) + V(\mu, n_2, K, \kappa_2,A; \kappa_1^{-1}r). \label{BasicConcat}
\end{align} 

However, in order to use Proposition~\ref{LotsOfTrace} it is necessary to work with different scales $r_1$ and $r_2$ and therefore we show the following proposition.

\begin{proposition} \label{VarSumAdds}
    Let $\mu$ be a probability measure on $G$. Let $R > 1$ be such that $\rho(g) \in [R^{-1}, R]$ for every $g \in \mathrm{supp}(\mu)$. Let $n_1, n_2, K \in \mathbb{Z}_{\geq 0}$ with $n_2, K > 0$ and let $\kappa_1, \kappa_2, r \in (0, 1)$. Let $A > 0$ and let $M \geq R$. Then
    \begin{align*}
        \MoveEqLeft V(\mu,  n_1 + n_2, K, R^{-1} M^{-1} \kappa_1 \kappa_2, A; r) \\&\geq V(\mu, n_1, K, \kappa_1, A; r) + V(\mu,  n_2, K, \kappa_2, A; M \kappa_1^{-1} r).
    \end{align*} 
\end{proposition}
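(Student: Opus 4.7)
The plan is to concatenate near-optimal proper decompositions $D^{(1)}$ and $D^{(2)}$ attaining, up to arbitrarily small $\varepsilon > 0$, the two variance sums on the right-hand side. The obstruction is a scale mismatch: $D^{(1)}$ only enforces $\rho(f^{(1)}_1 h^{(1)}_1 \cdots h^{(1)}_{n_1}) \geq \kappa_1$, whereas the axioms \ref{item:U_ibound} and \ref{item:m_i} of $D^{(2)}$ are posed at scale $M \kappa_1^{-1} r$; for a clean splice at scale $r$ the prefix $\rho$ must actually lie in $[R^{-1} M^{-1} \kappa_1, M^{-1} \kappa_1]$, and driving it into this window is exactly what the factor $R^{-1} M^{-1}$ in the combined $\kappa$ budgets for.

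I would enforce the window by extending $D^{(1)}$ with additional walk steps. Concretely, define the stopping time
\[
\tau := \inf\{ n \geq T^{(1)}_{n_1} \,:\, \rho(\gamma_1 \cdots \gamma_n) \leq M^{-1} \kappa_1 \}
\]
and set $g' := \gamma_{T^{(1)}_{n_1} + 1} \cdots \gamma_\tau$; the step-size bound $\rho(\gamma) \in [R^{-1}, R]$ forces $\bar\rho := \rho(\gamma_1 \cdots \gamma_\tau) \in [R^{-1} M^{-1} \kappa_1, M^{-1} \kappa_1]$. Then absorb $g'$ into the modified first decomposition by declaring $\tilde h^{(1)}_{n_1} := h^{(1)}_{n_1} g'$, $\tilde T^{(1)}_{n_1} := \tau$ and $\tilde{\mathscr{A}}^{(1)}_{n_1} := \mathscr{A}^{(1)}_{n_1} \vee \sigma(g')$, and redefine $\tilde U^{(1)}_{n_1} := U^{(1)}_{n_1} \cdot \mathbf{1}_{|b(\tilde h^{(1)}_{n_1})| \leq A}$ to repair \ref{item:bh_i}.

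By the strong Markov property $\gamma_{\tau + 1}, \gamma_{\tau + 2}, \ldots$ is i.i.d.\ $\mu$-distributed and independent of $\sigma(\gamma_1, \ldots, \gamma_\tau)$, so I can realize $D^{(2)}$ on this fresh stream to produce the remaining $n_2$ indices of the combined decomposition. The total $\rho$ factors as $\bar\rho \cdot \rho(f^{(2)}_1 h^{(2)}_1 \cdots h^{(2)}_{n_2}) \geq R^{-1} M^{-1} \kappa_1 \kappa_2$, so the decomposition is admissible. Among the axioms, \ref{item:stop}--\ref{item:h_idef} and \ref{item:rhof_i}--\ref{item:bh_i} are routine by construction, \ref{item:U_1ind1}--\ref{item:U_iind2} are immediate from the strong Markov independence, and \ref{item:U_ibound} at a spliced index $n_1 + j$ is handled by $\bar\rho \leq M^{-1} \kappa_1$, which yields
\[
\rho(f^{\mathrm{new}}_1 \cdots f^{\mathrm{new}}_{n_1 + j})^{-1} r = \bar\rho^{-1} \rho(f^{(2)}_1 \cdots f^{(2)}_j)^{-1} r \geq M \kappa_1^{-1} \rho(f^{(2)}_1 \cdots f^{(2)}_j)^{-1} r,
\]
dominating $D^{(2)}$'s bound on $|U^{(2)}_j|$.

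The main obstacle is \ref{item:m_i} at the spliced indices. Independence of the fresh $D^{(2)}$-stream from $\tilde{\mathscr{A}}^{(1)}_{n_1}$ replaces $\var(\cdot \mid \mathscr{A}^{\mathrm{new}}_{n_1 + j})$ by $\var(\cdot \mid \mathscr{A}^{(2)}_j)$, and $\bar\rho$ being $\mathscr{A}^{\mathrm{new}}_{n_1}$-measurable lets one factor it cleanly out of the outer conditional expectation. The combined denominator $\bar\rho^{-2} \rho(f^{(2)}_1 \cdots h^{(2)}_{j-1})^{-2} r^2$ then lines up with $D^{(2)}$'s denominator $\rho(f^{(2)}_1 \cdots h^{(2)}_{j-1})^{-2} (M \kappa_1^{-1} r)^2$ up to the factor $\bar\rho^2 M^2 \kappa_1^{-2}$, which the window $\bar\rho \in [R^{-1} M^{-1} \kappa_1, M^{-1} \kappa_1]$ controls, allowing one to transfer the lower bound $m^{(2)}_j I$ to the combined decomposition. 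Summing gives $\sum_i m^{\mathrm{new}}_i \geq V^{(1)} + V^{(2)} - 2\varepsilon$ where $V^{(i)}$ denotes the $i$th summand on the right-hand side; letting $\varepsilon \to 0$ concludes.
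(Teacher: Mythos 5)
Your construction has a genuine gap at the splice point, and it sits exactly where the paper's definition is designed to avoid it. By absorbing the bridging block $g'$ into the \emph{last} $h$ of the first decomposition, $\tilde h^{(1)}_{n_1} = h^{(1)}_{n_1} g'$, you alter the $n_1$-th term of $D^{(1)}$, and property \ref{item:m_i} for $i = n_1$ no longer follows: the original guarantee bounds $\var\bigl(\rho(f^{(1)}_{n_1})U(f^{(1)}_{n_1})U^{(1)}_{n_1} b(h^{(1)}_{n_1})\mid \mathscr{A}^{(1)}_{n_1}\bigr)$, whereas your modified term involves $b(\tilde h^{(1)}_{n_1}) = \rho(h^{(1)}_{n_1})U(h^{(1)}_{n_1})b(g') + b(h^{(1)}_{n_1})$, a different (and differently conditioned) vector; since $\psi_x(u)$ degenerates on an affine hyperplane depending on $u$, nothing in the axioms prevents $b(\tilde h^{(1)}_{n_1})$ from landing near that bad set, and your truncation $\tilde U^{(1)}_{n_1} = U^{(1)}_{n_1}\mathbf{1}_{|b(\tilde h^{(1)}_{n_1})|\leq A}$ (needed for \ref{item:bh_i}) can outright kill the term with positive probability. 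So the weight $m^{(1)}_{n_1}$ cannot be carried over, and your final sum only gives $V^{(1)} - m^{(1)}_{n_1} + V^{(2)}$ rather than the claimed $V^{(1)} + V^{(2)}$. The paper's proof leaves $D^{(1)}$ completely untouched and instead absorbs the bridge into the \emph{first} $f$ of the second decomposition, $f_{n_1+1} := \hat\gamma_1\cdots\hat\gamma_\tau f^{(2)}_1$; this is precisely the reason the definition carries two sequences $f$ and $h$, as remarked after Definition~\ref{ProperDecomp}.

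Two further points would also need repair even with the correct splice. First, you define $\tau$ and $\bar\rho$ via the actual walk $\rho(\gamma_1\cdots\gamma_n)$, but $q_{T^{(1)}_{n_1}} = f^{(1)}_1\exp(U^{(1)}_1)h^{(1)}_1\cdots \exp(U^{(1)}_{n_1})h^{(1)}_{n_1}$ contains the $\exp(U^{(1)}_i)$ factors, so $\rho(q_{T^{(1)}_{n_1}})$ may already lie below $M^{-1}\kappa_1$ and the claimed window $\bar\rho\in[R^{-1}M^{-1}\kappa_1, M^{-1}\kappa_1]$ can fail; moreover the quantity entering \ref{item:U_ibound}, \ref{item:m_i} and the admissibility constraint is the product of the $f$'s and $h$'s, not $\rho(\gamma_1\cdots\gamma_\tau)$, so the factorization you use at the spliced indices is off by $\prod_i\rho(\exp(U^{(1)}_i))$. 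The paper defines the stopping time with $\rho(f^{(1)}_1h^{(1)}_1\cdots f^{(1)}_{n_1}h^{(1)}_{n_1}\hat\gamma_1\cdots\hat\gamma_k)$, which starts above $\kappa_1\geq M^{-1}\kappa_1$ and makes the window exact. Second, continuing the same $\gamma$-stream and invoking the strong Markov property gives independence from $\sigma(\gamma_1,\ldots,\gamma_\tau)$ but not from $\mathscr{A}^{(1)}_{n_1}$: the definition of a proper decomposition imposes no relation between the $\mathscr{A}_i$ and future steps, and these $\sigma$-algebras typically carry auxiliary randomness, so \ref{item:U_1ind1}, \ref{item:U_iind2} and your reduction of the conditioning to $\mathscr{A}^{(2)}_j$ are not justified as stated; the paper avoids this by building the concatenation on a product of three independent probability spaces, with a fresh stream for the bridge and another for $D^{(2)}$.
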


\begin{proof}
    For $j \in \{ 1,2 \}$ let $\gamma_1^{(j)}, \gamma_2^{(j)}, \dots$ be a sequence of i.i.d.\ samples from $\mu$ defined on the probability space $\left(\Omega_{(j)}, \mathscr{F}_{(j)}, \mathbb{P}_{(j)}\right)$. Let $\hat{\gamma}_1, \hat{\gamma}_2, \dots$ be a sequence of i.i.d.\ samples from $\mu$ defined on the probability space $\left(\hat{\Omega}, \hat{\mathscr{F}}, \hat{\mathbb{P}}\right)$. Consider the product probability space $$\left(\Omega, \mathscr{F}, \mathbb{P}\right) = \left( \Omega_1 \times \hat{\Omega} \times \Omega_2, \mathscr{F}_1 \times \hat{\mathscr{F}} \times \mathscr{F}_2, \mathbb{P}_1 \times \hat{\mathbb{P}} \times \mathbb{P}_2 \right).$$ 

    Let $\left(\gamma_i^{(1)}, S_i^{(1)}, T_i^{(1)}, f_i^{(1)}, U_i^{(1)}, h_i^{(1)}, \mathscr{A}_i^{(1)}, m_i^{(1)}\right)$ be a proper decomposition for $(\mu, k_1, K, \kappa_1, A)$ at scale $r$ defined on the probability space $\left(\Omega^{(1)}, \mathscr{F}^{(1)}, \mathbb{P}^{(1)}\right)$ such that $\sum_{i = 1}^{k_1} m_i^{(1)}$ approaches $V(\mu,n_1,K,\kappa_1,A;r)$ and  $$ \rho(f_1^{(1)}h_1^{(1)}\cdots f_{k_1}^{(1)}h_{k_1}^{(1)}) \geq \kappa_1.$$ 

    Given $\omega_1 \in \Omega_1$ and $\hat{\omega} \in \hat{\Omega}$, let $\tau = \tau(\omega_1, \hat{\omega})$ be given by $$\tau = \min \{k \in \mathbb{Z}_{\geq 0}: \rho( f^{(1)}_1 h^{(1)}_1 f^{(1)}_2 h^{(1)}_2 \dots f^{(1)}_{k_1} h^{(1)}_{k_1} \hat{\gamma}_1 \hat{\gamma}_2 \dots \hat{\gamma}_k) < M^{-1} \kappa_1  \} $$ and let $\hat{\rho} = \rho( f^{(1)}_1 h^{(1)}_1 f^{(1)}_2 h^{(1)}_2 \dots f^{(1)}_{k_1} h^{(1)}_{k_1} \hat{\gamma}_1 \hat{\gamma}_2 \dots \hat{\gamma}_\tau)$ such that $$\hat{\rho} \in [M^{-1} R^{-1} \kappa_1 , M^{-1} \kappa_1 ]. $$

    Now given $\omega_1 \in \Omega_1$ and $\hat{\omega} \in \hat{\Omega}$, let $\left(\gamma_i^{(2)}, S_i^{(2)}, T_i^{(2)}, f_i^{(2)}, U_i^{(2)}, h_i^{(2)}, \mathscr{A}_i^{(2)}, m_i^{(2)}\right)$ be a proper decomposition for $(\mu, k_2, K, \kappa_2, A)$ at scale $M\kappa_1^{-1}r$ defined on the probability space $\left(\Omega^{(2)}, \mathscr{F}^{(2)}, \mathbb{P}^{(2)}\right)$ such that $\sum_{i = 1}^{k_2} m_i^{(2)}$ approaches $V(\mu,n_2,K,\kappa_2,A;M\kappa_1^{-1}r)$ and  $$\rho(f_1^{(1)}h_1^{(1)}\cdots f_{k_2}^{(1)}h_{k_2}^{(1)}) \geq \kappa_2.$$ 

    We now concatenate the two decompositions as follows. Let $\gamma_1, \gamma_2, \dots$ be the sequence of random variables on the probability space $\left(\Omega, \mathscr{F}, \mathbb{P}\right)$ defined by $$\gamma_i = \begin{cases}
        \gamma_i^{(1)} & \text{if } i \leq T^{(1)}_{k_1} \\
        \hat{\gamma}_{i - T^{(1)}_{k_1}} & \text{if } i > T^{(1)}_{k_1} \text{ and } i \leq T^{(1)}_{k_1} + \tau \\
        \gamma_{i - T^{(1)}_{k_1} - \tau} & \text{if } i > T^{(1)}_{k_1} + \tau.
    \end{cases}$$
    Clearly these are i.i.d.\ samples from $\mu$. For $i = 1, 2, \dots, k_1 + k_2$ we define $S_i$ by
    \begin{equation*}
        S_i =
        \begin{cases}
            S_i^{(1)} & \text{if } i \leq k_1\\
            S_{i-k_1}^{(2)} + T^{(1)}_{k_1} + \tau & \text{if } i > k_1
        \end{cases}
    \end{equation*}
    and we define $T_i$ analogously. We define $f_i$ by
    \begin{equation*}
        f_i =
        \begin{cases}
            f_i^{(1)} & \text{if } i \leq k_1\\
            \hat{\gamma}_1 \dots \hat{\gamma}_{\tau} f_{1}^{(2)} & \text{if } i = k_1 + 1 \\
            f_{i-k_1}^{(2)} & \text{if } i > k_1 + 1.
        \end{cases}
    \end{equation*}
    We define $U_i$ by
    \begin{equation*}
        U_i =
        \begin{cases}
            U_i^{(1)} & \text{if } i \leq k_1\\
            U_{i-k_1}^{(2)} & \text{if } i > k_1.
        \end{cases}
    \end{equation*}
    and define $h_i$ and $m_i$ analogously. Finally we define $\mathscr{A}_i$ by
    \begin{equation*}
        \mathscr{A}_i =
        \begin{cases}
            \mathscr{A}_i^{(1)} \times \hat{\Omega} \times \Omega^{(2)} & \text{if } i \leq k_1\\
            \mathscr{A}_{k_1}^{(1)} \times \hat{\mathscr{F}} \times \mathscr{A}_{i-k_1}^{(2)} & \text{if } i > k_1.
        \end{cases}
    \end{equation*}
    It is easy to check that $\left(\gamma_i, S_i, T_i, f_i, U_i, h_i, \mathscr{A}_i, m_i\right)$ is a proper decomposition for $(\mu, R, k_1 + k_2, K, R^{-1} M^{-1} \kappa_1 \kappa_2, A)$ at scale $r$ and it holds that $$\sum_{i=1}^{k_1+k_2} m_i = \sum_{i=1}^{k_1} m_i^{(1)} + \sum_{i=1}^{k_2} m_i^{(2)}.$$ Indeed, we note that for $i > k_2$ we have that since $M\kappa_1^{-1} \leq \hat{\rho}^{-1}$,
    \begin{align*}
        |U_i| = |U_{i-k_1}^{(2)}| &\leq \rho(f_1^{(2)}h_1^{(2)}f_2^{(2)}h_2^{(2)}\cdots h_{i-k_1 -1}^{(2)} f_{i-k_1}^{(2)})^{-1} M\kappa_1^{-1}r \\
        &\leq \hat{\rho}^{-1}\rho(f_1^{(2)}h_1^{(2)}f_2^{(2)}h_2^{(2)}\cdots h_{i-k_1 -1}^{(2)} f_{i-k_1}^{(2)})^{-1} r \\
        &= \rho(f_1h_1f_2h_2\cdots h_{i-1} f_{i})^{-1}r.
    \end{align*}
    Similarly, for $i > k_2 + 1$ and using that $\hat{\rho}^2M^2\kappa_1^{-2} \leq 1$,
    \begin{align*}
        \MoveEqLeft \E\left[   \frac{\var(\rho(f_i) U(f_i) U_i b(h_i)|\mathscr{A}_i)}{\rho(f_1h_1 f_2h_2 \cdots f_{i-1}h_{i-1})^{-2} r^2} \,|\, \mathscr{A}_{i-1}  \right] \\
        &= \E\left[   \frac{\var(\rho(f^{(2)}_{i -k_1}) U(f^{(2)}_{i -k_1}) U^{(2)}_{i-k_1} b(h^{(2)}_{i - k_1})|\mathscr{A}_i)}{\hat{\rho}^{-2}\rho(f_1^{(2)}h_1^{(2)}f_2^{(2)}h_2^{(2)}\cdots h_{i-k_1}^{(2)})^{-2} r^2} \,|\, \mathscr{A}_{i-1}  \right] \\
        &\geq  \E\left[   \frac{\var(\rho(f^{(2)}_{i -k_1}) U(f^{(2)}_{i -k_1}) U^{(2)}_{i-k_1} b(h^{(2)}_{i - k_1})|\mathscr{A}_i)}{\hat{\rho}^{-2}\rho(f_1^{(2)}h_1^{(2)}f_2^{(2)}h_2^{(2)}\cdots h_{i-k_1}^{(2)})^{-2} \hat{\rho}^2 M^2 \kappa_1^{-2} r^2} \,|\, \mathscr{A}_{i-1}  \right] \\
        &\geq m_{i - k_1}^{(2)}I.
    \end{align*} The remainder of the properties are straightforward to check. 
\end{proof}

\begin{corollary} \label{ReScaleDecomposition}
    Let $\mu$ be a probability measure on $G$. Let $R > 1$ be such that $\rho(g) \in [R^{-1}, R]$ for every $g \in \mathrm{supp}(\mu)$. Let $n, K \in \mathbb{Z}_{> 0}$ and let $\kappa, r \in (0, 1)$. Let $C, A > 0$ and let $M \geq R$. Then
    \begin{align*}
        V(\mu, n, K, R^{-1} M^{-1} \kappa, A; M^{-1} r) \geq V(\mu, n, K, \kappa, A ; r)
    \end{align*}
\end{corollary}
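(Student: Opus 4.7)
The plan is to derive Corollary~\ref{ReScaleDecomposition} as an immediate specialisation of Proposition~\ref{VarSumAdds}, obtained by taking the first of the two decompositions in the concatenation to have length zero. The intuition is that the corollary is purely a rescaling statement: it changes the scale from $r$ to $M^{-1}r$ and relaxes the lower bound on $\rho$ by the compensating factor $R^{-1}M^{-1}$, and exactly this kind of rescaling is already built into the proof of Proposition~\ref{VarSumAdds} through the auxiliary $\hat{\gamma}_j$-block that bridges the two decompositions. Specialising to $n_1 = 0$ collapses that bridging construction to precisely what the corollary asks for.

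Concretely, I apply Proposition~\ref{VarSumAdds} with $n_1 = 0$, $n_2 = n$, $\kappa_1 = 1$, $\kappa_2 = \kappa$, the same $M$, $K$, $A$, and $\mu$ as in the corollary, and with scale parameter $M^{-1} r$ substituted in place of the proposition's scale variable (where $r$ now denotes the scale appearing in the corollary). A proper decomposition of length zero has no terms and so contributes an empty sum, giving $V(\mu, 0, K, 1, A; M^{-1}r) = 0$ trivially. The proposition then yields
\[
V\bigl(\mu, n, K, R^{-1}M^{-1}\kappa, A; M^{-1}r\bigr) \;\geq\; V\bigl(\mu, 0, K, 1, A; M^{-1}r\bigr) + V\bigl(\mu, n, K, \kappa, A; M \cdot 1^{-1} \cdot M^{-1}r\bigr) \;=\; V(\mu, n, K, \kappa, A; r),
\]
which is precisely the inequality claimed in the corollary. (The extra symbol $C$ appearing in the statement of the corollary has no counterpart in Definition~\ref{VDef} and I treat it as a vestigial typo.)

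The only small technical point is that Proposition~\ref{VarSumAdds} is phrased with $\kappa_1 \in (0,1)$, while I am taking $\kappa_1 = 1$. This is harmless: with $n_1 = 0$ the first decomposition carries no data, and the only constraint $\kappa_1$ imposes on it is $\rho(\text{empty product}) = 1 \geq \kappa_1$, which is satisfied at $\kappa_1 = 1$; the concatenation construction in the proof of Proposition~\ref{VarSumAdds} therefore runs through without modification. I do not foresee any substantive obstacle, since all of the genuine work has already been carried out in Proposition~\ref{VarSumAdds}, and indeed a direct attempt that merely prepends a contracting block to a given decomposition would lose a factor in the variance term (there is a mismatch between the scale constraint on $|U_i|$ and the variance lower bound in \ref{item:m_i}); Proposition~\ref{VarSumAdds} resolves this by choosing the bridging block carefully, which is precisely why I want to invoke it as a black box rather than re-prove the corollary from scratch.
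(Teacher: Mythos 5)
Your proposal is correct and is essentially identical to the paper's own proof: the paper also applies Proposition~\ref{VarSumAdds} with $n_1 = 0$, $\kappa_1 = 1$, $\kappa_2 = \kappa$ at scale $M^{-1}r$, and notes that $V(\mu,0,K,1,A;M^{-1}r)=0$. Your side remarks (the harmlessness of $\kappa_1=1$ and the vestigial $C$ in the statement) are accurate but not needed beyond what the paper does.
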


\begin{proof}
    By Proposition~\ref{VarSumAdds} we have \begin{align*}
        \MoveEqLeft V(\mu,  n, K, R^{-1} M^{-1} \kappa , A; M^{-1}r) \\&\geq V(\mu, 0, K, 1, A; M^{-1}r) + V(\mu,  n , K, \kappa, A;  r).
    \end{align*} 
    and simply note that $V(\mu, 0, K, 1, A; M^{-1}r) = 0$.
\end{proof}

\subsection{From Variance Sum to Bounding Detail}\label{section:FromVariancetoDetail}

\begin{proposition} \label{DecompositionToDetail}
    For every $d\geq 1$ and $A, \alpha > 0$ there is a constants $C = C(d,A,\alpha) > 0$ such that the following is true. Suppose that $\mu$ is a contracting on average probability measure on $G$. Then there is some $c = c(\mu) > 0$ such that whenever $\kappa \leq 1$ and $k,K,n \in \Z_{>0}$ with $K$ and $n$ sufficiently large (in terms of $A, \alpha$ and $\mu$) and $r > 0$ is sufficiently small (in terms of $A, \alpha$ and $\mu$) and $$V(\mu, n,K, \kappa, A; r) > Ck$$ we have $$s_{r}^{(k)}(\nu) < \alpha^k + n\exp(-cK) + C^n\kappa^{-1}r.$$
\end{proposition}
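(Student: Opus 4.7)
First, let $x\sim \nu$ be independent of $(\gamma_i)$, so that $q_{T_n}x\sim \nu$ by stationarity; the proper decomposition witnessing $V(\mu,n,K,\kappa,A;r) > Ck$ then gives
\begin{equation*}
q_{T_n}x = f_1\exp(U_1)h_1 f_2\exp(U_2)\cdots f_n\exp(U_n)h_n x.
\end{equation*}
I define a good event $B$ on which $|x|$, each $|b(f_i)|$ and each $|b(h_i)|$ are bounded by a constant $A_0 = A_0(A,\alpha,d,\mu)$, and each $\rho(h_i) \leq e^{-cK}$. The latter holds with probability at least $1-O(e^{-cK})$ for each $i$ by Corollary~\ref{coro:decay_estimates} applied to the length-$\geq K$ product $h_i$, while the former holds uniformly with large probability by \eqref{PolynomialTailDecay}, so a union bound gives $\mathbb{P}[B^c] \leq ne^{-cK}$ provided $K$ and $A_0$ are large in terms of $A,\alpha,\mu$. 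On $B$, Proposition~\ref{MainTaylorBound} (after the mild extension allowing factors with $\rho$ merely bounded rather than strictly $<1$) applied to this $2n$-term product yields
\begin{equation*}
q_{T_n}x = W + Y + E, \qquad |E|\leq C_1^n\rho(q_{T_n})^{-1}r^2 \leq C_1^n\kappa^{-1}r^2,
\end{equation*}
with $W = f_1h_1\cdots f_nh_n x$, $Y = \sum_{i=1}^n \zeta_i(U_i)$, and $\zeta_i(U_i) = \rho(g_i')U(g_i')\psi_{g_i''x}(U_i)$ for $g_i' = f_1h_1\cdots f_i$ and $g_i'' = h_if_{i+1}\cdots h_n$.

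Second, I bound the order-$k$ detail of $Y$ conditionally on $(\mathscr{A}_n,x)$. By A8 the $U_i$ are conditionally independent given $\mathscr{A}_n$, and by A7 their conditional laws coincide with those given $\mathscr{A}_i$. From A6, $|U_i|\leq \rho(g_i')^{-1}r$; combined with the bound $|g_i''x|\leq A_0+1$ on $B$ (which follows from $|g_i''x - b(h_i)| = \rho(h_i)|f_{i+1}\cdots h_n x|\leq e^{-cK}\cdot O(A_0)$), this gives $|\zeta_i(U_i)|\leq C_2 r$ for some $C_2 = C_2(A_0)$. Lemma~\ref{ZetaVariance} yields
\begin{equation*}
\var(\zeta_i(U_i)\mid\mathscr{A}_n,x) = \rho(g_i')^2 U(g_i')\psi_{g_i''x}\var(U_i\mid\mathscr{A}_i)\psi_{g_i''x}^T U(g_i')^T,
\end{equation*}
and Lemma~\ref{BasicDerivativeProp}(iii) together with $|g_i''x - b(h_i)| = O(e^{-cK})$ lets us replace $\psi_{g_i''x}$ by $\psi_{b(h_i)}$ at the cost of a $(1-O(e^{-cK}))$ factor. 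Writing $P_i = \rho(f_1h_1\cdots h_{i-1})$, A9 and orthogonal invariance of the partial order give
\begin{equation*}
\E[\var(\zeta_i(U_i)\mid\mathscr{A}_n,x)\mid\mathscr{A}_{i-1}] \geq m_ir^2I\cdot(1-O(e^{-cK})).
\end{equation*}
Since $\var(\zeta_i(U_i)\mid\cdot)\leq C_2^2 r^2 I$, applying the matrix Cram\'er bound Lemma~\ref{lemma:cramer} to $M_i := \var(\zeta_i(U_i)\mid\mathscr{A}_n,x)/r^2$ yields $\sum_i \var(\zeta_i(U_i)\mid\mathscr{A}_n,x) \geq \tfrac14(\sum_i m_i)r^2 I > \tfrac{C}{4}kr^2 I$ outside an event of probability at most $e^{-c'Ck}+ne^{-cK}$.

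Third, on this sub-event, with $C \geq C(\alpha,d,A)$ chosen sufficiently large (so that the factor $C_2$ is absorbed into the threshold), Proposition~\ref{kDetailBerryEssen} applies to give $s_r^{(k)}(\mathcal{L}(Y\mid\mathscr{A}_n,x))\leq \alpha^k$. Since $W$ is $(\mathscr{A}_n,x)$-measurable and order-$k$ detail is translation-invariant, the same bound holds for $\mathcal{L}(W+Y\mid\mathscr{A}_n,x)$. Averaging via the triangle inequality for $\|\cdot\|_1$ and bounding the $B^c$-contribution by $s_r^{(k)}\leq 1$ gives $s_r^{(k)}(\mathcal{L}(W+Y))\leq \alpha^k + ne^{-cK}$. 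Finally, on $B$ the explicit coupling $|q_{T_n}x - (W+Y)|\leq C_1^n\kappa^{-1}r^2$ together with Lemma~\ref{DetailWassersteinBound} yields $|s_r^{(k)}(\nu) - s_r^{(k)}(\mathcal{L}(W+Y))| \ll_d C_1^n\kappa^{-1}r + ne^{-cK}$, giving the claim after renaming constants.

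The principal obstacle is the variance approximation $\var(\zeta_i(U_i)\mid\cdot)\approx P_i^2 U(\cdot)\var(\rho(f_i)U(f_i)U_ib(h_i)\mid\mathscr{A}_i)U(\cdot)^T$: it requires $\rho(h_i)$ to be exponentially small in $K$ so that $g_i''x$ is near $b(h_i)$, while bounding $|\zeta_i(U_i)|\leq C_2 r$ forces truncation of $|x|$, $|b(f_i)|$ and $|b(h_i)|$. Balancing these truncations with Lemma~\ref{lemma:cramer} so that all bad-event probabilities stay within the $ne^{-cK}$ budget, and absorbing the resulting $C_2$-factor into the constant $C$ of the hypothesis, is where the technical work lies.
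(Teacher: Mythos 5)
Your overall architecture is the right one and matches the paper's: realise $\nu$ through the stopped product coming from a proper decomposition, Taylor-expand via Proposition~\ref{MainTaylorBound}, compare $\var(\zeta_i(U_i)\mid\cdot)$ to the quantity $\var(\rho(f_i)U(f_i)U_ib(h_i)\mid\mathscr{A}_i)$ controlled by \ref{item:m_i}, sum with the matrix Cram\'er bound (Lemma~\ref{lemma:cramer}), apply Proposition~\ref{kDetailBerryEssen}, and transfer back through Lemma~\ref{DetailWassersteinBound}. However, there is a genuine gap at your good event $B$. You require $|x|$, all $|b(f_i)|$ and all $|b(h_i)|$ to be bounded by a \emph{constant} $A_0$ and claim $\mathbb{P}[B^c]\leq n e^{-cK}$. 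These translation parts only have polynomial tails, via \eqref{PolynomialTailDecay} and Corollary~\ref{coro:decay_estimates}: for a fixed truncation level $A_0$ the failure probability of, say, $|b(h_i)|\leq A_0$ is a constant of order $A_0^{-\alpha(\mu)}$, independent of $K$, so the union bound gives an error of order $nA_0^{-\alpha}$, which cannot be absorbed into the $n e^{-cK}$ budget. Making $A_0$ grow with $K$ (exponentially, as would be needed) is not a repair: then the Taylor constant $C_1$, the bound $|\zeta_i(U_i)|\leq C_2 r$, the upper bound $b$ in Lemma~\ref{lemma:cramer} and the threshold needed for Proposition~\ref{kDetailBerryEssen} all degrade with $K$, contradicting the statement that $C=C(d,A,\alpha)$ and destroying the final application in Proposition~\ref{kDetailEstimate}, where $K=\exp(\sqrt{\log\log r^{-1}})\to\infty$ while the hypothesis only supplies variance $>Ck$ for a fixed constant $C$.

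The paper's proof avoids exactly this by using property \ref{item:bh_i}, which you never invoke: whenever $|b(h_i)|>A$ one has $U_i=0$, so those indices contribute nothing and their $\exp$ factors are identities. One then regroups along the set $I=\{i:|b(h_i)|\leq A\}$ into blocks $g_1=f_1h_1\cdots f_{i_1}$, $g_j=h_{i_{j-1}}f_{i_{j-1}+1}\cdots f_{i_j}$, with $\overline{v}=h_{i_m}\cdots h_n v$, so that every quantity entering the Taylor and variance estimates is anchored at some $b(h_{i_j})$ with $|b(h_{i_j})|\leq A$ \emph{deterministically}; the only probabilistic inputs are the corrections $\rho(h_{i_j})\cdot(\text{poly-tail quantity})$ and $|b(g_j)-b(h_{i_{j-1}})|$, which are exponentially small in $K$ with probability $1-e^{-cK}$ by Corollary~\ref{coro:decay_estimates}, and this is what makes the $n\exp(-cK)$ error term and the $A$-only dependence of the $C^n\kappa^{-1}r$ term legitimate. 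Your variance-comparison and Cram\'er steps would then go through essentially as you wrote them (the paper separates them into the events $\underline{F}$ and $\overline{F}$, applying Lemma~\ref{lemma:cramer} to the $\mathscr{A}_i$-measurable matrices $r^{-2}\var(\hat{\zeta}_i(U_i)\mid\mathscr{A}_i)$, which is cleaner than conditioning on $(\mathscr{A}_n,x)$), but without the $I$-grouping the truncation step as you state it fails.
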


\begin{proof}
    Suppose that $(f, h, U, \mathscr{A}, \gamma, \mathscr{F}, S, T, m)$ is a proper decomposition of $(\mu, n, K, A)$ at scale $r$ such that $\sum_{i=1}^n m_i \geq Ck / 2$ and let $v$ be an independent sample from $\nu$. Let $$I = \{ i \in [1, n] \cap \Z : |b(h_i)| \leq A \}$$ and let $m = |I|$. Enumerate $I$ as $i_1 < i_2 < \cdots < i_m$ and define $g_1, \dots, g_m$ by $g_1 = f_1 h_1 \dots f_{i_1}$ and $g_j = h_{i_{j-1}} f_{i_{j-1}+1} \dots f_{i_j}$ for $2 \leq j \leq m$. Define $\overline{v}$ by $\overline{v} = h_{i_m} f_{i_m +1} \dots h_n v$ and let $V_j = U_{i_j}$. Let $x$ be defined by $$x = g_1 \exp(V_1) \dots g_m \exp(V_m) \overline{v}.$$ Note that $x$ is a sample from $\nu$. Let $\hat{\mathscr{A}}$ be the $\sigma$-algebra generated by $\mathscr{A}_n$ and $v$. Note that the $g_j$ and $\overline{v}$ are $\hat{\mathscr{A}}$-measurable.

    We will bound the order $k$ detail of $x$ by showing that with high probability we can apply Proposition~\ref{MainTaylorBound} to $g_1, \dots, g_m$, $V_1, \dots, V_m$, and $\overline{v}$ and then bound the order $k$ detail of this using Proposition~\ref{kDetailBerryEssen}.

    Let $E$ be the event that $|\overline{v}| \leq 2A$ and that for each $j = 1, \dots, m$ we have $|b(g_j)| \leq 2A$, $\rho(g_j) < 1$ and note that by assumption A6 we have $|V_j| \leq \rho(g_1 \dots g_j)^{-1} r$. We claim that $\mathbb{P}[E^C] \leq \exp(-c_1 K)$ for some $c_1 = c_1(\mu, A) > 0$. Indeed, by Corollary \ref{coro:decay_estimates}, with probability $1 - \exp(-c_1 K)$ we have for all $j = 1, \dots, m$ that $|b(g_j) - b(h_{j-1})| \leq e^{-\delta K}$ for some $\delta > 0$. Therefore, the claim follows by using that $|b(h_{j-1})| \leq A$.
    
    For $j=1, \dots, m$ define $\zeta_j$ by $$\zeta_j = D_u(g_1 \cdots g_j \exp(u) g_{j + 1} \cdots g_m \overline{v})|_{u=0}.$$ By Proposition~\ref{MainTaylorBound} on $E$ we have $$\left|x-g_1 \dots g_m \overline{v} - \sum_{j=1}^m \zeta_j(V_j)\right| \leq C_1^m \rho(g_1 \dots g_m)^{-1} r^2$$ for some $C_1 = C_1(A) > 0$. Clearly the right hand side is at most $C_1^n \kappa^{-1} r^2$. By Lemma \ref{DetailWassersteinBound} this means that on $E$ we have $$s_r^{(k)}(x|\hat{\mathscr{A}}) \leq s_r^{(k)}\left(\sum_{j=1}^m \zeta_j(V_j) | \hat{\mathscr{A}}\right) + C_1^n ed \kappa^{-1} r$$ where $e$ is Euler's number.

    Let $C_3 = C_3(\alpha, d)$ be the constant $C$ from Proposition~\ref{kDetailBerryEssen} with the same values of $\alpha$ and $d$ and let $F$ be the event that $$\sum_{j=1}^m \var \zeta_j(V_j|\hat{\mathscr{A}}) \geq k C_3 I.$$ By Proposition~\ref{kDetailBerryEssen}, using that by \ref{item:U_iind2} the $[V_1|\hat{\mathscr{A}}], \ldots ,[V_m|\hat{\mathscr{A}}]$ are independent almost surely, we have that on $F$ $$s_r^{(k)}\left(\sum_{j=1}^m \zeta_j(V_j) | \hat{\mathscr{A}}\right) \leq \alpha^k.$$ Therefore
    \begin{equation*}
        s_r^{(k)}(x|\hat{\mathscr{A}}) \leq \alpha^k + C_1^n ed \kappa^{-1} r + \mathbb{I}_{E^C \cup F^C}
    \end{equation*}
    and so by the convexity of order $k$ detail we have
    \begin{equation*}
        s_r^{(k)}(x) \leq \alpha^k + C_1^n ed \kappa^{-1} r^2 + \mathbb{P}[E^C] + \mathbb{P}[F^C]. \label{eq:decomposition_order_k_detail_estimate}
    \end{equation*}
    We already have that $\mathbb{P}[E^C] \leq \exp(-c_1 K)$ so it only remains to bound $\mathbb{P}[F^C]$.

    For $i=1, \dots, n$ define $$\hat{\zeta}_i = D_u(f_1 h_1 \cdots h_{i-1}f_i \exp(u) b(h_i))|_{u=0}$$ and let $\underline{F}$ be the event that
    \begin{equation*}
        \left\| \sum_{i=1}^{n} \var \hat{\zeta}_i(U_i|\hat{\mathscr{A}}) - \sum_{j=1}^m \var \zeta_j(V_j|\hat{\mathscr{A}}) \right\| < 1.
    \end{equation*}

    Recall that $C_3 = C_3(\alpha, d)$ is the constant $C$ from Proposition~\ref{kDetailBerryEssen} with the same values of $\alpha$ and $d$ and let $\overline{F}$ be the $\hat{\mathscr{A}}$-measurable event that $\sum_{i=1}^{n} \var(\hat{\zeta}_i(U_i)|\hat{\mathscr{A}}) \geq (C_3 + 1) k I r^2$. Clearly $\underline{F} \cup \overline{F} \subset F$ so it suffices to bound $\mathbb{P}[\underline{F}^C]$ and $\mathbb{P}[\overline{F}^C]$.

    Since $g_1, \ldots , g_m$ and $\overline{v}$ are $\hat{\mathscr{A}}$ measurable, by Lemma~\ref{ZetaVariance} we have for $j=1, \dots, m$ that $\var (\zeta_j(V_j)|\hat{\mathscr{A}})$ is equal to $$ \rho(g_1 \dots g_j)^2\cdot U(g_1 \dots g_j)\psi_{g_{j + 1} \dots g_m \overline{v}} \circ \var (V_j |\hat{\mathscr{A}}) \circ \psi_{g_{j + 1} \dots g_m}^TU(g_1 \dots g_j)^T$$ and that $$\var (\hat{\zeta}_{i_j}(U_{i_j})|\hat{\mathscr{A}}) = \rho(g_1\cdots g_j)^2 \cdot U(g_1 \dots g_j)\psi_{b(h_{i_j})} \circ \var (V_j|\hat{\mathscr{A}}) \circ \psi_{b(h_{i_j})}^TU(g_1 \dots g_j)^T.$$ We also have that $| V_j | \leq \rho(g_1 \cdots g_j)^{-1} r$ almost surely and so consequently $\| \var V_j \| \leq \rho(g_1\cdots g_j)^{-2} r^2$. Therefore by Lemma~\ref{BasicDerivativeProp} (iii), $$\| \var \zeta_j(V_i| \hat{\mathscr{A}}) - \var \hat{\zeta}_{i_j}(U_{i_j}| \hat{\mathscr{A}}) \| \ll_d |b(h_j) - g_{j + 1} \dots g_m \overline{v}|^2 r^2.$$ Furthermore we have that whenever $i \notin I$ that $\var (\hat{\zeta}_{i}(U_{i})|\hat{\mathscr{A}}) = 0$. We may assume without loss of generality that $n \exp(-K \chi_{\mu} / 10) < 1$. This means that, providing $K$ is sufficiently large (in terms of $d$), in order for $\underline{F}$ to occur it is sufficient that for each $j = 1, \dots, m$ we have
    $$|b(h_j) - g_{j + 1} \dots g_m \overline{v}| < \exp(-K \chi_{\mu} / 10) < 1/n.$$
    By Corollary \ref{coro:decay_estimates} this occurs with probability at least $1 - m\exp(-c_2 K)$ for some $c_2 = c_2(\mu) > 0$ and therefore $\mathbb{P}[\underline{F}^C] \leq m\exp(-c_2 K) \leq n\exp(-c_2 K)$.

    Finally we wish to bound $\mathbb{P}[\overline{F}^C]$. Let 
    \begin{align*}
        \Sigma_i &= r^{-2} \var(\hat{\zeta_i}(U_i)|\hat{\mathscr{A}}) =  r^{-2}\var(\hat{\zeta_i}(U_i)|\mathscr{A}_i) \\
        &= r^{-2}\Var(\rho(f_1h_1\cdots h_{i-1}f_i)U(f_1h_1\cdots h_{i-1}f_i)U_ib(h_i)|\mathscr{A}_i))
    \end{align*}
    By construction we know that $$\mathbb{E}[\Sigma_i | \Sigma_1, \dots, \Sigma_{i-1}] \geq m_i I.$$
    We also know that $\| \Sigma_i \| \leq A^2$ since $||\psi_{b(h_i)}|| \leq |b(h_i)| \leq A$. This means that we can apply Lemma \ref{lemma:cramer}. By Lemma \ref{lemma:cramer} we know that providing $C$ is sufficiently large we have
    $$\mathbb{P}\left[ \sum_{i=1}^n \Sigma_i \geq (C_3+1) k I \right] \geq 1 - \exp\left(-c_3 k \sum_{i=1}^n m_i\right)$$
    for some absolute $c_3 > 0$. Providing we choose $C$ to be sufficiently large, we therefore have $\mathbb{P}[\overline{F}^C] \leq \exp(-c_3kC) \leq \alpha^k$  this is less than $\alpha^k$.

    Putting everything together we have $$s_r^{(k)}(x) \leq 2 \alpha^k + n\exp(-c_3K) + ed C_1^n \kappa^{-1} r.$$
    Replacing $\alpha$ be a slightly smaller value gives the required result.
\end{proof}

\subsection{Conclusion of Proof of Theorem~\ref{MainResult}}\label{section:ProofMain}

We finally show a decay in detail under the assumption of Theorem~\ref{MainResult}. What follows is a rather intricate calculation and we refer the reader to the outline of proofs in section~\ref{section:Outline} for intuition and a sketch of the argument.

\begin{proposition}\label{kDetailEstimate}
    Let $d \in \Z_{\geq 1}$ and $c, T, \alpha_0, \theta, A, R > 0$ with $c, \alpha_0 \in (0,1)$ and $T \geq 1$. Then there exists $C = C(d,R,c, T,\alpha_0,\theta, A) > 0$ such that the following is true. Let $\mu$ be a contracting on average, $(c,T)$-well-mixing and $(\alpha_0,\theta, A)$-non-degenerate probability measure on $G$ with $\rho(g) \in [R^{-1}, R]$ for all $g \in \mathrm{supp}(\mu)$ and assume that $$\frac{h_{\mu}}{|\chi_{\mu}|} > C \max\left\{ 1, \left(\log \frac{S_{\mu}}{h_{\mu}} \right)^2  \right\}.$$ Then for all sufficiently small $r > 0$ and all integers $k \in [\log \log r^{-1}, 2\log \log r^{-1}]$ we have that $$s_r^{(k)}(\nu) < (\log r^{-1})^{-10d}.$$
\end{proposition}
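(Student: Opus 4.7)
The plan is to produce, for $r$ sufficiently small and $k\in[\log\log r^{-1},2\log\log r^{-1}]$, a proper decomposition of $(\mu,n,K,A)$ at scale $r$ whose variance sum exceeds $C_0 k$, where $C_0=C(d,A,e^{-10d})$ is the constant from Proposition~\ref{DecompositionToDetail} applied with $\alpha=e^{-10d}$. Once this is in hand, Proposition~\ref{DecompositionToDetail} yields
\[
s_r^{(k)}(\nu)<e^{-10dk}+ne^{-cK}+C_0^n\kappa^{-1}r.
\]
The first term is at most $(\log r^{-1})^{-10d}$ by the choice of $k$; the second is forced negligible by taking $K$ of order $\log\log r^{-1}$; the third will be dominated once we ensure that the surviving $\kappa$-parameter of the constructed decomposition stays above $r^{1-\eta}$ for a very small $\eta$.

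For the multi-scale setup we fix $S=2\max\{S_\mu,h_\mu\}$ and define a geometric sequence $\kappa_1=r^{|\chi_\mu|/S}$, $\kappa_{i+1}=\kappa_i^{h_\mu/(3\ell S)}$; since $h_\mu/(3\ell S)<1$ the $\kappa_i$ are increasing, and because this ratio is strictly less than $h_\mu/(2\ell S)$ the scale intervals $I_i=(\kappa_i^{S/|\chi_\mu|},\kappa_i^{h_\mu/(2\ell|\chi_\mu|)})$ are pairwise disjoint. A geometric-series computation shows that the largest $m$ with $\kappa_m^{h_\mu/(2\ell|\chi_\mu|)}\leq e^{-1}$ satisfies
\[
m\asymp\frac{\log\log r^{-1}}{\max\{1,\log(S_\mu/h_\mu)\}}.
\]
For each $i\in[m]$, Proposition~\ref{LotsOfTrace} supplies $\hat m=\lfloor S/(100|\chi_\mu|)\rfloor$ scales $s_{i,1}<\cdots<s_{i,\hat m}$ in $I_i$ with $s_{i,j+1}\geq\kappa_i^{-3}s_{i,j}$ and
\[
\sum_{j=1}^{\hat m}\tr(q_{\tau_{\kappa_i}};s_{i,j})\gg_d\frac{h_\mu}{|\chi_\mu|}\max\{1,\log(S_\mu/h_\mu)\}^{-1},
\]
and Proposition~\ref{InitialDecompositionV} converts each individual trace into a one-step proper decomposition at scale $r_{i,j}:=R^{-K}\kappa_i s_{i,j}$ with $\kappa$-parameter $R^{-3K}\kappa_i$ and variance contribution at least $c_1\tr(q_{\tau_{\kappa_i}};s_{i,j})$.

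We then iterate Proposition~\ref{VarSumAdds} in the order $(1,1),(1,2),\ldots,(m,\hat m)$, choosing at each step the rescaling factor $M$ so as to match the native scale of the incoming piece; the lower bound $s_{i,j+1}/s_{i,j}\geq\kappa_i^{-3}$ within a block and the scale gap between $I_i$ and $I_{i+1}$ force $M\geq R$ as required. An inconsequential final rescaling from $r_{1,1}\asymp r$ to exactly $r$ via Corollary~\ref{ReScaleDecomposition} produces a single proper decomposition at scale $r$ with
\[
V\gtrsim c_1\cdot m\cdot\frac{h_\mu}{|\chi_\mu|}\max\{1,\log(S_\mu/h_\mu)\}^{-1}\gtrsim c_1\cdot\frac{h_\mu}{|\chi_\mu|}\max\{1,\log(S_\mu/h_\mu)\}^{-2}\log\log r^{-1}.
\]
Under the hypothesis $h_\mu/|\chi_\mu|>C\max\{1,\log(S_\mu/h_\mu)\}^2$ with $C$ large in terms of $c_1$ and $C_0$, this exceeds $2C_0\log\log r^{-1}\geq C_0 k$, and Proposition~\ref{DecompositionToDetail} closes the argument.

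The delicate part is the $\kappa$-parameter bookkeeping across the $\sim m\hat m$ concatenation steps. Each use of Proposition~\ref{VarSumAdds} multiplies the surviving $\kappa$-parameter by $R^{-1}M^{-1}$, and scale matching forces $M$ to be of order $s_{i,j+1}/s_{i,j}$ inside a block and of order $\kappa_{i+1}/\kappa_i\cdot s_{i+1,1}/s_{i,\hat m}$ at block transitions. One must check that the telescoping product of these factors is only mildly polynomial in $r^{-1}$: the critical inputs are that Proposition~\ref{LotsOfTrace} chooses $\hat m\leq S/(100|\chi_\mu|)$ and that consecutive $\kappa_i$ are related by the moderate exponent $h_\mu/(3\ell S)$, both tuned so that $\kappa_i^{-3\hat m}$ and the cross-block losses combine to keep the final $\kappa$ above $r^{1-1/(100\ell)}$. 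With this verified the residual error $C_0^n\kappa^{-1}r$ is $o((\log r^{-1})^{-10d})$ as required.
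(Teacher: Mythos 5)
Your outline is, in structure, the paper's own proof: the same choice $S=2\max\{S_{\mu},h_{\mu}\}$, the same geometric sequence $\kappa_{i+1}=\kappa_i^{h_{\mu}/(3\ell S)}$ with $m\asymp \log\log r^{-1}/\max\{1,\log (S_{\mu}/h_{\mu})\}$, Proposition~\ref{LotsOfTrace} within each block, Proposition~\ref{InitialDecompositionV} to turn each trace into a one-step proper decomposition, iterated Proposition~\ref{VarSumAdds}, a final rescale via Corollary~\ref{ReScaleDecomposition}, and Proposition~\ref{DecompositionToDetail}; so the argument goes through essentially as you describe. Two quantitative points in your "delicate part" need correcting, though neither changes the outcome. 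First, the final $\kappa$-parameter cannot be kept above $r^{1-1/(100\ell)}$: the concatenation only yields $\kappa\approx R^{-5K}r/s^{(m)}_{\hat m}$, i.e. $r$ times a power of $R^{K}$, and since $K$ is only of poly-$\log\log r^{-1}$ size this is far below $r^{1-1/(100\ell)}$. What is true, and suffices, is the reverse bound $\kappa^{-1}r=R^{5K}s^{(m)}_{\hat m}\leq R^{5K}\kappa_m^{h_{\mu}/(2\ell|\chi_{\mu}|)}\leq R^{-K}$ (using $\kappa_m<R^{-10K}$ and $h_{\mu}/|\chi_{\mu}|$ large), so the third error term is at most $C^{m\hat m}R^{-K}$; to beat this and the term $m\hat m\,e^{-cK}$ with the unknown constant $c=c(\mu)$ you should take $K$ growing faster than any fixed multiple of $\log\log r^{-1}$ (the paper uses $K=\exp(\sqrt{\log\log r^{-1}})$), or else make the constant in your $K\asymp\log\log r^{-1}$ explicitly $\mu$-dependent and large. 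Second, with $\alpha=e^{-10d}$ the first term alone already equals $(\log r^{-1})^{-10d}$ at $k=\log\log r^{-1}$, leaving no room for the other two terms; take $\alpha$ strictly smaller (the paper uses $e^{-20d}$).
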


\begin{proof}
    We prove this by repeatedly applying Proposition~\ref{InitialDecompositionV} and Proposition~\ref{VarSumAdds} and then applying Proposition~\ref{DecompositionToDetail}. First let $C$ be as in Proposition~\ref{DecompositionToDetail} with $\alpha = \exp(-20 d)$.

    Now let $r > 0$ be sufficiently small and let $K = \exp(\sqrt{\log \log r^{-1}})$. This value of $K$ is chosen so that $K$ grows more slowly than $(\log r^{-1})^{\varepsilon}$ but faster than any polynomial in $\log \log r^{-1}$ as $r \to 0$. Let $S = 2\max \{h_{\mu}, S_{\mu} \}$.
    
    Note that $\frac{h_{\mu}}{2\ell S} < 1$ and for $i = 1, 2, \dots$ let $$\kappa_i = \exp \left( - \frac{|\chi_{\mu}| \log r^{-1}}{2 S} \left(\frac{h_{\mu}}{3 \ell S} \right)^{i-1} \right) = r^{\frac{|\chi_{\mu}|}{2 S}(\frac{h_{\mu}}{3 \ell S})^{i-1}}$$ with $\ell = \dim G$. Then $$\kappa_1 = r^{\frac{|\chi_{\mu}|}{2 S}} \quad \text{ and } \quad \kappa_{i+1} = \kappa_i^{\frac{h_{\mu}}{3 \ell S}}$$ and let $m$ be chosen as large as possible such that $$\kappa_m < \min \{R^{-10K}, 2^{-10K} \}.$$ We require $\kappa_m < R^{-10K}$ later in the proof and assume $\kappa_m < 2^{-10K}$ so that $\kappa_m$ is surely sufficiently small when $r$ is small enough so that we can apply Proposition~\ref{LotsOfTrace}. Note that this gives 
    $$\log \log R +  \sqrt{\log \log r^{-1}} \ll \log \log r^{-1} + m\log \frac{h_{\mu}}{2\ell S}  + \log \frac{\chi_{\mu}}{2S} $$ which is equivalent to $$m \log \left( 4\ell \max\left\{1, \frac{S_{\mu}}{h_{\mu}}\right\} \right)= m\log \frac{2\ell S}{h_{\mu}} \ll_d \log \log r^{-1}$$ and therefore it follows that $$m \asymp \left( \max \left\{ 1, \log \frac{S_{\mu}}{h_{\mu}} \right\} \right)^{-1} \log \log r^{-1} . $$ 

    Now as in Proposition~\ref{LotsOfTrace} let $\hat{m} = \lfloor \frac{S}{100 |\chi_{\mu}|} \rfloor$. For each $i = 1, 2, \dots, m$ let $s_1^{(i)}, s_2^{(i)}, \dots, s_{\hat{m}}^{(i)} > 0$ be the $s_i$ from Proposition~\ref{LotsOfTrace} with $\kappa_i$ in the role of $\kappa$. So $s_j^{(i)} \in (\kappa_i^{\frac{S}{|\chi_{\mu}|}}, \kappa_i^{\frac{h_{\mu}}{2\ell|\chi_{\mu}|}} )$. By Proposition~\ref{InitialDecompositionV} we have for each $j \in [\hat{m}]$,
    \begin{equation*}
        V(\mu,1,K,R^{-3K} \kappa_i, A; R^{-K} \kappa_i s_j^{(i)}) \geq c_1 \mathrm{tr}(q_{\tau_{\kappa_i}}; s_j^{(i)}) 
    \end{equation*}
    for some constant $c_1 = c_1(c, T, \alpha_0, \theta, A, R, d) > 0$. Therefore by Proposition~\ref{VarSumAdds} with $M= R^{-1_{\{ \geq 2 \}}(j)} R^{-3K} \kappa_i s_{j+1}^{(i)} / {s_{j}^{(i)}}$, where we denote $1_{\{ \geq 2 \}}(j) = 1$ whenever $j \geq 2$, we can prove inductively for $j=2, 3, \dots, \hat{m}$ that
    \begin{equation*}
        V(\mu,j,K, R^{-1} R^{-3K} \kappa_i s_1^{(i)} /  s_j^{(i)} , A; R^{-K} \kappa_i s_1^{(i)}) \geq c_1 \sum_{a = 1}^{j} \mathrm{tr}(q_{\tau_{\kappa_i}}; s_j^{(i)}).
    \end{equation*} We have used here that $s_{j + 1}^{(i)}/s_{j}^{(i)} \geq \kappa_i^{-3}$ and so $M \geq R^{-6K}\kappa_i^{-2} \geq R^{10K} \geq R$ since $\kappa_i < R^{-10K}$. By Proposition~\ref{LotsOfTrace} and \eqref{TrivialVarianceBound} we conclude that
    \begin{equation*}
        V(\mu,\hat{m},K, R^{-4K} \kappa_i s_1^{(i)}/ s_{\hat{m}}^{(i)} , A; R^{-K}\kappa_i s_{1}^{(i)}) \geq c_2 \frac{h_{\mu}}{|\chi_{\mu}|} \max \left\{1, \log \frac{S_{\mu}}{h_{\mu}} \right\}^{-1}
    \end{equation*}
    for some constant $c_2> 0$ depending on all of the parameters. 
    
    Note that for $i = 1, 2, \dots, m-1$ when $h_{\mu} / |\chi_{\mu}|$ is sufficiently large we have
    \begin{align*}
        R^{-4K} \kappa_{i + 1} s_1^{(i+1)} / s_{\hat{m}}^{(i)} & \geq R^{-4K} \kappa_{i+1}^{\frac{S}{|\chi_{\mu}|} + 1} \kappa_i^{ - \frac{h_{\mu}}{2 \ell |\chi_{\mu}|}} \\ &\geq R^{-4K} \kappa_i^{\frac{h_{\mu}}{3 \ell |\chi_{\mu}|} - \frac{h_{\mu}}{2 \ell |\chi_{\mu}|} + \frac{h_{\mu}}{3\ell S}}  \\ &\geq R^{-4K}\kappa_i^{-1} \geq R^{6K} \geq R.
    \end{align*}
    as $\kappa_{i + 1} = \kappa_i^{\frac{h_{\mu}}{3 \ell S}}$ and $\kappa_i < R^{-10K}$ and so we may repeatedly apply Proposition~\ref{VarSumAdds} with $$M=R^{-1_{\{ \geq 2 \}}(i)}R^{-4K} \kappa_{i+1} s_1^{(i+1)}/s_{\hat{m}}^{(i)},$$ where we denote $1_{\{ \geq 2 \}}(i) = 1$ whenever $i \geq 2$, to inductively show for $i = 2, 3, \dots, m$ that
    \begin{align*}
       \MoveEqLeft V(\mu,i\hat{m},K,R^{-1}R^{-4K} \kappa_1 s_1^{(1)} / s_{\hat{m}}^{(i)} , A; R^{-K} \kappa_1 s_{1}^{(1)}) \\ &\geq c_2 i \frac{h_{\mu}}{|\chi_{\mu}|} \max \left\{1, \log \frac{S_{\mu}}{h_{\mu}} \right\}^{-1}.
    \end{align*}
    This means using \eqref{TrivialVarianceBound}
    \begin{align*}
        \MoveEqLeft V(\mu,m\hat{m},K,R^{-5K} \kappa_1 s_1^{(1)} / s_{\hat{m}}^{(m)} , A; R^{-K} \kappa_1 s_{1}^{(1)}) \\\geq & c_3  \frac{h_{\mu}}{|\chi_{\mu}|} \max \left\{1, \log \frac{S_{\mu}}{h_{\mu}} \right\}^{-2} \log \log r^{-1}
    \end{align*}
    for some constant $c_3 > 0$ depending on all of the parameters. Since $$R^{-K} \kappa_1 s_{1}^{(1)} \geq  R^{-K} \kappa_1^{\frac{S}{|\chi_{\mu}|} + 1} = R^{-K}  r^{\frac{1}{2} + \frac{|\chi_{\mu}|}{2S}} \geq R^{-K} r^{\frac{1}{2} + \frac{1}{4d}} \geq r$$ for $r$ sufficiently small by Corollary~\ref{ReScaleDecomposition} with $M = R^{-K} \kappa_1 s_{1}^{(1)} r^{-1} \geq R$
    \begin{equation*}
        V(\mu,m\hat{m},K,R^{-5K} r / s_{\hat{m}}^{(m)} , A; r) \geq c_3  \frac{h_{\mu}}{|\chi_{\mu}|} \max \left\{1, \log \frac{S_{\mu}}{h_{\mu}} \right\}^{-2} \log \log r^{-1}.
    \end{equation*}
    
     Note that $1  / s_{\hat{m}}^{(m)} \geq \kappa_m^{-\frac{h_{\mu}}{2 \ell |\chi_{\mu}|}}$ and so in particular providing $h_{\mu} / |\chi_{\mu}|$ is sufficiently large we have $R^{-5K} r / s_{\hat{m}}^{(m)} \geq R^K r$. By Proposition~\ref{DecompositionToDetail} provided $$\frac{h_{\mu}}{|\chi_{\mu}|} \max \left\{1, \log \frac{S_{\mu}}{h_{\mu}} \right\}^{-2} \geq 2 c_3^{-1} C$$ we deduce
     \begin{equation*}
         s_r^{(k)}(\nu) \leq \exp(-20 d k) + m \hat{m} \exp(-c_4 K) + R^{-K} C^{m \hat{m}}
     \end{equation*}
     for some constant $c_4=c_4(\mu)>0$ and $k \in [\log \log r^{-1}, 2\log \log r^{-1}]$. Since $m \hat{m} \ll_{\mu} \log \log r^{-1}$ it is easy to see that $$m \hat{m} \exp(-c_4 K) + R^{-K} C^{m \hat{m}} < \left( \log r^{-1} \right)^{-20d}$$ whenever $r>0$ is sufficiently small (in terms of $\mu$). Since $k \geq \log \log r^{-1}$ we have that $\exp(-20dk) \leq \left( \log r^{-1} \right)^{-20 d}$. Overall this means that provided $r>0$ is sufficiently small (in terms of $\mu$) we have
     \begin{equation*}
         s_r^{(k)}(\nu) < \left( \log r^{-1} \right)^{-10 d}. \qedhere
     \end{equation*}
    
\end{proof}

We deduce the main theorem from Proposition~\ref{kDetailEstimate}.

\begin{proof}(of Theorem~\ref{MainResult})
    We combine Proposition~\ref{kDetailEstimate} with Lemma~\ref{kto1DetailBound}. Given $r > 0$ sufficiently small, let $k = \frac{3}{2} \log \log r^{-1}$, $a = r/\sqrt{k}$ and $b = rk^k$.

    Suppose that $s\in [a,b]$ and note that then $k \in [\log \log s^{-1}, 2\log\log s^{-1}]$ and $\frac{1}{2} \log r^{-1} < \log s^{-1}$ for $r$ sufficiently small and therefore by Proposition~\ref{kDetailEstimate} $$s_{s}^{(k)}(\nu) < (\log s^{-1})^{-10d} < 2^{10d}  (\log r^{-1})^{-10d}.$$ By Lemma~\ref{kto1DetailBound} it follows that 
    \begin{align*}
        s_r(\nu) \leq Q'(d)^{k-1}(2^{10d} (\log r^{-1})^{-10d} + k^{-k}),
    \end{align*}
    which is easily shown to be $\leq (\log r^{-1})^{-2}$ for $r$ sufficiently small. Indeed, recall that $Q'(d) \leq ed^{-1/2} \leq e$ for all $d \geq 1$ and therefore $Q'(d)^{k} \leq (\log(r^{-1}))^{e}$. 

    This concludes the proof of the main theorem of this paper. 
\end{proof}

\subsection{Proof of Theorem~\ref{HighdimMainTheorem}} \label{subsection:HighDimMain}

In this section we show how to work with the entropy and separation rate on $O(d)$ instead of the one on $G$. Recall that for a measure $\mu$ on $G$ the measure $U(\mu)$ on $O(d)$ is the pushforward of $\mu$ under the map $g \mapsto U(g)$. We then denote for a finitely supported $\mu$ by $h_{U(\mu)}$ and $S_{U(\mu)}$ the analogously defined Shannon entropy and separation rate of $U(\mu)$. 

The proof of Theorem~\ref{HighdimMainTheorem} is analogous to the proof of Theorem~\ref{MainResult}. The only point where a slightly different argument is needed is the following version of Proposition~\ref{EntropyBetweenScalesIncrease}. The remainder of the proof is verbatim to the proof of Theorem~\ref{MainResult} with only changing the notation of $h_{\mu}$ to $h_{U(\mu)}$ and $S_{\mu}$ to $S_{U(\mu)}$.

\begin{proposition}\label{UEntropyBetweenScalesIncrease}
    Let $\mu$ be a finitely supported, contracting on average probability measure on $G$.  Suppose that $S_{U(\mu)} < \infty$ and  that $h_{U(\mu)}/|\chi_{\mu}|$ is sufficiently large. Let $S > S_{U(\mu)}$, $\kappa > 0$ and $a \geq 1$ and suppose that $0 < r_1 < r_2 < a^{-1}$ with $r_1 < \exp( -S \log(\kappa^{-1})/|\chi_{\mu}|)$. Then as $\kappa \to 0$,
    \begin{align*}
          H_a(q_{\tau_{\kappa}}; r_1|r_2) \geq \left( \frac{h_{U(\mu)}}{|\chi_{\mu}|} - d -1 \right) \log \kappa^{-1} + H(s_{r_2,a}) + o_{\mu,d,S,a}(\log \kappa^{-1}).
    \end{align*}
\end{proposition}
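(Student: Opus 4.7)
The plan is to mirror the proof of Proposition~\ref{EntropyBetweenScalesIncrease}, which proceeded by writing
\[
H_a(q_{\tau_\kappa};r_1|r_2)=H_a(q_{\tau_\kappa};r_1)-H(q_{\tau_\kappa}s_{r_2,a})+H(s_{r_2,a})
\]
and then bounding the two pieces via Lemma~\ref{EntScaleGrowth1} and Lemma~\ref{EntScaleGrowth2}. I would keep the upper bound on $H(q_{\tau_\kappa}s_{r_2,a})$ essentially unchanged (with a slight loosening by $+\log\kappa^{-1}$ to absorb normalization discrepancies, explaining the ``$-1$'' in the conclusion), and replace the lower bound on $H_a(q_{\tau_\kappa};r_1)$ by one driven purely by the $O(d)$--coordinate.

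For the lower bound, I would exploit the projection $\pi\colon G\to O(d)$, $g\mapsto U(g)$. Since entropy between scales is non-increasing under Lipschitz maps (after suitable normalization with respect to Haar), one has
\[
H_a(q_{\tau_\kappa};r_1)\;\geq\; H_a^{O(d)}\!\bigl(U(q_{\tau_\kappa});r_1\bigr)\;-\;O_{\mu,d,a}(1).
\]
Now $U(q_{\tau_\kappa})$ is the stopped random walk on the compact group $O(d)$ driven by $U(\mu)$, whose support is separated at rate $S_{U(\mu)}$. Since by hypothesis $r_1<\exp(-S\log\kappa^{-1}/|\chi_\mu|)$ with $S>S_{U(\mu)}$, the argument of Lemma~\ref{EntScaleGrowth1} (i.e.\ \cite{KittleKoglerEntropy}*{Corollary~1.3} applied on $O(d)$), combined with the large deviation principle of Lemma~\ref{rhotauLDP} for the number of steps $\tau_\kappa\approx \log\kappa^{-1}/|\chi_\mu|$, yields
\[
H_a^{O(d)}\!\bigl(U(q_{\tau_\kappa});r_1\bigr)\;\geq\;\frac{h_{U(\mu)}}{|\chi_\mu|}\log\kappa^{-1}+o_{\mu,d,S,a}(\log\kappa^{-1}).
\]
Here the point is that the exponential separation on $O(d)$ makes the smoothing by $s_{r_1,a}$ transparent, so the entropy of the atomic law of $U(q_{\tau_\kappa})$, which is at least $\RWEntropy_{U(\mu)}\cdot \tau_\kappa(1-o(1))$, is recovered.

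For the upper bound, I would repeat verbatim the argument of Lemma~\ref{EntScaleGrowth2}, decomposing with respect to the Haar measure $\rho^{-(d+1)}d\rho\,dU\,db$ of $G$ and applying \cite{KittleKoglerEntropy}*{Lemma~2.5} to get
\[
H(q_{\tau_\kappa}s_{r_2,a})\;\leq\; D_{\mathrm{KL}}\bigl(\rho(\cdot)\,\|\,\rho^{-(d+1)}d\rho\bigr)+D_{\mathrm{KL}}\bigl(U(\cdot)\,\|\,dU\bigr)+D_{\mathrm{KL}}\bigl(b(\cdot)\,\|\,db\bigr).
\]
The middle term is $\leq 0$ since $dU$ is a probability measure; the first term is at most $d(1+\varepsilon)\log\kappa^{-1}$ by the same exhaustion using $\rho(q_{\tau_\kappa})\geq \kappa^{1+\varepsilon}$ from Lemma~\ref{rhotauLDP}; and the third is $O_{\mu,d,a}(1)$ using the polynomial tail decay \eqref{PolynomialTailDecay} exactly as before. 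Combining with the lower bound and absorbing the $O(1)$ loss from the projection step into an extra $\log\kappa^{-1}$ gives the conclusion with the exponent $\frac{h_{U(\mu)}}{|\chi_\mu|}-d-1$.

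The main obstacle is the projection step: one must verify that $H_a(q_{\tau_\kappa};r_1)\geq H_a^{O(d)}(U(q_{\tau_\kappa});r_1)-O(1)$ with the correct normalizations, since the smoothing $s_{r_1,a}$ on $G$ has Lie algebra components along $\mathfrak{so}(d)$, $\R$, and $\R^d$ while the smoothing one would naturally use on $O(d)$ only uses the $\mathfrak{so}(d)$ component. Rigorously, I would condition on the $(\rho,b)$-components of $s_{r_1,a}$ being in their typical (bounded) range, use translation invariance of differential entropy in the $\R^d$-fibre, and invoke \cite{KittleKoglerEntropy}*{Lemma~2.5} to split the differential entropy into contributions from each factor. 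The $O(d)$-contribution is then precisely the $O(d)$ entropy between scales, while the $(\rho,b)$-contributions are absorbed into the one extra unit in the ``$-d-1$''. All other details—the Lie-algebra structure, the choice of $a$, and the $o_{\mu,d,S,a}(\log \kappa^{-1})$ error bookkeeping—carry over unchanged from the proof of Proposition~\ref{EntropyBetweenScalesIncrease}.
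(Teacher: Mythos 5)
Your skeleton is the same as the paper's: keep Lemma~\ref{EntScaleGrowth2} unchanged for the scale-$r_2$ term, and replace the scale-$r_1$ lower bound by one driven by the $O(d)$-coordinate, splitting the entropy along the product Haar structure $\rho^{-(d+1)}d\rho\, dU\, db$. The paper does this by applying \cite{KittleKoglerEntropy}*{Lemmas 2.5 and 2.6} to decompose $H(q_{\tau_\kappa}s_{r_1,a})$ into the three coordinate contributions, lower-bounding the $U$-marginal by $\frac{h_{U(\mu)}}{|\chi_\mu|}\log\kappa^{-1}+D_{\mathrm{KL}}(U(s_{r_1,a})\,\|\,dU)+o(\log\kappa^{-1})$ (using $S>S_{U(\mu)}$ and the LDP for $\tau_\kappa$, exactly as you propose), comparing the $\rho$- and $b$-marginals of $q_{\tau_\kappa}s_{r_1,a}$ with those of $s_{r_1,a}$ \emph{in their respective reference measures}, and reassembling $H(s_{r_1,a})$ via Lemma~2.5.

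The genuine gap is in your projection step. The principle ``entropy between scales is non-increasing under Lipschitz maps up to $O(1)$'' is not valid as stated, and the repair you sketch (condition on the $(\rho,b)$-components of $s_{r_1,a}$ being typical, use translation invariance of differential entropy in the $\R^d$-fibre, absorb the discrepancy into one extra $\log\kappa^{-1}$) does not survive a careful accounting. Conditionally on $q_{\tau_\kappa}=g$, the $b$-component of $gs_{r_1,a}$ is $b(g)+\rho(g)U(g)b(s_{r_1,a})$: this is not merely a translate but a contraction by $\rho(g)\approx\kappa$, so its differential entropy with respect to Lebesgue $db$ drops by about $d\log\kappa^{-1}$ — far more than the single $\log\kappa^{-1}$ of slack you reserve, and if left uncompensated your method only yields an exponent of roughly $\frac{h_{U(\mu)}}{|\chi_\mu|}-2d-1$, which is weaker than the stated proposition. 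The loss is in fact exactly cancelled by the simultaneous gain of $d\log\kappa^{-1}$ in the $\rho$-marginal measured against the reference $\rho^{-(d+1)}d\rho$; equivalently, one must treat the $(\rho,b)$-fibre jointly and use that left multiplication by a fixed $g\in G$ preserves the fibre Haar measure $\rho^{-(d+1)}d\rho\,db$, so the conditional fibre entropy of $q_{\tau_\kappa}s_{r_1,a}$ is at least that of $s_{r_1,a}$. With that ingredient your inequality $H_a(q_{\tau_\kappa};r_1)\geq H_a^{O(d)}(U(q_{\tau_\kappa});r_1)-O(1)$ does hold (and this is, in substance, what the paper's coordinate decomposition proves), but ``translation invariance in the $\R^d$-fibre'' alone cannot deliver it. A secondary, minor point: you should also check that the projected kernel $U(s_{r_1,a})$ is an admissible smoothing on $O(d)$ (or compare it to the canonical one) before invoking the $O(d)$-analogue of \cite{KittleKoglerEntropy}*{Corollary 1.3}.
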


\begin{proof}
    The proof is similar to the one of Proposition~\ref{EntropyBetweenScalesIncrease} thus we only provide a sketch. Lemma~\ref{EntScaleGrowth2} still holds and therefore we only need to show that 
    \begin{equation}\label{UEntScaleGrowth1}
        H_a(q_{\tau_{\kappa}}; r_1) \geq \left( \frac{h_{U(\mu)}}{|\chi_{\mu}|} -1 \right)\log \kappa^{-1} + o_{\mu,d,S,a}(\log \kappa^{-1}),
    \end{equation}
    where $H_a(q_{\tau_{\kappa}}; r_1) = H(q_{\tau_{\kappa}}s_{r_1,a}) - H(s_{r_1,a})$. To show \eqref{UEntScaleGrowth1} we apply \cite{KittleKoglerEntropy}*{Lemma 2.6} with $X = \R_{>0} \times O(d) \times \R^d$ and $\Phi : G \to X, g \mapsto (\rho(g), U(g), b(g))$ and $\Haarof{X}$ the product measure on $X$ as used in Lemma~\ref{EntScaleGrowth2}. Indeed, as $\Phi_{*}\Haarof{G} = \Haarof{X}$, it follows by \cite{KittleKoglerEntropy}*{Lemma 2.6} that $H(q_{\tau_{\kappa}}s_{r_1,a}) = D_{\mathrm{KL}}(q_{\tau_{\kappa}}s_{r_1,a}\,||\, \Haarof{G}) = D_{\mathrm{KL}}(\Phi_{*}q_{\tau_{\kappa}}s_{r_1,a}\,||\, \Haarof{X})$ and therefore, since $\Haarof{X}$ is a product measure,
    \begin{align*}
    H(q_{\tau_{\kappa}}s_{r_1,a}) &= D_{\mathrm{KL}}(U(q_{\tau_{\kappa}}s_{r_1,a})\,||\, dU) + D_{\mathrm{KL}}(\rho(q_{\tau_{\kappa}}s_{r_1,a}) \,||\, \rho^{-(d + 1)}d\rho) \\ &+ D_{\mathrm{KL}}(b(q_{\tau_{\kappa}}s_{r_1,a})\,||\, db).
    \end{align*}
    As in Proposition~\ref{EntropyBetweenScalesIncrease} one shows that $$D_{\mathrm{KL}}(U(q_{\tau_{\kappa}}s_{r_1,a})\,||\, dU) \geq \frac{h_{U(\mu)}}{|\chi_{\mu}|}\log \kappa^{-1} + D_{\mathrm{KL}}(U(s_{r_1,a})\,||\, dU) + o_{\mu,d,S,a}(\log \kappa^{-1}).$$ On the other hand, $$D_{\mathrm{KL}}(\rho(q_{\tau_{\kappa}}s_{r_1,a}) \,||\, \rho^{-(d + 1)}d\rho) \gg D_{\mathrm{KL}}(\rho(s_{r_1,a}) \,||\, \rho^{-(d + 1)}d\rho)$$ and $$D_{\mathrm{KL}}(b(q_{\tau_{\kappa}}s_{r_1,a})\,||\, db) \gg D_{\mathrm{KL}}(b(s_{r_1,a})\,||\, db)$$ and note that by \cite{KittleKoglerEntropy}*{Lemma 2.5}, $$D_{\mathrm{KL}}(U(s_{r_1,a})\,||\, dU) + D_{\mathrm{KL}}(\rho(s_{r_1,a}) \,||\, \rho^{-(d + 1)}d\rho) + D_{\mathrm{KL}}(b(s_{r_1,a})\,||\, db) \geq H(s_{r_1,a}).$$ All these estimates combined imply the claim. 
\end{proof}

    \section{Well-Mixing and Non-Degeneracy}

    \label{section:mixnondeg}

In this section we study $(c,T)$-well mixing as well as $(\alpha_0, \theta, A)$-non-degeneracy. The goal of this section is prove Proposition~\ref{UnifNonDeg} and Proposition~\ref{UnifNonDegContAv}. We treat $(c,T)$-well-mixing in section~\ref{MixingSection} and show that we have uniform results as long as $U(\mu)$ is fixed. In section~\ref{section:non-deg} we conclude the proofs of Proposition~\ref{UnifNonDeg} and Proposition~\ref{UnifNonDegContAv} by proving strong results on non-degeneracy. 

\subsection{$(c, T)$-well-mixing}\label{MixingSection}

In this subsection, we establish that we have uniform $(c,T)$-well-mixing whenever $U(\mu)$ is fixed and irreducible and note that well-mixing is continuous in pertubations of $\mu_U$. Moreover, we prove that $(c,T)$ can be taken to be uniform when we know a lower bound on the spectral gap of $U(\mu)$. We start with a preliminary lemma that will also be used in section~\ref{section:non-deg}. Throughout this section and next we denote by $\Haarof{H}$ the Haar probability measure on a closed subgroup $H\subset O(d)$ and by $I \in O(d)$ the identity matrix. 

\begin{lemma}\label{IrreducibleTrick}(Schur-type Lemma)
    Suppose that $d\geq 1$ and that $H$ is an closed, irreducible subgroup of $O(d)$ and let $V$ be a uniform random variable on $H$. Let $B$ be a random variable independent from $V$ taking values in $\R^d$. Then $VB$ has mean zero and covariance matrix of the form $\lambda I$ for some $\lambda \geq 0$.
\end{lemma}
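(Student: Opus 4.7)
The plan is to apply a Schur-type argument twice: once to show the mean vanishes, and once to show that a certain symmetric matrix in the commutant of $H$ must be a scalar multiple of the identity.

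First I would use independence to write $\E[VB] = \E[V]\,\E[B]$, so it suffices to show that the matrix $M_0 := \E[V]$ vanishes. Left-invariance of Haar measure on $H$ yields that $hV$ has the same law as $V$ for every $h\in H$, hence $h M_0 = M_0$. This forces $\mathrm{Im}(M_0) \subset \{x \in \R^d : hx = x \text{ for every } h \in H\}$, which is an $H$-invariant subspace and therefore equals either $\{0\}$ or $\R^d$ by irreducibility. The latter would mean that $H$ acts trivially, which in any dimension $d \geq 2$ contradicts irreducibility; in the remaining case $M_0 = 0$ and we conclude $\E[VB] = 0$.

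Next, using independence together with the identity $\E[VB]=0$, I would compute
\begin{equation*}
    \var(VB) \;=\; \E\!\left[VBB^T V^T\right] \;=\; \E\!\left[V\,Q\,V^T\right], \qquad Q := \E[BB^T],
\end{equation*}
and call this matrix $M$. Left-invariance of Haar measure again gives, for every $h \in H$,
\begin{equation*}
    hMh^T \;=\; \E\!\left[(hV)\,Q\,(hV)^T\right] \;=\; M,
\end{equation*}
and since $h \in O(d)$ satisfies $h^T = h^{-1}$ this means $M h = h M$. Thus $M$ lies in the commutant of the representation of $H$ on $\R^d$.

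Finally, since $M$ is a real symmetric matrix commuting with the $H$-action, every real eigenspace $E_\lambda$ of $M$ is $H$-invariant; irreducibility then forces $E_\lambda \in \{\{0\}, \R^d\}$. Because $M$ is symmetric it has at least one real eigenvalue, so $M = \lambda I$ for some $\lambda \in \R$, and $\lambda \geq 0$ since $M$ is a covariance matrix. The one place that needs care is the final Schur step: working over $\R$ rather than $\C$, the commutant of an irreducible representation can be larger than $\R I$ (it may be $\R$, $\C$, or $\mathbb H$), so one really has to use the symmetry of $M$ to pin down $M = \lambda I$. This is the only nontrivial point in the argument.
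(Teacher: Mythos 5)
Your proof is correct and takes essentially the same route as the paper's: Haar invariance makes the mean $H$-fixed (hence zero by irreducibility), and conjugation-invariance of the covariance matrix $M$ together with the existence of a real eigenvector of the symmetric matrix $M$, whose eigenspace is $H$-invariant, forces $M = \lambda I$. The only cosmetic differences are that you factor $\E[VB] = \E[V]\,\E[B]$ and kill the matrix $\E[V]$ where the paper argues directly that the mean vector is $H$-invariant, and your remark about the real commutant being possibly larger than $\R I$ is exactly the role that the symmetry of $M$ plays in the paper's eigenvector argument.
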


\begin{proof}
    For $h \in H$ the random variables $h VB$ and $VB$ have the same law. This means that the mean of $VB$ is invariant under $H$ and so since $H$ is irreducible it must be zero. Moreover the covariance matrix $M$ of $VB$ is invariant under conjugation by elements of $H$. Since $M$ is symmetric positive definite, it has an eigenvector $v$ and therefore $Mv = \lambda v$ and $Mhv = hMv = \lambda h v$ for some $\lambda \geq 0$ and all $h\in H$. Since $H$ is irreducible it therefore follows that $M  = \lambda I$ as claimed. 
\end{proof}

\begin{lemma}\label{Ufixeduniformmixing}
    Let $\mu_U$ be a finitely supported probability measure on $O(d)$ such that $\mathrm{supp}(\mu_U)$ acts irreducibly on $\R^d$. Then there exists $T = T(\mu_U)$ only depending on $\mu_U$ such that every finitely supported probability measure $\mu$ on $G$ with $U(\mu)$ is $(\frac{1}{2d},T)$-well-mixing.
\end{lemma}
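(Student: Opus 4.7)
Let $H$ denote the closure in $O(d)$ of the subgroup generated by $\mathrm{supp}(\mu_U)$. Since $\mu_U$ is irreducible, so is $H$. The strategy is to use the strong Markov property to decouple the random walk at time $\tau_\kappa$, so that averaging in the parameter $F$ replaces the product $U(\gamma_{\tau_\kappa+1})\cdots U(\gamma_{\tau_\kappa+F})$ with a Cesàro average of convolutions of $\mu_U$, which converges to $\Haarof{H}$; on the Haar side, Lemma~\ref{IrreducibleTrick} computes the relevant second moment exactly.

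First I would carry out the Haar computation. By Lemma~\ref{IrreducibleTrick}, for $V \sim \Haarof{H}$ and any unit vector $y \in \R^d$, the vector $Vy$ has covariance $\lambda I$ for some $\lambda \geq 0$. Since $V$ is orthogonal, $|Vy| = 1$ almost surely, so $\tr(\lambda I) = d\lambda = 1$, giving $\lambda = 1/d$. Consequently, for any $W \in O(d)$ and unit vectors $x,y \in \R^d$,
\begin{equation*}
\E_{V \sim \Haarof{H}}[|x \cdot W V y|^2] = x^T W \cdot \tfrac{1}{d} I \cdot W^T x = \tfrac{1}{d}.
\end{equation*}
Next, by the mean ergodic theorem on the compact group $H$, the Cesàro averages $\nu_T := \tfrac{1}{T+1}\sum_{n=0}^{T} \mu_U^{*n}$ converge weakly to $\Haarof{H}$ as $T \to \infty$. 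The map $(V,W,x,y) \mapsto |x \cdot WVy|^2$ is continuous and bounded on the compact space $H \times O(d) \times S^{d-1} \times S^{d-1}$, so the family of functions $F_T(W,x,y) = \E_{V \sim \nu_T}[|x \cdot WVy|^2]$ is uniformly bounded and equicontinuous in $(W,x,y)$. A standard compactness argument (if uniform convergence failed, extract a convergent subsequence of parameters and use weak convergence together with equicontinuity to derive a contradiction) then promotes pointwise convergence to uniform convergence. I may therefore choose $T = T(\mu_U)$ so that $F_T(W,x,y) \geq \tfrac{1}{2d}$ for every $(W,x,y)$.

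Finally, I would invoke the strong Markov property: since $\tau_\kappa$ is a stopping time for the natural filtration of $(\gamma_n)$, the increments $\gamma_{\tau_\kappa+1}, \gamma_{\tau_\kappa+2}, \ldots$ are i.i.d.\ $\mu$-distributed and independent of $(q_{\tau_\kappa}, \tau_\kappa)$; moreover $F$ is independent of everything. Thus, setting $W = U(q_{\tau_\kappa})$, the conditional law of $U(\gamma_{\tau_\kappa+1}) \cdots U(\gamma_{\tau_\kappa+F})$ given $W$ and averaged over $F \in \{0,1,\ldots,T\}$ is exactly $\nu_T$, so
\begin{equation*}
\E\bigl[|x \cdot U(q_{\tau_\kappa+F}) y|^2\bigr] = \E\bigl[F_T(W,x,y)\bigr] \geq \tfrac{1}{2d}
\end{equation*}
for every unit pair $(x,y)$ and every $\kappa > 0$ (so one may take any $\kappa_0 > 0$). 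This establishes $(\tfrac{1}{2d},T)$-well-mixing.

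\textbf{Main obstacle.} The computation itself is short once the pieces are in place. The only subtle step is upgrading weak convergence $\nu_T \to \Haarof{H}$ to convergence of $F_T$ that is uniform in the auxiliary parameters $W,x,y$, since this uniformity is what makes $T$ depend only on $\mu_U$ and not on $\mu$; this is handled by the equicontinuity/compactness argument sketched above. The use of the stopping time $\tau_\kappa$ plays no real role beyond the strong Markov property, which is why the bound holds for every $\kappa > 0$.
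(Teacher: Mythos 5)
Your proposal is correct and follows essentially the same route as the paper: the Schur-type Lemma~\ref{IrreducibleTrick} gives $\E_{V\sim \Haarof{H}}[|x\cdot Vy|^2]=\tfrac1d$, the Ces\`aro averages $\tfrac{1}{T+1}\sum_{n=0}^T\mu_U^{*n}$ converge to $\Haarof{H}$, and $T$ is then chosen large depending only on $\mu_U$. The only (harmless) differences are that you justify the Ces\`aro convergence via the mean ergodic theorem rather than the paper's argument that weak-$*$ limit points are $\mu_U$-stationary hence $H$-invariant, and that you spell out the strong Markov decoupling at $\tau_\kappa$ and the uniformity in $(W,x,y)$, which the paper leaves implicit by quantifying over all unit vectors.
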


\begin{proof}
    Let $H \subset O(d)$ be the closure of the group generated by $\mathrm{supp}(\mu_U)$. Then $H$ is compact and let $\Haarof{H}$ be the Haar probablility measure on $G$ and denote by $V$ a uniform random variable on $H$. We first claim that for all unit vectors $x$ and $y$ in $\R^d$ we have 
    \begin{equation}\label{HExpectation}
        \E[|x\cdot Vy|^2] = \frac{1}{d}.
    \end{equation}
    Indeed, we can view $y$ as a random variable independent from $V$ and therefore, by Lemma~\ref{IrreducibleTrick}, the random variable $Vy$ has mean zero and covariance matrix $\lambda I$. Moreover, since $\E[|Vy|^2] = d\lambda = 1$ it follows that $\lambda = \frac{1}{d}$ and therefore \eqref{HExpectation} holds. 
    
    Let $F$ be a uniform random variable on $[0,T]$. Then $U(q_F)$ is distributed as 
    \begin{equation}\label{Fdist}
        \frac{1}{T + 1} \sum_{i = 0}^T \mu_U^{*i}.
    \end{equation}
    We claim that $\eqref{Fdist}$ converges as $T\to \infty$ to $\Haarof{H}$ in the weak$^{*}$-topology. Indeed, we note that any weak$^{*}$-limit $m$ of \eqref{Fdist} is $\mu_U$-stationary and, upon performing an ergodic decomposition, we may assume without loss of generality that $m$ is in addition ergodic. As this is equivalent to the measure being extremal, we conclude that $m$ is invariant under the group generated by $\mathrm{supp}(\mu_U)$ and therefore also by $H$, implying that $m = m_H$.

    Finally, we just choose $c = \frac{1}{2d}$ and $T$ sufficiently large depending on $\mu_U$ such that $\eqref{Fdist}$ is sufficiently close in distribution to $\Haarof{H}$ and therefore $\E[|x\cdot U(q_F)y|^2] \geq \frac{1}{2d}$ for all unit vectors $x,y \in \R^d$, implying the claim.
\end{proof}

It is straightforward to show that $(c,T)$-well-mixing is continuous in measure $\mu_U$. We state the following version with the $L^3$-Wasserstein distance in order to prove our non-degeneracy results in the next subsection.

\begin{lemma}\label{MixingContinuity}
    Let $\mu_U$ be a probability measure on $O(d)$ and suppose that $\mu_U$ is $(c, T)$-well-mixing for some $c, T > 0$. Then for every $\varepsilon \in (0, c)$ there is some $\delta > 0$ such that if $\mathcal{W}_3(U(\mu), \mu_U) < \delta$, for $\mu$ a measure on $G$, then $\mu$ is $(c - \varepsilon, T)$-well-mixing.
\end{lemma}

\begin{proof}
    This follows as small pertubations in the matrices, lead to small pertubations in $|x\cdot U(q_F)y|$ for all $x,y \in \mathbb{R}^d$ with unit norm.
\end{proof}

For a closed subgroup $H \subset O(d)$ and a probability measure $\mu_U$ supported on $H$ we denote, as defined in \eqref{SpectralGapDef}, by $\mathrm{gap}_H(\mu_U)$ the $L^2$-spectral gap of $\mu_U$ on $L^2(H)$. We aim to show uniform well-mixing as long as $\mathrm{gap}_H(\mu_U) \geq \eps$ independent of the subgroup $H$. To do so, we first show that we have uniform convergence in the Wasserstein distance with a rate only depending on $\eps$ and $d$.

\begin{lemma}\label{GapWassersteinConvergence}
    Let $d\geq 1, \eps \in (0,1)$ and let $\mu_U$ be a probability measure on $O(d)$. Assume that $\mathrm{gap}_H(\mu_U) \geq \eps$ for $H$ the subgroup generated by the support of $\mu_U$. Then for $n\geq 1$ $$\mathcal{W}_1(\mu_U^{*n}, \Haarof{H}) \ll_d (1-\eps)^{\alpha n}$$ for $\alpha = (1 + \frac{1}{2}\dim O(d))^{-1}$.
\end{lemma}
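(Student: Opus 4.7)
The plan is a smoothing argument that converts the $L^{2}$-spectral gap into a Wasserstein estimate. Let $k = \dim H \leq \dim O(d)$ and fix $r \in (0,1)$ to be chosen later. Since $H$ is a closed subgroup of $O(d)$, the Riemannian ball $B_r^H(e) \subset H$ has $m_H$-mass $\asymp_d r^{k}$, so one can take $\psi_r$ to be a probability density on $H$ (with respect to $m_H$) supported in $B_r^H(e)$ with
\[
    \|\psi_r - 1\|_{L^2(m_H)} \ll_d r^{-k/2};
\]
for instance $\psi_r = \mathbf{1}_{B_r^H(e)}/m_H(B_r^H(e))$ does the job.

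The key step is to pass the spectral gap through to the density $f_n$ of $\mu_U^{*n} * \psi_r$ with respect to $m_H$. For any mean-zero $\phi \in L^2(m_H)$, left-invariance of $m_H$ together with Fubini yields the identity
\[
    \int \phi \, d(\mu_U^{*n} * \psi_r) = \langle T_{\mu_U^{*n}} \phi, \psi_r \rangle = \langle T_{\mu_U^{*n}} \phi, \psi_r - 1 \rangle,
\]
where the constant can be subtracted because $T_{\mu_U^{*n}}\phi$ still has mean zero. Applying Cauchy--Schwarz and the hypothesis $\|T_{\mu_U^{*n}}\phi\|_2 \leq (1-\eps)^n \|\phi\|_2$ bounds this by $(1-\eps)^n \|\phi\|_2 \|\psi_r-1\|_2 \ll_d (1-\eps)^n r^{-k/2} \|\phi\|_2$. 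Taking the supremum over mean-zero unit $\phi$ gives $\|f_n - 1\|_{L^2(m_H)} \ll_d (1-\eps)^n r^{-k/2}$, and a second application of Cauchy--Schwarz against the probability measure $m_H$ yields the same bound on the total variation $\|\mu_U^{*n} * \psi_r - m_H\|_{\mathrm{TV}}$. Because $H \subset O(d)$ has diameter $O_d(1)$, this translates into
\[
    \mathcal{W}_1(\mu_U^{*n} * \psi_r, m_H) \ll_d (1-\eps)^n r^{-k/2}.
\]

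To conclude, the coupling $(X, XY)$ with $X \sim \mu_U^{*n}$ and $Y \sim \psi_r$ independent witnesses $\mathcal{W}_1(\mu_U^{*n}, \mu_U^{*n} * \psi_r) \leq r$, so the triangle inequality produces
\[
    \mathcal{W}_1(\mu_U^{*n}, m_H) \ll_d r + r^{-k/2}(1-\eps)^n.
\]
Balancing by choosing $r = (1-\eps)^{n/(1+k/2)}$ yields a bound of the form $(1-\eps)^{n/(1+k/2)}$, which is no weaker than the claimed estimate since $1/(1+k/2) \geq (1+\dim O(d)/2)^{-1} = \alpha$. The main technical point is the volume estimate $m_H(B_r^H(e)) \asymp_d r^{k}$ and the accompanying construction of $\psi_r$, which follow from standard Riemannian geometry of closed Lie subgroups of $O(d)$; the degenerate case $k = 0$ requires only a minor adjustment, as $H$ is then finite and the bound can be obtained directly from the Poincaré-type estimate $\|\mu_U^{*n} - m_H\|_{\mathrm{TV}} \ll |H|^{1/2}(1-\eps)^n$ combined with the trivial bound $\mathcal{W}_1 \leq \pi$ in the initial mixing regime.
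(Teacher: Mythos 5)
Your overall strategy is the same as the paper's: smooth $\mu_U^{*n}$ by the normalized indicator of a small ball in $H$, use the $L^2$ spectral gap to force the smoothed density close to $1$, pass to total variation and then to $\mathcal{W}_1$, and balance the smoothing scale against $(1-\eps)^n$. The one step where you diverge is also where the argument breaks: you assert $\Haarof{H}(B_r^H(e)) \asymp_d r^{k}$ with $k=\dim H$ and implied constants depending only on $d$. The lower bound in this two-sided estimate is false uniformly over closed subgroups $H\subset O(d)$, and uniformity is exactly what the lemma requires, since the conclusion is $\ll_d$ with a constant independent of $H$ (this independence is what is later used to make the constants in Theorem~\ref{SpectralGapTheorem} and Lemma~\ref{UnifWellMixSpecGap} depend only on $d$ and $\eps$). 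A concrete counterexample: for a large integer $p$ let $H_p=\{\mathrm{diag}(\mathrm{rot}(\theta),\mathrm{rot}(p\theta)):\theta\in[0,2\pi)\}\subset SO(4)$, a closed one-dimensional subgroup. In the operator-norm metric one computes $\Haarof{H_p}(B_\delta^{H_p}(e))\asymp \delta^2+\delta/p$, which for fixed small $\delta$ and $p\to\infty$ is far smaller than $c_d\,\delta^{\dim H_p}=c_d\,\delta$; the geometry of the embedding (how the subgroup winds), not just $d$ and $\dim H$, enters the constant, so "standard Riemannian geometry of closed Lie subgroups" cannot supply a $d$-only constant. Your fallback for $k=0$ has the same defect: the factor $|H|^{1/2}$ is unbounded in terms of $d$, and for finite $H$ with $|H|$ huge there is an intermediate range of $n$ where neither $|H|^{1/2}(1-\eps)^n$ nor the trivial diameter bound is $\ll_d (1-\eps)^{\alpha n}$.

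The paper's proof repairs precisely this point: it only uses the weaker lower bound $\Haarof{H}(B_\delta^H(e)) \gg_d \delta^{\dim O(d)}$, which \emph{is} uniform in $H$ (a maximal $\delta$-separated subset of $H$ has $\ll_d \delta^{-\dim O(d)}$ elements by packing in $O(d)$, the $2\delta$-balls centred at these points cover $H$, and by invariance all such balls have measure $\Haarof{H}(B_{2\delta}^H(e))$). This is exactly why the exponent in the statement is $\alpha=(1+\tfrac12\dim O(d))^{-1}$ rather than $(1+\tfrac12\dim H)^{-1}$: your claim that your exponent $1/(1+k/2)$ is "no weaker" is true at the level of exponents, but the price of using $\dim H$ is an implied constant depending on $H$, which the statement does not allow. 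If you replace your volume estimate by the packing-based bound with exponent $\dim O(d)$ and rebalance, your argument becomes essentially identical to the paper's and is correct; the rest of your steps (the duality/Cauchy--Schwarz use of the spectral gap, TV-to-$\mathcal{W}_1$ via bounded diameter, and the coupling bound $\mathcal{W}_1(\mu_U^{*n},\mu_U^{*n}*\psi_r)\le r$) are fine.
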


\begin{proof}
    We consider the metric $d(g_1, g_2) = ||g_1 - g_2||$ on $O(d)$ for $||\circ||$ the operator norm and note that it is bi-invariant and restricts to $H$. Denote by $B^H_{\delta}(h)$ for $h\in H$ and $\delta > 0$ the $\delta$-ball around $h\in H$ and denote $$P_{\delta} = \frac{1_{B^H_{\delta}(e)}}{\Haarof{H}(B^H_{\delta}(e))}.$$ For $\delta \in (0,1)$ we note that $\Haarof{H}(B^H_{\delta}(e)) \gg_d \delta^{\dim O(d)}$ for an implied constant depending only on $d$ and therefore $||P_{\delta}||_2  \ll_d \delta^{-(\dim O(d))/2}$. Also we note that for $h \in H$ we have $(\mu^{*n}*P_{\delta})(h) = \frac{\mu^{*n}(B_{\delta}^H(h))}{\Haarof{H}(B^H_{\delta}(e))}.$ By the triangle inequality, $$\mathcal{W}_1(\mu^{*n}, \Haarof{H}) \leq \mathcal{W}_1(\mu^{*n}, \mu^{*n}*P_{\delta}) + \mathcal{W}_1(\mu^{*n}*P_{\delta}, \Haarof{H}).$$ Note $\mathcal{W}_1(\mu^{*n}, \mu^{*n}*P_{\delta}) \ll_d \delta$ and since $H$ is compact, 
    \begin{align*}
        \mathcal{W}_1(\mu^{*n}*P_{\delta}, \Haarof{H}) &\ll_d ||\mu^{*n}*P_{\delta} - 1||_1 \\ &\leq ||\mu^{*n}*P_{\delta} - 1||_2 \\ & \leq (1-\eps)^n ||P_{\delta}||_2 \ll_d (1-\eps)^n \delta^{-(\dim O(d))/2}.
    \end{align*} To conclude, if follows $$\mathcal{W}_1(\mu^{*n}, \Haarof{H}) \ll_d \delta + (1-\eps)^n \delta^{-(\dim O(d))/2}.$$ Therefore setting $\delta = (1-\eps)^{\alpha n}$ for $\alpha = (1 + \frac{1}{2}\dim O(d))^{-1}$ implies the claim.
\end{proof}

\begin{lemma}\label{UnifWellMixSpecGap}
    Let $d\geq 1, \eps \in (0,1)$ and let $\mu_U$ be a probability measure on $O(d)$. Assume that $\mathrm{gap}_H(\mu_U) \geq \eps$ for $H$ the subgroup generated by the support of $\mu_U$. Then there exists $T = T(d,\eps)$ only depending on $d$ and $\eps$ such every probability measure $\mu$ on $G$ with $U(\mu) = \mu_U$ is $(\frac{1}{2d},T)$-well-mixing.
\end{lemma}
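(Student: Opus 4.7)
The plan is to apply the strong Markov property to decouple $U(q_{\tau_\kappa})$ from $U(\gamma_{\tau_\kappa+1}\cdots\gamma_{\tau_\kappa+F})$, and then use Lemma~\ref{GapWassersteinConvergence} to show that the latter is close in Wasserstein distance to the Haar measure $m_H$, for $T$ large enough in terms of $d$ and $\varepsilon$.

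Fix unit vectors $x, y \in \R^d$ and set $W = U(q_{\tau_\kappa})$. Since $\tau_\kappa$ is a stopping time for the natural filtration of $(\gamma_i)$ and $F$ is independent of $(\gamma_i)$, the strong Markov property implies that $V := U(\gamma_{\tau_\kappa+1}\cdots \gamma_{\tau_\kappa+F})$ is independent of $W$ and has distribution $\nu_T := \frac{1}{T+1}\sum_{i=0}^{T}\mu_U^{*i}$. Because $U$ is multiplicative, $U(q_{\tau_\kappa+F})=WV$, so conditioning on $W$ and using Lemma~\ref{IrreducibleTrick} exactly as in the proof of Lemma~\ref{Ufixeduniformmixing}, under the Haar measure $m_H$ on the irreducible subgroup $H$ one has
$$
\E_{V_0\sim m_H}\bigl[|x\cdot WV_0 y|^2\bigr] \;=\; \tfrac{1}{d}\,|W^{T}x|^{2} \;=\; \tfrac{1}{d},
$$
uniformly in $W\in O(d)$. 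It therefore suffices to show that the corresponding expectation under $\nu_T$ differs from $1/d$ by at most $1/(2d)$, uniformly in $W$, $x$, $y$.

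For any $W$, the function $V\mapsto|x\cdot WVy|^2$ on $O(d)$ is $2$-Lipschitz with respect to the operator-norm metric, so Wasserstein duality gives
$$
\bigl|\E_{V\sim\nu_T}[|x\cdot WVy|^2] - \tfrac{1}{d}\bigr| \;\leq\; 2\,\mathcal{W}_1(\nu_T, m_H).
$$
Convexity of $\mathcal{W}_1$, together with Lemma~\ref{GapWassersteinConvergence} for the terms with $i\geq 1$ and the trivial bound $\mathcal{W}_1(\delta_e,m_H)\ll 1$ for $i=0$, yields
$$
\mathcal{W}_1(\nu_T, m_H) \;\leq\; \tfrac{1}{T+1}\sum_{i=0}^{T}\mathcal{W}_1(\mu_U^{*i},m_H) \;\ll_d\; \tfrac{1}{T+1}\sum_{i=0}^{\infty}(1-\varepsilon)^{\alpha i} \;\leq\; \tfrac{C(d,\varepsilon)}{T+1}.
$$
Choosing $T=T(d,\varepsilon)$ so that $2C(d,\varepsilon)/(T+1)\leq 1/(2d)$ then forces $\E[|x\cdot U(q_{\tau_\kappa+F})y|^2]\geq 1/d - 1/(2d) = 1/(2d)$, which gives $(\tfrac{1}{2d},T)$-well-mixing. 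There is no serious obstacle beyond bookkeeping; the only substantive input is the quantitative equidistribution statement of Lemma~\ref{GapWassersteinConvergence}, and the key point is that the estimate is uniform in the conditionally independent matrix $W=U(q_{\tau_\kappa})$, so no control of the distribution of $\tau_\kappa$ itself is needed.
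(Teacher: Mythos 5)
Your proposal is correct and takes essentially the same route as the paper: both proofs reduce to comparing the Ces\`aro average $\frac{1}{T+1}\sum_{i=0}^{T}\mu_U^{*i}$ with $\Haarof{H}$ via the spectral-gap Wasserstein bound of Lemma~\ref{GapWassersteinConvergence}, use the Schur-type Lemma~\ref{IrreducibleTrick} to get the Haar value $1/d$, and exploit that $V\mapsto|x\cdot WVy|^2$ is $2$-Lipschitz in the operator norm. The only differences are cosmetic: you apply the Lipschitz/Wasserstein estimate uniformly in $(x,y)$ (and in the prefactor $W$) instead of passing through the paper's finite net of pairs of unit vectors, and you make explicit the strong-Markov decoupling of $U(q_{\tau_\kappa})$ from the post-stopping increments, which the paper (as in Lemma~\ref{Ufixeduniformmixing}) leaves implicit.
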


\begin{proof}
    The proof is similar to the one of Lemma~\ref{Ufixeduniformmixing} and recall the notation used in it. Consider a list of tuples of unit vectors $(x_1,y_1), \ldots ,(x_m, y_m)$ such that for every two unit vectors $x$ and $y$ in $\R^d$ there is some $i \in [m]$ such that $$\sup_{U \in O(d)}\Big| \, |x\cdot U y|^2 - |x_i \cdot U y_i|^2 \Big| < \frac{1}{4d}.$$ Such a list of tuples exists as the action of $O(d)$ on $\mathbb{S}^{d-1}\subset \R^d$ is uniformly continuous. We claim that for $T$ large enough depending only on $\eps$ we have for all $i \in [m]$ that $$\E[|x_i \cdot U(q_F)y_i|^2] \geq \frac{3}{4d}.$$ Indeed, we note that for $h_1, h_2 \in H$ we have $$\big|\,|x_i \cdot h_1y_i|^2 - |x_i \cdot h_2y_i|^2\,\big| \leq \big|\,|x_i \cdot h_1y_i| + |x_i \cdot h_2y_i|\,\big| \cdot \big|\,|x_i \cdot h_1y_i| - |x_i \cdot h_2y_i|\,\big| \leq 2||h_1 - h_2||.$$ Thus it follows that $$\E[|x_i \cdot Vy_i|^2 - |x_i \cdot U(q_n)y_i|^2] \leq 2\mathcal{W}_1(\mu^{*n},\Haarof{H})$$ and the claim follows by Lemma~\ref{GapWassersteinConvergence}. This concludes the proof as for all $x$ and $y$ we have $$\E[|x \cdot U(q_F)y|^2] \geq \sup_{i \in [m]} \E[|x_i \cdot U(q_F)y_i|^2] - \frac{1}{4d} \geq \frac{1}{2d}.$$
\end{proof}

\subsection{$(\alpha_0, \theta, A)$-non-degeneracy}\label{section:non-deg}

In order to state our results on $(\alpha_0, \theta, A)$-non-degeneracy it is useful to understand that we can translate and rescale our generating measures, without changing any of the fundamental properties. It is also beneficial to replace $\mu$ by $\frac{1}{2}\delta_e + \frac{1}{2}\mu$ and we show in the following lemma that these changes do not change our self-similar measure or any of the relevant constants in a fundamental way. 

\begin{lemma}\label{ConjugationLemma}
    Let $\mu = \sum_{i} p_i \delta_{g_i}$ be a contracting on average probability measure on $G$ with self-measure $\nu$. Let $h \in G$ and consider the measures $$\mu_h = \sum_i p_i \delta_{hg_ih^{-1}} \quad \text{ and } \quad \mu_h' = \frac{1}{2}\delta_e + \frac{1}{2}\mu_h.$$ Then the following properties hold: 
    \begin{enumerate}[label = (\roman*)]
        \item $h_{\mu} = h_{\mu_h} = 2h_{\mu_h'}$,
        \item $\chi_{\mu} = \chi_{\mu_h} = 2\chi_{\mu_h'}$,
        \item $S_{\mu} = S_{\mu_h} = S_{\mu_h'}$,
        \item $\mathrm{gap}_H(\mu) = \mathrm{gap}_{U(h)HU(h)^{-1}}(\mu_h) = 2\mathrm{gap}_{U(h)HU(h)^{-1}}(\mu_h')$,
        \item $\mu_h$ and $\mu_{h'}$ have $h\nu$ as self-similar measure.
    \end{enumerate}
\end{lemma}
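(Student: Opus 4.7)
Proof plan. My plan is to dispatch each of the five items in turn, exploiting that $\phi_h : g \mapsto hgh^{-1}$ is a group automorphism of $G = \mathrm{Sim}(\R^d)$.

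First, for (v) I would set $\nu' = h_{*}\nu$; using the intertwining $(hgh^{-1})(hx) = h(gx)$, one checks $\mu_h * \nu' = h_{*}(\mu * \nu) = h_{*}\nu = \nu'$, and uniqueness of the stationary measure for contracting-on-average generating measures (recalled in the introduction) identifies $\nu'$ as the self-similar measure of $\mu_h$. The case of $\mu_h'$ is the same, using $\delta_e * \nu' = \nu'$ and linearity of convolution.

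The items (ii), (i), and (iv) then follow directly from the automorphism structure. Since $\rho : G \to \R_{>0}$ is a group homomorphism, $\rho(hgh^{-1}) = \rho(g)$, giving $\chi_{\mu_h} = \chi_\mu$; splitting the expectation in $\mu_h'$ yields $\chi_{\mu_h'} = \chi_\mu/2$. For the entropy, the pushforward $\phi_{h,*}\mu^{*n} = \mu_h^{*n}$ is a bijection on atoms, so $H(\mu^{*n}) = H(\mu_h^{*n})$ and hence $h_\mu = h_{\mu_h}$. To obtain $h_{\mu_h'} = h_{\mu_h}/2$, I would observe the convex combination
\[
\mu_h'^{*n} = \sum_{k=0}^n \binom{n}{k}2^{-n}\mu_h^{*k}
\]
obtained by conditioning on the number $K$ of non-identity factors (where $K \sim \mathrm{Bin}(n,1/2)$); concavity of Shannon entropy gives $H(\mu_h'^{*n}) \geq \mathbb{E}_K[H(\mu_h^{*K})]$, the standard bound $H(\sum p_i\nu_i) \leq \sum p_i H(\nu_i) + H(p)$ (with $H(p) = O(\log n)$) gives the matching upper bound, and concentration of $K$ at $n/2$ together with $H(\mu_h^{*k})/k \to h_{\mu_h}$ yields the claim. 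For (iv), conjugation by $U(h)$ is a topological group isomorphism $H \to U(h)HU(h)^{-1}$ pushing Haar measure to Haar measure, so $T_{U(\mu_U)}$ and $T_{U(\mu_h)}$ are unitarily equivalent on $L^2_0$ giving the first equality; the factor of $2$ comes from $T_{U(\mu_h')} = \tfrac{1}{2}(I + T_{U(\mu_h)})$ combined with $\|T_{U(\mu_h')}|_{L^2_0}\| = \tfrac{1}{2}(1 + \|T_{U(\mu_h)}|_{L^2_0}\|)$, which holds for Markov operators of probability measures on compact groups.

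Finally, (iii) is the main technical point and I expect it to be the hardest step. The equality $S_{\mu_h} = S_{\mu_h'}$ is immediate because $\bigcup_{i=0}^n \mathrm{supp}((\mu_h')^{*i}) = \bigcup_{i=0}^n \mathrm{supp}(\mu_h^{*i})$ as sets, so $M_n$ and hence $S_n$ agree. For $S_\mu = S_{\mu_h}$, I would write $h = rV+c$ and compute $b(\phi_h(g)) = c + rVb(g) - \rho(g)VU(g)V^{-1}c$, then expand $d(\phi_h(g_1),\phi_h(g_2))$ in terms of $d(g_1,g_2)$ using $|\rho_1-\rho_2| \leq e\min(\rho_1,\rho_2)|\log\rho_1-\log\rho_2|$ when $|\log\rho_1 - \log\rho_2| \leq 1$, to obtain $d(\phi_h(g_1),\phi_h(g_2)) \leq C_h \max(1,\rho(g_1),\rho(g_2))\,d(g_1,g_2)$ and the reverse bound. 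The main obstacle is that on $\bigcup_{i\leq n}\mathrm{supp}(\mu^{*i})$ one only has $\rho(g)\in[R^{-n},R^n]$ in general, so $\phi_h$ is bilipschitz with constant growing like $R^n$, giving $|S_\mu - S_{\mu_h}| = O(\log R)$ rather than literal equality. In the contracting case all $\rho(g_i) < 1$ forces the supports to lie in a bounded set independent of $n$ and $\phi_h$ is uniformly bilipschitz, yielding literal equality; in the contracting-on-average case the $O(\log R)$ additive loss is absorbed into the tolerance of the main theorem's hypothesis $h_\mu/|\chi_\mu| > C(\max\{1,\log(S_\mu/h_\mu)\})^2$ (since $\log(S_\mu/h_\mu)$ shifts by at most $O(\log R/S_\mu)$ under a bounded additive change in $S_\mu$), so subsequent applications of Lemma~\ref{ConjugationLemma} remain valid.
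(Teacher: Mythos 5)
Your overall route is the same as the paper's: (v) by the intertwining relation $(hgh^{-1})(hx)=h(gx)$ plus uniqueness of the stationary measure, (ii) from $\rho(hgh^{-1})=\rho(g)$, (i) from invariance of $H(\mu^{*n})$ under the automorphism $g\mapsto hgh^{-1}$ together with the lazy-walk halving (which the paper simply imports from \cite{HochmanSolomyak2017}*{Lemma 6.8}, and which your binomial decomposition with the bound $H(\sum_k p_k\nu_k)\le\sum_k p_kH(\nu_k)+H(p)$ correctly reproves), and (iii) via a bilipschitz estimate for conjugation applied along the limsup; your identification $\bigcup_{i\le n}\mathrm{supp}((\mu_h')^{*i})=\bigcup_{i\le n}\mathrm{supp}(\mu_h^{*i})$, so that $M_n$ is literally unchanged, is in fact a cleaner justification of $S_{\mu_h}=S_{\mu_h'}$ than the paper's one-line remark.

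There are, however, two places where your write-up does not deliver the statement as it stands. First, in (iii) your computation of the conjugation constant is correct: through the term $V(\rho_1U_1-\rho_2U_2)V^{-1}c$ in the difference of translation parts, the local Lipschitz constant is of order $|c|\max(1,\rho(g_1),\rho(g_2))$, which is uniform over $\bigcup_n\mathrm{supp}(\mu^{*n})$ only when $\rho$ stays bounded there (e.g.\ when $\mu$ is contracting). Consequently you prove the equality $S_\mu=S_{\mu_h}$ only in the contracting case and settle for an additive $O(\log R)$ discrepancy for only-contracting-on-average measures, which is weaker than item (iii). To be fair, the paper's own proof posits a single constant $C_h$ for all pairs in $S_n$ without addressing the growth of $\rho$ on $\mathrm{supp}(\mu^{*n})$, so you have isolated exactly the point it glosses over, and your observation that an additive shift in $S_\mu$ only perturbs $\log(S_\mu/h_\mu)$ and is therefore harmless in Theorems~\ref{MainResultFixU}--\ref{MainResultContAvFixU} is a sensible way to keep the applications intact. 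Second, in (iv) the claimed identity $\|\tfrac12(\mathrm{Id}+T)|_{L^2_0}\|=\tfrac12\bigl(1+\|T|_{L^2_0}\|\bigr)$ is false for general Markov operators: take $d=1$, $H=O(1)=\{\pm1\}$ and $\mu_U=\delta_{-1}$, so that $T$ acts as $-1$ on the one-dimensional $L^2_0(H)$ and $\mathrm{gap}_H(\mu_U)=0$, while the lazy operator is $0$ and its gap is $1$. What does hold is $\|\tfrac12(\mathrm{Id}+T)\|\le\tfrac12(1+\|T\|)$, i.e.\ $\mathrm{gap}(\mu_h')\ge\tfrac12\,\mathrm{gap}(\mu_h)$, and this one-sided inequality is all that the later arguments (Proposition~\ref{UnifNonDeg}, Theorem~\ref{SpectralGapTheorem}) use; the same example shows the exact equality asserted in (iv) of the lemma is itself problematic, and the paper's proof of (iv) is too terse to notice it, so you should replace your equality step by the inequality rather than try to justify it.
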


\begin{proof}
    As conjugation is a bijection on $G$ and by using \cite[Lemma 6.8]{HochmanSolomyak2017}, (i) follows. Moreover, (ii) follows since $\rho(hg_ih^{-1}) = \rho(g_i)$ and (iv) follows similarly. To show (iii) note $S_{\mu_h} = S_{\mu_h'}$ since by the triangle inequality $d(g,h) \leq d(g,e) + d(e,h)$ for all $g,h \in G$. To show that $S_{\mu} = S_{\mu_h}$, set $$A = \min_{g_1, g_2 \in \mathrm{supp}(\mu), g_1 \neq g_2} d(g_1, g_2)$$ and note that there is a constant $C_h$ depending on $h$ such that $d(hg_1h^{-1}, hg_2h^{-1}) \leq C_h d(g_1, g_2)$ for $d(g_1, g_2) \leq A $. Thus it holds that 
    \begin{align*}
        S_{\mu_h} &= \limsup_{g_1,g_2 \in S_n, g_1 \neq g_2} -\frac{1}{n}\log d(hg_1h^{-1}, hg_2h^{-1}) \\ &\leq \limsup_{g_1,g_2 \in S_n, g_1 \neq g_2} -\frac{1}{n}\log C_h d(g_1, g_2) = S_{\mu}
    \end{align*}
    Applying the same argument to conjugation by $h^{-1}$ implies the claim. Finally, we note that $\mu_h$ and $\mu_h'$ have the same self-similar measure and it holds that $$h\nu = h\sum_i p_i g_i \nu = \sum_i p_i hg_i h^{-1} h\nu$$ and therefore $h\nu$ is the self-similar measure of $\mu_h$ and $\mu_h'$.
\end{proof}
 
In particular, it follows that the self-similar measure of $\mu$ is absolutely continuous if and only if the one of $\mu_h$ or $\mu_{h'}$ is and all of the relevant quantities are the same up to a factor of $2$. 

To give an idea of the proof of the main results in this subsection, we first discuss how to show that real Bernoulli convolutions $\nu_{\lambda}$ are uniformly non-degenerate. Indeed, we distinguish between $\lambda \geq \lambda_0$ and $\lambda \leq \lambda_0$ for some $\lambda_0$ sufficiently close to $1$. Note that $\nu_{\lambda}$ is supported on $[-(1-\lambda)^{-1}, (1-\lambda)^{-1}]$ and thus when $\lambda \leq \lambda_0$ one easily checks uniform non-degeneracy depending only on $\lambda_0$ using for example that Bernoulli convolutions are symmetric around $0$. In the case $\lambda \geq \lambda_0$ the idea is to use an $L^p$-version of the Berry-Essen Theorem as stated in Theorem~\ref{theo:l_p_wasserstien_p}. Indeed, by rescaling $\nu_{\lambda}$ by $\frac{1}{\sqrt{1-\lambda^2}}$ to $\nu_{\lambda}'$ so that it has unit variance it follows from Theorem~\ref{theo:l_p_wasserstien_p} that $\mathcal{W}_3(\nu_{\lambda}, \mathcal{N}(0,1))) \ll \frac{(1-\lambda^2)^{1/2}}{(1-\lambda^3)^{1/3}} \to 0$ as $\lambda \to 1$. The latter then implies the claim by Lemma~\ref{CloseToNormalNonDegen} using that $\mathcal{W}_1 \leq \mathcal{W}_3$. It is necessary to work with $\mathcal{W}_3$ as a brief calculation show that the above calculation would not work with $\mathcal{W}_1$ or $\mathcal{W}_2$.

Our results will be deduced from suitable results in the case when $\mu$ has a uniform contraction ratio and then in the general case from comparing our given measure with a self-similar measure with uniform contraction ratio. We now state the main proposition of this section.

\begin{proposition} \label{prop:non_degen}
    Let $d\geq 1$, $\eps > 0$ and let $\mu_U$ be an irreducible probability measure on $O(d)$. Then there is  $\delta > 0$, $\tilde{\rho} \in (0,1)$ and some $(\alpha_0,\theta, A)$ depending on $d, \eps$ and $\mu_U$ such that the following is true. Let $\mu = \sum_{i = 1}^k p_i \delta_{g_i}$ be a contracting on average probability measure on $G$ satisfying  $\mathcal{W}_3(U(\mu), \mu_U) < \delta$ and $p_i \geq \eps$ for $1 \leq i \leq k$. Suppose further that there is some $\hat{\rho} \in (\tilde{\rho},1)$ such that $$\frac{\mathbb{E}_{\gamma \sim \mu} [|\hat{\rho} - \rho(\gamma) |]}{1 - \mathbb{E}_{\gamma \sim \mu}[\rho(\gamma)]} < 1 - \varepsilon.$$
    Then there is some $h \in G$ with $U(h) = I$ such that the conjugate measure $\mu_h' = \frac{1}{2}\delta_e + \frac{1}{2}\sum_i p_i \delta_{hg_ih^{-1}}$  is $(\alpha_0, \theta, A)$-non-degenerate.

    Moreover, if in addition $\mathrm{gap}_{H}(U_\mu) \geq \eps$, for $H$ the closure of the subgroup generated by $\supp \mu$, then $\tilde{\rho}$ and $(\alpha_0,\theta,A)$ can be made uniform in $d$ and $\eps$.
\end{proposition}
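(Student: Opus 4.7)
My plan is to choose, for each admissible $\mu$, a similarity $h \in G$ with $U(h) = I$ of the form $h(x) = \rho_0(x-m)$, where $m = \mathbb{E}[\nu]$ is the mean of the self-similar measure (well-defined by \eqref{PolynomialTailDecay}) and $\rho_0 > 0$ is a normalizing scale to be chosen. By Lemma~\ref{ConjugationLemma}(v) the self-similar measure of $\mu_h'$ is then $h\nu$, so the task reduces to proving that $h\nu$ is $(\alpha_0, \theta, A)$-non-degenerate with constants uniform in $\mu$ satisfying the hypotheses. The overall strategy is to show that, for $\tilde{\rho}$ close enough to $1$, the measure $h\nu$ is close in Wasserstein-$1$ distance to the standard Gaussian $\mathcal{N}(0, I_d)$, from which non-degeneracy transfers with universal loss.

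The critical step is an averaging argument exploiting the irreducibility of $\mu_U$. From $\nu = \mu * \nu$ and the decomposition $\gamma x = \rho(\gamma) U(\gamma) x + b(\gamma)$ one derives the covariance fixed-point equation
\begin{equation*}
  \Sigma := \operatorname{Var}(\nu) = B + T_\mu(\Sigma), \qquad T_\mu(M) := \mathbb{E}_\gamma\bigl[\rho(\gamma)^2 U(\gamma) M U(\gamma)^T\bigr],
\end{equation*}
where $B$ is a PSD matrix depending on $m$ and the translations. Since $U(\gamma) \in O(d)$ we have $T_\mu(I) = \mathbb{E}[\rho(\gamma)^2] I$, and a Schur-type argument (in the spirit of Lemma~\ref{IrreducibleTrick}) using irreducibility of $\mu_U$ shows that the restriction of $T_\mu$ to trace-free symmetric matrices has spectral radius strictly below $\mathbb{E}[\rho(\gamma)^2]$. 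Hence $\Sigma = \sigma^2 I + R$ with $\sigma^2$ of order $1/(1-\mathbb{E}[\rho(\gamma)^2])$ blowing up as $\hat{\rho}\to 1$, while $\|R\|$ stays bounded. Taking $\rho_0 = 1/\sigma$ normalizes the covariance of $h\nu$ to $I + o(1)$. A multivariate Berry--Esseen estimate extending Lemma~\ref{BerryEssenType}, applied to the series representation $b(q_n) = \sum_{k=1}^n \rho(q_{k-1}) U(q_{k-1}) b(\gamma_k)$ whose centered summands become uniformly small relative to the total variance as $\hat{\rho}\to 1$, then yields $\mathcal{W}_1(h\nu, \mathcal{N}(0, I_d)) \to 0$ as $\hat{\rho} \to 1$, uniformly over admissible $\mu$. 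The hypothesis $\frac{\mathbb{E}[|\hat{\rho}-\rho(\gamma)|]}{1-\mathbb{E}[\rho(\gamma)]} < 1-\varepsilon$ is used exactly to control the spread of scaling ratios around $\hat{\rho}$ in this Berry--Esseen step.

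The standard Gaussian is $(\alpha_0, \theta, A)$-non-degenerate with universal constants (Chebyshev for the tail bound; a direct integration shows Gaussian mass of a width-$\theta$ hyperplane tube is $O(\theta)$ uniformly over hyperplanes), and Wasserstein-$1$ closeness transfers non-degeneracy to $h\nu$ with slightly worse but still uniform constants once $\tilde{\rho}$ is chosen close enough to $1$. The spectral-gap version of the proposition follows identically, using Lemma~\ref{GapWassersteinConvergence} to make the quantitative spectral bound on $T_\mu$ depend only on $d$ and $\varepsilon$. The hard part will be executing the Berry--Esseen step uniformly in $\mu$: the summands $\rho(q_{k-1}) U(q_{k-1}) b(\gamma_k)$ are not independent but form a non-stationary martingale-difference-type sequence after centering, and the Lyapunov moment conditions required for the multivariate CLT must be verified with explicit rates depending only on the spectral data of $T_\mu$, the tail decay \eqref{PolynomialTailDecay}, and the hypothesis on $\hat{\rho}$ — with particular care needed in the contracting-on-average regime where some $\rho(g_i) > 1$ and the variance normalization must be replaced by a quantile-based one if $\mathbb{E}[\rho(\gamma)^2] \geq 1$.
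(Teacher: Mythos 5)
Your architecture (Gaussian comparison, a Schur-type sphericity argument, transfer of non-degeneracy via Wasserstein closeness) is in the right spirit, but the central step of your plan is precisely the part you defer, and it is the entire content of the proposition. You propose to prove $\mathcal{W}_1(h\nu, \mathcal{N}(0,I))\to 0$ for the \emph{actual} inhomogeneous, possibly contracting-on-average measure by a Berry--Esseen estimate applied to the series $\sum_k \rho(q_{k-1})U(q_{k-1})b(\gamma_k)$, whose summands are dependent through the random products $\rho(q_{k-1})U(q_{k-1})$; you acknowledge that this requires a uniform, quantitative martingale-type CLT and leave it unproved. The paper never faces this problem: it compares $\nu$ with the self-similar measure $\tilde\nu$ of the auxiliary \emph{homogeneous} measure in which every contraction ratio is replaced by $\hat\rho$ (Lemma~\ref{lemm:inhom_close_to_normal} -- this is the only place the hypothesis $\mathbb{E}[|\hat\rho-\rho(\gamma)|]/(1-\mathbb{E}[\rho(\gamma)])<1-\varepsilon$ enters), and proves the Gaussian approximation only for $\tilde\nu$ (Proposition~\ref{prop:homo_close_to_normal}), where a block decomposition with mixing buffers, the Schur-type Lemma~\ref{IrreducibleTrick}, and Sakhanenko's theorem for \emph{independent} blocks suffice. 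Note also that the comparison lemma only yields $\mathcal{PW}_1(\nu_h,\mathcal{N}(0,I))\leq(1-\varepsilon)\sqrt{2/\pi}+O(\varepsilon)$, an order-one bound; the paper therefore does not claim, and does not need, that the actual measure is close to Gaussian -- it needs the sharp-threshold statement of Lemma~\ref{CloseToNormalNonDegen} (any measure within projected $\mathcal{W}_1$-distance strictly less than $\sqrt{2/\pi}$ of $\mathcal{N}(0,I)$ is non-degenerate). Your stronger claim may well be true in the admissible regime (the hypotheses force every $\rho(g_i)$ within $O_\varepsilon(1-\hat\rho)$ of $\hat\rho$), but as written it is an assertion, not a proof, and the uniformity in $\mu$ of the deferred estimate is exactly where the difficulty lies.

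Two subsidiary points. First, your spectral claim for the covariance operator is false as stated: for $\mu_U=\delta_{U_0}$ with $U_0$ an irrational rotation in $O(2)$ (an irreducible measure in the paper's sense), $M\mapsto\mathbb{E}[\rho^2 UMU^T]$ acts on trace-free symmetric matrices as $\rho^2$ times an isometry, so its spectral radius equals $\mathbb{E}[\rho^2]$. What you actually need -- that $I-T_\mu$ restricted to trace-free matrices has inverse bounded in terms of $\mu_U$ (no eigenvalue near $1$, by real Schur), while $(1-\mathbb{E}[\rho^2])^{-1}$ blows up -- is true, but you should argue it that way. Second, your normalization by the mean and covariance of $\nu$ presupposes finite second (indeed third) moments, which \eqref{PolynomialTailDecay} does not give in general for contracting-on-average measures; in the admissible regime one can check $\mathbb{E}[\rho^2]<1$ from the ratio hypothesis and $p_i\geq\varepsilon$, but this needs to be verified rather than patched by the ad hoc ``quantile-based'' normalization you mention.
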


We first show how to deduce from Proposition~\ref{prop:non_degen} the two propositions \ref{UnifNonDeg} and \ref{UnifNonDegContAv} from section~\ref{subsection:MainResult}. To do so we first state the following lemma.

\begin{lemma} \label{lemm:distance_estimate}
	Suppose $x_1 < x_2$ and let $X$ be a real-valued random variable such that $X \leq x_2$ almost surely and $\mathbb{P}[X \leq x_1] \geq 1/2 +p$ for some $p > 0$. Then $$\mathbb{E}[|X-x_1|] \leq \mathbb{E}[| X - x_2 |] - 2 p (x_2 - x_1).$$ 
\end{lemma}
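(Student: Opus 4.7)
The plan is to proceed by a direct case analysis on whether $X \leq x_1$ or $X > x_1$, since both absolute values simplify once we know the sign of $X - x_1$ (and we already know $X - x_2 \leq 0$ almost surely by assumption).

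First I would rewrite $|X-x_2| = x_2 - X$ using the almost sure bound $X \leq x_2$, and then split $|X - x_1|$ according to the sign of $X - x_1$. On the event $\{X \leq x_1\}$ we get $|X-x_1| - |X-x_2| = (x_1 - X) - (x_2 - X) = -(x_2 - x_1)$, while on the event $\{X > x_1\}$ (intersected with $\{X \leq x_2\}$) we get $|X-x_1| - |X-x_2| = (X - x_1) - (x_2 - X) = 2X - x_1 - x_2 \leq x_2 - x_1$, using $X \leq x_2$ for the inequality.

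Setting $q = \mathbb{P}[X \leq x_1]$ and taking expectations gives
\begin{equation*}
    \mathbb{E}[|X-x_1|] - \mathbb{E}[|X-x_2|] \leq -q(x_2 - x_1) + (1-q)(x_2 - x_1) = (1 - 2q)(x_2 - x_1).
\end{equation*}
Since $q \geq 1/2 + p$ by hypothesis and $x_2 - x_1 > 0$, this yields $(1-2q)(x_2-x_1) \leq -2p(x_2 - x_1)$, which rearranges to the claim. There is no real obstacle here; the only point requiring care is keeping track of the fact that in the case $X > x_1$ one must use $X \leq x_2$ to bound $2X - x_1 - x_2$ from above, which is exactly the almost sure hypothesis given.
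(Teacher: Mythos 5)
Your proof is correct, and it takes a different route from the paper. You argue pointwise: writing $|X-x_2| = x_2 - X$ and splitting on the sign of $X - x_1$, you get the exact identity $|X-x_1| - |X-x_2| = -(x_2-x_1)$ on $\{X \leq x_1\}$ and the bound $2X - x_1 - x_2 \leq x_2 - x_1$ on $\{X > x_1\}$, then take expectations and use $\mathbb{P}[X \leq x_1] \geq 1/2 + p$ to conclude. The paper instead uses a coupling argument: it takes two copies $X_1, X_2$ of $X$ coupled so that at least one of them is $\leq x_1$ almost surely, notes that the event that both are $\leq x_1$ has probability at least $2p$, and compares $|X_i - x_1|$ with $|X_i - x_2|$ on that event (gaining $2(x_2-x_1)$ per occurrence) and on its complement (losing nothing, by summing the two terms), then halves. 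Both arguments are valid and yield the same constant $2p(x_2-x_1)$; yours is more elementary, avoiding the construction of the coupling entirely, while the paper's version packages the same cancellation through the "both small" event. There is no gap in your argument — the one delicate step, using $X \leq x_2$ a.s. to bound $2X - x_1 - x_2$ on $\{X > x_1\}$, is exactly where the hypothesis is needed and you handle it correctly.
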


\begin{proof}
	Let $X_1$ and $X_2$ have the same law as $X$ and be coupled such that at least one of them is at most $x_1$ almost surely. Let $A$ be the event that both $X_1$ and $X_2$ are at most $x_1$. Noting that $A$ has probability at least $2p$ we compute
	\begin{align*}
	\mathbb{E} [ |X_1 - x_1| + |X_2 - x_1|] &= \mathbb{E} [ (|X_1 - x_1| + |X_2 - x_1|) \mathbb{I}_{A^C}] \\ &+ \mathbb{E} [ (|X_1 - x_1| + |X_2 - x_1|) \mathbb{I}_{A}]\\
	&\leq \mathbb{E} [ (|X_1 - x_2| + |X_2 - x_2|) \mathbb{I}_{A^C}] \\ &+ \mathbb{E} [ (|X_1 - x_2| + |X_2 - x_2| - 2 (x_2 - x_1)) \mathbb{I}_{A}]\\
	&\leq \mathbb{E} [ |X_1 - x_2| + |X_2 - x_2|] - 4p (x_2 - x_1).
	\end{align*}
	The result follows.
\end{proof}

We now prove Proposition~\ref{UnifNonDeg} and Proposition~\ref{UnifNonDegContAv}.

\begin{proof}[Proof of Proposition~\ref{UnifNonDeg}.]
	Let $\gamma_1, \gamma_2, \dots$ be i.i.d.\ samples from $\mu$. Let $p_{\text{min}}$ be the smallest of the $p_1, \dots, p_k$ and let $\rho_{\text{min}}$ be the smallest of the $\rho(g_1), \dots, \rho(g_k)$. Clearly
	\begin{equation*}
		\mathbb{P}[\rho(\gamma_1 \dots \gamma_n) \leq \rho_{\text{min}}] \geq 1 - (1 - p_{\text{min}})^n.
	\end{equation*}
	In particular there is some $n$ depending only on $\eps$ such that this is at least $3/4$. Note that by Lemma \ref{lemm:distance_estimate} with $x_1 = \rho_{\min}$ and $x_2 = 1$ and $p = \frac{1}{4}$ we have
	\begin{align*}
		\frac{\mathbb{E}[|\rho(\gamma_1 \cdots \gamma_n) - \rho_{\text{min}} |]}{1 - \mathbb{E}[\rho(\gamma_1 \dots \gamma_n)]} &\leq \frac{1 - \mathbb{E}[\rho(\gamma_1 \dots \gamma_n)] - (1 - \rho_{\text{min}}) / 2}{1 - \mathbb{E}[\rho(\gamma_1 \dots \gamma_n)]}\\
		&= 1 - \frac{1 - \rho_{\text{min}}}{2 (1 - \mathbb{E}[\rho(\gamma_1 \dots \gamma_n)])}\\
		& \leq 1 - \frac{1 - \rho_{\text{min}}}{2  (1 - \rho_{\text{min}}^n)} \\
		& \leq 1 - \frac{1}{2n}.
	\end{align*} The result now follows by applying Proposition \ref{prop:non_degen}, Lemma~\ref{Ufixeduniformmixing} and Lemma~\ref{UnifWellMixSpecGap} to $\mu^{*n}$ along with noting that compact subsets of $\mathcal{I}_k$ are also compact with respect to $\mathcal{W}_3$. 
\end{proof}

\begin{proof}[Proof of Proposition~\ref{UnifNonDegContAv}.]
	This follows directly by Proposition \ref{prop:non_degen}, Lemma~\ref{Ufixeduniformmixing} and Lemma~\ref{UnifWellMixSpecGap}.
\end{proof}

Now we prove Proposition \ref{prop:non_degen}. We use the following definition.

\begin{definition}
	Given two measures $\lambda_1, \lambda_2$ on $\R^d$ we define
	\begin{equation*}
		\mathcal{PW}_1(\lambda_1, \lambda_2) :=  \inf_{\gamma \in \Gamma(\lambda_1, \lambda_2)} \sup_{p \in P(d)} \int |px-py| \, d \gamma(x, y)
	\end{equation*}
	where $P(d)$ is the set of orthogonal projections onto one dimensional subspaces of $\R^d$ and $\Gamma(\lambda_1, \lambda_2)$ is the set of couplings between $\lambda_1$ and $\lambda_2$.
\end{definition}

We use this to show that if a measure is sufficiently close to a spherical normal distribution then it is $(\alpha_0, \theta, A)$-non-degenerate.

\begin{lemma} \label{CloseToNormalNonDegen}
    Let $I$ be the $d\times d$ identity matrix. Then given any $p \in P(d)$ we have $$\mathbb{E}_{x \sim N(0,I)}[|p x|] = \sqrt{\frac{2}{\pi}}.$$ Moreover, for any $\varepsilon > 0$ there exists $\alpha_0 \in (0, 1)$ and $\theta, A > 0$ such that if $\nu$ is a measure on $\R^d$ and $$\mathcal{PW}_1(\nu, N(0, I)) < \sqrt{\frac{2}{\pi}} - \varepsilon$$ then $\nu$ is $(\alpha_0, \theta, A)$-non-degenerate.
\end{lemma}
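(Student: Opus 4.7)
The first identity is a rotational-invariance computation: writing $p$ as the orthogonal projection onto the line spanned by a unit vector $v$, we have $|px| = |v \cdot x|$, and $v \cdot x \sim N(0,1)$ under $x \sim N(0,I)$, so $\mathbb{E}[|px|] = \mathbb{E}[|Z|] = \sqrt{2/\pi}$ for $Z$ a standard normal.

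For the non-degeneracy statement, the plan rests on the fact that every coupling $\gamma$ of $\nu$ and $N(0,I)$ projects under $(x,y) \mapsto (px, py)$ to a coupling of $p_* \nu$ and $N(0,1)$ with cost $\int |px - py| \, d\gamma(x,y)$, so that $\mathcal{W}_1(p_* \nu, N(0,1)) \leq \mathcal{PW}_1(\nu, N(0,I)) < \sqrt{2/\pi} - \varepsilon$ for every $p \in P(d)$. First I would apply Kantorovich--Rubinstein duality to the $1$-Lipschitz function $x \mapsto |px|$ to obtain $\mathbb{E}_\nu[|px|] < 2\sqrt{2/\pi} - \varepsilon$ for each such $p$, then sum over projections onto an orthonormal basis together with $|x| \leq \sum_i |e_i \cdot x|$ to get $\mathbb{E}_\nu[|x|] \leq 2d\sqrt{2/\pi}$. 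Markov's inequality then furnishes a choice of $A = A(\varepsilon, d)$ making $\nu(\{|x| \geq A\})$ as small as desired.

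The substantive step is bounding the slab mass. Given a proper subspace $W \subset \R^d$ and $y \in \R^d$, choose a unit vector $v \in W^\perp$ and let $p$ be the projection onto $\R v$; with $a = v \cdot y$, the slab $\{x : |x - (y+W)| < \theta\}$ sits inside $\{|v \cdot x - a| < \theta\}$, which is disjoint from $\{|x| < A\}$ whenever $|a| > A + \theta$. In the remaining case $|a| \leq A + \theta$, I plan to argue by compactness and contradiction: if no admissible $(\alpha_0, \theta, A)$ works, there are measures $\nu_n$ with $\mathcal{PW}_1(\nu_n, N(0,I)) < \sqrt{2/\pi} - \varepsilon$, unit vectors $v_n$, and scalars $a_n$ such that $\nu_n(\{|v_n \cdot x - a_n| < 1/n\}) \to 1$. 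The uniform first-moment bound on $\nu_n$ forces $|a_n|$ to stay bounded (otherwise the slab concentration would push $\mathbb{E}_{\nu_n}[|v_n \cdot x|]$ to infinity), so by tightness and compactness of the unit sphere, after passing to subsequences, $\nu_{n_k} \to \nu_*$ weakly, $v_{n_k} \to v_*$, and $a_{n_k} \to a_*$. A portmanteau argument applied to the closed neighbourhoods $H_\eta = \{|v_* \cdot x - a_*| \leq \eta\}$ shows $\nu_*(H_\eta) = 1$ for every $\eta > 0$, hence $\nu_*$ is concentrated on the hyperplane $\{v_* \cdot x = a_*\}$; in particular $(v_*)_* \nu_* = \delta_{a_*}$ and $\mathcal{W}_1((v_*)_* \nu_*, N(0,1)) = \mathbb{E}_{N(0,1)}[|t - a_*|] \geq \sqrt{2/\pi}$. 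Lower semicontinuity of $\nu \mapsto \sup_{p \in P(d)} \mathcal{W}_1(p_* \nu, N(0,1))$ under weak convergence, together with $\sup_p \mathcal{W}_1(p_* \nu_n, N(0,1)) \leq \mathcal{PW}_1(\nu_n, N(0,I)) < \sqrt{2/\pi} - \varepsilon$, then gives $\sup_p \mathcal{W}_1(p_* \nu_*, N(0,1)) \leq \sqrt{2/\pi} - \varepsilon < \sqrt{2/\pi}$, contradicting the previous line.

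The hard part will be the lower-semicontinuity step: since $|t|$ is unbounded, convergence of first moments under weak convergence is not automatic, and the uniform first-moment estimate on $\nu_n$ is exactly what supplies the uniform integrability needed to keep $|a_n|$ bounded, to ensure tightness of $\{\nu_n\}$, and to legitimately pass the Wasserstein bound to the weak limit.
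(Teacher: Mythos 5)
Your proposal is correct, and the first part (the $\sqrt{2/\pi}$ identity) and the tail bound via first moments and Markov are essentially the paper's argument. Where you genuinely diverge is the key slab-mass estimate: the paper argues directly and quantitatively, taking a near-optimal coupling $\gamma$, restricting the transport cost to the event $\{|px - py_0| < \theta\}$, and using that $y \mapsto \mathbb{E}_{Z \sim N(0,1)}[|Z - y|]$ is minimized at $y=0$ with value $\sqrt{2/\pi}$; this yields explicit constants (roughly $\theta = \eps/4$, slab mass at most $1 - \eps/8$, tail mass at most $\eps/16$). You instead run a soft compactness-and-contradiction argument: extract bad measures $\nu_n$ concentrating on shrinking slabs, use the uniform first-moment bound to keep the offsets $a_n$ bounded and the family tight, pass to a weak limit concentrated on a hyperplane, and contradict the projected Wasserstein bound via $\mathcal{W}_1(\delta_{a_*}, N(0,1)) \geq \sqrt{2/\pi}$ and lower semicontinuity of $\mathcal{W}_1$ (which, as you note, needs the bounded-Lipschitz dual representation or truncated test functions, and the portmanteau step needs the moving slabs intersected with large balls to absorb the $|(v_* - v_{n_k})\cdot x|$ error — all supplied by the same moment bound). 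Your route buys conceptual cleanliness and sidesteps the slightly delicate conditional estimate on the coupling in the paper's display, at the price of losing explicit dependence of $(\alpha_0, \theta, A)$ on $\eps$ and $d$; since the lemma only asserts existence of such constants, this is an acceptable trade, and both arguments ultimately rest on the same fact about the minimal mean deviation of a standard Gaussian.
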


\begin{proof}
    The first part follows since if $X \sim \mathcal{N}(0,I)$ and $u \in \R^d$ is a unit vector, then $\langle X,u \rangle$ is distributed as $\mathcal{N}(0,1)$. The second part follows from the first part, the fact that the $y \in \R$ such that $\mathbb{E}_{x \sim N(0,1)} |x-y|$ is minimal is $y=0$ and Markov's inequality.
    
    More precisely, we aim to estimate for all $y_0 \in \R^d$ and all proper subspaces $W \subset \R^d$ $$\nu(\{ x \in \R^d \,:\, |x-(y_0 + W)| < \theta \text{ or } |x| \geq A  \}),$$ which is bounded by $\nu(\{ x \in \R^d \,:\, |x-(y_0 + W)| < \theta  \}) + \nu(\{ x \in \R^d \,:\, |x| \geq A  \})$. To deal with the second term we note that by Markov's inequality for a coupling $\gamma$ between $\nu$ and $\mathcal{N}(0,1)$ we have 
    \begin{align*}
        \nu(\{ x \in \R^d \,:\, |x| \geq A  \}) &\leq A^{-1} \int |x| \, d\nu(x) \\
        &\leq A^{-1}\left( \int |y| \, d\mathcal{N}(0,I)(y) + \int |x-y| \, d\gamma(x,y) \right).
    \end{align*} In order to apply our bound for $\mathcal{PW}_1(\nu, N(0, I))$ we consider the projections $p_1, \ldots , p_d$ to the coordinate axes. Then $|x-y| \leq \sum_{i = 1}^d |p_ix - p_iy|$ and therefore by choosing a suitable coupling, it follows that for $A$ sufficiently large only depending on $d$ and $\eps$ we have that $\nu(\{ x \in \R^d \,:\, |x| \geq A  \}) \leq \eps/16$.
    
    To deal with the first term $\nu(\{ x \in \R^d \,:\, |x-(y_0 + W)| < \theta  \})$, we assume without loss of generality that $W$ has dimension $d-1$ and we let $p$ be the orthogonal projection to the orthogonal complement of $W$. Then it holds that $|x - (y_0 + W)| = |px - py_0|$ and therefore $$\nu(\{ x \in \R^d \,:\, |x-(y_0 + W)| < \theta  \}) = \nu(\{ x \in \R^d \,:\, |px-py_0| < \theta  \}).$$ In the following we identify $p\R^{d}$ as the real line. Let $\gamma$ be any coupling between $\nu$ and $\mathcal{N}(0,I)$. Then it holds that \begin{align*}
        \int |px-py| \, d\gamma(x,y) &\geq \int |px-py| 1_{|px - py_0| < \theta}(x,y) \, d\gamma(x,y) \\
        &\geq \nu(\{ x \in \R^d \,:\, (|px-py_0| < \theta  \})\int |py-py_0| - \theta) \, d\mathcal{N}(0,I)(y) \\ &\geq \nu(\{ x \in \R^d \,:\, (|px-py_0| < \theta  \}) \left(\sqrt{\frac{2}{\pi}} - \theta\right),
    \end{align*} having used in the last line that $y \in \R$ such that $\mathbb{E}_{x \sim N(0,1)} |x-y|$ is minimal is $y=0$. By choosing a suitable coupling and setting $\theta = \eps/4$ it therefore follows for $\eps$ sufficiently small that  $$\nu(\{ x \in \R^d \,:\, (|px-py_0| < \theta  \}) \leq \frac{\sqrt{\frac{2}{\pi}} - \eps/2}{\sqrt{\frac{2}{\pi}} - \eps/4} \leq 1 - \eps/8.$$ The claim follows by combining the above two estimates.
\end{proof}

To make this useful we need to show that our self-similar measures are close to spherical normal distributions. We prove this in the case where all of the $\rho_i$ are equal with the following proposition.

\begin{proposition} \label{prop:homo_close_to_normal}
	Given any $\varepsilon > 0$ and any irreducible probability measure $\mu_U = \sum_{i=1}^k p_i \delta_{U_i}$ on $O(d)$ there is some $\delta > 0$ and some $\tilde{\rho} \in (0, 1)$ depending on $\eps$ and $\mu_U$ such that the following is true. Let $\mu = \sum_{i = 1}^k p_i\delta_{g_i}$ be a probability measure on $G$ without a common fixed point and with $\mathcal{W}_3(U(\mu), \mu_U) < \delta$ 
    as well as $p_i \geq \eps$ for all $1\leq i \leq k$. Assume there is $\rho \in (\tilde{\rho}, 1)$ such that $\rho(g_i) = \rho$ for all $1\leq i \leq k$. Then there exists some $h \in G$ with $U(h) = I$ such that the self similar measure $\nu_h'$ generated by the conjugate measure $\mu_h' = \frac{1}{2}\delta_e + \frac{1}{2} \sum_i p_i \delta_{hg_ih^{-1}}$ satisfies
	\begin{equation*}
		\mathcal{W}_3(\nu_h', N(0, I)) < \varepsilon.
	\end{equation*}
    If moreover $\mathrm{gap}_H(\mu_U) \geq \eps$ then $\tilde{\rho}$ is uniform in $d$ and $\eps$.
\end{proposition}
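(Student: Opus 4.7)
The plan is to choose $h(x) = sx + c$ (so $U(h) = I$) with $c$ centering the measure and $s$ normalizing the covariance, and then show the rescaled self-similar measure is close to the standard Gaussian via a quantitative central limit theorem. First, since $\mu_h' = \frac{1}{2}\delta_e + \frac{1}{2}\mu_h$, the stationary identity $\nu_h' = \mu_h' * \nu_h' = \frac{1}{2}\nu_h' + \frac{1}{2}\mu_h * \nu_h'$ gives $\nu_h' = \mu_h * \nu_h'$, so by uniqueness $\nu_h' = \nu_h$, and it is enough to study $h_*\nu$. Writing $X = \sum_{n \geq 0} \rho^n W_n b_{i_{n+1}}$ with $(i_n)$ i.i.d.\ and $W_n = U_{i_1} \cdots U_{i_n}$, and conjugating first by a translation so that $\bar b := \sum_i p_i b_i = 0$ and $\E[X] = 0$, the covariance $\Sigma_s$ of $sX$ obeys $\Sigma_s = s^2 \sum_{n\geq 0} \rho^{2n} M^n(T)$ with $M(A) = \sum_i p_i U_i A U_i^T$ and $T = \sum_i p_i b_i b_i^T$. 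Since orthogonality of the $U_i$ makes $M$ trace-preserving, the choice $s^2 = d(1-\rho^2)/\mathrm{tr}(T)$ gives $\mathrm{tr}(\Sigma_s) = d$.

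Next, the plan is to show $\Sigma_s \to I$ as $\rho \to 1$. On the real symmetric matrices, $M$ has operator norm at most one, and Lemma~\ref{IrreducibleTrick} together with a real Schur-lemma argument for the self-adjoint part of the commutant of an irreducible real orthogonal representation forces every symmetric fixed point of $M$ to be a scalar multiple of $I$. A standard mean-ergodic theorem then shows that the Abel average $(1-\rho^2)\sum_n \rho^{2n} M^n(T)$ converges to $(\mathrm{tr}(T)/d)\,I$ as $\rho \to 1$, so that $\Sigma_s \to I$. Under $\mathrm{gap}_H(\mu_U) \geq \varepsilon$, Lemma~\ref{GapWassersteinConvergence} yields a spectral gap of $M$ on the orthogonal complement of $\R \cdot I$ that depends only on $d$ and $\varepsilon$, giving an exponential rate that is independent of the particular $\mu$.

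For the $\mathcal{W}_3$ closeness to a Gaussian, the plan is to exploit the martingale series representation $Y = sX = \sum_{n \geq 0}\rho^n W_n (sb_{i_{n+1}})$. The partial sum $S_N = \sum_{n<N}\rho^n W_n(sb_{i_{n+1}})$ is a martingale with respect to $\mathcal{F}_n = \sigma(i_1,\ldots,i_n)$ with increments bounded by $sB\rho^n$ (where $B = \max_i |b_i|$) and random predictable quadratic variation $V_N = s^2\sum_{n<N}\rho^{2n}W_n T W_n^T$ whose expectation tends to $I$ by the previous step. Since $\nu$ has support of diameter $\ll B/(1-\rho)$, the rescaled $\nu_h$ has diameter $\ll 1/\sqrt{1-\rho}$, and the tail $\rho^N W_N Y^{(N)}$ decays geometrically in $N$. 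A quantitative martingale CLT---via Lindeberg's method or Stein's method applied to the recursive identity $Y = \rho U_{i_1} Y' + sb_{i_1}$---then gives $\mathcal{W}_3(\nu_h, N(0,I)) \to 0$ as $\rho \to 1$; the upgrade from $\mathcal{W}_1$ (as provided by Lemma~\ref{BerryEssenType} in the independent case) to $\mathcal{W}_3$ is achieved using the bounded support of $\nu_h$ to transfer moment control.

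The hardest step will be executing the quantitative martingale CLT in $\mathcal{W}_3$ with errors that depend only on $\rho$ (and on $\varepsilon$ in the spectral-gap case). Two difficulties arise together: $V_N$ is random and must be shown to concentrate around its mean $I$, which requires a second application of Schur/mixing ideas---now to the action of $M \otimes M$ on $\mathrm{Sym}(\R^d)\otimes\mathrm{Sym}(\R^d)$---and the increments are not independent because the symbols $i_n$ drive both the orthogonal factors $W_n$ and the translations $b_{i_{n+1}}$. Making every implicit constant uniform in $d$ and $\varepsilon$ under the spectral-gap hypothesis requires carefully tracing those dependencies through each step.
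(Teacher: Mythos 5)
Your setup (take $h$ a translation-plus-scaling so that $\sum_i p_i b_i=0$ and the covariance has trace $d$, note $\nu_h'=\nu_h$, and identify the limiting covariance as $I$ via the mean-ergodic theorem and a Schur-type argument as in Lemma~\ref{IrreducibleTrick}) is sound and consistent with what the paper does. But the core of the proposition is the quantitative $\mathcal{W}_3$ Gaussian approximation, and there your plan has a genuine gap. The specific mechanism you offer for reaching $\mathcal{W}_3$ --- first prove $\mathcal{W}_1$ closeness by a martingale Lindeberg/Stein argument, then ``upgrade to $\mathcal{W}_3$ using the bounded support'' --- fails quantitatively. After your normalization the increments have size $\asymp \sqrt{1-\rho}$, so the best $\mathcal{W}_1$ rate one can hope for (and the true order, by the discreteness of the partial sums) is $\asymp\sqrt{1-\rho}$, while the support of $\nu_h$ has diameter $\asymp (1-\rho)^{-1/2}$. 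The transfer inequality $\mathcal{W}_3^3\le D^2\,\mathcal{W}_1$ with $D\asymp(1-\rho)^{-1/2}$ then gives $\mathcal{W}_3\lesssim(1-\rho)^{-1/6}$, which diverges as $\rho\to 1$. So one must run the CLT \emph{directly} in $\mathcal{W}_3$, controlling third moments of the (dependent) increments; a quantitative multivariate martingale CLT in $\mathcal{W}_3$ with random predictable quadratic variation, with constants uniform as required, is not an off-the-shelf result, and your proposal leaves both it and the concentration of $V_N$ around $I$ (which you correctly flag, via the second-moment operator $M\otimes M$) unproved. These are not routine details: they are essentially the whole content of the proposition.

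For comparison, the paper avoids the martingale route altogether: it cuts the word $\gamma_1\cdots\gamma_{(m+n)n'}$ into blocks, inserts short ``mixing'' blocks of length $m$ whose rotation part is $\mathcal{W}_3$-close to Haar (Lemma~\ref{lemma:dense_tends_to_haar}; this is also why the $\tfrac12\delta_e$ component is kept), replaces those rotations by genuinely independent Haar variables so that, by Lemma~\ref{IrreducibleTrick}, the resulting block contributions are independent with isotropic covariance, and then applies Sakhanenko's theorem (Theorem~\ref{theo:l_p_wasserstien_p}), which is an independent-summand CLT stated directly in $\mathcal{W}_p$ with third-moment input --- exactly the estimate your $\mathcal{W}_1$-then-upgrade step cannot reproduce. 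The remaining difficulty, keeping the third-moment quantities $W_n$ bounded at every scale, is handled by the multiscale bootstrap of Lemma~\ref{lemm:w3_estimate} and Corollary~\ref{coro:many_bounded}, for which your plan has no counterpart. If you want to salvage your approach, you would need to prove a $\mathcal{W}_3$-quantitative martingale CLT with the stated uniformity, which is likely at least as hard as the paper's block argument.
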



We then extend to the general case using the following lemma.

\begin{lemma} \label{lemm:inhom_close_to_normal}
	Let $\gamma$ and $\tilde{\gamma}$ be contracting on average random variables taking values in $G$ such that $U(\gamma) = U(\tilde{\gamma})$ and $z(\gamma) = z(\tilde{\gamma})$ almost surely. Let $\nu$ and $\tilde{\nu}$ be the self similar measures generated by the laws of $\gamma$ and $\tilde{\gamma}$ respectively. Then
	\begin{equation*}
		\mathcal{PW}_1(\nu, \tilde{\nu}) \leq \frac{\mathbb{E} [|\rho(\gamma) - \rho(\tilde{\gamma}) |]}{1 - \mathbb{E}[\rho(\gamma)]} \sup_{p \in P(d)} \mathbb{E}_{x \sim \tilde{\nu}} |px|.
	\end{equation*}
\end{lemma}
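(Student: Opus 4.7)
My plan is to build an explicit coupling of $\nu$ and $\tilde\nu$ from the natural random series representation and then telescope. Let $(\gamma_i,\tilde\gamma_i)_{i\geq 1}$ be i.i.d.\ copies of $(\gamma,\tilde\gamma)$, and write $U_i = U(\gamma_i) = U(\tilde\gamma_i)$, $b_i = b(\gamma_i) = b(\tilde\gamma_i)$, $r_i = \rho(\gamma_i)$, $\tilde r_i = \rho(\tilde\gamma_i)$. Fix any $x_0\in\R^d$. Iterating the similarity action gives
\[
X := \lim_{n\to\infty}\gamma_1\cdots\gamma_n x_0 = b_1 + \sum_{k\geq 2} r_1\cdots r_{k-1}\, U_1\cdots U_{k-1}\, b_k,
\]
and the analogous series for $\tilde X$ with $r_i$ replaced by $\tilde r_i$; by hypothesis $X\sim\nu$ and $\tilde X\sim\tilde\nu$, and this yields a coupling.

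Since the $U_i$-- and $b_i$--coordinates agree, subtracting and applying the telescoping identity $r_1\cdots r_{k-1} - \tilde r_1\cdots\tilde r_{k-1} = \sum_{j=1}^{k-1}(r_j-\tilde r_j)\, r_1\cdots r_{j-1}\,\tilde r_{j+1}\cdots\tilde r_{k-1}$ gives, after swapping the order of summation,
\[
X-\tilde X = \sum_{j\geq 1}(r_j-\tilde r_j)\, r_1\cdots r_{j-1}\, U_1\cdots U_j\, Y_j,\qquad Y_j := \sum_{\ell\geq 1}\tilde r_{j+1}\cdots\tilde r_{j+\ell-1}\, U_{j+1}\cdots U_{j+\ell-1}\, b_{j+\ell}.
\]
The key observation is that $Y_j$ is built from the i.i.d.\ tuples with indices $>j$, so $Y_j\sim\tilde\nu$ and $Y_j$ is independent of $\mathcal{F}_j := \sigma\bigl((r_i,\tilde r_i,U_i,b_i):i\leq j\bigr)$.

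Now fix $p\in P(d)$ and apply the triangle inequality, noting that because each $U_i$ is orthogonal, $p_j := p\, U_1\cdots U_j$ is again a rank-one orthogonal projection that is $\mathcal{F}_j$-measurable. Conditioning on $\mathcal{F}_j$ and using independence of $Y_j$ gives
\[
\mathbb{E}[\,|p\,U_1\cdots U_j\, Y_j|\mid\mathcal{F}_j\,] = \mathbb{E}_{x\sim\tilde\nu}|p_j x| \leq \sup_{p'\in P(d)}\mathbb{E}_{x\sim\tilde\nu}|p'x|.
\]
Since the pairs $(r_i,\tilde r_i)$ are i.i.d., $\mathbb{E}[|r_j-\tilde r_j|\,r_1\cdots r_{j-1}] = \mathbb{E}[|\rho(\gamma)-\rho(\tilde\gamma)|]\cdot \mathbb{E}[\rho(\gamma)]^{j-1}$, and summing the geometric series yields
\[
\mathbb{E}|pX-p\tilde X| \leq \frac{\mathbb{E}[|\rho(\gamma)-\rho(\tilde\gamma)|]}{1-\mathbb{E}[\rho(\gamma)]}\sup_{p'\in P(d)}\mathbb{E}_{x\sim\tilde\nu}|p'x|.
\]
Taking the supremum over $p$ on the left gives the claimed $\mathcal{PW}_1$ bound via this particular coupling. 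The only mildly delicate point is justifying absolute convergence of the doubly-indexed series and the swap of summations, which follows from $\mathbb{E}[\rho(\gamma)]<1$ together with the polynomial tail bound \eqref{PolynomialTailDecay} applied to $\tilde\nu$; the rest is bookkeeping.
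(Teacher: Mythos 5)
Your proof is correct, but it follows a genuinely different route from the paper. The paper's argument is a one-step fixed-point/stationarity argument: sample $(x,\tilde x)$ from a near-optimal coupling of $(\nu,\tilde\nu)$, apply one coupled step $(\gamma,\tilde\gamma)$, split $|p\gamma x - p\tilde\gamma\tilde x|$ by the triangle inequality into a contraction term and a $|\rho(\gamma)-\rho(\tilde\gamma)|$ term, and obtain $\mathcal{PW}_1(\nu,\tilde\nu)\leq \E[\rho(\gamma)]\,\mathcal{PW}_1(\nu,\tilde\nu) + \E[|\rho(\gamma)-\rho(\tilde\gamma)|]\sup_{p}\E_{x\sim\tilde\nu}|px|$, which is then rearranged; this is shorter but needs a priori finiteness of $\mathcal{PW}_1$ (from finite first moments) and a limiting sequence of couplings. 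You instead build one explicit coupling from the random series representation, telescope the products $r_1\cdots r_{k-1}-\tilde r_1\cdots\tilde r_{k-1}$, identify the tails $Y_j\sim\tilde\nu$ independent of $\mathscr{F}_j$, and sum a geometric series; this avoids both the near-optimal couplings and the rearrangement step, at the cost of justifying the interchange of summations. Two small points to tidy: (a) $pU_1\cdots U_j$ is not literally an orthogonal projection (it is not symmetric), but $|pU_1\cdots U_j y|=|p_jy|$ where $p_j$ is the orthogonal projection onto the line spanned by $(U_1\cdots U_j)^T v$ (for $p=vv^T$), which is all you use; (b) the appeal to \eqref{PolynomialTailDecay} is slightly off, since a polynomial tail with small exponent does not by itself give $\E_{\tilde\nu}|x|<\infty$ — but if $\sup_{p}\E_{\tilde\nu}|px|=\infty$ the claimed bound is vacuous, so you may assume this quantity is finite, and then Tonelli (all terms nonnegative) together with $\E[\rho(\gamma)]<1$, which is implicit in the statement exactly as in the paper's proof, justifies the swap and gives convergence in $L^1$.
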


We now have all the ingredients needed to prove Proposition \ref{prop:non_degen}.

\begin{proof}[Proof of Proposition \ref{prop:non_degen}] Without        loss of generality we replace $\mu$ by $\frac{1}{2}\delta_e + \frac{1}{2}\mu$. Let $\tilde{g_i}:x \mapsto \hat{\rho} U_i x + b_i$ and let $\tilde{\mu} = \sum_{i=1}^{n} p_i \delta_{\tilde{g}_i}$ with self-similar measure $\tilde{\nu}$. Then by Proposition \ref{prop:homo_close_to_normal} there is some $h \in G$ with $U(h) = I$ such that
    \begin{equation*}
        \mathcal{W}_3(\tilde{\nu}_h, N(0, I)) < \varepsilon / 10.
    \end{equation*}
    Clearly this implies $\mathcal{W}_1(\tilde{\nu}_h, N(0, I)) < \varepsilon / 10$ and therefore $\mathcal{PW}_1(\tilde{\nu}_h, N(0, I)) < \varepsilon / 10$ and so by Lemma \ref{lemm:inhom_close_to_normal} if we define $\mu_h = \sum_{i=1}^{k} p_i \delta_{h g_i h^{-1}}$ and let $\nu_h$ be the self similar measure generated by $\mu_h$ we have $\mathcal{PW}_1(\nu_h, N(0, I)) < \sqrt{\frac{\pi}{2}} - \varepsilon / 2$. The result follows by Lemma \ref{CloseToNormalNonDegen}.
\end{proof}

Now we just need to prove Lemma \ref{lemm:inhom_close_to_normal} and Proposition \ref{prop:homo_close_to_normal}. We start with Lemma \ref{lemm:inhom_close_to_normal}.

\begin{proof}[Proof of Lemma \ref{lemm:inhom_close_to_normal}]
    Let $x$ be a sample from $\nu$ and $\tilde{x}$ be a sample from $\tilde{\nu}$ such that $(x, \tilde{x})$ is independent from $(\gamma, \tilde{\gamma})$. Note that this means that $\gamma x$ is a sample from $\nu$ and $\tilde{\gamma} \tilde{x}$ is a sample from $\tilde{\nu}$. Let $p \in P(d)$. We have
	\begin{align*}
		\mathbb{E}[| p \gamma x - p \tilde{\gamma} \tilde{x} |] &\leq \mathbb{E}[| p \gamma(x - \tilde{x})|] + \mathbb{E}[| p (\gamma - \tilde{\gamma}) \tilde{x} |] \\
		& = \mathbb{E}[\rho(\gamma)] \mathbb{E}[| p U(\gamma) (x - \tilde{x})|] + \mathbb{E}[|\rho(\gamma) - \rho(\tilde{\gamma})|] \mathbb{E}[| p U(\gamma) (\tilde{x})|].
	\end{align*}
	Therefore by taking a series of couplings such that $\sup_{p \in P(d)} \mathbb{E}[|px - p\tilde{x}|] \to \mathcal{PW}_1(\nu, \tilde{\nu})$ we get
	\begin{equation*}
		\mathcal{PW}_1(\nu, \tilde{\nu}) \leq \mathbb{E}[\rho(\gamma)] \mathcal{PW}_1(\nu, \tilde{\nu}) + \mathbb{E}[|\rho(\gamma) - \rho(\tilde{\gamma})|] \mathbb{E}_{x \sim \tilde{\nu}}[|p(x)|].
	\end{equation*}
\end{proof}

Now we wish to prove Proposition \ref{prop:homo_close_to_normal}. First we need the following result.

\begin{lemma} \label{lemma:dense_tends_to_haar}
    Let $\mu_U$ be a probability measure on $O(d)$ and let $H$ be the closure of the group generated by the support of $\mu_U$ and let $V$ be a uniform random variable on $H$. Let $\gamma_1, \gamma_2, \dots$ be independent samples from $\frac{1}{2}\delta_e + \frac{1}{2}\mu_U$. Then for every $\varepsilon > 0$ there exists $N \in \Z_{>0}$ depending on $d,H$ and $\eps$ such that whenever $n \geq N$ we have
	\begin{equation*}
		\mathcal{W}_3(\gamma_1 \dots \gamma_n, V) < \varepsilon.
	\end{equation*}
		Furthermore, if in addition $\mathrm{gap}_{H}(\mu_U) \geq \eps$, then $N$ can be made uniform $d$ and $\varepsilon$.
\end{lemma}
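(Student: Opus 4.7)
The plan is to exploit the atom at the identity in $\tilde{\mu} := \tfrac{1}{2}\delta_e + \tfrac{1}{2}\mu_U$, which renders the random walk aperiodic and forces the convolution powers $\tilde{\mu}^{*n}$ to converge to $\Haarof{H}$, and then to upgrade weak-$*$ convergence to convergence in $\mathcal{W}_3$ using compactness of $H \subset O(d)$.

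First I would show $\tilde{\mu}^{*n} \to \Haarof{H}$ in the weak-$*$ topology. Because $H$ is compact, the sequence $(\tilde{\mu}^{*n})_n$ is tight and hence has weak-$*$ accumulation points; any such accumulation point $m$ is $\tilde{\mu}$-stationary, and the atom at $e$ lets the identity $\tilde{\mu}*m=m$ be rewritten as $\mu_U * m = m$. As in the proof of Lemma~\ref{Ufixeduniformmixing}, after an ergodic decomposition $m$ must be invariant under the group generated by $\mathrm{supp}(\mu_U)$, hence under its closure $H$, so $m = \Haarof{H}$ by uniqueness of Haar probability measure on the compact group $H$. Uniqueness of the accumulation point forces the whole sequence to converge weak-$*$ to $\Haarof{H}$.

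Next I would convert weak-$*$ convergence into convergence in $\mathcal{W}_3$. Since $O(d)$ has operator-norm diameter at most $2$, the elementary bound $d(x,y)^3 \leq 4\, d(x,y)$ applied to any coupling gives
\begin{equation*}
    \mathcal{W}_3(\tilde{\mu}^{*n}, \Haarof{H}) \;\leq\; \bigl( 4\, \mathcal{W}_1(\tilde{\mu}^{*n}, \Haarof{H}) \bigr)^{1/3},
\end{equation*}
and weak-$*$ convergence of probability measures on the compact space $H$ is equivalent to $\mathcal{W}_1$-convergence. Combining the two proves the first assertion, yielding an $N$ that depends on $\mu_U$ and $\varepsilon$.

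For the uniform version under the spectral gap hypothesis, I would apply Lemma~\ref{GapWassersteinConvergence} directly to $\tilde{\mu}$ instead of $\mu_U$. A quick computation gives $\mathrm{gap}_H(\tilde{\mu}) \geq \varepsilon / 2$: indeed $T_{\tilde{\mu}} = \tfrac{1}{2}\mathrm{Id} + \tfrac{1}{2}T_{\mu_U}$, so $\|T_{\tilde{\mu}}|_{L^2_0(H)}\| \leq \tfrac{1}{2} + \tfrac{1}{2}(1-\varepsilon) = 1 - \varepsilon/2$. Lemma~\ref{GapWassersteinConvergence} then yields $\mathcal{W}_1(\tilde{\mu}^{*n}, \Haarof{H}) \ll_d (1 - \varepsilon/2)^{\alpha n}$ with $\alpha = (1 + \tfrac{1}{2}\dim O(d))^{-1}$, and combining with the diameter bound above gives an $N = N(d,\varepsilon)$ independent of $H$ and of the specific $\mu_U$, as required. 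The only mildly delicate point is the aperiodicity: without the atom at $e$ the sequence $\mu_U^{*n}$ need not converge (for example if $\mathrm{supp}(\mu_U)$ lies in a nontrivial coset of a proper closed subgroup of $H$), so it is essential that one works with $\tilde{\mu}$ rather than with $\mu_U$ itself.
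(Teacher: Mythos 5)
Your route is the same as the paper's: prove that the lazy convolution powers $\tilde{\mu}^{*n}$, $\tilde{\mu}=\frac{1}{2}\delta_e+\frac{1}{2}\mu_U$, converge to $\Haarof{H}$, upgrade to $\mathcal{W}_3$ using compactness of $H$ (the paper likewise notes that on $H$ the $L^3$- and $L^1$-Wasserstein distances are comparable), and in the spectral gap case apply Lemma~\ref{GapWassersteinConvergence}; your computation $\mathrm{gap}_H(\tilde{\mu})\geq \varepsilon/2$ correctly supplies the detail needed to apply that lemma to the lazy measure, whose support generates the same group $H$.

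The one step that does not work as written is the weak-$*$ argument. For the sequence $\tilde{\mu}^{*n}$ (as opposed to the Ces\`aro averages used in Lemma~\ref{Ufixeduniformmixing}) a weak-$*$ accumulation point is not automatically $\tilde{\mu}$-stationary: if $\tilde{\mu}^{*n_k}\to m$, then $\tilde{\mu}^{*(n_k+1)}\to \tilde{\mu}*m$, but nothing yet identifies this with $m$, since $n_k+1$ is a different subsequence. This is precisely where the laziness must enter quantitatively, not merely to rewrite $\tilde{\mu}*m=m$ as $\mu_U*m=m$. A standard repair: expand $\tilde{\mu}^{*n}=2^{-n}\sum_{k=0}^{n}\binom{n}{k}\mu_U^{*k}$ and observe that the total variation distance between the binomial $(n,\frac{1}{2})$ and binomial $(n+1,\frac{1}{2})$ weight vectors is $O(n^{-1/2})$ (unimodality of the binomial), so $\|\tilde{\mu}^{*(n+1)}-\tilde{\mu}^{*n}\|_{TV}\to 0$; consequently any subsequential weak-$*$ limit $m$ satisfies $\tilde{\mu}*m=m$, and your ergodic-decomposition argument then identifies $m=\Haarof{H}$. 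Alternatively one may invoke the Kawada--It\^o equidistribution theorem for aperiodic measures on compact groups. With that point filled in, your proof is complete and coincides with the paper's intended argument.
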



\begin{proof}
    This follows similarly to the arguments given in section~\ref{MixingSection} since the measure $\mu_U' = \frac{1}{2}\delta_e + \frac{1}{2}\mu_U$ satisfies that $(\mu_U')^{*n}\to \Haarof{H}$ as $n\to \infty$. In the presence of a spectral gap we apply Lemma~\ref{GapWassersteinConvergence} and use that by compactness of $H$ the $L^3$-Wasserstein distance is comparable with the $L^1$-Wasserstein distance.  
\end{proof}

It is convenient to work with measures which are appropriately translated.

\begin{definition}
	We say that a probability measure $\mu$ on $G$ is \emph{centred at zero} if $\mathbb{E}_{\gamma \sim \mu} [\gamma(0)] = 0$.
\end{definition}

\begin{lemma} \label{lemma:centred_variance}
	Suppose that $\mu$ is a probability measure on $G$ which is centred at zero and has uniform contraction ratio $\rho \in (0, 1)$. Then if $\gamma_1, \gamma_2, \dots$ are i.i.d.\ samples from $\mu$ and $n \in \mathbb{Z}_{>0}$ we have
	\begin{equation*}
		\mathbb{E}[\gamma_1 \dots \gamma_n(0)] = 0
	\end{equation*}
	and
	\begin{equation*}
		\mathbb{E} [|\gamma_1 \dots \gamma_n(0)|^2] = \frac{1 - \rho^{2n}}{1 - \rho^2} \mathbb{E} [|\gamma_1(0)|^2].
	\end{equation*}
\end{lemma}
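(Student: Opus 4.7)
The plan is to exploit the affine structure of $G = \mathrm{Sim}(\R^d)$: writing each sample as $\gamma_i(x) = \rho U_i x + b_i$ with $U_i \in O(d)$ and $b_i = \gamma_i(0) \in \R^d$, the centred-at-zero hypothesis translates exactly to $\mathbb{E}[b_i] = 0$. The key observation, which I will verify by a short induction on $n$ (using $\gamma_k(\gamma_{k+1}\cdots\gamma_n(0)) = \rho U_k (\gamma_{k+1}\cdots\gamma_n(0)) + b_k$), is that the iterated composition at the origin linearises as
\begin{equation*}
\gamma_1 \gamma_2 \cdots \gamma_n(0) \;=\; \sum_{i=1}^n \rho^{\,i-1} \, U_1 U_2 \cdots U_{i-1}\, b_i,
\end{equation*}
where by the i.i.d.\ assumption each pair $(U_i, b_i)$ is independent of the pairs $(U_j, b_j)$ for $j \neq i$. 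Both conclusions of the lemma will then be read off term-by-term from this sum.

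For the first identity I will simply take expectations and note that, for each $i$, the factor $b_i$ is independent of $U_1, \ldots, U_{i-1}$, so the $i$-th summand has expectation $\rho^{i-1}\mathbb{E}[U_1 \cdots U_{i-1}]\,\mathbb{E}[b_i] = 0$.

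For the second identity I will show that the summands $X_i := \rho^{i-1} U_1 \cdots U_{i-1}\, b_i$ are pairwise orthogonal in $L^2(\R^d)$. Indeed, for $i < j$, conditioning on $\gamma_1, \ldots, \gamma_{j-1}$ makes $X_i$ deterministic while the conditional expectation of $X_j$ is $\rho^{j-1} U_1 \cdots U_{j-1}\, \mathbb{E}[b_j] = 0$, so $\mathbb{E}[X_i \cdot X_j] = \mathbb{E}[X_i \cdot \mathbb{E}[X_j \mid \gamma_1,\ldots,\gamma_{j-1}]] = 0$. Combined with the fact that the $U_k$ are orthogonal (hence $|U_1 \cdots U_{i-1} b_i| = |b_i|$) and that every $b_i$ has the same law as $\gamma_1(0)$, this gives
\begin{equation*}
\mathbb{E}[|\gamma_1 \cdots \gamma_n(0)|^2] \;=\; \sum_{i=1}^n \rho^{2(i-1)} \mathbb{E}[|b_i|^2] \;=\; \mathbb{E}[|\gamma_1(0)|^2] \, \frac{1 - \rho^{2n}}{1 - \rho^2},
\end{equation*}
as required. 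There is no genuine obstacle here: once the linearisation above is established, both parts are elementary consequences of the independence of the increments $b_i$ and their vanishing mean.
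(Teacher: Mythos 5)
Your proof is correct, and it is the natural way to fill in the argument that the paper simply leaves to the reader as a routine induction: the explicit linearisation $\gamma_1\cdots\gamma_n(0)=\sum_{i=1}^n \rho^{i-1}U_1\cdots U_{i-1}b_i$ together with the vanishing-mean/orthogonality of the increments is exactly the intended computation. No gaps; the independence and norm-preservation steps you invoke are all valid under the i.i.d.\ and uniform-contraction hypotheses.
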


\begin{proof}
	Both of these follow by an induction argument left to the reader.
\end{proof}

In order to prove Proposition~\ref{prop:homo_close_to_normal}, we need the following theorem of Sakhanenko from \cite{sakhanenko1985estimates}.

\begin{theorem} \label{theo:l_p_wasserstien_p}
    For every $p, d \geq 1$ there is some constant $C = C(p, d) >0$ such that the following holds. Suppose that $X_1, \dots, X_n$ are independent random variables taking values in $\R^d$ with mean $0$. Let $\Sigma_i = \var X_i$, suppose that $\sum_{i=1}^n \Sigma_i = I$ and let $L_p = (\sum_{i=1}^n \mathbb{E}[|X_i|^p])^{1/p}$. Then
    \begin{equation*}
        \mathcal{W}_p\left(\sum_{i=1}^n X_i, N(0, I)\right) \leq C L_p.
    \end{equation*}
\end{theorem}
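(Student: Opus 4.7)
The plan is to construct an explicit coupling between the partial sum $S=\sum_i X_i$ and a standard Gaussian $T\sim N(0,I)$, and to control the $L^p$-cost. I would build a coupling on a common probability space by pairing each $X_i$ with a Gaussian $Y_i\sim N(0,\Sigma_i)$, arranging things so that the joint law of $(Y_1,\ldots,Y_n)$ is that of independent Gaussians; then (using the normalization $\sum \Sigma_i = I$, which I read the statement as intending) $\sum_i Y_i\sim N(0,I)$ exactly, and one has $\mathcal{W}_p(S,T)^p\leq \mathbb{E}\bigl[|\sum_i(X_i-Y_i)|^p\bigr]$. The problem thus reduces to controlling the $p$-th moment of the error $\sum_i(X_i-Y_i)$.

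First I would handle the one-block cost. For each $i$, I would take $Y_i=T_i(X_i)$ with $T_i$ the Brenier optimal transport map from the law of $X_i$ to $N(0,\Sigma_i)$ (or, more hands-on, a coordinatewise quantile coupling composed with the correct rotation and scaling). The bound $\mathbb{E}[|X_i-Y_i|^p]\leq C_{p,d}\,\mathbb{E}[|X_i|^p]$ will follow from the triangle inequality in $L^p$ together with $\mathbb{E}[|Y_i|^p]\ll_{p,d}(\operatorname{tr}\Sigma_i)^{p/2}\leq (\mathbb{E}|X_i|^2)^{p/2}\leq \mathbb{E}|X_i|^p$ via Jensen.

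Second I would assemble the blocks. Taking the one-block couplings to be mutually independent across $i$, write $Z_i:=X_i-Y_i$. For $p\in[1,2]$ the bound is essentially routine: an application of Marcinkiewicz--Zygmund to the centered parts $Z_i-\mathbb{E}Z_i$, together with control of the deterministic drift $\sum_i\mathbb{E}Z_i$ (which can be absorbed by shifting each coupling), yields $\mathbb{E}\bigl[|\sum_i Z_i|^p\bigr]\leq C\sum_i\mathbb{E}|Z_i|^p\leq C'L_p$.

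The hard part will be $p>2$. Here Rosenthal's inequality gives
\begin{equation*}
\mathbb{E}\Bigl[\bigl|\sum_i Z_i\bigr|^p\Bigr]\lesssim \Bigl(\sum_i\mathbb{E}|Z_i|^2\Bigr)^{p/2}+\sum_i\mathbb{E}|Z_i|^p,
\end{equation*}
and the first term is typically of order $1$, far worse than $L_p$. A purely independent product coupling cannot beat this, so I would replace it with a dyadic, Komlós--Major--Tusnády/Zaitsev-style construction: group the $X_i$ in a binary tree and, at each merge, recouple the two intermediate Gaussians conditionally so that the accumulated bias is cancelled at every level. The crux becomes a quantitative optimal-transport estimate for the conditional law of a sum of two blocks given their total against the corresponding Gaussian conditional, which is exactly the technical heart of Sakhanenko's argument and is where all the delicacy lies.
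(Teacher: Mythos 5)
This statement is not proved in the paper at all: it is quoted verbatim as a theorem of Sakhanenko, with a citation to \cite{sakhanenko1985estimates}, and then applied (only with $p=3$, $d\leq 2$) in the proof of Lemma~\ref{lemm:w3_estimate}. So there is no internal argument to compare yours against; the relevant question is whether your sketch actually constitutes a proof of the cited result, and it does not. Your reduction to bounding $\mathbb{E}\bigl[\bigl|\sum_i(X_i-Y_i)\bigr|^p\bigr]$ for a suitable coupling is fine, and your diagnosis of why the product coupling fails for $p>2$ (the Rosenthal term $(\sum_i\mathbb{E}|Z_i|^2)^{p/2}\asymp 1$) is correct. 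But the case $p>2$ is the entire content of the theorem — it is exactly the case the paper needs, since it is invoked with $p=3$ — and at that point your argument stops being a proof: you propose a KMT/Zaitsev-style dyadic recoupling and then state that the crux, a quantitative transport estimate for the conditional law of a block sum given the total against the Gaussian conditional, "is exactly the technical heart of Sakhanenko's argument." Naming the difficulty and attributing it to Sakhanenko is not supplying it; as written, the blind attempt has a genuine gap precisely where the theorem is hard, and no amount of routine Rosenthal/Marcinkiewicz--Zygmund bookkeeping closes it.

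Two smaller points. First, in your one-block estimate the inequality $(\mathbb{E}|X_i|^2)^{p/2}\leq\mathbb{E}|X_i|^p$ is Jensen in the wrong direction when $p<2$; it is valid only for $p\geq 2$, so your "routine" range should be stated as $p\in[1,2]$ handled by von Bahr--Esseen/MZ directly on the $Z_i$ (whose means are automatically zero, since both $X_i$ and $Y_i$ are centred — no drift needs absorbing), with the moment comparison reserved for $p\geq 2$. Second, the hypotheses as printed in the paper contain typos you silently corrected ($\sum_i\Sigma_i=I$ rather than $\sum_i\Sigma_i^2=I$, the sum defining $L_p$ running to $n$, and $c$ versus $C$); flagging these explicitly would be worthwhile, but it does not affect the verdict that the proposal, unlike the paper's citation, leaves the essential step unproved.
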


This is enough to deduce the following estimate. We note that we work with $\mathcal{W}_3$ norm in order to establish the decaying $(n')^{-1/6}$ term in \eqref{nprimebound}.

\begin{lemma} \label{lemm:w3_estimate}
	Let $(p_1, \dots, p_k)$ be a probability vector, $U_1, \dots, U_k \in O(d)$ generate an irreducible subgroup, $b_1, \dots, b_k \in \R^d$ and let $\rho \in (0, 1)$. Let $\mu$ be the probability measure on $G$ given by $\mu = \sum_{i=1}^{k} p_i \delta_{g_i}$ where $g_i: x \mapsto \rho U_i x + b_i$. Suppose that $\mu$ is centred at zero and that all of the $b_i$ have modulus at most $1$. Let $\gamma_1, \gamma_2, \dots$ be i.i.d.\ samples from $\mu$. Let $\varepsilon \in (0, 1)$.
	
	Given $\ell \in \Z_{>0}$ we define $S_{\ell} := \mathbb{E}[|\gamma_1 \dots \gamma_{\ell}(0)|^2]$ and
	\begin{equation*}
		W_{\ell} := \mathcal{W}_3 \left( d^{1/2} S_{\ell}^{-1/2} \gamma_1 \dots \gamma_{\ell}(0), N(0, I) \right).
	\end{equation*}	
	
	Suppose that there exist $m, n \in \Z_{>0}$ such that for $V$ a uniform random variable on the closure of the subgroup generated by the $U_1, \ldots , U_k$ we have
	\begin{equation*}
        \mathcal{W}_3(U(\gamma_1 \dots \gamma_m), V) < \varepsilon \quad\quad \text{ and }\quad \quad  \frac{m}{S_n^{1/2}} < \varepsilon.
    \end{equation*}
    Then for $n' \in \Z_{>0}$,
    \begin{equation}
        W_{(m+n) n'} \ll_d (T^{-1/6} + T^{1/2} \varepsilon) (W_n + 1)
    \end{equation}
    where $T := \sum_{i=0}^{n'-1} \rho^{(m+n) i}$. In particular if $\rho^{(m+n) n'} > 1/2$ then $n'/2 \leq T \leq n'$ and therefore
    \begin{equation}\label{nprimebound}
        W_{(m+n) n'} \ll_d ((n')^{-1/6} + (n')^{1/2} \varepsilon) (W_n + 1)
    \end{equation}
\end{lemma}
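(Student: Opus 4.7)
The plan is to decompose $X := \gamma_1 \cdots \gamma_{(m+n)n'}(0)$ into $n'$ block contributions, approximate each block by an independent, isotropic, approximately Gaussian random variable (using the mixing and $W_n$ hypotheses together with Lemma~\ref{IrreducibleTrick}), and apply Theorem~\ref{theo:l_p_wasserstien_p}. Let $q_j := \gamma_{(m+n)(j-1)+1} \cdots \gamma_{(m+n)j}$, so that
\begin{equation*}
X = \sum_{j=1}^{n'} c_j, \qquad c_j := \rho^{(m+n)(j-1)} U(q_1 \cdots q_{j-1}) b(q_j).
\end{equation*}
Within each block I would split $q_j = P_j Q_j$ with $P_j$ the first $m$ factors (used for $U$-mixing) and $Q_j$ the last $n$ factors (used for Gaussian approximation from $W_n$). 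Expanding $b(q_j) = \rho^m U(P_j) b(Q_j) + b(P_j)$ decomposes $c_j = a_j + e_j$, with $a_j$ the mixed main term and $e_j$ a small error.

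Using the hypothesis $\mathcal{W}_3(U(\gamma_1 \cdots \gamma_m), V) < \varepsilon$, I would couple each $U(P_j)$ with an independent Haar variable $V_j$ on the closure $H$ of the group generated by the $U_i$ and replace $a_j$ by $\tilde{a}_j := \rho^{(m+n)(j-1)+m} U(q_1 \cdots q_{j-1}) V_j b(Q_j)$. By invariance of Haar measure the rotation $U(q_1 \cdots q_{j-1}) V_j$ is itself Haar distributed (conditional on all earlier blocks) and, since the conditional law of $\tilde{a}_j$ depends only on this product, the $\tilde{a}_j$ are unconditionally independent across $j$; Lemma~\ref{IrreducibleTrick} then forces their covariance to be isotropic, equal to $\rho^{2[(m+n)(j-1)+m]}(S_n/d)I$. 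The $W_n$ hypothesis supplies an optimal coupling $b(Q_j) \leftrightarrow N_j \sim N(0, (S_n/d)I)$ with $\mathbb{E}[|b(Q_j)-N_j|^3]^{1/3} \leq W_n (S_n/d)^{1/2}$, and $V_j N_j \sim N(0, (S_n/d)I)$ because $N_j$ is spherically symmetric.

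Set $\sigma^2 := S_{(m+n)n'} = T'S_{m+n}$ with $T' := \sum_{j=0}^{n'-1}\rho^{2(m+n)j}$ and $\tilde{\sigma}^2 := \rho^{2m}T'S_n$; by Lemma~\ref{lemma:centred_variance}, $\sigma^2 - \tilde{\sigma}^2 = T'S_m$. I would then bound $W_{(m+n)n'} = \mathcal{W}_3(d^{1/2}\sigma^{-1}X, N(0,I))$ by summing, after rescaling by $d^{1/2}\sigma^{-1}$: (i) the mixing error $\|\sum(a_j-\tilde{a}_j)\|_{L^3}$; (ii) the Gaussian-replacement error $\|\sum \rho^{(m+n)(j-1)+m}V_j(b(Q_j)-N_j)\|_{L^3}$; (iii) the $e_j$-error $\|\sum e_j\|_{L^3}$; (iv) the Berry--Esseen error from applying Theorem~\ref{theo:l_p_wasserstien_p} to the now-independent $\tilde{a}_j$'s; and (v) a negligible variance correction bounded by $(\sigma-\tilde\sigma)/\sigma \leq S_m/S_n \leq \varepsilon^2$. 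Errors (i) and (ii) follow from Minkowski in $L^3$ and the independence of $U(P_j)$ (resp.\ $b(Q_j)$) from $b(Q_j)$ (resp.\ $V_j$), giving $\ll \rho^m T(\varepsilon + W_n)(S_n/d)^{1/2}$; under $\sigma \geq \rho^m\sqrt{T'S_n}$ and $T \asymp T'$ this rescales to $\ll \sqrt{T}(\varepsilon + W_n)$. Error (iv) produces the $T^{-1/6}(W_n + 1)$ contribution through the cubic moment bounds $\mathbb{E}[|b(Q_j)|^3] \ll_d (S_n/d)^{3/2}(1+W_n)^3$ weighted by $\rho^{3[(m+n)(j-1)+m]}$ and summed geometrically.

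The main obstacle will be the $e_j$-error (iii). The $e_j$ form a martingale difference sequence but are not independent, and a naïve bound using the $L^2$-martingale structure together with the deterministic bound $|b(P_j)| \leq m$ yields only $\ll \sqrt{T}\varepsilon^{2/3}$, which is too weak. The resolution is to abandon the block structure for this piece: unfold
\begin{equation*}
\sum_{j=1}^{n'} e_j = \sum_{\ell \in P\text{-segments}} \rho^{\ell-1} U(\gamma_1 \cdots \gamma_{\ell-1}) b(\gamma_\ell),
\end{equation*}
and apply Minkowski in $L^3$ directly over the $mn'$ individual terms. The sharp deterministic bound $|b(\gamma_\ell)| \leq 1$ (replacing the blockwise $|b(P_j)| \leq m$) saves the missing factor of $m^{1/3}$, giving $\|\sum_j e_j\|_{L^3} \leq Tm$ and hence $\ll \sqrt{T}\varepsilon$ after rescaling, using the hypothesis $m/S_n^{1/2} < \varepsilon$. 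Beyond this key trick, the proof is careful bookkeeping of the geometric factors $\rho^{(m+n)(j-1)}$, exploiting that $T$, $T'$ and $T_p := \sum_{j=0}^{n'-1}\rho^{p(m+n)j}$ for $p \in \{3,4\}$ are of comparable order in the ``in particular'' regime $\rho^{(m+n)n'} > 1/2$.
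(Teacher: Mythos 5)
Your overall route is the paper's route: split each block of length $m+n$ into an $m$-prefix (used to couple the rotation with a Haar variable $V_j$ on $H$), an $n$-suffix (whose translation part carries the $W_n$ information), invoke the Schur-type Lemma~\ref{IrreducibleTrick} for isotropy of $V_j b(Q_j)$, apply Sakhanenko (Theorem~\ref{theo:l_p_wasserstien_p}) across the now-independent blocks, and correct the normalisation at the end. However, as budgeted, your error decomposition does not prove the stated bound. The problem is item (ii), the per-block replacement of $b(Q_j)$ by a coupled Gaussian $N_j$: bounding $\|\sum_j \rho^{(m+n)(j-1)+m}V_j(b(Q_j)-N_j)\|_{L^3}$ by Minkowski costs $\sum_j \rho^{(m+n)(j-1)+m}W_n(S_n/d)^{1/2}\asymp T\rho^m W_n(S_n/d)^{1/2}$, i.e.\ $\asymp \sqrt{T}\,W_n$ after dividing by $\sigma\asymp \rho^m\sqrt{T S_n/d}$ — and you state exactly this, $\ll\sqrt{T}(\varepsilon+W_n)$. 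A term $\sqrt{T}\,W_n$ is not dominated by $(T^{-1/6}+T^{1/2}\varepsilon)(W_n+1)$, and it would ruin the downstream bootstrap in Corollary~\ref{coro:many_bounded}, where the whole point is that the coefficient of $W_n$ decays (there $W_n$ is only known to be $O(1)$ while $T\approx\varepsilon'^{-3/2}$ is large). Even a Rosenthal-type refinement only improves this to $O(W_n)$, never to something decaying in $T$: a block-by-block Gaussian coupling cannot see the central limit effect across blocks. The fix is simply to delete the $N_j$'s and error (ii) altogether and apply Sakhanenko directly to the independent blocks $\tilde{a}_j$ (using $\mathbb{E}[|b(Q_j)|^3]^{1/3}\ll_d S_n^{1/2}(W_n+1)$), which is precisely your item (iv) and is what the paper does; keeping both (ii) and (iv) double-counts the Gaussian approximation, and only (iv) produces the correct $T^{-1/6}(W_n+1)$ contribution.

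A smaller point: the ``main obstacle'' you describe for the $e_j$-term is not an obstacle, and your ``unfolding'' trick buys nothing. The blockwise deterministic bound $|b(P_j)|\leq m$ (which already uses $|b_i|\leq 1$ and $\rho<1$ summed over the $m$ letters) together with Minkowski over blocks gives $\sum_j \rho^{(m+n)(j-1)}m = Tm \leq T\varepsilon S_n^{1/2}$, hence $\ll\sqrt{T}\varepsilon$ after rescaling — identical to your unfolded estimate; there is no missing factor $m^{1/3}$. This is exactly the paper's one-line treatment, which absorbs the $|X_i(0)|\leq m$ term via the hypothesis $m<\varepsilon S_n^{1/2}$. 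Your handling of the normalisation discrepancy, $(\sigma-\tilde\sigma)/\sigma\leq S_m/S_n\leq\varepsilon^2$, is fine and if anything cleaner than the paper's $1+O(m/n)$ remark, and your argument that the $\tilde a_j$ are jointly independent (conditional law independent of the past, by Haar invariance) is correct.
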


\begin{proof}
	For $i = 1, \dots, n'$ let $$X_i := \gamma_{(i-1)(n+m) + 1} \dots \gamma_{(i-1)(n+m)+m} $$ and $$Y_i := \gamma_{(i-1)(n+m) + m + 1} \dots \gamma_{i(n+m)}$$ such that $$Z_i = X_iY_i = \gamma_{(i-1)(n+m) + 1} \dots \gamma_{i(n+m)}.$$ Furthermore consider $V_1, \dots, V_k$ independent random variables which are uniform on $H$ (the closure of the subgroup generated by the $U_i$), independent of the $Y_i$ and are such that
	\begin{equation*}
		\mathbb{E} [\| U(X_i) - V_i \|^3] < \varepsilon^3.
	\end{equation*}
	
	Note that
	\begin{align*}
		Z_1 \dots Z_{n'} (0) &= Z_1(0) + \rho^{(m+n)} U(Z_1) Z_2 (0) + \\ & \dots + \rho^{(m+n)(n'-1)} U(Z_1 \dots Z_{n'-1}) Z_{n'} (0).
	\end{align*}
	
	Also note that
	\begin{align*}
	\MoveEqLeft \mathcal{W}_3\left(\rho^{(m+n)(i-1)} U(Z_1 \dots Z_{i-1}) Z_i (0), \rho^{(m+n)(i-1)+m} V_i Y_i(0) \right) \\ &= \rho^{(m+n)(i-1)}\mathcal{W}_3\left(U(Z_1 \dots Z_{i-1}) ( \rho^m U(X_i)Y_i(0) + X_i(0)), \rho^{m} V_i Y_i(0) \right) \\
    &\leq \rho^{(m+n)(i-1)}(m + \varepsilon \rho^{m} (\mathbb{E} \left[ | Y_i(0) |^3 \right])^{1/3})\\
	& \ll_d \varepsilon \rho^{(m+n)(i-1)} S_n^{1/2} (W_n + 1),
	\end{align*} having used the triangle inequality in the second line and that $|X_i(0)| \leq m$ as $\sup_i|b_i| \leq 1$ as well as that 
    \begin{align*}
        \MoveEqLeft \mathcal{W}_3\left(U(Z_1 \dots Z_{i-1})U(X_i)Y_i(0) , V_i Y_i(0) \right) \\ &= \mathcal{W}_3\left(U(Z_1 \dots Z_{i-1})U(X_i)Y_i(0), U(Z_1 \dots Z_{i-1})V_i Y_i(0) \right)
    \end{align*} 
    as $V_i$ is distributed like the Haar measure on $H$.
	
	Note that by Lemma~\ref{IrreducibleTrick} the covariance matrix of $V_iY_i(0)$ is $d^{-1}S_n I$. Therefore by Theorem \ref{theo:l_p_wasserstien_p} letting $A= d^{-1/2} \left( \frac{1 - \rho^{2 n' (m+n) } }{1 - \rho^{2(m+n)}} \right)^{1/2} S_n^{1/2}$ we have that
	\begin{align*}
	\MoveEqLeft \mathcal{W}_3 \left( A^{-1}  \left( \sum_{i=1}^{n'}\rho^{(m+n)(i-1)} V_i Y_i(0) \right), N(0, I) \right) \\ & \ll \left( \sum_{i=1}^{n'} \mathbb{E} \left[ |A^{-1} \rho^{ (m+n)(i-1)} Y_i(0)|^3 \right] \right)^{1/3} \\
	&\ll_d A^{-1} \left( \frac{1 - \rho^{3 (m+n) n'}}{1 - \rho^{3(m+n)}} \right)^{1/3} (W_n + 1) \\
    & \ll_d T^{-1/6} (W_n+1),
	\end{align*} where we exploited that $$\frac{1 - \rho^{2 n' (m+n) } }{1 - \rho^{2(m+n)}} = \frac{1 - \rho^{n' (m+n) } }{1 - \rho^{(m+n)}}\frac{1 + \rho^{ n' (m+n) } }{1 + \rho^{(m+n)}} \in [T/2,T]$$ and a similar estimate for $\left( \frac{1 - \rho^{3 (m+n) n'}}{1 - \rho^{3(m+n)}} \right)^{1/3}$.
	
	Therefore we may deduce that
	\begin{equation*}
	\mathcal{W}_3 \left( A^{-1} \gamma_1 \dots \gamma_{(m+n) n'} (0), N(0, I) \right) \ll_d T^{-1/6} (W_n + 1) + \varepsilon T^{1/2} (W_n + 1)
	\end{equation*}
	By Lemma \ref{lemma:centred_variance} we have that
	\begin{equation*}
	\frac{d^{-1/2} S_{n'}^{-1/2}}{A} = 1 + O(\frac{m}{n}) = 1 + O(\varepsilon).
	\end{equation*}
	We conclude
	\begin{align*}
		W_{(m+n) n'} &\ll_d T^{-1/6} (W_n + 1) + \varepsilon T^{1/2} (W_n + 1) + \varepsilon \\
		&\ll_d T^{-1/6} (W_n + 1) + \varepsilon T^{1/2} (W_n + 1)
	\end{align*}
	as required.
\end{proof}

From this we can deduce the following.
\begin{corollary} \label{coro:many_bounded}
    For every $\varepsilon > 0$ and every irreducible probability measure $\mu_U$ on $O(d)$ there is $\delta > 0$,  $C>1$ and $\tilde{\rho} \in (0, 1)$ such that the following is true. Let $\mu = \sum_{i = 1}^k p_i\delta_{g_i}$ be a probability measure on $G$ such that $\mathcal{W}_3(U(\mu), \mu_U) < \delta$ and $p_i \geq \eps$ for all $1 \leq i \leq k$. Assume further that $\max_{1 \leq i \leq k} |b(g_i)| = 1$ and for some $\rho \in (\tilde{\rho},1)$ we have $\rho(g_i) = \rho$ for all $1\leq i \leq k$. Suppose that $\mu$ is centred at zero and let $\gamma_1, \gamma_2, \dots$ be i.i.d.\ samples from $\mu$. Then for every $\ell \in \Z_{>0}$ with $\ell \geq 24$ such that $C^{\ell+1} < \frac{\rho^{C}}{1 - \rho^{C}}$ there is some $n \in \Z_{>0}$ such that
    \begin{equation*}
        \frac{1}{1 - \rho^n} \in [C^{k}, C^{k+1}]
    \end{equation*}
    and
    \begin{equation*}
        \mathcal{W}_3(d^{1/2} S_n^{-1/2} \gamma_1 \dots \gamma_n(0), N(0, I)) < \varepsilon.
    \end{equation*}

    Moreover, if $U(\mu)$ is supported on $H$ and $\mathrm{gap}_H(\mu) \geq \eps$, then $C$ and $\tilde{\rho}$ can be made uniform $d$ and $\eps$.
\end{corollary}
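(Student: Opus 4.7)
The plan is to apply Lemma~\ref{lemm:w3_estimate} iteratively: each application of the lemma roughly divides the scale $1/(1-\rho^n)$ by the factor $n'$, so by iterating with $n_j'$ of appropriate size one can step through the target bins $[C^k, C^{k+1}]$ for $k$ ranging from the largest allowed value $k_{\max}$ down to $1$, while keeping $W_{n_j}$ uniformly bounded throughout.

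First I would fix parameters in the following order. Take $C$ sufficiently large in terms of $d$ and $\varepsilon$ so that $c_d C^{-1/6} \leq 1/4$, where $c_d$ denotes the implicit constant in Lemma~\ref{lemm:w3_estimate}. Then take $\varepsilon_0 > 0$ small depending on $C$ and $d$ so that $c_d C^{1/2} \varepsilon_0 \leq 1/4$ as well. Apply Lemma~\ref{lemma:dense_tends_to_haar} to $\mu_U$ (or, more precisely, to $\tfrac{1}{2}\delta_e + \tfrac{1}{2}\mu_U$) to obtain $m \in \Z_{>0}$ with $\mathcal{W}_3(U(\gamma_1 \cdots \gamma_m), V) < \varepsilon_0$, where $V$ is Haar-distributed on the closure $H$ of the subgroup generated by $\mathrm{supp}(\mu_U)$; in the spectral-gap case $m$ depends only on $d$ and $\varepsilon$. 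Finally, take $\tilde{\rho} \in (0,1)$ close enough to $1$, depending on $d$, $\varepsilon$, $C$, $\varepsilon_0$ and $m$, to ensure three technical conditions: (i) the hypothesis $m/S_n^{1/2} < \varepsilon_0$ of Lemma~\ref{lemm:w3_estimate} holds at every step in the iteration below; (ii) the linear regime $\rho^{(m+n)n'} > 1/2$ holds throughout; and (iii) for every target bin $[C^k, C^{k+1}]$ in the allowed range the preimage under $n \mapsto 1/(1-\rho^n)$ is an interval of length greater than $1$, and so contains an integer.

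For the base case I would take $n_0 = n_0(d, \varepsilon, m)$ large enough that $m/S_{n_0}^{1/2} < \varepsilon_0$, which is possible using $S_{n_0} \gg n_0 \varepsilon$ in the linear regime (Lemma~\ref{lemma:centred_variance}), while simultaneously small enough that the crude bound $|\gamma_1 \cdots \gamma_{n_0}(0)| \leq (1-\rho)^{-1}$ combined with the same lower bound on $S_{n_0}$ yields $W_{n_0} \leq C_1$ for some $C_1 = C_1(d, \varepsilon, m)$. Now for each target $k \in \{1, \ldots, k_{\max}\}$ I would inductively construct $n_0 < n_1 < n_2 < \cdots$ by applying Lemma~\ref{lemm:w3_estimate} at each step with $n_{j+1} = (m+n_j) n_j'$, where $n_j' \in \Z_{>0}$ is chosen so that $s_{j+1} := 1/(1-\rho^{n_{j+1}})$ is the largest value at most $C^{k+1}$ attainable with $n_j' \geq C$. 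Because $s_{j+1}/s_j \approx 1/n_j'$ in the linear regime, after a bounded number of iterations the scale enters $[C^k, C^{k+1}]$; incrementing $n_j'$ by $1$ at the final step changes the scale by a factor at most $(n_j'+1)/n_j' \leq C$, so the final bin is swept entirely and some such $n_j'$ places $s_{j+1}$ inside it.

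The recursion $W_{n_{j+1}} \leq c_d((n_j')^{-1/6} + (n_j')^{1/2} \varepsilon_0)(W_{n_j}+1) \leq \tfrac{1}{2}(W_{n_j}+1)$ holds at every step with $n_j' \geq C$, by the choice of $C$ and $\varepsilon_0$. By induction $W_{n_j} \leq \max(C_1, 1)+1$ uniformly, so taking $C$ larger than this bound gives the required $W_{n_j} < C$. The main obstacle is the joint choice of parameters: $\tilde{\rho}$ must simultaneously ensure the base case bound on $W_{n_0}$, the integer-containment of each bin preimage, the persistence of the linear regime throughout, and the hypothesis $m/S_n^{1/2} < \varepsilon_0$ at every step of every iteration. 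These constraints are all one-sided (only $1-\rho$ small enough), so they are mutually compatible once the earlier parameters are fixed; nevertheless the bookkeeping is delicate because both $n_0$ and $k_{\max}$ themselves depend on $1-\rho$.
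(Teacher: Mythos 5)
There is a genuine gap in how you control the error term of Lemma~\ref{lemm:w3_estimate} along your iteration. The bound from that lemma is $W_{(m+n)n'} \ll_d \bigl((n')^{-1/6} + (n')^{1/2}\varepsilon_0\bigr)(W_n+1)$, so you need the step size $n_j'$ to be bounded \emph{above} (in terms of $\varepsilon_0$) as well as below; your justification ``the recursion holds at every step with $n_j' \geq C$, by the choice of $C$ and $\varepsilon_0$'' has the inequality pointing the wrong way, since $c_d C^{1/2}\varepsilon_0 \leq 1/4$ says nothing about $c_d (n_j')^{1/2}\varepsilon_0$ when $n_j' > C$. And your rule for choosing $n_j'$ (the largest $s_{j+1} \leq C^{k+1}$ attainable with $n_j' \geq C$) does force huge steps: since $s_{j+1} \approx s_j/n_j'$, whenever the current scale $s_j$ is far above $C^{k+1}$ the smallest feasible $n_j'$ is about $s_j/C^{k+1}$. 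For the first step and a small target, say $k=1$, this is about $(1-\rho)^{-1}C^{-2}$ (recall $n_0$ is bounded in terms of $\varepsilon,\varepsilon_0,m$, so $s_0 = 1/(1-\rho^{n_0})\to\infty$ as $\rho\to 1$), which is unbounded as $\rho \to 1$; then $(n_0')^{1/2}\varepsilon_0(W_{n_0}+1)$ blows up and the claimed bound $W_{n_1} < C$ fails. The same problem threatens your final ``fine-tuning'' step unless you separately cap its size.

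This is exactly the point the paper's proof is built around: there the step is chosen so that $T = \sum_{i=0}^{n_k'-1}\rho^{(m+n_k)i} \in [\varepsilon'^{-3/2}, 2\varepsilon'^{-3/2}]$, i.e.\ bounded above \emph{and} below, which makes both error terms $T^{-1/6}$ and $T^{1/2}\varepsilon'$ of size $O(\varepsilon'^{1/4})$, so $W_{n_i} \ll_d \varepsilon'^{1/4}(W_{n_{i-1}}+1)$ stays bounded; and since consecutive scales then satisfy $(1-\rho^{n_{i+1}})/(1-\rho^{n_i}) \leq 4\varepsilon'^{-3/2} \leq C$, every bin $[C^k, C^{k+1}]$ in the admissible range automatically contains one of the constructed scales $1/(1-\rho^{n_i})$ — no target-by-target construction or final adjustment is needed. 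Your scheme can be repaired in this spirit (take all steps of size comparable to a fixed constant depending on $\varepsilon_0$, and bound the one fine-tuning step by, say, $C^2$ after strengthening the requirement on $\varepsilon_0$ to $c_d C\varepsilon_0 \leq 1/4$), but as written the absence of any upper bound on $n_j'$, together with the greedy choice that makes the first step arbitrarily large as $\rho \to 1$, breaks the uniform bound $W_{n} < C$ that the corollary asserts.
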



\begin{proof}
    Let $\varepsilon' > 0$ be sufficiently small in terms of $\varepsilon$. Choose $m = m(\mu_U, \varepsilon')$ and $\delta = \delta(\mu_U, \varepsilon')$ such that
    \begin{equation*}
        \mathcal{W}_3(U(\gamma_1 \dots \gamma_m), V) < \varepsilon'
    \end{equation*}
    and choose $n_0 = n_0(\varepsilon, \varepsilon' , \tilde{\rho})$ such that
    \begin{equation*}
        \frac{m}{S_{n_0}^{1/2}} < \varepsilon'.
    \end{equation*}
    Note that by Lemma~\ref{lemma:centred_variance} provided $\tilde{\rho}$ is sufficiently close to $1$ (in terms of $\varepsilon'$) we may assume that $n_0 \ll_{d, \mu_U} {\varepsilon'}^{-2}$. Now inductively chose $n_\ell'$ such that $\sum_{i=0}^{n_\ell'-1} \rho^{(m+n_\ell) i} \in [\varepsilon'^{-3/2}, 2 \varepsilon'^{-3/2}]$ and define $n_{\ell+1} := n_\ell'(n_\ell + m)$. Repeat this process until we find some $\ell$ such that $\sum_{i=0}^{\infty} \rho^{(m+n_\ell) i} < \varepsilon'^{-3/2}$ and let $\ell^*$ denote this value of $\ell$. Let $W_i := \mathcal{W}_3(d^{1/2} S_{i}^{-1/2} \gamma_1 \dots \gamma_{i}(0), N(0, I))$. By Lemma \ref{lemm:w3_estimate} we have that for $i=1, \dots, \ell^*$ we have
    \begin{equation*}
        W_{n_i} \ll_d \varepsilon'{}^{1/4} (W_{n_{i-1}} + 1).
    \end{equation*}
    Clearly $W_0 \ll_d n_o ^3 \ll {\varepsilon'}^{-6}$. Therefore for every $\ell \geq 24$ we have
    \begin{equation*}
        W_{n_\ell} \ll \varepsilon '^ {1/4}
    \end{equation*}
    We also have that
    \begin{equation*}
        \frac{1 - \rho^{n_{\ell+1}}}{1 - \rho^{m+n_\ell}} \in [\varepsilon'^{-3/2}, 2 \varepsilon^{-3/2}]
    \end{equation*}
    and so providing we choose $\tilde{\rho}$ to be sufficiently large we have
    \begin{equation*}
        \frac{1 - \rho^{n_{\ell+1}}}{1 - \rho^{n_\ell}} \leq 4 \varepsilon'^{-3/2}.
    \end{equation*}
    The result follows. When we have a spectral gap, all of these constants can be chosen to be uniform.
\end{proof}

Now we have enough to prove Proposition \ref{prop:homo_close_to_normal}.

\begin{proof}[Proof of Proposition \ref{prop:homo_close_to_normal}]
	Without loss of generality we may assume that $\mu$ is centred at zero and that $\max_{i=1}^k |b_i| = 1$.
 
    Let $\varepsilon' > 0$. By Lemma \ref{lemma:dense_tends_to_haar} there is some $m \in \Z_{>0}$ depending only on $\varepsilon$ and $\varepsilon'$ such that
	\begin{equation*}
		\mathcal{W}_3(U(\gamma_1 \dots \gamma_m), V) < \varepsilon'.
	\end{equation*}
	
	By Lemma \ref{lemma:centred_variance} there is some $N$ depending only on $\mu_U$ and $\varepsilon'$ such that for any $n \geq N$ we have
	\begin{equation*}
		\frac{m}{S_{n}^{1/2}} < \varepsilon'.
	\end{equation*}

    Let $C$ be as in Corollary \ref{coro:many_bounded} and choose $n$ such that 
    \begin{equation*}
        \frac{1}{1 - \rho^{m+n}} \in [C^{-1} \varepsilon'^{-3/2}, C \varepsilon'^{-3/2}].
    \end{equation*}
    Providing we choose $\tilde{\rho}$ sufficiently close to $1$ we will also have $n \geq N$. By letting $n' \to \infty$ in Lemma \ref{lemm:w3_estimate} we deduce that
    \begin{equation*}
		\mathcal{W}_3(A^{-1}\nu, N(0, I)) \ll_d C \varepsilon'^{1/4}
	\end{equation*}
    where $A = d^{1/2}(1 - \rho^2)^{1/2} = \lim_{\ell \to \infty} d^{1/2}S_{\ell}^{-1/2} $. In the presence of a spectral gap, all of these bounds are easily seen to be uniform.
\end{proof}

    \section{Construction of Examples}

    \label{section:examples}

    Throughout this section we denote as usual by $G = \mathrm{Sim}(\R^d)$. We first study random walk entropy in section~\ref{subsection:Entropy} and then the separation rate in section~\ref{HeightSeparationSection}.  We then prove all of the corollaries stated in the introduction.

\subsection{Bounding Random Walk Entropy}\label{subsection:Entropy}

The techniques from \cite[Section 6.3]{HochmanSolomyak2017} or \cite[Section 9.2]{Kittle2023} follow through to our setting. In particular we have the following using Breuillard's strong Tits alternative. 

\begin{proposition} (\cite[Section 6.3]{HochmanSolomyak2017}) \label{EntropyFromTits}
    Let $d\geq 1$. Then for every $p_0 > 0$ there exists $\rho = \rho(p_0,d)$ such that if $\mu = \sum_{i = 1}^k p_i \delta_{g_i}$ is a finitely supported probability measure on $G$ with $p_i \geq p_0$ and $\mathrm{supp}(\mu)$ generates a non-virtually solvable subgroup, then $h_{\mu} \geq \rho$.
\end{proposition}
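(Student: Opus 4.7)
The plan is to combine Breuillard's uniform strong Tits alternative with a standard entropy lower bound for random walks whose support contains two freely generating elements.

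First, I would apply Breuillard's strong uniform Tits alternative \cite{Breuillard2008} to the finite subset $\mathrm{supp}(\mu)$ of $\mathrm{Sim}(\R^d) \hookrightarrow \mathrm{GL}_{d+1}(\R)$: since $\langle \mathrm{supp}(\mu) \rangle$ is non-virtually-solvable by hypothesis, there exists an integer $N = N(d)$, depending only on $d$, together with two elements $a, b \in \mathrm{supp}(\mu^{*N})$ that freely generate a non-abelian free subgroup of $G$. Each such element of $\mathrm{supp}(\mu^{*N})$ admits at least one representation as a product of $N$ factors from $\mathrm{supp}(\mu)$, and since every factor carries $\mu$-mass at least $p_0$, we obtain $\mu^{*N}(\{a\}) \geq p_0^N$ and $\mu^{*N}(\{b\}) \geq p_0^N$.

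Next, I would invoke a standard entropy estimate, as in \cite{HochmanSolomyak2017}*{Section 6.3} or \cite{Kittle2023}*{Section 9.2}, asserting that if a finitely supported probability measure $\tau$ on a group has two elements in its support which freely generate a free semigroup, each of $\tau$-mass at least $\alpha$, then $h_\tau \geq \kappa(\alpha) > 0$ for some explicit positive function $\kappa$. Applying this to $\tau = \mu^{*N}$ with $\alpha = p_0^N$, and combining it with the identity $h_{\mu^{*N}} = N h_\mu$ (immediate from the definition of random walk entropy as $\lim_{n\to\infty} \frac{1}{n}H(\mu^{*n})$), yields $h_\mu \geq \kappa(p_0^N)/N$, a quantity depending only on $p_0$ and $d$, as required.

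The main obstacle sits in the entropy estimate of the second step: one must convert the presence of two freely generating atoms of mass at least $\alpha$ in the support of $\tau$ into a quantitative positive lower bound on $h_\tau$. The argument in \cite{HochmanSolomyak2017} decomposes $\tau$ as a convex combination isolating the two distinguished generators from the remaining mass, then couples samples of the random walk $Y_1 \cdots Y_n$ to an auxiliary i.i.d.\ Bernoulli sequence indicating which indices produce $a$ or $b$; the freeness of the subsemigroup $\langle a,b\rangle$ is what prevents the interleaved random factors from collapsing distinct choices of $a,b$ to the same group element, thereby transferring the Bernoulli entropy into $h_\tau$.
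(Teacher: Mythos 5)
The paper does not actually prove Proposition~\ref{EntropyFromTits}; it quotes it from \cite{HochmanSolomyak2017}*{Section 6.3}, and your overall route is the same one that source follows and that the paper's own toolkit supports: produce two elements of the support of a bounded convolution power which generate a free (semi)group, feed them into an entropy bound of the type of Lemma~\ref{lemm:FreeEntBound} (your Bernoulli-coupling sketch is consistent with \cite{Kittle2023}*{Proposition 9.7}), and use the correct identity $h_{\mu^{*N}} = N h_{\mu}$ to descend to $h_{\mu}$.

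The genuine gap is in your first step. Breuillard's uniform Tits alternative in \cite{Breuillard2008} is stated for a finite \emph{symmetric} set containing the identity, so to apply it you must pass to $F = \mathrm{supp}(\mu) \cup \mathrm{supp}(\mu)^{-1} \cup \{e\}$, and the two free generators it produces lie in $F^{N}$, i.e.\ they are words of length at most $N$ in the $g_i$ \emph{and their inverses}, possibly shorter than $N$. Such elements need not belong to $\mathrm{supp}(\mu^{*N})$ at all, so the crucial mass estimate $\mu^{*N}(\{a\}) \geq p_0^{N}$, on which the whole quantitative bound rests, does not follow as you assert. Since the entropy lemma requires two atoms of the measure itself (or of a fixed convolution power) generating a free semigroup, this is precisely the point that needs an argument: either invoke a refinement of the uniform Tits alternative producing two elements of the positive ball $\mathrm{supp}(\mu)^{N}$ that generate a free subsemigroup (such non-symmetric/semigroup versions exist in the literature, but they are not the statement you quoted), or explain how to convert the mixed words into positive words of length bounded only in terms of $d$. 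Note that elsewhere in the paper, when free semigroups inside $\mathrm{supp}(\mu^{*n})$ are actually needed for explicit examples, the authors do not extract them from the Tits alternative but construct them directly by $p$-adic or Galois ping-pong, which reflects that this conversion is not automatic. Everything after this step in your proposal is sound.
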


We will also use the following version of the ping-pong lemma for which we provide a full proof for the convenience of the reader.

\begin{lemma}(Ping-Pong) \label{lemm:ping_pong}
	Let $G$ be a group acting on a set $X$ and let $g_1, g_2 \in G$. Assume there exist disjoint non-empty sets $A_1, A_2 \subset X$ such $$g_1(A_1 \cup A_2) \subset A_1 \quad \text{ and } \quad g_2(A_1 \cup A_2) \subset A_2.$$ Then $g_1$ and $g_2$ generate a free semigroup.
\end{lemma}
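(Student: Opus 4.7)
The plan is to run the classical ping-pong argument in two layers. First I would establish the following invariance by induction on word length: for any non-empty positive word $w = g_{i_1} g_{i_2} \cdots g_{i_n}$ in the letters $g_1, g_2$ (with each $i_j \in \{1,2\}$) and any point $x \in A_1 \cup A_2$, the image $w \cdot x$ lies in $A_{i_1}$. The base case $n=1$ is exactly the hypothesis $g_i(A_1 \cup A_2) \subset A_i$; for the inductive step, by induction $g_{i_2}\cdots g_{i_n}\cdot x \in A_{i_2} \subset A_1 \cup A_2$, and then applying $g_{i_1}$ lands the image in $A_{i_1}$.

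Second, to conclude that $g_1$ and $g_2$ generate a free semigroup, I would suppose for contradiction that there exist two distinct non-empty positive words $w_1 \neq w_2$ in $g_1, g_2$ with $w_1(g_1, g_2) = w_2(g_1, g_2)$ as elements of $G$. Strip off the longest common prefix $u$ and use left cancellation in the ambient group $G$ to write $w_1 = u w_1'$, $w_2 = u w_2'$ with $w_1'(g_1,g_2) = w_2'(g_1,g_2)$ and $(w_1', w_2')$ either starting with different letters or having one of them empty.

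If both $w_1', w_2'$ are non-empty and start with different letters, say $g_i$ and $g_j$ with $i \neq j$, pick any $x \in A_1 \cup A_2$ (which is non-empty by hypothesis). By the first step, $w_1'\cdot x \in A_i$ and $w_2'\cdot x \in A_j$, contradicting $A_1 \cap A_2 = \emptyset$. If instead one of them, say $w_2'$, is empty, then the non-empty word $w_1'(g_1, g_2)$ equals the identity in $G$; writing $w_1' = g_{i_1}\cdots$, pick $x$ from $A_j$ where $j \in \{1,2\} \setminus \{i_1\}$ (possible because both $A_1$ and $A_2$ are non-empty). Then $x = w_1'\cdot x \in A_{i_1}$ by the first step, contradicting $A_{i_1} \cap A_j = \emptyset$.

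The argument is elementary and there is no real obstacle; the hypothesis that $A_1$ and $A_2$ are both non-empty is used precisely to rule out the degenerate case of a non-trivial positive word representing the identity in $G$.
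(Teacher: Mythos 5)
Your proof is correct and follows essentially the same route as the paper: after cancelling the longest common prefix in the ambient group, the first-letter invariance $w\cdot x\in A_{i_1}$ (which the paper uses implicitly rather than via an explicit induction) yields a contradiction either from disjointness of $A_1,A_2$ or, in the prefix case, from a nontrivial word that would have to fix a point of the other set. The only cosmetic difference is that you isolate the invariance as a separate inductive lemma, which the paper leaves implicit.
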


When this happens we say that $g_1$ and $g_2$ \emph{play ping pong}.

\begin{proof}
	Let $w_1 = h_1h_2\cdots h_{\ell_1}$ and $w_2 = f_1f_2\cdots f_{\ell_2}$ with distinct sequences $h_i, f_j \in \{ g_1, g_2 \}$. Assume without loss of generality that $\ell_1 \leq \ell_2$. First assume that there is some $1 \leq k \leq \ell_1$ such that $h_k \neq f_k$. Choose the smallest such $k$ and note that it suffices to show that $h_k\cdots h_{\ell_1} \neq f_k\cdots f_{\ell_2}$, which follows by applying the resulting maps to any $x \in A_1 \cup A_2$ and noting that $h_k\cdots h_{\ell_1}x \neq f_k\cdots f_{\ell_2}x$. On the other hand assume that $h_i = f_i$ for all $1 \leq i \leq \ell_1$, in which case we need to show that $w' = f_{\ell_1 + 1}\cdots f_{\ell_2}$ is not the identity. Without loss of generality assume that $f_{\ell_1 + 1} = g_1$. Then for $x \in A_2$ we have that $w'x \in A_1$ and thus $w'$ is not the identity. We note that in particular it follows by the assumptions that $g_1$ and $g_2$ have infinite order.
\end{proof}

\begin{lemma}\label{lemm:FreeEntBound}
    Let $\mu$ be a finitely supported probability measure on $G$ such that $g_1, g_2 \in \mathrm{supp}(\mu)$ generate a free semigroup. Then $$h_{\mu} \gg  \min\{ \mu(g_1), \mu(g_2) \}.$$
\end{lemma}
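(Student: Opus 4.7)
Let $p = \min(\mu(g_1), \mu(g_2))$. The plan is to show $H(\mu^{*n}) \geq 2 p n \log 2$ for all $n$, from which $h_\mu \geq 2p \log 2 \gg p$ follows at once. The strategy is a coupling argument that isolates the entropy contribution of the free sub-semigroup $\langle g_1, g_2 \rangle$.

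First I would decompose $\mu = p\delta_{g_1} + p\delta_{g_2} + (1-2p)\mu'$ for a probability measure $\mu'$, which is possible because $\mu(g_j) \geq p$ for $j = 1, 2$. I realize iid samples $\gamma_1, \ldots, \gamma_n \sim \mu$ by first drawing auxiliary marks $B_i \in \{0, 1, 2\}$ with probabilities $(1-2p, p, p)$; if $B_i \in \{1,2\}$ set $\gamma_i = g_{B_i}$, and otherwise sample $\gamma_i \sim \mu'$ independently. Let $I = \{i : B_i \neq 0\}$ and $\mathcal{F} = \sigma(I, (\gamma_j)_{j \notin I})$. Then $H(W_n) \geq H(W_n \mid \mathcal{F})$, and conditional on $\mathcal{F}$ the product $W_n = \gamma_1 \cdots \gamma_n$ is a deterministic function of the iid uniform marked bits $(B_i)_{i \in I} \in \{1, 2\}^{|I|}$.

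The crucial step is injectivity of this conditional function: for any fixed template $h_0, h_1, \ldots, h_k \in G$ coming from the unmarked positions, the map
$$(w_1, \ldots, w_k) \in \{g_1, g_2\}^k \longmapsto h_0 w_1 h_1 w_2 h_2 \cdots w_k h_k$$
should be injective. Granted injectivity, $H(W_n \mid \mathcal{F}) = |I| \log 2$, and since $\mathbb{E}[|I|] = 2pn$ one obtains $H(\mu^{*n}) \geq 2pn \log 2$, hence $h_\mu \gg p$. To prove injectivity I would assume two distinct sequences $(w_j), (w_j')$ yield the same product, cancel $h_0$ on the left and $h_k$ on the right to reduce to $w_1 h_1 w_2 \cdots w_k = w_1' h_1 w_2' \cdots w_k'$, take the first index $j$ where $w_j \neq w_j'$, and peel off the matching prefix to expose a relation that contradicts the freeness of the semigroup $\langle g_1, g_2 \rangle$.

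The hard part is completing this last reduction cleanly: the intervening $h_j$'s need not lie in $\langle g_1, g_2\rangle$, so a purely combinatorial appeal to free-ness is insufficient, and one must argue either that the forced relation forces $g_1$ and $g_2$ to coincide in the free semigroup after tracking how the $h_j$ interact with the words on either side, or (as a fallback) condition on a high-probability subevent of $\mathcal{F}$ where the $h_j$'s are in sufficiently generic position that no coincidence can occur, retaining a linear-in-$n$ lower bound on $H(W_n)$. Once injectivity is in place, passing to the limit $n \to \infty$ finishes the proof.
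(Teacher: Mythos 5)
Your overall scheme (mark the positions where $g_1$ or $g_2$ is played, condition on everything else, and collect $|I|\log 2$ bits) founders exactly at the step you flag: injectivity of the interleaved word map $(w_1,\dots,w_k)\in\{g_1,g_2\}^k\mapsto h_0w_1h_1w_2\cdots w_kh_k$ is false in general, and neither of your proposed repairs can save it. A concrete counterexample inside $G=\mathrm{Sim}(\R)$: take $g_1(x)=x/3$ and $g_2(x)=x/4+1$, which generate a free semigroup by Lemma~\ref{lemm:ping_pong} (with $A_1=[0,4/9]$ and $A_2=[1,4/3]$), and let $h(x)=x/2+10$ be a third element of $\mathrm{supp}(\mu)$. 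Then $g_1hg_2=g_2hg_1$ (both equal $x\mapsto x/24+7/2$), so for the template $h_0=h_2=e$, $h_1=h$ the words $(g_1,g_2)$ and $(g_2,g_1)$ collide and $H(W_n\mid\mathcal{F})<|I|\log 2$. This relation involves $h$, so it does not contradict freeness of the semigroup generated by $g_1,g_2$; hence your first fallback (forcing a contradiction with freeness) cannot work. Your second fallback (restricting to a high-probability event where the $h_j$ are in ``generic position'') is also unavailable: the $h_j$ are words in the fixed finite support, and for $\mu=p\delta_{g_1}+p\delta_{g_2}+(1-2p)\delta_h$ a single $h$ sits between consecutive marked positions a positive proportion of the time, so the offending templates occur with probability bounded away from $0$ uniformly in $n$. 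To push your route through one would have to show that such collisions destroy at most a bounded fraction of the $|I|\log 2$ bits, which is a genuinely different argument; as written, the identity $H(W_n\mid\mathcal{F})=|I|\log 2$, and with it the bound $H(\mu^{*n})\geq 2pn\log 2$, is unproven.

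For contrast, the paper does not attempt this direct count: it passes to the lazy measure $\frac12\delta_e+\frac12\mu$, uses $h_{\frac12\delta_e+\frac12\mu}=h_{\mu}/2$ from \cite[Lemma 6.8]{HochmanSolomyak2017}, and then quotes \cite[Proposition 9.7]{Kittle2023} (generalised to $G$) with $K=\min\{\mu(g_1),\mu(g_2)\}/2$; it is that proposition, not a bare injectivity statement, that controls the entropy loss coming from the other generators. A self-contained proof would need to reprove a statement of that strength rather than rely on injectivity of the interleaved word map.
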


\begin{proof}
    Denote $\mu' = \frac{1}{2}\delta_e + \frac{1}{2}\mu$. Then by \cite[Lemma 6.8]{HochmanSolomyak2017} we have $h_{\mu'} = h_{\mu}/2$. Thus the claim follows from \cite[Proposition 9.7]{Kittle2023} (generalised to $G$ and applied to $K = \min\{ \mu(g_1), \mu(g_2) \}/2$).
\end{proof}

\subsubsection{$p$-adic Ping-Pong}

We first use ping-pong in a $p$-adic setting. For a number field $K$ with ring of integers $O_K$. Let $\mathfrak{p} \subset O_K$ be a prime ideal and we denote by $R_{\mathfrak{p}}$ the localization of $O_K$ at $P$ defined as $$R_{\mathfrak{p}} = \left\{ \frac{a}{b} : a \in O_K, b \in O_K \backslash \mathfrak{p} \right\}.$$

\begin{lemma}($p$-adic Ping-Pong) \label{lemm:p_adic_ping_pong}
	Let $K$ be a number field and let $O_K$ be its ring of integers. Let $\mathfrak{p} \subset O_K$ be a prime ideal and let $M_{\mathfrak{p}}$ be the ideal of $R_{\mathfrak{p}}$ defined by $$M_{\mathfrak{p}} = \left\{ \frac{a}{b} : a \in \mathfrak{p}, b \in O_K \backslash \mathfrak{p} \right\}.$$ Let $g_1, g_2 \in G$ be such that all of the entries of $\rho(g_1) U(g_1)$ and $\rho(g_2) U(g_2)$ are in $M_{\mathfrak{p}}$ and all components of $b_1$ and $b_2$ are in $R_{\mathfrak{p}}$. Suppose that $$M_{\mathfrak{p}} \times \cdots \times M_{\mathfrak{p}} + b_1 \neq M_{\mathfrak{p}} \times \cdots \times M_{\mathfrak{p}} + b_2.$$ Then $g_1$ and $g_2$ generate a free semigroup.
\end{lemma}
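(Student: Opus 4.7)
The plan is to apply the abstract ping-pong lemma (Lemma~\ref{lemm:ping_pong}) with the two sets $A_i = M_{\mathfrak{p}}^d + b_i \subset R_{\mathfrak{p}}^d$ for $i \in \{1,2\}$, viewed as subsets of $\R^d$ on which $G$ acts. These are non-empty since $b_i \in A_i$, and by hypothesis they are disjoint, so it suffices to verify the two inclusions $g_i(A_1 \cup A_2) \subset A_i$.

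To check this, fix $i,j \in \{1,2\}$ and take an arbitrary $x \in A_j$, which we write as $x = m + b_j$ with $m \in M_{\mathfrak{p}}^d$. Then
\[
g_i(x) = \rho(g_i)U(g_i)(m + b_j) + b_i = \rho(g_i)U(g_i) m + \rho(g_i)U(g_i) b_j + b_i.
\]
The key algebraic point is that $M_{\mathfrak{p}}$ is an ideal of $R_{\mathfrak{p}}$: since the entries of $\rho(g_i)U(g_i)$ lie in $M_{\mathfrak{p}}$ and the components of $m$ and $b_j$ lie in $R_{\mathfrak{p}}$ (with those of $m$ even in $M_{\mathfrak{p}}$), both $\rho(g_i)U(g_i) m$ and $\rho(g_i)U(g_i) b_j$ have all components in $M_{\mathfrak{p}}$. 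Consequently $g_i(x) \in M_{\mathfrak{p}}^d + b_i = A_i$, as required.

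Applying Lemma~\ref{lemm:ping_pong} with $X = \R^d$ (or, equivalently, with $X = R_{\mathfrak{p}}^d$, which is stable under $g_1$ and $g_2$ by the same argument) now yields that $g_1$ and $g_2$ generate a free semigroup. There is no real obstacle here; the only subtlety is keeping track of the ideal structure of $R_{\mathfrak{p}}$, namely $M_{\mathfrak{p}} \cdot R_{\mathfrak{p}} \subset M_{\mathfrak{p}}$, which is what allows the matrix-vector products appearing above to stay inside $M_{\mathfrak{p}}^d$ and thereby guarantees that $g_i$ sends all of $A_1 \cup A_2$ into the coset $A_i$.
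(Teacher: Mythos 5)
Your proof is correct and follows exactly the paper's argument: the paper also applies Lemma~\ref{lemm:ping_pong} with $X = R_{\mathfrak{p}}^d$ and $A_i = M_{\mathfrak{p}}^d + b_i$, leaving the verification of the inclusions (which you spell out via the ideal property $M_{\mathfrak{p}} \cdot R_{\mathfrak{p}} \subset M_{\mathfrak{p}}$) to the reader. No issues.
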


\begin{proof}
	This follows immediately from Lemma \ref{lemm:ping_pong} with $X = R_{\mathfrak{p}} \times \cdots \times R_{\mathfrak{p}}$ and $A_i = M_{\mathfrak{p}} \times \cdots \times M_{\mathfrak{p}} + b_i$ for $i=1, 2$.
\end{proof}

\subsubsection{Ping-Pong under a Galois transform}

We can also apply the ping-pong lemma using field automorphisms. Recall that given a number field $K$, the automorphism group $\mathrm{Aut}(K/\Q)$ consists of field automorphisms that fix $\Q$.

\begin{lemma}(Galois Ping-Pong)\label{lemm:GaloisPingPong}
    Let $g_1$ and $g_2$ be two elements in $G$ whose coefficients lie in a real number field $K$ and without a common fixed point. Let $\Phi \in \mathrm{Aut}(K/\Q)$ be such that for $i = 1,2$ we have $$|\rho(\Phi(g_i))| < 1/3.$$ Then $g_1$ and $g_2$ generate a free semigroup.
\end{lemma}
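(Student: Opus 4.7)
The plan is to apply the preceding ping-pong lemma (Lemma~\ref{lemm:ping_pong}) not to $g_1, g_2$ directly, but to their images $\Phi(g_1), \Phi(g_2)$ obtained by applying $\Phi$ entrywise to coefficients. First I would verify that $\Phi(g_i)$ is again a similarity of $\R^d$: orthogonality is an algebraic identity ($U^T U = I$) preserved by $\Phi$, so $\Phi(U(g_i))$ is orthogonal, and $\Phi(g_i): x \mapsto \Phi(\rho(g_i))\,\Phi(U(g_i))\,x + \Phi(b(g_i))$ is an affine similarity with scaling factor $|\Phi(\rho(g_i))| < 1/3$, hence a strict contraction of $\R^d$. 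In particular each $\Phi(g_i)$ has a unique fixed point $p_i \in \R^d$. Since $\Phi$ is a $\Q$-algebra automorphism, and since $\rho(g_i) \neq 1$ (were it $1$, the hypothesis $|\Phi(\rho(g_i))|<1/3$ would fail because $\Phi(1)=1$), one checks that $g_i$ itself has a unique fixed point $q_i$ and that $p_i = \Phi(q_i)$ (the coordinate-wise image). The no-common-fixed-point hypothesis gives $q_1 \neq q_2$, and hence $p_1 \neq p_2$.

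Next I would build the ping-pong sets as open balls. Set $c := \max\{|\Phi(\rho(g_1))|,|\Phi(\rho(g_2))|\} < 1/3$ and $D := |p_1 - p_2| > 0$, and choose any
\[
r \in \Bigl(\tfrac{c}{1-c}\,D,\ \tfrac{D}{2}\Bigr),
\]
an interval which is non-empty precisely because $c < 1/3$. Let $A_i := B(p_i, r)$. Disjointness $A_1 \cap A_2 = \emptyset$ follows from $2r < D$. For the contraction property, any $x \in A_1 \cup A_2$ satisfies $|x - p_j| \leq r + D$, so
\[
|\Phi(g_j)(x) - p_j| = |\Phi(\rho(g_j))|\cdot |x-p_j| \leq c(r+D) \leq r
\]
by the lower bound on $r$, giving $\Phi(g_j)(A_1 \cup A_2) \subset A_j$ for $j=1,2$. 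Lemma~\ref{lemm:ping_pong} then yields that $\Phi(g_1)$ and $\Phi(g_2)$ generate a free semigroup in the group of affine maps of $\R^d$.

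Finally I would transfer freeness back to $g_1, g_2$: any relation $w(g_1,g_2) = w'(g_1,g_2)$ between distinct words would, upon applying $\Phi$ entrywise (which commutes with composition of affine maps whose coefficients lie in $K$), produce the identical relation $w(\Phi(g_1),\Phi(g_2)) = w'(\Phi(g_1),\Phi(g_2))$, contradicting freeness of the $\Phi(g_i)$. Hence $g_1, g_2$ generate a free semigroup. The only subtle step is verifying that $r$ can be chosen to simultaneously ensure disjointness and contraction-invariance, and this is exactly where the numerical threshold $1/3$ enters: the condition $\frac{c}{1-c} < \frac{1}{2}$ is equivalent to $c < 1/3$, so the hypothesis is sharp for this ping-pong argument.
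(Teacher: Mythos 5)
Your proposal is correct and follows essentially the same route as the paper: apply $\Phi$ to the coefficients to obtain contractions $h_i=\Phi(g_i)$ with scaling factor $<1/3$ and distinct fixed points $p_1\neq p_2$, play ping-pong with balls around the fixed points, and conclude via Lemma~\ref{lemm:ping_pong}. You are merely more explicit than the paper about the radius computation where the threshold $1/3$ enters and about transferring freeness from $\Phi(g_1),\Phi(g_2)$ back to $g_1,g_2$, details the paper leaves implicit.
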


\begin{proof}
    For $i = 1,2$ write $h_i = \Phi(g_i)$ and let $p_i$ be the fixed point of $h_i$, which has coefficients in $K$ since it arises from a linear equation over $K$. Then $h_1 \neq h_2$ as $g_1$ and $g_2$ have no common fixed point. Consider $A_i = B_{d(h_1,h_2)/2}(h_i)$ (the open ball around $h_i$ of radius $d(h_1,h_2)/2$) and note further that $h_1(A_1 \cup A_2)\subset A_1$ and $h_2(A_1 \cup A_2)\subset A_2$. So the claim follows by Lemma~\ref{lemm:ping_pong}. 
\end{proof}

\subsubsection{Height Entropy Bound in Dimension One}

In dimension one we also have the following tool for bounding the random walk entropy. We use the absolute height $\mathcal{H}(\alpha)$ and the logarithmic heigh $h(\alpha)$ of an algebraic number $\alpha$ as defined in \eqref{def:height} and \eqref{def:logheight}.

\begin{proposition} \label{proposition:height_to_entropy}
    Suppose that $\mu$ is a finitely supported probability measure on $G$ and that there exist $f, g \in \supp \mu$ which are of the form $f: x \mapsto \lambda_1x + 1$ and $g : x \mapsto \lambda_2 x$ with $\lambda_1$ and $\lambda_2$ real algebraic and $\lambda_2 \neq 1$. Let $n =  \lceil\frac{\log 3}{\max \{ h(\lambda_1), h(\lambda_2) \}}\rceil + 2$. Then
    \begin{equation*}
        h_{\mu} \gg \frac{1}{n} \min \{ \mu(f), \mu(g) \}^n.
    \end{equation*}
\end{proposition}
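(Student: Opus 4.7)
Let $p = \min\{\mu(f), \mu(g)\}$. I will reduce the statement to Lemma~\ref{lemm:FreeEntBound} applied to the convolution power $\mu^{*n}$ by exhibiting two words $w_1, w_2$ of length $n$ in the letters $\{f, g\}$ that generate a free semigroup. Each $w_i$ lies in $\mathrm{supp}(\mu^{*n})$ with $\mu^{*n}(\{w_i\}) \geq p^n$, so Lemma~\ref{lemm:FreeEntBound} applied to $\mu^{*n}$ gives $h_{\mu^{*n}} \gg p^n$. Combined with the standard identity $h_{\mu^{*n}} = n h_\mu$, immediate from $h_\mu = \inf_k \tfrac{1}{k} H(\mu^{*k})$, this yields the claimed bound $h_\mu \gg \tfrac{1}{n} p^n$.

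To produce such $w_1, w_2$, I use the height hypothesis $n \max\{h(\lambda_1), h(\lambda_2)\} \geq \log 3$. Without loss of generality assume $h(\lambda_1) \geq h(\lambda_2)$, so that $h(\lambda_1^n) = n h(\lambda_1) \geq \log 3$. Working in the number field $K = \Q(\lambda_1, \lambda_2)$, the product formula $\sum_v n_v \log |\lambda_1|_v = 0$ together with $\sum_v n_v \log^+ |\lambda_1|_v = [K:\Q] h(\lambda_1)$ provides a place $v$ of $K$ at which $|\lambda_1^n|_v$ is bounded above by a fixed negative power of $3$. If $v$ is archimedean, giving an embedding $\Phi : K \to \C$, I apply the Galois ping-pong Lemma~\ref{lemm:GaloisPingPong} to a pair of $n$-letter words (for instance $w_1 = f^n$ and $w_2 = g f^{n-1}$), which under $\Phi$ have contractions at most a small power of $1/3$ and manifestly distinct fixed points thanks to $\lambda_2 \neq 1$. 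If instead $v$ is non-archimedean, corresponding to a prime ideal $\mathfrak{p}$ of $O_K$, I apply the $p$-adic ping-pong Lemma~\ref{lemm:p_adic_ping_pong} to the same pair after conjugating by a scalar homothety that clears denominators and places the linear parts in $M_\mathfrak{p}$; the distinctness of the translations modulo $M_\mathfrak{p}$ follows from the explicit form of $w_1, w_2$, with the ``$+2$'' in the definition of $n$ providing the necessary buffer.

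\textbf{Main obstacle.} The principal technical difficulty lies in the non-archimedean branch: the height of $\lambda_1$ may be distributed across many places, so one must identify a single prime $\mathfrak{p}$ at which the contraction is genuine (i.e.\ by a factor at least $1/3$, not merely $<1$) and then renormalise the words so that the shift-distinctness hypothesis of Lemma~\ref{lemm:p_adic_ping_pong} is verified. A minor additional complication arises when $\lambda_1 = 1$, so that $f$ is a pure translation; in this case $f$ and $g$ in fact generate a free semigroup already (using $\lambda_2 \neq 1$ to distinguish the order of letters in any word), so the statement holds with a direct application of Lemma~\ref{lemm:FreeEntBound} to $\mu$ itself.
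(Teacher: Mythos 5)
Your overall reduction is the same as the paper's: exhibit two words of length $n$ in $\{f,g\}$ generating a free semigroup, apply Lemma~\ref{lemm:FreeEntBound} to $\mu^{*n}$, and use $h_{\mu^{*n}} = n h_{\mu}$. The gap is in how you certify freeness. You pick the place $v$ using $\lambda_1$ alone (so that $|\lambda_1^n|_v$ is small), but your second word $w_2 = g f^{n-1}$ has linear part $\lambda_2\lambda_1^{n-1}$, and nothing in your argument controls $|\lambda_2|_v$ at that place. Both cited lemmas need control of \emph{both} maps at the \emph{same} place: Lemma~\ref{lemm:GaloisPingPong} requires $|\rho(\Phi(w_i))| < 1/3$ for $i=1,2$, and Lemma~\ref{lemm:p_adic_ping_pong} requires the linear parts of both $w_1$ and $w_2$ to lie in $M_{\mathfrak{p}}$. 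This can genuinely fail everywhere for your pair: take $\lambda_1 = 2/3$, $\lambda_2 = 3/2$, so $h(\lambda_1) = h(\lambda_2) = \log 3$ (your WLOG holds) and $n = 3$. At the real place, $|\lambda_1^3| = 8/27 < 1/3$ but $|\lambda_2\lambda_1^2| = 2/3 \geq 1/3$, so Galois ping-pong does not apply; at $\mathfrak{p} = (3)$, $\lambda_1^3 \in M_{\mathfrak{p}}$ but $\lambda_2\lambda_1^2 = 2/3 \notin R_{\mathfrak{p}}$, so $p$-adic ping-pong does not apply; and these are the only places where $\lambda_1$ is small. So for this pair of words neither cited lemma applies at any place, and the step "which under $\Phi$ have contractions at most a small power of $1/3$" is simply false in general.

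The paper's proof avoids exactly this by choosing two words that are \emph{rearrangements of the same multiset of letters}, hence have the \emph{same} multiplier $\lambda = \lambda_1^{j}\lambda_2^{n-j}$; the choice of $n$ forces $\mathcal{H}(\lambda) > 3$, and then Lemma~\ref{lemma:height_to_free_semigroup} (same multiplier, distinct fixed points) runs the archimedean/non-archimedean dichotomy for both maps simultaneously at a single place. That same-multiplier trick is the missing idea in your proposal; without it, the asymmetry between $\lambda_1$ and $\lambda_2$ (note also that your "WLOG $h(\lambda_1)\geq h(\lambda_2)$" is not a genuine WLOG, since $f$ and $g$ play different roles in your words) cannot be repaired just by picking the place more carefully, as the example above shows. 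Two smaller points: in the non-archimedean branch $|\lambda|_{\mathfrak{p}} < 1$ already suffices for Lemma~\ref{lemm:p_adic_ping_pong} (the ultrametric does the rest), so the "contraction by at least $1/3$ at a single prime" you describe as the main obstacle is not actually needed; and the distinctness of the translations modulo $M_{\mathfrak{p}}$ after rescaling is not automatic from "the explicit form of $w_1, w_2$" and would need to be argued (in the paper it reduces to $a \neq b$ and $|1-\lambda|_{\mathfrak{p}} = 1$ after a conjugation normalising the fixed points).
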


This is a simple consequence of the following lemma.

\begin{lemma} \label{lemma:height_to_free_semigroup}
    Suppose that $\lambda$ is algebraic and in some number field $K$. Let $f, g \in G$ be defined by $f : x \mapsto \lambda(x - a) + a$ and $g : x \mapsto \lambda(x - b) + b$ for some $a, b \in K$ with $a \neq b$. Suppose that $\mathcal{H}(\lambda) > 3$. Then $f$ and $g$ freely generate a free semi-group.
\end{lemma}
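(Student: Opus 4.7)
The plan is to locate, using the hypothesis $\mathcal{H}(\lambda) > 3$ together with the product formula, a single place $v_0$ of $K$ at which $\lambda$ is sufficiently small, and then to run a ping-pong argument at that place. By definition $[K:\Q]\log \mathcal{H}(\lambda) = \sum_v n_v \log^+|\lambda|_v$, where $\log^+(x) = \max(0,\log x)$. The product formula $\prod_v |\lambda|_v^{n_v} = 1$ yields
\begin{equation*}
  \sum_v n_v \log^-|\lambda|_v \;=\; \sum_v n_v \log^+|\lambda|_v \;>\; [K:\Q]\log 3,
\end{equation*}
with $\log^-(x) = \max(0,-\log x)$. Since $\sum_{v \text{ archimedean}} n_v = [K:\Q]$, if $|\lambda|_v \geq 1/3$ at every archimedean $v$ then the archimedean contribution to $\sum_v n_v \log^-|\lambda|_v$ would be at most $[K:\Q]\log 3$, a contradiction. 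Hence either (i) there is an archimedean place $v_0$ with $|\lambda|_{v_0} < 1/3$, or (ii) there is a non-archimedean place $v_0$ with $|\lambda|_{v_0} < 1$.

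In case (i), let $\sigma : K \hookrightarrow \C$ be the embedding corresponding to $v_0$. Then $\sigma(f)$ and $\sigma(g)$ are similarities of $\C$ with common contraction ratio $|\sigma(\lambda)| < 1/3$ and distinct fixed points $\sigma(a) \neq \sigma(b)$. Taking $A_i$ to be the closed ball of radius $r = |\sigma(a)-\sigma(b)|/2$ about $\sigma(a)$ (resp.\ $\sigma(b)$), a direct computation using $|\sigma(\lambda)| < 1/3$ and the triangle inequality gives $\sigma(f)(A_1 \cup A_2) \subset A_1$ and $\sigma(g)(A_1 \cup A_2) \subset A_2$. Lemma~\ref{lemm:ping_pong} implies $\sigma(f)$ and $\sigma(g)$ generate a free semigroup, and since $\sigma$ is compatible with composition the same holds for $f$ and $g$; alternatively, one may simply invoke Lemma~\ref{lemm:GaloisPingPong}.

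In case (ii), we play ping-pong in the completion $K_{v_0}$. Choose $r$ with $|\lambda|_{v_0}|a-b|_{v_0} \leq r < |a-b|_{v_0}$ and let $A_i$ be the closed $v_0$-ball of radius $r$ about $a$ (resp.\ $b$). The ultrametric inequality immediately gives $A_1 \cap A_2 = \emptyset$, and for $x \in A_2$ we have $|x-a|_{v_0} \leq \max(|x-b|_{v_0}, |a-b|_{v_0}) = |a-b|_{v_0}$, whence $|f(x)-a|_{v_0} = |\lambda|_{v_0}|x-a|_{v_0} \leq |\lambda|_{v_0}|a-b|_{v_0} \leq r$; the case $x \in A_1$ is even easier. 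Thus $f(A_1 \cup A_2) \subset A_1$ and symmetrically $g(A_1 \cup A_2) \subset A_2$, so Lemma~\ref{lemm:ping_pong} concludes the argument (this is essentially an application of Lemma~\ref{lemm:p_adic_ping_pong} after rescaling to clear denominators).

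The only non-trivial step is the first, extracting a ``contracting'' place from the height hypothesis via the product formula; I expect this to be the main point, and the precise threshold $\mathcal{H}(\lambda) > 3$ is exactly what makes the archimedean ping-pong computation close (since ping-pong there requires contraction factor strictly less than $1/3$), while in the non-archimedean case any factor strictly less than $1$ suffices thanks to the ultrametric inequality.
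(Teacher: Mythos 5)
Your proof is correct and takes essentially the same route as the paper: the bound $\mathcal{H}(\lambda) > 3$ (via the product formula, which is what underlies the paper's identity $\mathcal{H}(\lambda)=\mathcal{H}(\lambda^{-1})$) produces either an archimedean place with $|\lambda|_v < 1/3$ or a non-archimedean place with $|\lambda|_v < 1$, and one then plays ping-pong at that place, just as the paper does by citing its Galois and $p$-adic ping-pong lemmas. One cosmetic slip: your closed balls of radius $|\sigma(a)-\sigma(b)|/2$ meet at the midpoint and so are not disjoint as Lemma~\ref{lemm:ping_pong} requires; take open balls (or radius strictly less than half the distance), which the strict inequality $|\sigma(\lambda)| < 1/3$ readily accommodates.
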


\begin{proof}
    First note that 
    \begin{equation*}
        \mathcal{H}(\lambda) = \mathcal{H}(\lambda^{-1}) = \prod_{v \in M_K} \min(1, |\lambda|_v)^{-\frac{n_v}{[K:\mathbb{Q}]}}.
    \end{equation*}
    This means that either there is some Archimedean  place $v$ such that $| \lambda|_v < 1/3$ or there is some non-Archimedean place $v$ such that $|\lambda |_v <1$.

    In the Archimedean case there is some Galois transform $\rho$ such that $| \rho(\lambda)| < 1/3$ and the result follows from Lemma~\ref{lemm:GaloisPingPong}. In the non-Archimedean case there is some prime ideal $\mathfrak{p} \subset \mathcal{O}_K$ with $\lambda \in \mathfrak{p}$ and Lemma~\ref{lemm:p_adic_ping_pong} applies.
\end{proof}

We now deduce Proposition \ref{proposition:height_to_entropy}.

\begin{proof}[Proof of Proposition \ref{proposition:height_to_entropy}]
    For $n =  \lceil\frac{\log 3}{\max \{ h(\lambda_1), h(\lambda_2) \}}\rceil + 2$, using that $h(\alpha^n) = |n|h(\alpha)$ and $h(\alpha \beta) \geq h(\alpha)- h(\beta)$ for all $\alpha,\beta$ algebraic and $n \in \Z$, there exists $f', g' \in \{f, g \}^n$ satisfying the conditions of Lemma \ref{lemma:height_to_free_semigroup}. Therefore by Lemma \ref{lemm:FreeEntBound} we deduce that
    \begin{equation*}
        h_{\mu^{*n}} \gg \min \{ \mu(f), \mu(g) \}^n \quad\quad \text{ and so } \quad\quad h_{\mu} \gg \frac{1}{n} \min \{ \mu(f), \mu(g) \}^n
    \end{equation*}
    as required.
\end{proof}

\subsection{Heights and Separation}\label{HeightSeparationSection}

In this subsection we will review some techniques for bounding $S_{\mu}$ using heights as defined in \eqref{def:height} and \eqref{def:logheight}. We wish to bound the size of polynomials of algebraic numbers. To do this we need the following way of measuring the complexity of a polynomial.

\begin{definition}
Given some polynomial $P \in \Z [X_1, X_2, \dots, X_n]$ we define the \emph{length} of $P$, which we denote by $\mathcal{L}(P)$, to be the sum of the absolute values of the coefficients of $P$.
\end{definition}

We recall the following basic facts about heights.

\begin{lemma} \label{lemma:height_basic}
The following properties hold:
\begin{enumerate}[label = (\roman*)]
    \item $\mathcal{H}(\alpha^{-1}) = \mathcal{H}(\alpha)$ for any non-zero algebraic number $\alpha$.
    \item If $\alpha$ is a non-zero algebraic number of degree $d$, $$\mathcal{H}(\alpha)^{-d} \leq |\alpha| \leq \mathcal{H}(\alpha)^d.$$
    \item Given $P \in \Z[X_1, X_2, \dots, X_n]$ of degree at most $L_1 \geq 0$ in $X_1$, $\dots$, $L_n \geq 0$ in $X_n$ and algebraic numbers $\xi_1, \xi_2, \dots, \xi_n$ we have
\begin{equation*}
\mathcal{H}(P(\xi_1, \xi_2, \dots, \xi_n)) \leq \mathcal{L}(P) \mathcal{H}(\xi_1)^{L_1} \dots \mathcal{H}(\xi_n)^{L_n}
\end{equation*}
\end{enumerate}
\end{lemma}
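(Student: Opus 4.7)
The plan is to derive all three parts from the product formula for number fields, namely that for any non-zero $\alpha$ in a number field $K$ one has $\prod_{v \in M_K} |\alpha|_v^{n_v} = 1$, together with place-by-place bounds exploiting the distinction between Archimedean and non-Archimedean absolute values. None of the steps requires anything beyond standard manipulations, so the only real task is to choose the right bookkeeping.

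For (i), I would exploit the pointwise identity
\begin{equation*}
    \max(1, |\alpha|_v) \;=\; |\alpha|_v \cdot \max(1, |\alpha^{-1}|_v)
\end{equation*}
valid for every non-zero $\alpha$ and every place $v$ (trivial when $|\alpha|_v \geq 1$, and obtained by rearranging when $|\alpha|_v < 1$). Raising to the $n_v$, taking the product over all places of $K$, and applying the product formula to cancel the factor $\prod_v |\alpha|_v^{n_v} = 1$ yields $\prod_v \max(1,|\alpha|_v)^{n_v} = \prod_v \max(1,|\alpha^{-1}|_v)^{n_v}$, so that $\mathcal{H}(\alpha) = \mathcal{H}(\alpha^{-1})$ after taking $[K:\Q]$-th roots.

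For (ii), take $K = \Q(\alpha)$ so that $[K:\Q] = d$, and pick the Archimedean place $v_0$ coming from the embedding of $K$ into $\C$ that realizes the modulus $|\alpha|$, so $|\alpha|_{v_0} = |\alpha|$ and $n_{v_0} \in \{1,2\}$. Since every factor in the product defining $\mathcal{H}(\alpha)^d$ is at least $1$, we have
\begin{equation*}
    |\alpha|^{n_{v_0}} \;\leq\; \max(1,|\alpha|_{v_0})^{n_{v_0}} \;\leq\; \prod_{w \in M_K} \max(1,|\alpha|_w)^{n_w} \;=\; \mathcal{H}(\alpha)^d,
\end{equation*}
giving $|\alpha| \leq \mathcal{H}(\alpha)^{d/n_{v_0}} \leq \mathcal{H}(\alpha)^d$. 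The lower bound follows by applying the upper bound to $\alpha^{-1}$ and invoking part (i).

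For (iii), fix a number field $K$ containing all the $\xi_j$ and bound $|P(\xi_1,\ldots,\xi_n)|_v$ separately at each place. At a non-Archimedean place, the ultrametric inequality together with $|c|_v \leq 1$ for every $c \in \Z$ gives the monomial-wise estimate
\begin{equation*}
    |P(\xi_1,\ldots,\xi_n)|_v \;\leq\; \prod_{j=1}^{n} \max(1,|\xi_j|_v)^{L_j},
\end{equation*}
while at an Archimedean place the triangle inequality upgrades this to the same product times a factor $\mathcal{L}(P)$. Raising each bound to the $n_v$ and multiplying over all places, the combinatorial fact $\sum_{v \mid \infty} n_v = [K:\Q]$ converts the accumulated $\mathcal{L}(P)$ factors into $\mathcal{L}(P)^{[K:\Q]}$, and taking the $[K:\Q]$-th root yields the stated inequality. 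The only subtlety is keeping the normalizations of the $|\cdot|_v$ consistent across places, but no genuine obstacle arises in any of the three parts.
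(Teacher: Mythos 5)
Your proposal is correct. Note, though, that the paper does not actually prove this lemma: it dismisses (i) and (ii) as well-known and cites Masser's book (Proposition 14.7) for (iii). What you have written is essentially the standard product-formula argument that those references contain: the identity $\max(1,|\alpha|_v)=|\alpha|_v\max(1,|\alpha^{-1}|_v)$ plus the product formula for (i), isolating the Archimedean place of $\Q(\alpha)$ realizing $|\alpha|$ for (ii) (with the lower bound obtained from (i) applied to $\alpha^{-1}$, whose degree equals that of $\alpha$), and the place-by-place ultrametric/triangle-inequality estimates with $\sum_{v\mid\infty} n_v=[K:\Q]$ for (iii). Two routine points you glossed over and should make explicit if you write this up: in (iii) the local bounds are all $\geq 1$ (since $\mathcal{L}(P)\geq 1$ for $P\neq 0$), so they bound $\max(1,|P(\xi_1,\dots,\xi_n)|_v)$ and not merely $|P(\xi_1,\dots,\xi_n)|_v$; and throughout you compute heights of $P(\xi_1,\dots,\xi_n)$ and of the $\xi_j$ inside a common field $K$, which is legitimate because the height is independent of the number field used, a fact the paper records when defining $\mathcal{H}$. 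With those remarks your argument is complete and matches the standard treatment.
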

\begin{proof}
(i) and (ii) are well-known and (iii) is \cite[Proposition 14.7]{MasserPolynomialsBook}.
\end{proof}

\begin{proposition} \label{HeightSeparationBound}
    Suppose that $\mu$ is a finitely supported measure on $G = \mathrm{Sim}(\R^d)$. Let $S$ be the set of coefficients of $\rho(g), U(g)$ and $b(g)$ with $g \in \mathrm{supp}(\mu)$ supported on a finite set of points. Suppose that all of the elements of $S$ are algebraic and let $K$ be the number field generated by $S$. Then $$S_{\mu} \ll_d [K:\mathbb{Q}] \max (\{h(y) : y \in S \} \cup \{1 \} ).$$
\end{proposition}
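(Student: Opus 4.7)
The plan is to show that for any two distinct elements $g \neq h$ in $\bigcup_{i=0}^{n}\mathrm{supp}(\mu^{*i})$, one has $d(g,h) \geq e^{-Cn}$ with $C \ll_d [K:\Q]\max\{h_{\max},1\}$, where $h_{\max} := \max_{y \in S} h(y)$. Write $g = g_{i_1}\cdots g_{i_{n_g}}$ and $h = g_{j_1}\cdots g_{j_{n_h}}$ with $n_g, n_h \leq n$ and all factors in $\mathrm{supp}(\mu)$. Since $g \neq h$, at least one of the coefficients $\rho$, $U_{a,b}$, or $b_c$ of $g$ differs from the corresponding coefficient of $h$; call this differing pair $\alpha_g \neq \alpha_h$, both of which lie in $K$. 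By Lemma~\ref{lemma:height_basic}(ii) (Liouville), $|\alpha_g - \alpha_h| \geq \mathcal{H}(\alpha_g - \alpha_h)^{-[K:\Q]}$, so the problem reduces to bounding $\mathcal{H}(\alpha_g - \alpha_h)$ from above.

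The key step is to express $\alpha_g - \alpha_h$ as $P(\xi_1, \ldots, \xi_N)$ where the $\xi_r$ are $N = (n_g + n_h)(d^2 + d + 1) \leq 2n(d^2+d+1)$ \emph{formal} variables, one per coefficient of each generator slot (not per distinct element of $S$), each of which is an element of $S$ and so has logarithmic height at most $h_{\max}$. Using the matrix multiplication formula $U(g) = U(g_{i_1})\cdots U(g_{i_{n_g}})$ together with the telescoping identity
\[
b(g_1\cdots g_m) \;=\; \sum_{\ell=1}^m \rho(g_1\cdots g_{\ell-1})\, U(g_1\cdots g_{\ell-1})\, b(g_\ell),
\]
one checks that every monomial of $P$ uses each formal variable with exponent at most one, so each $L_r \leq 1$ in the notation of Lemma~\ref{lemma:height_basic}(iii). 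A crude count gives $\mathcal{L}(P) \leq O_d(1)^n$, so Lemma~\ref{lemma:height_basic}(iii) yields
\[
h(\alpha_g - \alpha_h) \;\leq\; \log \mathcal{L}(P) + \sum_{r=1}^N L_r\, h(\xi_r) \;\leq\; O_d(n) + 2n(d^2+d+1)h_{\max} \;\ll_d\; n\max\{h_{\max},1\}.
\]
Combining with the Liouville bound, $|\alpha_g - \alpha_h| \geq \exp\bigl(-O_d(n[K:\Q]\max\{h_{\max},1\})\bigr)$.

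The final step is to translate this lower bound on a single coefficient difference into a lower bound on the metric $d(g,h)$. If $U$ or $b$ differs, then $\|U_g - U_h\|$ or $|b_g - b_h|$ dominates the absolute value of any individual differing entry up to a dimension-dependent factor, so the translation is immediate. For the $\rho$-coefficient, the mean value theorem gives $|\log\rho_g - \log\rho_h| \geq |\rho_g - \rho_h|/\max(\rho_g, \rho_h) \geq |\rho_g - \rho_h|\, R^{-n}$ where $R = \max_{g_i \in \mathrm{supp}(\mu)} \rho(g_i)^{\pm 1}$; Lemma~\ref{lemma:height_basic}(ii) gives $\log R \leq [K:\Q] h_{\max}$, so this introduces only an additive $[K:\Q]h_{\max}$ correction to $-\tfrac{1}{n}\log d(g,h)$. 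Taking the maximum over the three coefficient types and passing to the limsup in $n$ then yields $S_\mu \ll_d [K:\Q]\max\{h_{\max},1\}$.

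The principal obstacle is the correct indexing of formal variables: if we had used one variable per distinct element of $S$, then the maximum degree $L_r$ in a given monomial could grow linearly in $n$ and the bound would blow up to $|S|\cdot [K:\Q]h_{\max}$. Indexing variables per generator-slot instead guarantees $L_r \leq 1$ in every monomial and is what eliminates the $|S|$ factor, producing a bound that depends only on $d$ and on the arithmetic data $[K:\Q]h_{\max}$.
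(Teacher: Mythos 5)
Your argument is correct and rests on the same two ingredients as the paper's proof: the height bound for polynomial expressions, Lemma~\ref{lemma:height_basic}(iii), followed by the Liouville-type lower bound (ii) with degree at most $[K:\Q]$. The route differs only in bookkeeping: the paper compares the word $a_1^{-1}\cdots a_n^{-1}b_1\cdots b_m$ with the identity, so it needs heights of inverses (Lemma~\ref{lemma:height_basic}(i)) and an implicit conversion between $d(g,h)$ and the distance of $g^{-1}h$ from $\id$, whereas you compare matched coefficients of the two products directly, avoiding inverses altogether but paying instead with the mean-value estimate relating $|\rho_g-\rho_h|$ to $|\log\rho_g-\log\rho_h|$, which costs the same harmless additive $n[K:\Q]h_{\max}$ term; both ways the limsup gives $S_\mu\ll_d [K:\Q]\max\{h_{\max},1\}$. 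One remark in your closing paragraph is off, though it does not affect correctness: indexing the variables by the distinct elements of $S$ would \emph{not} blow the bound up by a factor of $|S|$, since Lemma~\ref{lemma:height_basic}(iii) then gives $\mathcal{H}\leq \mathcal{L}(P)\,C^{D}$ with $C=\max_{y\in S}\mathcal{H}(y)$ and $D$ the total degree, which is still $O_d(n)$; this is exactly how the paper's proof proceeds (it works with "total degree at most $n+m$" in the elements of $S$ and their inverses), so the per-slot indexing is a convenience, not a necessity.
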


\begin{proof}
    We let $m, n \in \mathbb{Z}_{>0}$ and we consider an expression of the from $$a_1^{-1} a_2^{-1} \dots a_n^{-1} b_1 b_2 \dots b_m$$ for $a_1, \ldots , a_n$ and $b_1, \ldots , b_m$ elements in the support of $\mu$. We wish to show that this is either the identity or at least some distance away from the identity. Let $C := \max\{\mathcal{H}(y) : y \in S \}$. First note that $$\rho(a_1^{-1} a_2^{-1} \dots a_n^{-1} b_1 b_2 \dots b_m) - 1$$ is a polynomial in elements of $S$ and their inverses with length $2$ and total degree at most $n+m$. Therefore by Lemma~\ref{lemma:height_basic} $$H(\rho(a_1^{-1} a_2^{-1} \dots a_n^{-1} b_1 b_2 \dots b_m) - 1) \leq 2 C^{m+n}$$
		and so either $\rho(a_1^{-1} a_2^{-1} \dots a_n^{-1} b_1 b_2 \dots b_m) = 1$ or $$|\rho(a_1^{-1} a_2^{-1} \dots a_n^{-1} b_1 b_2 \dots b_m) - 1| \geq 2^{-[K:\mathbb{Q}]} C^{-(m+n)[K:\mathbb{Q}]}.$$
		By a similar argument, using that the coefficients of the inverse matrix of a matrix are polynomial in the coefficients of the given matrix, we see that either $$U(a_1^{-1} a_2^{-1} \dots a_n^{-1} b_1 b_2 \dots b_m) = I$$ or $$||U(a_1^{-1} a_2^{-1} \dots a_n^{-1} b_1 b_2 \dots b_m) - I|| \geq  (d^{m+n} + 1)^{-[K:\mathbb{Q}]} C^{-O_d(m+n)[K:\mathbb{Q}]}$$
		and that either $b(a_1^{-1} a_2^{-1} \dots a_n^{-1} b_1 b_2 \dots b_m) = 0$ or $$|b(a_1^{-1} a_2^{-1} \dots a_n^{-1} b_1 b_2 \dots b_m)| \geq  (d^{m+n} + 1)^{-[K:\mathbb{Q}]} C^{-O_d(m+n)[K:\mathbb{Q}]}.$$ Overall this means that either $a_1^{-1} a_2^{-1} \dots a_n^{-1} b_1 b_2 \dots b_m = \id$ or $$\log d(a_1^{-1} a_2^{-1} \dots a_n^{-1} b_1 b_2 \dots b_m, \id) \gg_d -(m + n)(\log C + 1)[K:\mathbb{Q}].$$ The result follows.
\end{proof}

\subsection{Inhomogeneous examples in $\R$}

In this section, we prove Corollary~\ref{Dim1Imhomogen}, Corollary~\ref{Dim1ImhomogenEasy} and Corollary~\ref{Dim1ContAverageEasy}, which we all recall for convenience of the reader.

\begin{corollary*}[Restatement of Corollary \ref{Dim1Imhomogen}]
		For every $\eps > 0$ there exists a small constant $c = c(\eps) > 0$ such that the following holds. Let $K$ be a number field and $\lambda_1,\lambda_2 \in K \cap (0,1)$ and write $h(\lambda_1,\lambda_2) = \max\{ h(\lambda_1), h(\lambda_2) \}$. Consider the similarities given for $x \in \R$ as $$g_1(x) = \lambda_1 x \quad\quad \text{ and } \quad\quad g_2(x) = \lambda_2 x + 1.$$ Then the self-similar measure of $\mu = \frac{1}{2}\delta_{g_1} + \frac{1}{2}\delta_{g_2}$ is absolutely continuous if $$ h(\lambda_1,\lambda_2) \geq \eps \quad\quad \text{ and } \quad\quad  |\chi_{\mu}| \max\{ 1, \log ([K:\Q] h(\lambda_1,\lambda_2) ) \} ^2 < c.$$
	\end{corollary*}

\begin{proof}(of Corollary~\ref{Dim1Imhomogen})
    Write $\mu = \frac{1}{2}\delta_{g_1} + \frac{1}{2}\delta_{g_2}$ By Proposition~\ref{proposition:height_to_entropy} for every $\eps > 0$ there exists a $\delta > 0$ such that if $h(\lambda_1,\lambda_2) \geq \eps$ then it follows that $h_{\mu} \geq \delta$. Therefore by Theorem~\ref{MainResultFixU} and  using that $S_{\mu} \ll h(\lambda_1,\lambda_2) [K:\Q]$ as shown in Proposition~\ref{HeightSeparationBound}, it follows that $\mu$ is absolutely continuous if for absolute constants $C_1,C_2$ it holds that $$\frac{\delta}{|\chi_{\mu}|} \geq C_1 \max \{ 1, \log (C_2\delta^{-1}h(\lambda_1,\lambda_2) [K:\Q] ) \}^2,$$ which easily implies the claim. 
\end{proof}

\begin{corollary*}[Restatement of Corollary \ref{Dim1ImhomogenEasy}]
		There exists an absolute constant $c > 0$ such that the following holds. Let $\lambda_i = 1 - p_i/q_i \in (0,1)$ be rational for $i \in \{1,2 \}$ with coprime integers $p_i, q_i \geq 1$ and let $g_1$ and $g_2$ be the similarities from \eqref{EasySimilarities}. Then the self-similar measure of $\frac{1}{2}\delta_{g_1} + \frac{1}{2}\delta_{g_2}$ is absolutely continuous if for $i \in \{ 1,2 \}$, $$\frac{p_i}{q_i}\leq \frac{c}{(\log \log \max\{q_1, q_2\})^2}.$$
	\end{corollary*}

\begin{proof}(of Corollary~\ref{Dim1ImhomogenEasy})
This follows easily from Corollary~\ref{Dim1Imhomogen} by taking $K = \Q$ and noting that then $h(\lambda_1, \lambda_2) = \max\{ \log q_1, \log q_2\} \geq \log 2$ as $q_i \geq 2$ since $\lambda_i \in (0,1)$. So by Corollary~\ref{Dim1Imhomogen} there is an absolute constant $c >  0$ such that the self-similar measure of $\mu = \frac{1}{2}\delta_{g_1} + \frac{1}{2}\delta_{g_2}$ is absolutely continuous if $$|\chi_{\mu}| \leq \frac{c}{(\log \log \max\{ q_1, q_2\})^2}.$$ As $-x \leq \log (1-x)$ for $x \in [0,\frac{1}{2}]$, it follows that $|\chi_{\mu}| \leq \frac{1}{2}\frac{p_1}{q_1} + \frac{1}{2}\frac{p_2}{q_2}$. Thus, to prove that the self-similar measure is absolutely continuous, it suffices to show that $$\frac{1}{2}\frac{p_1}{q_1} + \frac{1}{2}\frac{p_2}{q_2} \leq\frac{c}{(\log \log \max\{ q_1, q_2\})^2}$$ as follows directly from the assumption.
\end{proof}

    \begin{corollary*}[Restatement of Corollary \ref{Dim1ContAverageEasy}]
		For $n\geq 1$ consider the similarities given for $x\in \R$ as $$g_1(x) = \frac{n + 1}{n} x \quad\quad \text{ and } \quad\quad g_2(x) = \frac{n}{n+ 1} x + 1.$$ Then the self-similar measure of $\mu = \frac{1}{5}\delta_{g_1} + \frac{4}{5}\delta_{g_2}$ is absolutely continuous for sufficiently large $n$. 
	\end{corollary*}

\begin{proof}(of Corollary~\ref{Dim1ContAverageEasy})
    By Proposition~\ref{proposition:height_to_entropy} and Proposition~\ref{HeightSeparationBound} it holds that $h_{\mu} \gg_{\mu} \log n$ and $S_{\mu} \ll  \log n$. Thus by Theorem~\ref{MainResultContAvFixU} it suffices to show that there is $\hat{\rho} \in (0,1)$ sufficiently close to $1$ such that for sufficiently large $N$, $$\sup_{n \geq N}\frac{\mathbb{E}_{\gamma \sim \mu}[|\hat{\rho} - \rho(\gamma)|]}{1 - \mathbb{E}_{\gamma \sim \mu}[\rho(\gamma)]} < 1.$$ We note that $$1 - \mathbb{E}_{\gamma \sim \mu}[\rho(\gamma)] = 1 - \left(\frac{1}{5}\frac{n + 1}{n} + \frac{4}{5}\frac{n}{n + 1}\right) = \frac{3n - 1}{5n(n + 1)}.$$
    We then set $\hat{\rho} = \frac{n}{n + 1}$ (which actually is the optional $\hat{\rho}$) and then have $$\mathbb{E}_{\gamma \sim \mu}[|\hat{\rho} - \rho(\gamma)|] = \frac{1}{5}\left(\frac{n + 1}{n} - \frac{n}{n+1}\right) = \frac{2n + 1}{5n(n + 1)}.$$ Thus it follows with this choice of $\hat{\rho}$ that $$\lim_{n \to \infty} \frac{\mathbb{E}_{\gamma \sim \mu}[|\hat{\rho} - \rho(\gamma)|]}{1 - \mathbb{E}_{\gamma \sim \mu}[\rho(\gamma)]} = \lim_{n \to \infty} \frac{2n + 1}{3n - 1} = \frac{2}{3} < 1,$$ concluding the proof.
\end{proof}

\subsection{Examples in $\R^d$}
\label{section:GenEx}

In this section we prove Corollary~\ref{QuiteGeneralExample}, Corollary~\ref{QuiteGeneralContAvExample} and Corollary~\ref{MostGeneralExample} on general examples with absolutely continuous self-similar measures, which we all again recall for convenience of the reader.

\begin{corollary*}[Restatement of Corollary \ref{QuiteGeneralExample}]
		Let $d \geq 1$ and $\eps > 0$, let $\mu_U = \sum_{i = 1}^k p_i\delta_{U_i}$ be an irreducible probability measure on $O(d)$ with $p_i \geq \eps$ and let $b_1, \ldots , b_k \in \R^d$ with $b_1 \neq b_2$. Assume that $U_1, \ldots , U_k$ and $b_1, \ldots , b_k$ have algebraic coefficients. Let $q$ be a prime number and for $1 \leq i \leq k$ consider $$g_i(x) = \frac{q}{q+a_{i,q}}U_ix + b_i \quad \quad \text{ for any integer } \quad \quad a_{i,q} \in [1, q^{1-\eps}].$$ Assume that $g_1, \ldots , g_k$ do not have a common fixed point and consider $\mu = \sum_{i = 1}^k p_i\delta_{g_i}$. Then the self-similar measure of $\mu$ is absolutely continuous for $q$ a sufficiently large prime depending on $d,\eps,U_1, \ldots , U_k$ and $b_1, \ldots , b_k$. 
	\end{corollary*}

\begin{proof}[Proof of Corollary~\ref{QuiteGeneralExample}.]
    We first show that $g_1$ and $g_2$ generate a free semigroup for sufficiently large $q$ by using Lemma~\ref{lemm:p_adic_ping_pong}. For simplicity we first treat the case when all of the entries are rational. Then consider the $q$-adic numbers $\Q_{q}$ and the $q$-adic integers $\Z_{q}$. As the $U_1, \ldots , U_k$ and the $b_1, \ldots , b_k$ are fixed, for a sufficiently large prime $q$ all of their entries are in $\Z_{q} \backslash q\Z_q$. On the other hand, by construction $\rho(g_i) \in q\Z_q$ for $1\leq i \leq k$ and as $q\Z_q$ is an ideal therefore also all of the entries of $\rho(g_i)U_i$ are in $q\Z_q$. By Lemma~\ref{lemm:p_adic_ping_pong} it therefore suffices to check that $(q\Z_q)^d + b_1 \neq (q\Z_q)^{d} + b_2$ or equivalently $b_1 - b_2 \not\in (q\Z_q)^d$, which is clearly the case for sufficiently large $q$. Thus $g_1$ and $g_2$ generate a free semigroup. The same argument applies in the general case for $K$ the number field generated by the coefficients of the entries of $g_i$ and by choosing any prime ideal that factors $(q)$.

    Thus it follows by Lemma~\ref{lemm:FreeEntBound} that $h_{\mu} \gg \eps$ and note that by Lemma~\ref{lemma:height_basic} it holds that $S_{\mu} \ll_{K,d} \log q$. Hence there exists a constant $C$ depending on all the relevant parameters such that the self-similar measure of $\mu$ is absolutely continuous if $$C|\chi_{\mu}| \leq \frac{1}{(\log \log q)^2}.$$ Therefore it remains to estimate the Lyapunov exponent. Indeed, note that 
    \begin{align*}
        \log\left( \frac{q}{q + a_{i,q}} \right) &= \log\left(1 - \frac{a_{i,q}}{q + a_{i,q}}\right) \geq \log\left(1 - \frac{q^{1 - \eps}}{q}\right) \gg -q^{-\eps}.
    \end{align*} Therefore $|\chi_{\mu}| \ll q^{-\eps}$ and the claim follows for sufficiently large $q$.
\end{proof}

\begin{corollary*}[Restatement of Corollary \ref{QuiteGeneralContAvExample}]
		Let $d, \eps$ and $\mu_U = \sum_{i = 1}^k p_i \delta_{U_i}$  as well as $b_1, \ldots , b_k$ be as in Proposition~\ref{QuiteGeneralExample}. Let $q$ be a prime number and  consider for $1 \leq i \leq k-1$ $$g_i(x) = \frac{q}{q+3}U_ix + b_i \quad \quad \text{ and } \quad \quad g_k(x) = \frac{q}{q-1}U_kx + b_k.$$ Assume that $g_1, \ldots , g_k$ do not have a common fixed point and further that $$p_k \leq \frac{1}{3}.$$ Then the self-similar measure of $\mu = \sum_{i = 1}^k p_i\delta_{g_i}$ is absolutely continuous for $q$ a sufficiently large prime depending on $d,\eps,U_1, \ldots , U_k$ and $b_1, \ldots , b_k$. 
	\end{corollary*}

\begin{proof}[Proof of Corollary~\ref{QuiteGeneralContAvExample}.]
    As in the proof of Corollary~\ref{QuiteGeneralExample}, $g_1$ and $g_2$ generate a free semigroup for sufficiently large $q$ and therefore $h_{\mu} \gg \eps $. Write $\alpha_1 = p_1 + \ldots + p_{k-1}$ and $\alpha_2 = p_k$. Then we have as $\alpha_1 + \alpha_2 = 1$,
    \begin{align*}
        \E_{\gamma\sim\mu}[\rho(\gamma)] &= \alpha_1\frac{q}{q + 3} + \alpha_2\frac{q}{q-1} = \frac{q^2 + (4\alpha_2-1)q}{(q + 3)(q-1)}
    \end{align*} and thus $$1 - \E_{\gamma\sim\mu}[\rho(\gamma)] = \frac{(q + 3)(q-1) - (q^2 + (4\alpha_2-1)q)}{(q + 3)(q-1)} = \frac{(3-4\alpha_2)q - 3}{(q + 3)(q-1)}.$$ On the other hand, choosing $\hat{\rho} = \frac{q}{q + 3}$ we have $$\E_{\gamma\sim\mu}[|\hat{\rho} - \rho(\gamma)|] = \alpha_2 \left( \frac{q}{q-1} - \frac{q}{q + 3} \right) = \frac{4q\alpha_2}{k(q + 3)(q-1)}.$$ Thus it follows that 
    \begin{equation}\label{ContAvLimit}
        \lim_{q \to \infty}  \frac{\E_{\gamma\sim\mu}[|\hat{\rho} - \rho(\gamma)|]}{1 - \E_{\gamma\sim\mu}[\rho(\gamma)]} = \frac{4\alpha_2}{3-4\alpha_2} < 1
    \end{equation}
    provided that $\alpha_2 = p_k < \frac{3}{8}$. If we assume that $p_k \leq \frac{1}{3}$ then we have that the limit in \eqref{ContAvLimit} is uniformly away from $1$. As in Corollary~\ref{QuiteGeneralExample}, we have that $S_{\mu} \ll_{K,d} \log q$.  Therefore by Theorem~\ref{MainResultContAvFixU} there exists a constant $C$ depending on all of the parameters such that $\mu$ is absolutely continuous if $$C|\chi_{\mu}| \leq \frac{1}{(\log \log q)^2}.$$ As in Corollary~\ref{QuiteGeneralExample} it follows that $|\chi_{\mu}| \ll q^{-1}$ and hence the claim follows. 
\end{proof}

We next prove Corollary~\ref{MostGeneralExample} and first show the following basic lemma.

\begin{lemma}\label{BasicGaloisLemma}
    Let $K$ be a real algebraic number field satisfying $\Q(\sqrt{q}) \subset K$ for a prime $q$. Then there exists a field automorphism $\Phi \in \mathrm{Aut}(K/\Q)$ such that $\Phi(\sqrt{q}) = -\sqrt{q}$.
\end{lemma}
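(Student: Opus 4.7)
The plan is to extend the non-trivial automorphism of $\Q(\sqrt{q})$ over $\Q$ to an automorphism of $K$ via the standard extension theorem for embeddings of algebraic extensions. First I would observe that, since $q$ is prime, the polynomial $x^{2}-q$ is irreducible over $\Q$, so the extension $\Q(\sqrt{q})/\Q$ is Galois of degree two, with non-trivial element $\sigma_{0}:\sqrt{q}\mapsto -\sqrt{q}$.

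The key step is then extending $\sigma_{0}$ to $K$. Because $K/\Q(\sqrt{q})$ is a finite algebraic extension, the classical extension theorem for embeddings (obtained, for instance, by picking a primitive generator $\alpha$ of $K/\Q(\sqrt{q})$ and sending $\alpha$ to any root in $\Alg$ of the $\sigma_{0}$-image of its minimal polynomial, then iterating if needed) supplies a $\Q$-algebra homomorphism $\Phi:K\hookrightarrow\Alg$ satisfying $\Phi|_{\Q(\sqrt{q})}=\sigma_{0}$. By construction $\Phi(\sqrt{q})=-\sqrt{q}$, which is what we want.

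The one delicate point, and the place I expect to spend the most care, is making sure $\Phi$ actually lands inside $K$, so that it is genuinely an element of $\mathrm{Aut}(K/\Q)$ rather than merely a $\Q$-embedding of $K$ into $\Alg$. When $K/\Q$ is Galois, this is automatic from the injectivity of $\Phi$ together with $[\Phi(K):\Q]=[K:\Q]$ and $\Phi(K)\subset K$. Otherwise one passes to the Galois closure $\widetilde{K}\subset\Alg$ of $K$ over $\Q$, applies the extension theorem inside $\widetilde{K}$ to produce an element $\widetilde{\Phi}\in\mathrm{Gal}(\widetilde{K}/\Q)$ sending $\sqrt{q}$ to $-\sqrt{q}$, and then uses the standing setup for $K$ to descend $\widetilde{\Phi}$ to an automorphism of $K$. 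Beyond this bookkeeping, no real obstacle is expected: the statement is essentially a restatement of the fact that $\mathrm{Gal}(\Alg/\Q)$ acts transitively on the two roots of $x^{2}-q$.
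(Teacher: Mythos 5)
Your first two steps are sound: $x^{2}-q$ is irreducible over $\Q$, the nontrivial automorphism $\sigma_{0}$ of $\Q(\sqrt{q})$ exists, and it extends to a $\Q$-embedding $\Phi\colon K\hookrightarrow\Alg$ with $\Phi(\sqrt{q})=-\sqrt{q}$; and when $K/\Q$ is Galois this embedding is automatically an element of $\mathrm{Aut}(K/\Q)$. The genuine gap is the point you dismiss as bookkeeping: the ``descent'' from the Galois closure. An element $\widetilde{\Phi}\in\mathrm{Gal}(\widetilde{K}/\Q)$ restricts to an automorphism of $K$ only if $\widetilde{\Phi}(K)=K$, and nothing in the hypotheses forces this; there is no general mechanism that produces such a $\widetilde{\Phi}$ among the extensions of $\sigma_{0}$. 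In fact the statement, read for an arbitrary real number field $K\supset\Q(\sqrt{q})$, is false, so the gap cannot be closed by any amount of care: take $K=\Q(q^{1/4})$, which is real and contains $\sqrt{q}$. Any $\Phi\in\mathrm{Aut}(K/\Q)$ must send $q^{1/4}$ to a root of $x^{4}-q$ lying in $K$, hence to $\pm q^{1/4}$, and therefore $\Phi(\sqrt{q})=\Phi(q^{1/4})^{2}=\sqrt{q}$; no automorphism of $K$ moves $\sqrt{q}$. So in the non-Galois case your argument does not merely need polishing --- it needs additional hypotheses on $K$ (or a weaker conclusion, e.g.\ an embedding of $K$ into $\Alg$ rather than an automorphism of $K$, which is what your construction genuinely provides).

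For comparison, the paper's own proof writes $K=K_{0}(\alpha_{1},\dots,\alpha_{\ell})$ with $K_{0}=\Q(\sqrt{q})$ and extends $\Theta=\sigma_{0}$ one generator at a time via the map $K_{0}[X]\to K_{0}(\alpha_{1})$, $P\mapsto\Theta(P)(\alpha_{1})$. For this to induce a well-defined map on $K_{0}(\alpha_{1})\cong K_{0}[X]/(m_{\alpha_{1}})$ one needs $\alpha_{1}$ to be a root of $\Theta(m_{\alpha_{1}})$, which is exactly the obstruction you face in different clothing, and it is not addressed there either (the example above defeats that induction as well). So your proposal is on a par with the paper's argument in the Galois case, and both leave the same unbridgeable gap in general; any correct version must either assume more about $K$ or settle for a $\Q$-embedding $\Phi\colon K\to\Alg$ with $\Phi(\sqrt{q})=-\sqrt{q}$, which is the part of your argument that does work.
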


\begin{proof}
    Write $K_0 = \Q(\sqrt{q})$ and assume that $K = K_0(\alpha_1, \ldots , \alpha_{\ell})$ for some $\alpha_1, \ldots , \alpha_{\ell} \in K$. Denote by $\Theta \in \mathrm{Aut}(K_0/\Q)$ the automorphism with $\Theta(\sqrt{q}) = -\sqrt{q}$. When $\ell = 1$ we consider the surjective map $K_0[X] \to K_0(\alpha)$ with $P \mapsto \Theta(P)(\alpha_1)$ for $\Theta(P)$ the polynomial to which all coefficients we have applied $\Theta$. This map induces a field automorphism of $K_0(\alpha)$ with the required properties and our proof is concluded by an induction on $\ell$ with the same argument.
\end{proof}

\begin{corollary*}[Restatement of Corollary \ref{MostGeneralExample}]
		Let $d \geq 1$ and $\eps \in (0,1)$ and $\mu_U = \sum_{i = 1}^k p_i\delta_{U_i}$ an irreducible probability measure on $O(d)$ with $p_i \geq \eps$ for all $1\leq i \leq k$. Assume furthermore that $U_1, \ldots , U_k$ have algebraic entries. Let $\tilde{\rho} \in (0,1)$ be sufficiently close to $1$ in terms of $d,\eps$ and $\mu_U$ and let $C > 1$ be sufficiently large depending on the same parameters.  
		
		Suppose that $g_i(x) = \frac{a_i + b_i\sqrt{q}}{c_i}U_ix + d_i$ with $a_i, b_i, c_i \in \Z$ and $d_i \in \Z^d$ for $1 \leq i \leq k$ and a prime number $q$ do not have a common fixed point. Then the self-similar measure associated to $\mu = \sum_{i = 1}^k p_i\delta_{g_i}$ is absolutely continuous if the following properties are satisfied: 
		\begin{enumerate}[label = (\roman*)]
			\item $\frac{a_i + b_i\sqrt{q}}{c_i} \in (\tilde{\rho},1)$ for $1\leq i \leq k$,
			\item for $j = 1$ and for $j = 2$ we have $$\bigg|\frac{a_j - b_j\sqrt{q}}{c_j}\bigg| < \frac{1}{3},$$
			\item For $L = \max(\sqrt{q}, |a_i|, |b_i|, |c_i|, |d_i|_{\infty})$ we have $$C|\chi_{\mu}| \leq \frac{1}{(\log(\log L))^2}.$$
		\end{enumerate}
	\end{corollary*}

\begin{proof}[Proof of Corollary~\ref{MostGeneralExample}.]
    By Theorem~\ref{MainResultFixU} there exists $\tilde{\rho} \in (0,1)$ and $C \geq 1$ depending on $d,\eps$ and $\mu_U$ such that $\mu$ is absolutely continuous if $p_i \geq \eps$ as well as $\frac{a_i + b_i\sqrt{q}}{c_i} \in (\tilde{\rho},1)$ for all $1\leq i \leq k$ as well as $$\frac{h_{\mu}}{|\chi_{\mu}|} \geq  C \left( \max\left\{ 1, \log \frac{S_{\mu}}{h_{\mu}} \right\} \right)^2.$$ Let $K$ be the number field generated by all the coefficients of elements in $\mathrm{supp}(\mu)$. Then by Lemma~\ref{BasicGaloisLemma} there is a field automorphism $\Phi \in \mathrm{Aut}(K/\Q)$ such that $\Phi(\sqrt{q}) = -\sqrt{q}$ and therefore we have that $|\rho(\Phi(g_j))| < \frac{1}{3}$ for $j = 1,2$. Thus by Lemma~\ref{lemm:GaloisPingPong} and Lemma~\ref{lemm:FreeEntBound} we have that $h_{\mu} \gg \eps$. We also have $h_{\mu} \leq \log \eps^{-1}$. On the other hand, it follows by Lemma~\ref{lemma:height_basic} (iii) and Proposition~\ref{HeightSeparationBound} that $S_{\mu} \ll_{d,\mu_U} \log L$, which readily implied the claim upon changing the constant $C$.
\end{proof}

\begin{lemma}\label{MostGeneralParticualr}
    In the setting of Corollary~\ref{MostGeneralExample}, for $\eps > 0$ choose $$a_i = \lceil \sqrt{q} \rceil - m_{i,q}, \quad \quad b_i = 2 \quad\quad c_i = 3\lceil \sqrt{q} \rceil$$ for  $m_{i,q}$ an integer satisfying $m_{i,q} \in [0,q^{1/2 - \eps}]$ and any $d_i \in \Z^d$ with $|d_i|_{\infty} \leq \exp(\exp(q^{\eps/3}))$. Then $\mu$ is absolutely continuous for sufficiently large $q$ depending on $d,p_0, \eps$ and $U_1, \ldots , U_k$, provided $g_1, \ldots , g_k$ does not have a common fixed point.
\end{lemma}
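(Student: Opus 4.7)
The plan is to verify that the explicit choices $a_i = \lceil\sqrt{q}\rceil - m_{i,q}$, $b_i = 2$, $c_i = 3\lceil\sqrt{q}\rceil$ (together with the given $d_i$) satisfy the hypotheses of Corollary~\ref{MostGeneralExample}, and then invoke it. Set $\delta := \lceil\sqrt{q}\rceil - \sqrt{q} \in (0,1)$, which is strictly positive since $\sqrt{q}$ is irrational for prime $q$. A direct computation yields
\[
1 - \rho(g_i) \;=\; \frac{2\delta + m_{i,q}}{3(\sqrt{q} + \delta)}.
\]
First I would use this to conclude that each $g_i$ is contracting (since $m_{i,q} \geq 0$ and $\delta > 0$), and combined with $m_{i,q} \leq q^{1/2-\eps}$, that $1 - \rho(g_i) \ll q^{-\eps}$ uniformly in $i$, whence $|\chi_{\mu}| \ll q^{-\eps}$.

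Next I would verify condition (i) of Corollary~\ref{MostGeneralExample} by arranging the indexing so that $m_{1,q} = m_{2,q} = 0$, which is permitted since $0$ lies in the allowed range $[0, q^{1/2-\eps}]$. Then for $j = 1, 2$ a direct calculation gives
\[
\left|\frac{a_j - b_j\sqrt{q}}{c_j}\right| \;=\; \frac{\sqrt{q} - \delta}{3(\sqrt{q}+\delta)} \;<\; \frac{1}{3},
\]
the strict inequality being equivalent to $\delta > 0$.

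For condition (ii), I would bound $L = \max(\sqrt{q}, |a_i|, |b_i|, |c_i|, |d_i|_\infty) \leq \exp(\exp(q^{\eps/3}))$, using that $|a_i|, |b_i|, |c_i|$ are at most linear in $\sqrt{q}$ while the $|d_i|_\infty$ bound dominates. Hence $(\log\log L)^2 \leq q^{2\eps/3}$, and combining with the Lyapunov estimate yields
\[
C|\chi_{\mu}|(\log\log L)^2 \;\ll\; C\, q^{-\eps/3} \;\longrightarrow\; 0
\]
as $q \to \infty$. Thus condition (ii) is satisfied for $q$ large enough (depending on $C$, which in turn depends on $d$, $\mu_U$ and $\eps$).

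The main technical obstacle is condition (i), whose admissible range for $m_{j,q}$ is essentially $\{0\}$ and is much narrower than $[0, q^{1/2-\eps}]$; the plan relies on the freedom to reindex the $g_i$ so that two of them carry $m_{j,q} = 0$, which is what drives the Galois ping-pong underlying Corollary~\ref{MostGeneralExample}. Once (i) and (ii) are verified together with the standing no-common-fixed-point hypothesis, Corollary~\ref{MostGeneralExample} yields absolute continuity of the self-similar measure.
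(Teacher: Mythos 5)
Your explicit computations are correct: the identity $1-\rho(g_i)=\frac{2\delta+m_{i,q}}{3(\sqrt{q}+\delta)}$ with $\delta=\lceil\sqrt{q}\rceil-\sqrt{q}\in(0,1)$, hence $|\chi_{\mu}|\ll q^{-\eps}$; the bound $(\log\log L)^2\leq q^{2\eps/3}$ giving condition (ii) for large $q$; and the observation that $\big|\frac{a_j-b_j\sqrt{q}}{c_j}\big|=\frac{\sqrt{q}+m_{j,q}-\delta}{3(\sqrt{q}+\delta)}<\frac13$ holds precisely when $m_{j,q}<2\delta$, i.e.\ essentially only for $m_{j,q}=0$. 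The genuine gap is the step ``arrange the indexing so that $m_{1,q}=m_{2,q}=0$.'' The lemma is a universal statement: the integers $m_{i,q}\in[0,q^{1/2-\eps}]$ are given data, not parameters you may choose, and relabelling the maps cannot create indices with $m_{j,q}=0$ that are not already present. For instance $m_{i,q}=5$ for every $i$ is an admissible choice, and then for every pair of indices the Galois conjugate ratio is $\frac{\sqrt{q}+5-\delta}{3(\sqrt{q}+\delta)}>\frac13$, so condition (i) of Corollary~\ref{MostGeneralExample} genuinely fails and your argument does not apply. As written, your proposal proves only the special case in which at least two of the prescribed $m_{i,q}$ are (essentially) zero, which is strictly weaker than the statement.

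For comparison, the paper's proof follows the same route as yours (estimate $|\chi_{\mu}|\ll q^{-\eps}$, take $\log\log L=q^{\eps/3}$, invoke Corollary~\ref{MostGeneralExample}) and simply asserts the bound $\big|\frac{a_i-b_i\sqrt{q}}{c_i}\big|<\frac13$, so your scrutiny of condition (i) is in fact sharper than the paper's; but the repair has to be different from reindexing. For arbitrary $m_{j,q}\in[0,q^{1/2-\eps}]$ the conjugate ratios are at most $\frac13(1+q^{-\eps})$, so one can instead run the Galois ping-pong on suitable words: the squares $g_1^2,g_2^2$ have Galois conjugates contracting by at most $\frac19(1+q^{-\eps})^2<\frac13$ for large $q$, so Lemma~\ref{lemm:GaloisPingPong} applies to them, and Lemma~\ref{lemm:FreeEntBound} applied to $\mu^{*2}$ (together with $h_{\mu^{*2}}=2h_{\mu}$) yields $h_{\mu}\gg\eps^2$; the remaining steps of the proof of Corollary~\ref{MostGeneralExample}, and your verification of condition (ii), then go through with $\eps^2$ in place of $\eps$. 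Without some such replacement of the entropy input (or an added hypothesis forcing $m_{1,q},m_{2,q}<2\delta$), the proposal does not establish the lemma as stated.
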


\begin{proof}
    It holds that $\frac{a_i + b_i\sqrt{q}}{c_i} \in (0,1)$ converges to $1$ as $q \to \infty$ and that $|\frac{a_i - b_i\sqrt{q}}{c_i}| < \frac{1}{3}$. We next estimate the Lyapunov exponent of $\mu$. Indeed, note that for $q$ large enough,
    \begin{align*}
        \log\left( \frac{a_i + b_i\sqrt{q}}{c_i} \right) &\geq \log\left( \frac{\lceil \sqrt{q} \rceil - q^{1/2-\eps} + 2\sqrt{q}}{3\lceil \sqrt{q} \rceil }\right) \\
        &\geq \log\left( 1 - \frac{2(\lceil \sqrt{q}\rceil - \sqrt{q}) + q^{1/2-\eps}}{3\lceil \sqrt{q} \rceil }\right) \gg -q^{-\eps}
    \end{align*} and therefore $|\chi_{\mu}| \ll q^{-\eps}$. In our case, for large $q$ we have  $L = |d_i|_{\infty} = \exp(\exp(q^{\eps/3}))$ and therefore $\log(\log L) = q^{\eps/3}$. Thus for sufficiently large $q$ we have that $C|\chi_{\mu}| \leq  (\log\log L)^{-2} = q^{-2\eps/3}$ and the claim follows. 
\end{proof}

\subsection{Real Bernoulli Convolutions}
\label{section:RealBern}

In this section we prove Corollary~\ref{BernoulliCorollary}.

\begin{corollary*}[Restatement of Corollary \ref{BernoulliCorollary}]
		There is an absolute constant $C > 1$ such that the following holds. Let $\lambda \in (1/2,1)$ be a real algebraic number. Then the Bernoulli convolution $\nu_{\lambda}$ is absolutely continuous on $\R$ if 
		\begin{equation}\label{BernoulliCond}
		\lambda > 1 - C^{-1}\min\{\log M_{\lambda},(\log \log M_{\lambda})^{-2} \} .
		\end{equation}
\end{corollary*}

\begin{proof}[Proof of Corollary~\ref{BernoulliCorollary}]
        As in the paragraph before Proposition~\ref{prop:non_degen}, Bernoulli convolutions are uniformly non-degenerate. Since we are in $d = 1$ they are $(1,0)$-well-mixing and therefore Theorem~\ref{MainResult} applies. For convenience write $\eta = \log M_{\lambda}$ and $h_{\lambda} = h_{\nu_{\lambda}}$. We don't keep track of possible enlargements of $C$. That Bernoulli convolutions are uniformly non-degenerate follows from Proposition~\ref{UnifNonDeg}. Then Theorem~\ref{MainResult} implies that if 
        \begin{equation}\label{BernoulliMain}
            (1-\lambda)^{-1}h_{\lambda} > C \left(\max\left\{ 1, \log \eta/h_{\lambda} \right\} \right)^2,
        \end{equation}
        then $\nu_{\lambda}$ is absolutely continuous. Recall that by \cite[Theorem 5]{BreuillardVarju2020} (which is stated with logarithms base 2) there is an absolute $c_0 \in (0,1)$ such that $c_0 \min(\log 2,\eta) \leq h_{\lambda} \leq \min(\log 2,\eta).$ 
        
        We proceed with a case distinction. First assume that $\eta \leq \log 2$. Then $c_0^{-1} \geq \eta/h_{\lambda} \geq 1$ and therefore by \eqref{BernoulliMain} the condition $(1-\lambda)^{-1}c_0 \eta > C$ is sufficient for absolute continuity, which is equivalent to 
        \begin{equation}\label{BernoulliCondPart1}
            \lambda > 1 - C^{-1}\eta.
        \end{equation}
        
        Next assume that $\eta \geq \log 2$. Then $c_0\log 2 \leq h_{\lambda} \leq \log 2$ and so \eqref{BernoulliMain} gives $$(1-\lambda) \max\{ 1, \log \eta + \log (c_0\log 2)^{-1} \}^2 < C^{-1}.$$ Note that $\max\{ 1, \log \eta + \log (c_0\log 2)^{-1} \} \leq 2\log (c_0\log 2)^{-1}\max\{ 1, \log \eta \}$. Therefore we get the condition 
        \begin{equation}\label{BernoulliCondPart2}
            \lambda > 1 - C^{-1}\max\{ 1, \log \eta \}^{-2} = 1 - C^{-1}\min\{ 1, (\log \eta)^{-2} \}.
        \end{equation}

        To deduce \eqref{BernoulliCond}, we note that there is a unique $\eta' > 0$ with $\eta' = (\log \eta')^{-2}$ and this $\eta'$ satisfies $2 \leq \eta' \leq 5/2$. Moreover $\log \eta < (\log \eta)^{-2}$ for $0 < \eta < \eta'$ and $\log \eta > (\log \eta)^{-2}$ for $\eta > \eta'$. Therefore \eqref{BernoulliCond} holds for $\eta < \log(2)$ and $\eta > 2\eta'$ by \eqref{BernoulliCondPart1} and \eqref{BernoulliCondPart2}. In the range $\log(2) < \eta < 2\eta'$, we enlarge $C$ to ensure that \eqref{BernoulliCond} holds. 
    \end{proof}

    We note that if $\lambda$ is algebraic and not the root of any non-zero polynomial with coefficients $0,\pm 1$, then $h_{\lambda} = 2$ and also as mentioned in Remark 5.10 of \cite{Kittle2021}, $M_{\lambda} \geq 2$. Therefore for such a $\lambda$, $\nu_{\lambda}$ is absolutely continuous if 
    \begin{equation}\label{BernoulliCondSimplified}
            \lambda > 1 - C^{-1}\min\{1,(\log \log M_{\lambda})^{-2} \}.
    \end{equation}

\subsection{Complex Bernoulli Convolutions}
\label{section:ComplexBern}

\begin{corollary*}[Restatement of Corollary \ref{ComplexBernoulliCorollary}]
		For every $\varepsilon > 0$ there is a constant $C \geq 1$ such that the following holds. Let $\lambda \in \mathbb{C}$ be a complex algebraic number such that $|\lambda| \in (2^{-1/2}, 1)$ and 
		\begin{equation}
		|\mathrm{Im}(\lambda)| \geq \varepsilon. \label{eq:SomeImaginaryPart}
		\end{equation} Then the Bernoulli convolution $\nu_{\lambda}$ is absolutely continuous on $\C$ if 
		\begin{equation*}
		|\lambda| > 1 - C^{-1}\min\{\log M_{\lambda},(\log \log M_{\lambda})^{-2} \} .
		\end{equation*}
\end{corollary*}

\begin{proof}[Proof of Corollary~\ref{ComplexBernoulliCorollary}]
    We can't directly apply Proposition~\ref{UnifNonDeg} so we give a direct proof of mixing and non-degeneracy. First note that \eqref{eq:SomeImaginaryPart} ensures that there is some $c > 0$ and $T \geq 1$ depending only on $\varepsilon$ such that the $(c, T)$-well-mixing property is satisfied. 
    
    To deal with non-degeneracy, we distinguish the case when $|\lambda| \leq \lambda_0$ and $|\lambda| \geq \lambda_0$ for some $\lambda_0$ sufficiently close to $1$. As in the case of real Bernoulli convolution, for any given $\lambda_0$, the family of Bernoulli convolutions with $|\lambda| \leq \lambda_0$ are easily seen to be uniformly non-degenerate depending on $\lambda_0$. To deal with the case $\lambda \geq \lambda_0$, we rescale our measure to the one given by the law of $B_{\lambda} = \sqrt{1 - |\lambda|^2} \sum_{i=0}^{\infty} \pm \lambda^i$ and denote the resulting measure by $\nu_{\lambda}'$. Now let $\Sigma$ be the covariance matrix of $\nu_{\lambda}'$ under the natural identification of $\mathbb{C}$ with $\R^2$. Note that the trace of $\Sigma$ is $1$ and we claim that the smallest eigenvalue of $\Sigma$ is  $\gg_{\eps} 1$. Indeed, for a unit vector $x \in \R^2$ we want to estimate $x^T\Sigma x$, which is by identifying $\C$ with $\R^2$ equal to $$\E[|B_{\lambda}\cdot x|^2] = (1 - |\lambda|^2) \sum_{i = 0}^{\infty} |\lambda^i \cdot x|^2 \gg_{\eps} 1,$$ which follows as $|\lambda^i\cdot x|^2 \gg |\lambda|^2$ unless $\lambda^i$ and $x$ are almost colinear, which is only the case for a very small proportion of $i$'s. It follows that $$\inf_{p\in P(2)} \E_{x\sim \mathcal{N}(0,\Sigma)}[|px|]  \gg_{\eps} 1$$ for $p$ ranging in the orthogonal projections of $\R^2$ as in section~\ref{section:non-deg}. By for example Lemma~\ref{BerryEssenType} we know that $\mathcal{W}_1\left( \nu_{\lambda}', N(0, \Sigma) \right) \ll \sqrt{1 - |\lambda|^2}.$ Therefore for $\lambda_0$ sufficiently close to $1$ in terms of $\varepsilon$, uniform non-degeneracy follows as in Lemma~\ref{CloseToNormalNonDegen}. Having establish uniform well-mixing and non-degeneracy, Corollary~\ref{ComplexBernoulliCorollary} is established by the same argument as the proof of Corollary \ref{BernoulliCorollary}.
\end{proof}

\bibliography{referencesgeneral.bib}
\end{document}